\documentclass[10pt,twoside, a4paper, english, reqno]{amsart}
\usepackage{a4wide}
\usepackage{amscd}
\usepackage{amssymb}
\usepackage{amsthm}
\usepackage{graphicx}
\usepackage{amsmath,calrsfs}
\usepackage{latexsym}
\usepackage{enumitem,dsfont}

\usepackage[colorlinks]{hyperref}
\usepackage{color,graphics}
\usepackage[usenames,dvipsnames]{xcolor}
\usepackage{pdfsync}
\usepackage{ulem,cancel,pgf}
\usepackage{cancel}
\usepackage{comment}

\def\niklas#1{\textcolor{purple!100!black}{#1} }

\def\yas#1{\textcolor{brown!98!black}{#1} }

\usepackage{frcursive}


\setlength{\topmargin}{-.5cm}
\setlength{\textheight}{24cm}
\setlength{\evensidemargin}{0cm}
\setlength{\oddsidemargin}{0cm}
\setlength{\textwidth}{16cm}
\theoremstyle{plain}
\newtheorem{theorem}{Theorem}[section]
\theoremstyle{plain}
\newtheorem{lemma}[theorem]{Lemma}
\newtheorem{prop}[theorem]{Proposition}

\newtheorem{rem}[theorem]{Remark}

\theoremstyle{definition}
\newtheorem{definition}{Definition}[section]
\newtheorem{defi}{Definition}[section]
\newtheorem{remark}{Remark}[section]

\newtheorem*{maintheorem*}{Main Theorem}
\newtheorem*{maincorollary*}{Main Corollary}

\newcommand{\dint}{\ensuremath{\displaystyle\int}}

\newcommand{\R}{\ensuremath{\mathbb{R}}}
\newcommand{\N}{\ensuremath{\mathbb{N}}}
\newcommand{\E}{\ensuremath{\mathbb{E}}}

\newcommand{\Div}{\operatorname{div}\,}

\newcommand{\eps}{\ensuremath{\varepsilon}}

\def\V{V}

\def\Vprime{V^\prime}

\def\penalisation#1{(#1)^-}

\newcommand{\erww}[1]{\mathbb{E}\left[{#1}\right]}
\newcommand{\erws}[1]{\overline{\mathbb{E}}\left[{#1}\right]}
\newcommand{\ueps}{u_{\varepsilon}}

\numberwithin{equation}{section} \allowdisplaybreaks

\title[ Stochastic  pseudomonotone parabolic    obstacle problem]
{Stochastic  pseudomonotone parabolic    obstacle problem: well-posedness $\&$ Lewy-Stampacchia's inequalities }
\date{\today}


\keywords{ Variational inequalities; pseudomonotone; Lewy-Stampacchia's inequalities; multiplicative noise.
\\
\textit{AMS codes:} 35K86; 60H15; 35K55}


\author[Niklas Sapountzoglou]{Niklas Sapountzoglou}
\address[Niklas Sapountzoglou]{\newline
Institute of Mathematics, Clausthal University of Technology}
\email[Niklas Sapountzoglou]{niklas.sapountzoglou@tu-clausthal.de}

\author[Yassine Tahraoui]{Yassine Tahraoui}
\address[Yassine Tahraoui]{\newline
Scuola Normale Superiore, Piazza dei Cavalieri, 7,  56126 Pisa, Italy}
\email[Yassine Tahraoui]{yassine.tahraoui@sns.it}

\author[Guy Vallet]{Guy Vallet}
\address[Guy Vallet]{\newline
Laboratory of Mathematics and 
Applications of Pau (LMAP)
UMR CNRS 5142}
\email[Guy Vallet]{guy.vallet@univ-pau.fr}

\author[Aleksandra Zimmermann]{Aleksandra Zimmermann}
\address[Aleksandra Zimmermann]{\newline
Institute of Mathematics, Clausthal University of Technology}
\email[Aleksandra Zimmermann]{aleksandra.zimmermann@tu-clausthal.de}

\allowdisplaybreaks[1]

\begin{document}

\begin{abstract}
We consider obstacle problems for nonlinear stochastic evolution equations. More precisely, the leading operator in our equation is a nonlinear, second order pseudomonotone operator of Leray-Lions type. The multiplicative noise term is given by a stochastic integral with respect to a Q-Wiener process. We show well-posedness of the associated initial value problem for random initial data on a bounded domain with a homogeneous Dirichlet boundary condition. First, we consider a singular perturbation of our problem by a higher order operator. Through the \textit{a priori} estimates for the approximate solutions of the singular perturbation, only weak convergence is obtained. This convergence is not compatible with the nonlinearities in the equation. Therefore we use the theorems of Prokhorov and Skorokhod to establish existence of martingale solutions. Then, path-wise uniqueness follows from a L1-contraction principle and we may apply the method of Gy{\"o}ngy-Krylov to obtain stochastically strong solutions. These well-posedness results serve as a basis for the study of variational inequalities and Lewy-Stampacchia's inequalities for our problem.
\end{abstract}

\maketitle
\tableofcontents


\section{Introduction}

\subsection{Functional spaces and stochastic framework}
Let us denote by $D \subset \R^d$ a Lipschitz bounded domain, $T>0$ and consider $\max(1,\frac{2d}{d+2}) <p <+\infty$.  
As usual, $p^\prime=\frac{p}{p-1}$ denotes the conjugate exponent of $p$, $V=W^{1,p}_0(D)$, the sub-space of elements of $W^{1,p}(D)$ with null trace, endowed with Poincar\'e's norm,  $H=L^2(D)$ is identified with its dual space so that, the corresponding dual spaces to $V$, $V^\prime$ is $W^{-1,p^\prime}(D)$ and the Lions-Guelfand triple $\V \hookrightarrow_d H=L^2(D)   \hookrightarrow_d \Vprime$ holds. The duality bracket for $T \in V^\prime$ and $v \in V$ is denoted $\langle T,v\rangle$.
Denote by $p^*=\frac{pd}{d-p}$ if $p<d$ the Sobolev embedding exponent and remind that 
\begin{align*}
\text{ if }p<d,\quad &V \hookrightarrow L^a(D),\ \forall a \in [1,p^*]\text{ and compact if }a \in [1,p^*),
\\
\text{ if }p=d,\quad & V \hookrightarrow L^a(D),\ \forall a <+\infty\text{ and compactly}, 
\\
\text{ if }p>d,\quad &V \hookrightarrow C(\overline D)\text{ and compactly }.
\end{align*}
Since $\max(1,\frac{2d}{d+2}) <p <+\infty$, the compactness of the embeddings hold in Lions-Guelfand triple.\\

Concerning the stochastic framework, let $(\Omega,\mathcal{F},(\mathcal{F}_t)_{t\geq 0},\mathds{P})$ be a stochastic basis with the usual assumptions on the filtration, \textit{i.e.}, $(\mathcal{F}_t)_{t\geq 0}$ is right continuous and $\mathcal{F}_0$ contains all negligible sets of $\mathcal{F}$. 
Let $(\beta^k)_k=((\beta^k_t)_{t\geq 0})_k$ a sequence of independent, real-valued Wiener processes with respect to $(\mathcal{F}_t)_{t\geq 0}$. Denote by $\Omega_T=(0,T)\times \Omega$ and $\mathcal{P}_T$ the predictable $\sigma$-field on $\Omega_T$\footnote{$\mathcal{P}_{T}:=\sigma(\{ (s,t]\times F_s \ | \ 0\leq s < t \leq T, \ F_s\in \mathcal{F}_s \} \cup \{\{0\}\times F_0 \ | \ F_0\in \mathcal{F}_0 \})$ (see \cite[p. 33]{Liu-Rock}). Then, a mapping defined on $\Omega_T$ with values in a separable Banach space $E$ is predictable if it is $\mathcal{P}_{T}$-measurable.} and by $Q_T$ the product $(0,T)\times D$.  Unless otherwise stated, for any separable Banach space $X$ and $1\leq r<\infty$,
\[L^r(\Omega_T;X):=\left\{u:\Omega\times (0,T)\rightarrow X \ \text{predictable and} \ u\in L^r(\Omega;L^r(0,T;X))\right\}. \] 
In particular,  $u$ is a random variable with values in $L^r(0,T;X)$ and the measurability of $u$ as an element of the Bochner space $L^r(\Omega\times (0,T);X)$ is understood as the predictable measurability.
We fix a separable Hilbert space $U$ such that $H\subset U$ and a non-negative symmetric trace class operator $Q:U\rightarrow U$ with $Q^{1/2}(U)=H$. Let $(e_k)_k$ be an orthonormal basis of $U$ made of eigenvectors of $Q$ with corresponding eigenvalues $(\lambda_k)_k\subset [0,\infty)$. Then, 
\[W(t):=\sum_{k=1}^{\infty}\sqrt{\lambda_k} e_k\beta^k_t,\ t\geq 0\] 
is a $(\mathcal{F}_t)$-adapted $Q$-Wiener process with values in $U$ according to \cite[Section 4.1.1]{DPZ14}. Denoting the separable Hilbert space of Hilbert-Schmidt operators from $H$ to $H$ by $HS(H)$, the stochastic It\^{o} integral of a predictable mapping $g \in L^2(\Omega_T; HS(H))$ with respect to $(W(t))_{t\geq 0}$ may be defined as in \cite[Section 4.2]{DPZ14} and will be denoted by 
\[\int_0^t g(s)\,dW(s), \ t\in [0,T].\]

We recall that an element $\xi\in L^{p'}(\Omega_T;V')$ is called \textit{non-negative} iff 
\[\erww{\int_0^T\langle \xi,\varphi\rangle_{V',V}\,dt}\geq 0\]
holds for all $\varphi\in L^p(\Omega_T;V)$ such that $\varphi\geq 0$. In this case, with a slight abuse of notation, we will often write $\xi\geq 0$.
Denote by $L^{p}(\Omega_T;V)^* =L^{p^\prime}(\Omega_T;\Vprime)^+ - L^{p^\prime}(\Omega_T;\Vprime)^+ \subset L^{p^\prime}(\Omega_T;\Vprime)$ the order dual as being the difference of two non-negative elements of $L^{p^\prime}(\Omega_T;\Vprime)$, more precisely $h\in L^{p}(\Omega_T;V)^*$ iff
$h=h^+-h^-$ with $h^+,h^- \in L^{p^\prime}(\Omega_T;\Vprime)^+$, each element being predictable \cite[H$_{5}$ and Rmk. 5]{YV}.\\ 
It is worth  recalling that $L^{p}(\Omega_T;V)^* \subsetneq L^{p^\prime}(\Omega_T;\Vprime)$, see \textit{e.g.} \cite[Example I.1]{Han-Joly}.
When there is no ambiguity, for a function $f:X\times Y \to Z$ of two variables $x\in X$ and $y\in Y$, one will sometimes denote by $f$ the function $(x,y) \mapsto f(x,y)$, $f(x)$ the function $f(x,\cdot):y\mapsto f(x,y)$ and $f(y)$ the function $f(\cdot,y):x\mapsto f(x,y)$, \textit{i.e.} $a(u,\nabla u)$ can be a notation for $x \mapsto a(x,u(x),\nabla u(x))$.
\subsection{Obstacle problem and Lewy-Stampacchia's inequalities}
Our aim is to show existence and uniqueness of a solution $(u,\rho)$ to the following obstacle problem for a nonlinear, pseudomonotone stochastic evolution equation
\begin{align}\label{I}
\begin{cases}
du-\Div a(u,\nabla u)\,ds+ \rho\,ds= f\,ds+G(u)\,dW(s) \quad & \text{in} \quad D\times  \Omega_T, \\ 
u(t,0)=u_0 & \text{in} \quad H,\  \text{a.s. in }\Omega,\\ 
u \geq \psi & \text{in} \quad  D\times  \Omega_T, \\  
u=  0   &\text{on} \quad\partial D\times \Omega_T,  \\
\langle \rho,u-\psi\rangle_{L^{p'}(\Omega_T;V'),L^p(\Omega_T;V)}  = 0 \hspace*{0.2cm} \text{and} \hspace*{0.2cm} -\rho \geq 0 \  \text{in} \ L^{p'}(\Omega_T;V')  & 
\end{cases}
\end{align}
Then we prove that the solution $(u,\rho)$ to \eqref{I} satisfies the following Lewy-Stampacchia's inequalities, in the sense of the order dual:
\begin{align}\label{LSfinal}
0 \leq & \partial_t \left( u-\displaystyle\int_0^\cdot G(u)\,dW\right) - \Div a(u,\nabla u)-f
\leq \left(f - \partial_t \left( \psi-\displaystyle\int_0^\cdot G(\psi)\,dW\right) + \Div a(\psi,\nabla \psi)\right)^- 
\end{align}
The precise notion of solution will be given in Definition \ref{def}. The precise notion of \eqref{LSfinal} will be given in Definition \ref{230310_def1}. All assumptions on the pseudomonotone operator  $A:u \mapsto -\Div a(u,\nabla u)$ and on the given data $u_0$, $\psi$ and $f$ will be given in Section \ref{Assumptions and notations}.
\subsection{State of the art}
Obstacle problems appear in the mathematical modeling of many phenomena in physics, finance, biology, \textit{etc}. For example,  the questions of  the evolution of damage in a continuous medium and American option pricing, we refer \textit{e.g.} to \cite{Bar1,Bauz,Kara,Yas} and references therein. Several studies exist in the literature concerning  obstacle problems, where in the deterministic setting, it can be formulated by using variational inequalities, see \textit{e.g.} \cite{J.L.L,GMYV,Mok-Val,J.F} and their references.
\\

Concerning  stochastic obstacle problems,
many authors have been interested in the topic, without seeking to be exhaustive, let us  mention  the work in  \cite{Hau-Par}, where the authors  studied the well-posedness of a reflected parabolic problem governed by a bounded linear operator.
The question of the semi-linear case was studied in \cite{DonatiMartinPardoux,Ras2}, and the quasilinear case have been proposed in \cite{DenisMatoussiZhang,DenisMatoussiZhangNonHom}. We mention \cite{Ben-Ras} for a differential inclusion approach on a Hilbert space and \cite{Bauz} for an Allen-Cahn type equation. Strong solutions to some  stochastic variational inequalities with monotone operators in a non-Hilbertian case have been addressed in \cite{Rascanu}. Recently, a stochastic non-linear T-monotone  obstacle  problem, in the presence of multiplicative noise, has  been studied in  \cite{YV}, where  stochastic Lewy-Stampacchia's inequalities were introduced. 	The proof is based on  \textit{ad hoc} perturbation of the stochastic term and a penalization of the constraint in the frame of Sobolev spaces. Concerning some qualitative properties and the behavior of solutions to stochastic obstacle problems, let us mention  \cite{MatoussiLDP,YassLDP} about  large deviations and \cite{YassInvarinat} on ergodicity and invariant measures. It is worth mentioning that the additive noise case requires a different approach based on parabolic potential theory to show the existence of solution, see \cite{STVZ25} for more details.\\

After the first results of H. Lewy and G. Stampacchia \cite{L-S} concerning inequalities in the context of super-harmonic problems, deterministic Lewy-Stampacchia's  inequalities have been largely studied, without trying to be exhaustive, let us cite the monograph \cite{J.F} and the papers  \cite{Mok-Mur,Mok-Val} for elliptic problems, and \cite{GigliMosconi} concerning an abstract presentation. 
The literature on Lewy-Stampacchia's inequalities is mainly aimed at elliptic problems, or close to elliptic problems and fewer papers are concerned with other type of problems. Let us cite  \cite{Rodrigues} for hyperbolic problems,  \cite{FLAVIODONATI} for parabolic problems with a monotone operator and recently \cite{GMYV} for parabolic Leray-Lions pseudomonotone problems. Concerning the stochastic Lewy-Stampacchia's inequalities, 
they have been proved first in \cite{YV} for parabolic T-monotone obstacle problems, then in \cite{IYG} for stochastic scalar conservation laws with constraint. It is worth mentioning that Lewy-Stampacchia's inequalities are irrelevant in the additive noise case.

\subsection{Our aim} In this work we provide a general result of well-posedness for the obstacle problem, associated with the corresponding Lewy-Stampacchia's inequalities, for nonlinear operators.   The operator is pseudomonotone and not monotone; this requires the use of suitable compactness methods, combined with an \textit{ad hoc} perturbation of the operator and the noise. It is worth mentioning that one should use the compactness method carefully, see \cite{Ondrejat}. Let us note that even in the deterministic case, the pseudomonotone case is more delicate, and  has been solved recently (\cite{GMYV}) while the monotone one was known for a long time (\cite{FLAVIODONATI}). It should also be noted that the addition of noise makes the study more delicate, and \cite{GMYV} proposes a more general situation. 

\par Lewy-Stampacchia's inequalities play an important part in this method. In the proof, they are the estimates of the penalty, then of the transition from a regular case to the general case. They are both a result and a tool for obtaining it. Their formulation, the sense given to the left hand side of \eqref{I}, is closely linked to the dual order assumption H$_5$. 
This makes it possible to formulate the obstacle problem as an equation with a Lagrange multiplier having a natural regularity: the dual space in which the obstacle-free problem is posed. This multiplier can also be interpreted as a control ensuring the obstacle constraint. Finally,  these inequalities give a control on the Lagrange multiplier, and  the particular case where the right-hand side in \eqref{LSfinal} vanishes leads to   a comparison (maximum) principle. \\

To the best of the author's knowledge, there doesn't exist in the literature any result of existence and uniqueness associated with corresponding Lewy-Stampacchia's inequalities for stochastic pseudomonotone obstacle problem and our aim is to propose such a result.

\subsection{Assumptions and notations}\label{Assumptions and notations}
We will consider in the sequel the following assumptions:
\begin{enumerate}
 \item[H$_1$] : $u_0\in L^2(\Omega;H)$ is $\mathcal{F}_0$-measurable with values in $H$, product measurable on $\Omega\times D$.
 \item[H$_2$] : $A:V\rightarrow V'$ is a Leray-Lions pseudomonotone operator of the form 
 \begin{align*}
 v \mapsto A(v) = -\Div a(\cdot,v,\nabla v)
 \end{align*}
satisfying the following conditions:\\
 \begin{enumerate}
  \item[H$_{2,1}$] $a:D\times\R\times\R^d\rightarrow \mathbb{R}^d$  is a Carath\'eodory function on $D \times \R^{d+1}$,
  \item[H$_{2,2}$] $a$ is monotone with respect to its last argument, \textit{i.e.}, for almost all $x\in D$, for all $\lambda\in\mathbb{R}$, for all $\xi,\eta\in\mathbb{R}^d$
  \begin{align*} 
  \left(a(x,\lambda,\xi)-a(x,\lambda,\eta)\right)\cdot (\xi - \eta)\geq 0.
  \end{align*}  
  \item[H$_{2,3}$] $a$ is coercive and bounded: there exist  constants $\bar\alpha > 0$ and $C_1^{a},C_2^{a},\bar\gamma \geq 0$, a function  $\bar h$  in $L^1(D)$  and a function  $\bar k$  in $L^{p^\prime}(D)$ and an exponent $q<p$ such that, 
  for a.e. $x\in D$, for all $\lambda\in \R$ and for all $\xi\in \R^d$, 
  \begin{align}
a(x,\lambda,\xi)\cdot \xi &\geq \bar\alpha |\xi|^{p} - \bar\gamma 
  |\lambda|^{q} + \bar h(x), \quad  \label{coercif1} \\
  |a(x,\lambda,\xi)| &\leq \bar k(x) + C_1^{a}|\lambda|^{p-1} + C_2^{a}|\xi|^{p-1}. \label{croissance1}
\end{align}
  
 \item[H$_{2,4}$] There exists a constant $C_3^{a} \geq 0$ and a non-negative function $l\in L^{p^\prime}(D)$ such that
 \begin{align*}
|a(x,\lambda_1,\xi)-a(x,\lambda_2,\xi)|\leq \left(C_3^{a}|\xi|^{p-1}+l(x)\right)|\lambda_1-\lambda_2| 
 \end{align*}
 for all $\lambda_1$, $\lambda_2\in \mathbb{R}$, for all $\xi \in \mathbb{R}^d$ and almost all $x\in D$.

 \end{enumerate}

\item[H$_3$] : 
Let $G: \Omega_T \times H \to HS(H)$ be defined by \[G(\omega,t,u)=g(\omega,t)+\sigma(\omega,t,u)\] 
with $g \in L^2(\Omega_T;HS(H))$ and $\sigma \in L^2(\Omega_T; \operatorname{Lip}_0(H; HS(H)))$ where $\operatorname{Lip}_0$ denotes the space of Lipschitz-continuous functions vanishing at $0$.
In a more concrete way, there exist constants $C_g\geq 0$, $C_{\sigma}\geq 0$ such that, if $(\mathfrak{e}_k)_{k\in\mathbb{N}}$ is an orthonormal basis of $H$, for any $k\in\mathbb{N}$, 
\begin{enumerate}
\item[H$_{3,1}$:] $g(\omega,t)(\mathfrak{e}_k) = \{ x \mapsto g_k(\omega,t,x)\}$ where $g_k \in L^2(\Omega_T;L^2(D))$  is such that
\[\sum_{k=1}^{\infty} \|g_k(\omega,t)\|_{L^2(D)}^2=\sum_{k=1}^{\infty}\Vert g(\omega,t)(\mathfrak{e}_k)\Vert_{L^2(D)}^2=\Vert g(\omega,s)\Vert^2_{HS(H)}\leq C_g\]

\item[H$_{3,2}$:] For any $u\in H$, $\sigma(\omega,t,u) (\mathfrak{e}_k)=\{x \mapsto h_k(\omega,t,x,u(x))\}$ where $h_k:\Omega_T\times D \times\mathbb{R}\rightarrow \mathbb{R}$ is a Carath\'eodory function, measurable with respect to $\mathcal{P}_T\otimes\mathcal{B}(D)$ on $\Omega_T\times D$ and continuous with respect to its last variable, satisfying $h_k(\omega,t,x,0)=0$ $d\mathds{P}\otimes dt\otimes dx$-a.e. in $\Omega_T\times D$ and
\begin{align*}
\sum_{k=1}^\infty |h_k(\omega,t,x,\lambda)-h_k(\omega,t,x,\mu)|^2\leq C_{\sigma}|\lambda-\mu|^2 
\end{align*}
for all $\mu$, $\lambda\in\mathbb{R}$, $d\mathds{P}\otimes dt\otimes dx$-a.e. in $\Omega_T\times D$. Thus, $d\mathds{P}\otimes dt$-a.e., for all $ u,v \in H$, $\sigma(\omega,t,u) \in HS(H)$ and 
\[\|\sigma(\omega,t,u)-\sigma(\omega,t,v)\|^2_{HS(H)}\leq C_{\sigma}\|u-v\|^2_{H}.\]
Moreover, for fixed $u\in H$ and any $v\in HS(H)$
\[(\omega,t) \mapsto (\sigma(\omega,t,u),v)_{HS(H)}=\sum_{k=1}^{\infty} \int_D h_k(\omega,t,x,u(x))v(\mathfrak{e}_k)\,dx\]
is predictable by Fubini's theorem, hence $(\omega,t) \mapsto \sigma(\omega,t,u)$ is weakly measurable in $HS(H)$, thus measurable by Pettis' theorem on weak/strong measurability. Note that 
\[\Omega_T\times H\ni(\omega,t,u)\mapsto \sigma(\omega,t,u)\in HS(H)\] 
is then a Carath\'eodory function and, for any predictable process $u:\Omega\times[0,T]\rightarrow H$, $\sigma(.,u)$ is also predictable  \cite[Lem. 1.2.3]{Castaing}.

\item[H$_{3,3}$:] With a slight abuse of notation, we have $\sigma \in L^2(\Omega_T;\operatorname{Lip}_0(H; HS(H)))$ in the following sense:
$\sigma(\omega,t,\cdot)\in \operatorname{Lip}_0(H; HS(H)))$, a Banach space for the norm given by the Lipschitz coefficient. This Banach space is not \textit{a priori} separable, but  Bochner measurability of $\sigma$ yields that $\sigma$ is the limit of a sequence of simple functions with values in $\operatorname{Lip}_0(H; HS(H))$ and hence $\sigma(\Omega_T)$ is contained in the closure w.r.t. the norm $\| \cdot \|_{\operatorname{Lip}_0(H; HS(H))}$ of the linear hull of the values of those simple functions and we call this space $\Sigma$. By construction, $\Sigma$ is a separable sub-Banach space of $\operatorname{Lip}_0(H; HS(H)))$ with $\sigma \in L^2(\Omega_T;\Sigma)$, where $\Sigma$ is equipped with the same norm as $\operatorname{Lip}_0(H; HS(H))$. 
\end{enumerate}
\noindent \\
Recall that the properties in H$_3$ do not depend on the choice of the ONB of $H$ and that it is possible to choose $\mathfrak{e}_k:= Q^{1/2}(e_k)$.

\item[H$_4$] :  Let $\psi \in  L^p(\Omega_T;V)\cap L^2(\Omega_T;H)$ such that 
\[Y:=\partial_t \left( \psi-\displaystyle\int_0^\cdot G(\psi)\,dW\right)\in L^{p^\prime}(\Omega_T;\Vprime)\]
and $\psi(0,\cdot)=\psi_0\in H$. 
\begin{remark}\label{conti-obst}
From H$_4$ it follows that $\psi\in C([0,T];H)$ is an adapted stochastic process and 
\[\erww{\sup_{t\in [0,T]} \|\psi\|^2_{L^2(D)}}<+\infty,\] 
see \cite[Thm 4.2.5]{Liu-Rock}.
\end{remark}

\item[H$_5$] :  There exist predictable $f\in L^{p^\prime}(\Omega_T;V^\prime)$ and $h^+,h^- \in L^{p^\prime}(\Omega_T;\Vprime)^+\subset L^{p^\prime}(\Omega_T;\Vprime)$ such that
 \begin{align}\label{230308_02}
h := f - \partial_t \left(\psi - \int_0^{\cdot} G(\psi)\,dW\right)  +\Div a(\psi,\nabla \psi)=h^+-h^- \in L^{p}(\Omega_T;V)^*.
 \end{align}

\item[H$_6$] : The initial value $u_0$ satisfies the constraint, \textit{i.e.} $u_0 \geq \psi(0)$ $d\mathds{P}\otimes dx$-a.s. in $\Omega\times D$.
\end{enumerate}

\begin{remark}\label{remark}
Taking into account Assumptions H$_4$ and H$_5$, it's worth noticing that $\psi$ solves the following stochastic problem
\begin{align}\label{PDEforpsi}
d\psi-\operatorname{div}\,a(\psi,\nabla \psi)\,dt =(f-h)\,dt + G(\psi)\,dW
\end{align}
and the obstacle can be understood as a constraint in the coupling of stochastic PDEs.
\end{remark}

\subsection{Notion of solution} \label{section2}
\begin{definition}\label{Def_K}
Denote by $K$ the convex set of admissible functions
\[K=\left\{v \in L^p(\Omega_T;V), \ v(\omega,t,x)\geq \psi(\omega,t,x) \ d\mathds{P}\otimes dt\otimes dx\text{-a.e. in }  \Omega_T\times D\right\}.\]
\end{definition}
Let us introduce the concept of a solution for Problem \eqref{I}.
\begin{definition}\label{def}
For fixed $\psi\in L^p(\Omega_T;V)\cap L^2(\Omega_T;H)$, the pair $(u,\rho)$ is a solution to Problem \eqref{I} iff:
 \begin{itemize}
\item $u: \Omega_T \to L^2(D)$ is a predictable process with
$u\in L^2(\Omega;C([0,T];L^2(D)))$.
  \item $u\in L^p(\Omega_T;V)$, $u(0,\cdot)=u_0$ and $u\geq \psi$, \textit{i.e.}, $u\in K$.
  \item  $\rho \in L^{p^\prime}(\Omega_T; V^\prime)$ with 
  \begin{align}\label{230530_01}
  - \rho\in L^{p^\prime}(\Omega_T;V^\prime)^+ \ \text{and} \ \erww{\int_0^T\langle \rho,u-\psi\rangle_{V',V}\,dt}=0
  \end{align}
 \item It holds
\begin{align}\label{221013_01}
u(t)+\int_0^t \rho\,ds-\int_0^t \Div a(u,\nabla u)\,ds=u_0+\int_0^t G(u)\, dW(s)+\int_0^t f\,ds
\end{align}
for all $t\in[0,T]$, $\mathds{P}$-a.s. in $\Omega$.
\end{itemize}
\end{definition}
\begin{remark}\label{Prog-meas}
Since the embedding $V \hookrightarrow H$ is continuous, $u$ is  equally a predictable process  with values in $H$ or in $V$ (thanks to Kuratowski's theorem  \cite[Th. 1.1 p. 5]{Vakh}). Moreover, the first point of Definition \ref{def} could be given in a more general way by replacing $C([0,T],L^2(D))$ by $C([0,T];W^{-1,p'}(D))\cap L^\infty(0,T;L^2(D))$. This assumption yields $u(\omega)\in C_w([0,T];L^2(D))$\footnote{$C_w([0,T];L^2(D))$	denotes	the	Bochner space of weakly continuous functions with values in $L^2(D).$}, then, since $u$ satisfies \eqref{221013_01}, we have back $u(\omega)\in C([0,T];L^2(D))$ according to \cite[Thm 4.2.5]{Liu-Rock}.
\end{remark}
\begin{remark}
Condition \eqref{230530_01} can be understood as a minimality condition on $\rho$ in the sense that $\rho$ vanishes on the set $\{u>\psi\}$. Moreover, \eqref{230530_01} implies that, for all $v \in K$,  
\[\erww{\int_0^T\langle \rho,u-v\rangle_{V',V}\,dt}  \geq 0.\]
\end{remark}
\begin{remark}
 Note that  Problem \eqref{I} can be written in the equivalent form (see \cite[p.$7-8$]{BARBU}):
\begin{align*} 
\partial I_{K}(u)\ni f-\partial_t \left(u-\int_0^\cdot G(u)\,dW\right) + \Div a(u,\nabla u)
 \end{align*}
 where $\partial I_{K}(u)$ represents the sub-differential of $I_{K}: L^p(\Omega_T;V)\rightarrow \bar{\mathbb{R}} $ defined as 
 \begin{align*}
 I_{K}(u)=\left\{
 \begin{array}{l}
 0, \quad\quad  u \in K,\\ [1.2ex]
 +\infty, \quad u \notin K,
 \end{array}
 \right.
 \end{align*}
 and 
\begin{align*} 
\partial I_{K}(u)=N_{K}(u)=
\left\{y \in L^{p^\prime}(\Omega_T;V^\prime); \ \E\int_0^T\langle y,u-v\rangle_{V',V}dt \geq 0, \ \text{for all} \ v \in K \right\}.
\end{align*} 
\end{remark}

\begin{definition}\label{230310_def1}
Let H$_1$-H$_6$ be satisfied. The solution $(u,\rho)$ of Problem \eqref{I} satisfies Lewy-Stampacchia's inequalities \eqref{LSfinal}, iff $0\leq -\rho\leq h^{-}$ in the sense that for all $\varphi\in L^p(\Omega_T;V)$ such that $\varphi\geq 0$, 
\[\erww{\int_0^T\langle h^{-}+\rho,\varphi\rangle_{V',V}\,dt}\geq 0  \text{\quad  and \quad } \erww{\int_0^T\langle -\rho,\varphi\rangle_{V',V}\,dt}\geq 0.\]
\end{definition}

\subsection{Main results and outline}
Our aim is to prove the following Theorem
\begin{theorem}\label{MTh}
Under assumptions H$_1$-H$_6$, there exists a unique solution $(u,\rho)$ to \eqref{I} in the sense of Definition \ref{def}. Moreover, $(u,\rho)$ satisfies the Lewy-Stampacchia's inequalities \eqref{LSfinal} in the sense of Definition \ref{230310_def1}.
\end{theorem}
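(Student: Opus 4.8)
The plan is to build the solution to \eqref{I} by a double approximation: first a higher-order singular perturbation (adding $\eps\,\mathrm{div}(|\nabla u|^{q_0-2}\nabla u)$ or $-\eps\Delta_{q_0}$ with $q_0>p$ large enough so that $W_0^{1,q_0}(D)\hookrightarrow\!\!\to V$) to regularise the pseudomonotone operator, and second a penalisation of the obstacle constraint, replacing $\rho$ by $-\frac1\delta(u-\psi)^-$ (or a monotone penalty term acting in $V'$). For the perturbed-and-penalised problem one has a genuine stochastic evolution equation with a coercive pseudomonotone leading operator in the Gelfand triple $W_0^{1,q_0}(D)\hookrightarrow H\hookrightarrow W^{-1,q_0'}(D)$, whose well-posedness follows from the variational theory (e.g.\ \cite{Liu-Rock}); here H$_3$ guarantees the Lipschitz/linear-growth structure of $G$ needed for It\^o's formula and the energy estimates. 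The first task is then the derivation of \textit{a priori} estimates uniform in $(\eps,\delta)$: applying It\^o's formula to $\|u_{\eps,\delta}(t)\|_H^2$, using the coercivity \eqref{coercif1}, the bound $q<p$, the growth \eqref{croissance1}, the Burkholder--Davis--Gundy inequality and Gronwall, one obtains uniform bounds for $u_{\eps,\delta}$ in $L^2(\Omega;C([0,T];H))\cap L^p(\Omega_T;V)$, for $a(u_{\eps,\delta},\nabla u_{\eps,\delta})$ in $L^{p'}(\Omega_T;L^{p'}(D)^d)$, for $\eps^{1/q_0}\nabla u_{\eps,\delta}$ in $L^{q_0}(\Omega_T;L^{q_0})$, and — crucially, using H$_4$--H$_5$ and testing with $u_{\eps,\delta}-\psi$ — a uniform bound for the penalty term $\rho_{\eps,\delta}:=-\frac1\delta(u_{\eps,\delta}-\psi)^-$ in $L^{p'}(\Omega_T;V')$, which also forces $\erww{\int_0^T\langle\rho_{\eps,\delta},u_{\eps,\delta}-\psi\rangle\,dt}\to0$.

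Since these bounds give only weak convergence, which is incompatible with the nonlinearity $a(\cdot,\nabla\cdot)$ and with $\sigma(u)$, the second task is a compactness/stochastic-compactness argument: one checks tightness of the laws of $(u_{\eps,\delta})$ on a path space such as $L^p(0,T;V)_{w}\cap C([0,T];W^{-1,p'}(D))\cap L^2(0,T;H)$ (using the uniform estimates together with a fractional-in-time estimate on $u_{\eps,\delta}-\int_0^\cdot G(u_{\eps,\delta})\,dW$ coming from the equation, plus the Aldous or Simon criterion and compactness of $V\hookrightarrow\!\!\to H$), applies Prokhorov and Skorokhod to pass to a new probability space with a.s.\ convergent copies, identifies the limiting noise and uses the a.s.\ strong $L^p(Q_T)$ convergence to pass to the limit in $a(u,\nabla u)$ via Minty--Browder / pseudomonotonicity (H$_{2,1}$--H$_{2,3}$) and in $\sigma(u)$ via H$_{3,2}$. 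This yields a martingale solution $(u,\rho)$ of \eqref{I} with $-\rho\in L^{p'}(\Omega_T;V')^+$ and the complementarity relation \eqref{230530_01}, the sign of $\rho$ being preserved from $\rho_{\eps,\delta}\le0$ and the complementarity from the vanishing of the penalty energy. Pathwise uniqueness is then obtained from an $L^1$-type contraction estimate: for two solutions one subtracts the equations, applies It\^o's formula (or an approximation of $|\cdot|$), uses H$_{2,2}$, H$_{2,4}$ and the Lipschitz bound H$_{3,2}$ on $\sigma$, and exploits that $\langle\rho_1-\rho_2,u_1-u_2\rangle\le0$ thanks to \eqref{230530_01}; Gronwall then gives $u_1=u_2$ and hence $\rho_1=\rho_2$. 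With existence of martingale solutions and pathwise uniqueness in hand, the Gy\"ongy--Krylov characterisation of convergence in probability upgrades these to a stochastically strong solution on the original stochastic basis, completing the first assertion of the theorem.

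For the Lewy--Stampacchia inequalities \eqref{LSfinal}, the plan is to revisit the penalised approximation and compare $u_{\eps,\delta}$ with the obstacle process $\psi$, which by Remark \ref{remark} solves \eqref{PDEforpsi} with right-hand side $f-h$. Writing the equation for $u_{\eps,\delta}-\psi$ and testing against a suitable nonnegative truncation/regularisation (in the spirit of the deterministic arguments in \cite{GMYV,J.F}, adapted to the stochastic setting as in \cite{YV}), one shows that the penalty term satisfies the two-sided bound $0\le-\rho_{\eps,\delta}\le h^-$ in the order-dual sense, uniformly in $\eps,\delta$; the key point is that on the coincidence region the operator applied to $u_{\eps,\delta}$ is, up to the penalty, controlled by the operator applied to $\psi$, so the negative part of the defect $h$ dominates. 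Passing to the limit and using the weak-$*$ closedness of the positive cone $L^{p'}(\Omega_T;V')^+$ and of the order interval $[0,h^-]$ (H$_5$), one transfers these inequalities to $-\rho$, which by the equation \eqref{221013_01} is exactly $0\le\partial_t(u-\int_0^\cdot G(u)\,dW)-\mathrm{div}\,a(u,\nabla u)-f\le h^-$, i.e.\ \eqref{LSfinal} in the sense of Definition \ref{230310_def1}. The main obstacle throughout is the incompatibility of the available weak convergence with the pseudomonotone nonlinearity: the whole Prokhorov--Skorokhod--Gy\"ongy--Krylov machinery, and in particular obtaining strong $L^p(Q_T)$ convergence of the gradients (or, alternatively, an energy-convergence/Minty argument compatible with the obstacle term $\rho$), is where the real work lies; the penalty estimates leading to \eqref{LSfinal} are delicate but follow the by-now standard Lewy--Stampacchia scheme once the limit passage is under control.
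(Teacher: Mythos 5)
Your overall architecture --- penalize, singularly perturb, Prokhorov--Skorokhod tightness, Minty, $L^1$-contraction, Gy\"ongy--Krylov --- does match the paper's, but there is a concrete gap in how you propose to control the penalty term. You claim a uniform $L^{p'}(\Omega_T;V')$ bound on $\rho_{\eps,\delta}=-\tfrac1\delta(u_{\eps,\delta}-\psi)^-$ ``using H$_4$--H$_5$ and testing with $u_{\eps,\delta}-\psi$.'' That estimate does not close at the stated level of generality. Subtracting the $\psi$-equation \eqref{PDEforpsi} and applying It\^o produces the pairing of $h^-$ with $\tfrac1\delta(u_{\eps,\delta}-\psi)^-$; if $h^-$ is merely a non-negative element of $L^{p'}(\Omega_T;V')$ (H$_5$), absorbing this term would require a $V$-norm control on $\tfrac1\delta(u_{\eps,\delta}-\psi)^-$ which the coercivity does not deliver. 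The paper therefore \emph{first} restricts to $h^-\in L^\alpha(\Omega_T;L^\alpha(D))$, $\alpha=\max(2,p')$: Young's inequality against $\|h^-\|_{L^2}$ then bounds $\tfrac1\eps(u_\eps-\psi)^-$ in $L^2$ (Lemma \ref{lem2}), and a separate convexity argument furnishes the $L^{p'}$ bound when $p<2$ (Lemma \ref{lem2_2}). The general $h^-\in L^{p'}(\Omega_T;V')^+$ is recovered only \emph{a posteriori}: Lewy--Stampacchia is proved in the regular case $\partial_t h^-\in L^2(\Omega_T;H)$ (Proposition \ref{LSinequlity}), the resulting order bound $0\le-\rho_n\le h_n$ (and \emph{not} the penalty estimate) is what gives the uniform $L^{p'}(\Omega_T;V')$ bound on $\rho_n$ (Lemma \ref{lem2Bis}), and the density of regular $h_n$ (Lemma \ref{230310_lem01}) plus a second Skorokhod/Gy\"ongy--Krylov pass closes the proof. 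Your plan to take a single double limit and read off \eqref{LSfinal} directly from the penalty estimates cannot work without this regular/general split.

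Two secondary points. The limit $\eps\to0$ in the paper is not a second round of stochastic compactness but exploits the monotonicity $u_\eps\le u_{\eps'}$ for $\eps'<\eps$ (Lemma \ref{monotone_sequence}), available once pathwise uniqueness of the penalized problem is settled; without that, you would need tightness of $\rho_{\eps,\delta}$ in a suitable negative space across both parameters, which again circles back to the missing regularity of $h^-$. Also, a first-order $q_0$-Laplacian singular perturbation is likely insufficient: the paper takes $\delta\,\partial J$ with $\mathcal{V}=W^{m,\nu}_0(D)$, $m$ large and $\nu>\max\{p,2,2p(p-1)\}$, precisely so that $H^m_0\hookrightarrow W^{1,2p}_0\cap L^\infty$, the $L^\infty$ embedding being needed to verify the local monotonicity hypothesis (H4') of the Liu--R\"ockner framework against the non-monotone term controlled by H$_{2,4}$.
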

In order to prove this result, we will first consider a singular perturbation of our problem by a higher order operator in Section \ref{S1}. This well-posedness result serves as a basis for the study of solutions to \eqref{I} when $h^{-}\in L^{\alpha}(\Omega_T;L^{\alpha}(D))$ for $\alpha=\max(2,p')$ in Section \ref{sec-obstacle-pb}. In Section \ref{sec-LS-regular} we prove Lewy-Stampacchia's inequalities in the regular case under the additional assumption $\partial_t h^{-}\in L^2(\Omega_T;H)$. Then, in Section \ref{GC} we prove the result in the general case.

\section{Penalization $\&$ Existence of approximate solution}\label{S1}

\subsection{Penalization}
Let $\varepsilon >0$ and consider the following approximation problem:
\begin{align}\label{penalization}
\begin{aligned}
u_\eps(t)-\dfrac{1}{\varepsilon}\int_0^t (u_\eps-\psi)^-ds-\int_0^t \Div\tilde{a}(u_\eps,\nabla u_\eps) \,ds &=u_0+\int_0^t \tilde{G}(u_\eps)\,dW(s)+\int_0^t f\,ds,\\[2ex]
u_\eps(0)&=u_0, 
\end{aligned}
\end{align}
where $\tilde{G}(u)=G(\max(u,\psi))$, $\tilde{a}(\omega,t,x,u_\eps,\nabla u_\eps)=a(x,\max(u_\eps,\psi(\omega,t)),\nabla u_\eps) $. The idea of the perturbation of 
$G$  (resp. $a$ ) is to have formally an additive stochastic source  
 with a monotone operator 
on the free-set where the constraint is violated.

\subsection{A singular perturbation}\label{section-2.2}

\subsubsection{A higher order problem}\label{higher order problem}

For $\delta>0$, denote by $\mathcal{V}=W^{m,\nu}_0(D)$ with $\nu>\max\{p,2,2p(p-1)\}$ and let $m\in\mathbb{N}$ be chosen such that $H^m_0(D)$ has a continuous injection in $W_0^{1,2p}(D)$ and $L^\infty(D)$. Then, there exists a constant $C_e\geq 0$ such that for all $u\in W^{m,\nu}_0(D)$,
\[\left(\Vert u\Vert_{H^m_0}+\Vert u\Vert_{W^{1,2p}_0}+\Vert u\Vert_{W^{1,p}_0}+\Vert u\Vert_{\infty}+\Vert u\Vert_{2}\right)\leq C_e\Vert u\Vert_{W^{m,\nu}_0}.\]
For $\delta>0$ we define
\begin{align*}
&A_\delta : \Omega_T\times \mathcal{V} \to \mathcal{V}^\prime,\ (\omega,t,u) \mapsto A_\delta(\omega,t,u), 
\\
& 
A_\delta(\omega,t,u): v \mapsto \int_D \left( a(\max(\psi(\omega,t),u),\nabla u)\cdot\nabla v  - \frac1\varepsilon  \penalisation{u-\psi(\omega,t)}v\right)\,dx 
+\delta b(u,v) -\langle  f(\omega,t),v \rangle_{V',V}
\end{align*}
where, for $u,v \in\mathcal{V}$,
\begin{align}\label{230104_01}
b(u,v)=\langle \partial J(u),v\rangle =\sum_{|\alpha|\leq m}\int_{D}(1 + |D^\alpha u|^{\nu-2})D^\alpha u D^\alpha v \,dx
\end{align}
denotes the variational formulation of the maximal monotone operator on $\mathcal{V}$ associated with the G\^ateaux derivative $\partial J$ of $J=v \mapsto \frac{1}{\nu}\|v\|^{\nu}_{W^{m,\nu}_0(D)} + \frac12\|v\|^2_{H^m_0(D)}$.
Thanks to the assumptions on $a$ and $\psi$, $A_\delta$ is, on $\mathcal{V}$,  progressively measurable and hemicontinuous. 
In the following, we will often use the notation 
\[\langle A_\delta(\omega,t,u),v \rangle_{\mathcal{V}',\mathcal{V}} = \langle A_0(\omega,t,u),v \rangle_{\mathcal{V}',\mathcal{V}}+\delta b(u,v)\] 
where 
\begin{align}\label{230104_02}
\langle A_{0}(\omega,t,u),v \rangle_{\mathcal{V}',\mathcal{V}}:=\int_D \left( a(\max(\psi(\omega,t) ,u),\nabla u)\cdot\nabla v  - \frac1\varepsilon  \penalisation{u-\psi(\omega,t) }v \right) \,dx  -\langle  f(\omega,t),v \rangle_{V',V}
\end{align}
for all $u,v\in \mathcal{V}$. Concerning the coercivity property, by \eqref{coercif1} we have
\begin{align*}
\int_D a(\max(\psi(\omega,t) ,u),\nabla u)\cdot\nabla u \,dx\geq \int_D \left( \bar{\alpha}|\nabla u|^p-\bar{\gamma}|\max(\psi(\omega,t) ,u)|^q-\bar{h}(x) \right) \,dx.
\end{align*}
Using Young's inequality with $\rho:=\frac{p}{q}>1$, for any constant $C_1>0$ we get
\begin{align*}
-\bar{\gamma}|\max(\psi(\omega,t) ,u)|^q\geq -\frac{1}{C_1^{\rho'/\rho}\rho'}\bar{\gamma}^{\rho'}-\frac{C_1}{\rho}|\max(\psi(\omega,t) ,u)|^p.
\end{align*}
Hence, by Poincar{\'e}'s inequality,
\begin{align}\label{221020_01}
\begin{aligned}
&\int_D a(\max(\psi(\omega,t) ,u),\nabla u)\cdot\nabla u \,dx\\
\geq& \bar{\alpha}\Vert\nabla u\Vert^p_{L^p} -\Vert \bar{h}\Vert_{L^1}-\frac{|D|}{C_1^{\rho'/\rho}\rho'}\bar{\gamma}^{\rho'}-\frac{C_1}{\rho}2^p\Vert\psi(\omega,t)\Vert_{L^p}^p-\frac{C_1}{\rho}2^pC_{\text{Poincar\'e}}\Vert\nabla u\Vert^p_{L^p}\\
&=\left(\bar{\alpha}-\frac{C_1}{\rho}2^pC_{\text{Poincar\'e}}\right)\Vert\nabla u\Vert^p_{L^p}-\Vert\bar{h}\Vert_{L^1}-\frac{|D|}{C_1^{\rho'/\rho}\rho'}\bar{\gamma}^{\rho'}-\frac{C_1}{\rho}2^p\Vert\psi(\omega,t)\Vert_{L^p}^p.
\end{aligned}
\end{align}
Using Young's inequality, for any constant $C_2>0$ we obtain
\begin{align*}
\langle f(\omega,t),u\rangle_{V',V}\leq \Vert f(\omega,t)\Vert_{V'}\Vert u\Vert_V
\leq\frac{1}{C_2p'}\Vert f(\omega,t)\Vert_{V'}^{p'}+\frac{C_2}{p}\Vert\nabla u\Vert_{L^p}^p.
\end{align*}
Together with the inequality
\begin{align}\label{221019_01}
-\penalisation{u-\psi }u \geq \frac1{2} \left(|(u-\psi )^-|^{2}- |\psi |^{2}\right),
\end{align}
we arrive at
\begin{align*}
\langle A_{0}(\omega,t,u),u \rangle_{\mathcal{V}',\mathcal{V}}\geq& \left(\bar{\alpha}-\frac{C_1}{\rho}2^pC_{\text{Poincar\'e}}-\frac{C_2}{p}\right)\Vert\nabla u\Vert_{L^p}^p+\frac{\|(u-\psi)^-\|_{L^{2}(D)}^{2}- \|\psi \|_{L^{2}(D)}^{2}}{2\varepsilon}\\
&-K_1\Vert\psi(\omega,t)\Vert_{L^p}^p-K_2\Vert f(\omega,t)\Vert_{V'}^{p'}-K_3
\end{align*}
where
\begin{align*}
K_1:=\frac{C_1}{\rho}2^p, \quad K_2:=\frac{1}{C_2p'}, \quad K_3:=\Vert h\Vert_{L^1}+\frac{|D|}{C_1^{\rho'/\rho}\rho'}\bar{\gamma}^{\rho'}. 
\end{align*}
In particular, choosing $C_1$, $C_2$ such that
\[K_4:=\left(\bar{\alpha}-\frac{C_1}{\rho}2^pC_{\text{Poincar\'e}}-\frac{C_2}{p}\right)>0,\]
for any $u\in \mathcal{V}$
\begin{align*}
&\langle A_\delta(\omega,t,u), u \rangle_{\mathcal{V}',\mathcal{V}} \\ 
\geq&\delta \|u\|^{\nu}_{W^{m,\nu}_0}  + \delta \|u\|^2_{H^m_0}  + K_4 \|\nabla u\|_{L^p}^p - K_1 \|\psi(\omega,t) \|^p_{L^p} - \frac{1}{2\eps} \|\psi(\omega,t)\|^2_{L^2} - K_2\|f(\omega,t) \|^{p'}_{V'}-K_3\\
\geq& \delta \|u\|^{\nu}_{W^{m,\nu}_0}+\tilde{h}(\omega,t)
\end{align*}
where $\tilde h \in L^1(\Omega_T)$ is given by
\[\tilde{h}(\omega,t)=-K_1 \|\psi(\omega,t) \|^p_{L^p} - \frac{1}{2\eps} \|\psi(\omega,t)\|^2_{L^2} - K_2\|f(\omega,t) \|^{p'}_{V'}-K_3\]
is predictable.
For the boundedness, by \eqref{croissance1}, recalling that $W^{1,p}_0(D)\hookrightarrow L^2(D)$  and using Poincar\'{e}'s inequality, there exist constants $C\geq 0$, $C^{\varepsilon}_1\geq 0$ such that for any $u,v\in V$, 
\begin{align}\label{221220_01}
{\small \begin{aligned}
&|\langle A_{0}(\omega,t,u),v \rangle_{\mathcal{V}',\mathcal{V}}|=\left |\int_D \left(a(\max(\psi(\omega,t),u),\nabla u)\cdot\nabla v  - \frac1\varepsilon  \penalisation{u-\psi(\omega,t)}v \right) \,dx  -\langle  f(\omega,t),v \rangle_{V',V}\right|
\\ \leq &
C_1^{\varepsilon}\left(\|\bar k\|_{L^{p'}\!(D)} + \|u\|_{L^p\!(D)}^{p-1}+\|\psi(\omega,t)\|_{L^p\!(D)}^{p-1}+ \|\nabla u \|_{L^p\!(D)}^{p-1}+\|f(\omega,t)\|_{V'}+\|u\|_{L^{2}\!(D)}+\|\psi(\omega,t) \|_{L^{2}\!(D)}\right)
\|\nabla v\|_{L^p\!(D)}
\\ \leq& C_1^\varepsilon \left((C+1)\Vert u\Vert_{W^{1,p}_0}^{p-1}+C\Vert u\Vert_{W^{1,p}_0}+\bar{h}(\omega,t)\right) \Vert v\Vert_{W^{1,p}_0}. 
\end{aligned}
}\end{align}
Where $\bar{h}:\Omega_T\rightarrow\mathbb{R}$ defined by 
\[\bar{h}(\omega,t)=\Vert \psi(\omega,t)\Vert_{L^p}^{p-1}+\Vert \psi(\omega,t)\Vert_{L^2}+\Vert f(\omega,t)\Vert_{V'}+\|\bar k\|_{L^{p'}(D)}\] 
is an integrable predictable function. Then, using the continuous embeddings $W^{1,p}_0(D)\hookrightarrow L^2(D)$ and $W^{m,\nu}_0(D)\hookrightarrow W^{1,p}_0(D)$ we find a constant $C_2^{\varepsilon}\geq 0$ such that 
\begin{align*}
\langle A_0(\omega,t,u), v \rangle_{\mathcal{V}',\mathcal{V}}\leq C_2^{\varepsilon}\left((\Vert u\Vert_{W^{m,\nu}_0}+1)^{\nu-1}+\Vert u\Vert_{W^{m,\nu}_0}+\bar{h}(\omega,t)\right)\Vert v\Vert_{W^{m,\nu}_0} 
\end{align*}
consequently, 
\begin{align*}
\|A_\delta(\omega,t,u)\|_{\mathcal{V}'} \leq C_2^{\varepsilon}\left((\Vert u\Vert_{W^{m,\nu}_0}+1)^{\nu-1}+\Vert u\Vert_{W^{m,\nu}_0}+\bar{h}(\omega,t)\right)+\delta\left(\Vert u\Vert_{H^m_0}+\Vert u\Vert_{W^{m,\nu}_0}^{\nu-1}\right).
\end{align*}
Now, recalling the embedding $H^m_0(D)\hookrightarrow W^{m,\nu}_0(D)$ and that $\nu>\max\{p,2\}$ it follows that
\begin{align*}
\|A_\delta(\omega,t,u)\|_{\mathcal{V}'} \leq& C_2^{\varepsilon}\left(2(\Vert u\Vert_{W^{m,\nu}_0} +1)^{\nu-1}+\bar{h}(\omega,t)\right)+\delta (C_e+1)\left(\Vert u\Vert_{W^{m,\nu}_0}+1\right)^{\nu-1}\\
\leq& K_5 (\Vert u\Vert_{W^{m,\nu}_0}^{\nu-1}+1)+C_2^{\varepsilon}\bar{h}(\omega,t)
\end{align*}
where $K_5=2^{\nu-1}(2C_2^{\varepsilon}+\delta (C_e+1))$. Now, it follows that 
\begin{align*}
\|A_\delta(\omega,t,u)\|^{\nu'}_{\mathcal{V}'} 
\leq & 2^{\nu'} K_5^{\nu'}\left(\Vert u\Vert_{W^{m,\nu}_0}^{\nu}+1\right)+(C_2^{\varepsilon})^{\nu'}\bar{h}(\omega,t)^{\nu'}
\end{align*}
and, since $\nu'<\min(p',2)$, it follows that $\bar{h}^{\nu'}\in L^1(\Omega_T)$.
In order to apply the well-posedness result of \cite[Section 5.1]{Liu-Rock}, let us verify that $A_\delta$ is a local monotone operator.  Set, $u,v \in \mathcal{V}$, then, thanks to the monotony of the penalization procedure, H$_{2,2}$ and then $H_{2,4}$, there exists $c=c_{\nu}>0$ such that
\begin{align*}
&\langle A_\delta u - A_\delta v,u-v \rangle_{\mathcal{V}',\mathcal{V}} \\ 
\geq& c\delta \|u-v\|^{\nu}_{W^{m,\nu}_0}+\delta\|u-v\|^2_{H^m_0(D)} + \int_D \left(a(\max(\psi,u),\nabla u)-a(\max(\psi,v),\nabla v)\right)\cdot\nabla (u-v) \,dx\\ 
&- \frac1\varepsilon \int_D \left(\penalisation{u-\psi}-\penalisation{v-\psi}\right)(u-v) \,dx\\ 
\geq& c\delta \|u-v\|^{\nu}_{W^{m,\nu}_0}+ \delta \|u-v\|^2_{H^m_0} + \int_D \left(a(\max(\psi,u),\nabla u)-a(\max(\psi,v),\nabla u)\right)\cdot\nabla (u-v)  \,dx\\ 
\geq& c\delta \|u-v\|^{\nu}_{W^{m,\nu}_0}+ \delta \|u-v\|^2_{H^m_0} 
- \int_D \left(C_3^{a}|\nabla u|^{p-1}+l\right)|\max(\psi,u)-\max(\psi,v)| 
|\nabla (u-v)| \,dx.
\end{align*}
Using again that $H^m_0(D)$  has a continuous injection in $V$, $W_0^{1,2p}(D)$ and $L^\infty(D)$, one gets that
\begin{align*}
&\left|\int_D \left(C_3^{a}|\nabla u|^{p-1}+l\right)|\max(\psi,u)-\max(\psi,v)| 
|\nabla (u-v)|  \,dx\right| 
\\ 
\leq& 
\Vert C_3^{a}|\nabla u|^{p-1}+l\|_{L^{p'}}\|u-v\|_{L^{2p}} 
\|\nabla (u-v)\|_{L^{2p}}
\\  
\leq& C(\|\nabla u\|_{L^p}^{p-1}+\|l\|_{L^{p'}})\|u-v\|^{\frac1p}_{L^2} \|u-v\|^{\frac1{p'}}_{L^\infty}
\|\nabla (u-v)\|_{L^{2p}}
\\ 
\leq& 
\frac\delta4 \|u-v\|^{(1+\frac1{p'})\frac{2p}{2p-1}}_{H^m_0}  + C_\delta\left(\|\nabla u\|_{L^p}^{2p(p-1)}+1\right)\|u-v\|^{2}_{L^2}
\\ 
\leq& \frac\delta4 \|u-v\|^2_{H^m_0}  + C_\delta\left(\|\nabla u\|_{L^p}^{2p(p-1)}+1\right)\|u-v\|^{2}_{L^2}.
\end{align*}
In conclusion, 
\begin{align*}
\langle A_\delta u - A_\delta v,u-v \rangle 
\geq C\delta \|u-v\|^{\nu}_{W^{m,\nu}_0}+\frac{\delta}{4} \|u-v\|^2_{H^m_0} 
- C_\delta \left(\pi(u)+1\right)\|u-v\|^{2}_{L^2}
\end{align*}
where $\pi :u\mapsto\pi(u)=\|\nabla u\|_{L^p}^{2p(p-1)}$ is a measurable hemicontinuous function and locally bounded in $\mathcal{V}$.\\
Thus, taking into account H$_{3,1}$ and H$_{3,2}$, we have shown that $A_{\delta}$ satisfies the conditions $(H1)$, $(H2')$, $(H3)$ and $(H4')$ of \cite[Section 5.1]{Liu-Rock} with $\beta=0$ and $\alpha=\nu$. Recalling that $\nu>2p(p-1)$ we get
\begin{align*}
\pi(u)\leq C(\Vert u\Vert_{\mathcal{V}}^{\nu}+1)
\end{align*}
for all $u\in \mathcal{V}$.
Consequently, from \cite[Theorem 5.1.3]{Liu-Rock}, for any fixed $\delta>0$ and for any initial value $u_0$ satisfying H$_1$ there exists a unique predictable solution $u_{\delta}^{\varepsilon} = u_{\delta}$ in $L^p(\Omega\times(0,T);\mathcal{V}) \cap L^2(\Omega;C([0,T],H))$ to
\begin{align}\label{Pblme_delta}
du_\delta + \delta b(u_\delta,\cdot)\,dt - \Div a(\max(\psi,u_\delta),\nabla u_\delta) \,dt -\frac1\varepsilon\penalisation{u_\delta-\psi}\,dt = f \,dt + G(\max(\psi,u_\delta))\,dW.
\end{align}
\subsubsection{Estimates uniformly in $\delta$ for fixed $\varepsilon$} \label{estimates_delta}
Let us fix $\varepsilon >0$. In order to not overload the notation in this sub-subsection, we omit all dependencies on $\varepsilon$ here, especially the dependency of the approximate solutions and the upcoming constants on $\varepsilon$. Applying It\^o's energy formula, for any $t\in [0,T]$ one gets that 
\begin{align*}
&\frac12 \|u_\delta(t)\|^2_{L^2} + \delta \int_0^t (\|u_\delta\|^{\nu}_{W^{m,\nu}_0}+ \|u_\delta\|^2_{H^m_0})\,ds + \int_0^t \int_D  a(\max(\psi,u_\delta),\nabla u_\delta)\cdot\nabla u_\delta \,dx\,ds \\
&-\frac1\varepsilon\int_0^t\int_D\penalisation{u_\delta-\psi}u_\delta \,dx\,ds\\ 
=& \int_0^t \langle f,u_\delta \rangle_{V',V}\,ds + \frac12 \int_0^t \|G(\max(\psi,u_\delta))\|^2_{HS}\,ds + \int_0^t (G(\max(\psi,u_\delta))(\cdot),u_\delta)_H \,dW
+\frac12 \|u_0\|^2_{L^2}.
\end{align*}
Then, using \eqref{coercif1}, \eqref{221019_01}, Young's and Poincar\'{e}'s inequalities and similar arguments as in \eqref{221020_01} we get for constants $C_1$, $C_{\bar{\alpha}}>0$
\begin{align*}
&\frac12 \|u_\delta(t)\|^2_{L^2} + \delta \int_0^t (\|u_\delta\|^{\nu}_{W^{m,\nu}_0}+ \|u_\delta\|^2_{H^m_0}) \,ds + \bar\alpha \int_0^t\|\nabla u_\delta \|_{L^p}^p \,ds + \frac{1}{2\varepsilon}\int_0^t\|(u_\delta-\psi)^-\|^{2}_{L^{2}}\,ds
\\ 
\leq& \int_0^t \frac{C_1}{\kappa}2^p C_{\text{Poincar\'e}}\Vert\nabla u_{\delta}\Vert_{L^p}^p\,ds+C_{\overline \alpha}\int_0^t   \| f\|^{p'}_{V'} \,ds  + \frac{\bar \alpha}{2} \int_0^t \|u_\delta\|^p_{V} \,ds \\ 
&+ \frac12 \int_0^t \|G(\max(\psi,u_\delta))\|^2_{HS}\,ds + \int_0^t (G(\max(\psi,u_\delta))(\cdot),u_\delta)_H \,dW+\frac12 \|u_0\|^2_{L^2}+C(1+\|\psi\|_{L^p(Q_T)}^p),
\end{align*}
with $\kappa=\frac{p}{q}>1$ and a constant $C\geq 0$ that may depend on $\varepsilon$. Using H$_{3,1}$ and H$_{3,2}$ and choosing $C_1>0$ such that $K_5:=\frac{\bar{\alpha}}{2}-\frac{C_1}{\kappa}2^p C_{\text{Poincar\'e}}>0$
\begin{align*}
&\frac12 \|u_\delta(t)\|^2_{L^2} + \delta \int_0^t (\|u_\delta\|^{\nu}_{W^{m,\nu}_0}+ \|u_\delta\|^2_{H^m_0}) \,ds + K_5 \int_0^t\|\nabla u_\delta \|_{L^p}^p \,ds\\ 
\leq& C_{\bar{\alpha}}\| f\|^{p'}_{L^{p'}(0,T,V')}+\int_0^t\Vert G(0)\Vert_{HS}^2+2C_{\sigma}\Vert \psi\Vert_{L^2}^2\,ds+K_6\int_0^t  \Vert u_{\delta}\Vert_{L^2}^2\,ds\\
&+\int_0^t (G(\max(\psi,u_\delta))(\cdot),u_\delta)_H \,dW +\frac12 \|u_0\|^2_{L^2}+C(1+\|\psi\|_{L^p(Q_T)}^p).
\end{align*}
Thus, by Remark \ref{conti-obst} and assumptions on $G$, $\sigma$, and H$_1$ on the initial value there exists $\Theta \in L^1(\Omega)$ that may depend on $\eps>0$ such that 
\begin{align}\label{DefTheta}
\Theta = C\left(1+\| f\|^{p'}_{L^{p'}(0,T,V')}+\Vert G(0)\Vert_{L^2(0,T,HS)}^2+\Vert \psi\Vert_{L^2(Q_T)}^2+\|\psi\|_{L^p(Q_T)}^p+\|u_0\|^2_{L^2}\right)
\end{align}
and with this notation
\begin{align}\label{221020_03}
\begin{aligned}
&\frac12 \|u_\delta(t)\|^2_{L^2} + \delta \int_0^t (\|u_\delta\|^{\nu}_{W^{m,\nu}_0}+ \|u_\delta\|^2_{H^m_0})\,ds + K_5 \int_0^t\|\nabla u_\delta \|_{L^p}^p\,ds
\\ 
\leq& K_6\int_0^t \|u_\delta(t)\|^2_{L^2}\,ds +   \int_0^t (G(\max(\psi,u_\delta))(\cdot),u_\delta)_H \,dW+\Theta
\end{aligned}
\end{align}
holds for all $t\in [0,T]$. Taking the supremum over $t\in [0,T]$, then the expectation in \eqref{221020_03} and discarding non-negative terms, it follows that
\begin{align}\label{221020_04}
\begin{aligned}
&\frac{1}{2}\erww{\sup_{t\in [0,T]}\Vert u_{\delta}(t)\Vert_{L^2}^2}\\
\leq& K_6\erww{\int_0^T\sup_{\tau\in [0,s]}\Vert u_{\delta}(\tau)\Vert_{L^2}^2\,ds}+\erww{\sup_{t\in [0,T]}\left|\int_0^t (G(\max(\psi,u_\delta))(\cdot),u_\delta)_H \,dW\right|}+\erww{\Theta}.
\end{aligned}
\end{align}
Using Burkholder-Davis-Gundy, Cauchy-Schwarz and Young inequalities, for 
any $\eta>0$ we get
\begin{align}\label{221020_02}
\begin{aligned}
& \erww{\sup_{t\in [0,T]} \left|\int_0^t (G(\max(\psi,u_\delta))(\cdot),u_\delta)_H \,dW \right|} 
\\ \leq& 
C_B \erww{\left(\int_0^T \|G(\max(\psi,u_\delta))\|_{HS}^2 \,\|u_\delta\|_{L^2}^2 \,ds\right)^{1/2}}
\\ \leq& 
C_B \erww{\sup_{t\in [0,T]}\|u_\delta(t)\|_{L^2}\left(\int_0^T \|G(\max(\psi,u_\delta))\|_{HS}^2\,ds\right)^{1/2}}
\\ \leq& 
\frac{C_B\eta}{2}\erww{\sup_{t\in [0,T]}\|u_\delta(t)\|^{2}_{L^2}} 
+ 
\frac{C_B}{2\eta} \erww{\int_0^T \|G(\max(\psi,u_\delta))\|_{HS}^{2}  \,ds}
\\ \leq& 
\frac{C_B\eta}{2}\erww{\sup_{t\in [0,T]}\|u_\delta(t)\|^{2}_{L^2}}
+ 
C_\eta^1\erww{\int_0^T \!\!\!\|g\|_{HS}^{2}\,ds}
+
C_\eta^1 \erww{\int_0^T\!\!\! \|\sigma(\cdot,\max(\psi,u_\delta))\|_{HS}^{2} \,ds\!}
\end{aligned}
\end{align}
for a constant $C_\eta^1 >0$ depending on $\eta>0$. Using H$_{3,2}$ we get from \eqref{221020_02}
\begin{align}\label{221020_05}
\begin{aligned}
& \erww{\sup_{t\in [0,T]} \left|\int_0^t (G(\max(\psi,u_\delta))(\cdot),u_\delta)_H \,dW \right| }\\
\leq& \frac{C_B\eta}{2}\erww{\sup_{t\in [0,T]}\|u_\delta(t)\|^{2}_{L^2}}
+ 
C_\eta^1\erww{\int_0^T \|g\|_{HS}^{2}\,ds}\\
&+
C_\eta^1C_{\sigma}2^{2}\erww{\int_0^T  \|\psi\|_{L^2}^{2}\,ds}
+ 
C_\eta^1 C_{\sigma}2^{2}\erww{\int_0^T \|u_\delta\|_{L^2}^{2} \,ds}\\ 
\leq& 
\frac{C_B\eta}{2}\erww{\sup_{t\in [0,T]}\|u_\delta(t)\|^{2}_{L^2}}
+ 
C_\eta^1\erww{\int_0^T \|g\|_{HS}^{2}\,ds}\\
&+
C_\eta^2\erww{\int_0^T  \|\psi\|_{L^2}^{2}\,ds}
+ 
C_\eta^2\erww{\int_0^T \sup_{\tau\in [0,s]}\|u_\delta(\tau)\|_{L^2}^{2} \,ds}
\end{aligned}
\end{align}
where $C_\eta^2>0$ is a constant depending on $\eta>0$.
Combining \eqref{221020_04} and \eqref{221020_05},
we arrive at 
{\small \begin{align*}
&\frac{1-C_B\eta}{2}\erww{\sup_{t\in [0,T]}\Vert u_{\delta}(t)\Vert_{L^2}^2}
\leq K_7\erww{\int_0^T \sup_{\tau\in [0,s]}\|u_\delta(\tau)\|_{L^2}^{2} \,ds}+K_8\left(\Vert g\Vert_{L^2(\Omega_T;HS(H))}^2+\Vert \psi\Vert_{L^2(\Omega_T;L^2(D))}^2\right)+K_9
\end{align*}
}where $K_7$, $K_8$, $K_9>0$ only depend on $\eta>0$ and $\eps>0$. Choosing $\eta>0$ such that $1-C_B\eta>0$, since the above inequality holds also replacing $T$ by any $t\in [0,T]$, from Gronwall's lemma it follows that there exists $K_{10}\geq 0$ not depending on $\delta>0$ such that
\begin{align}\label{221020_06}
\erww{\sup_{t\in [0,T]}\Vert u_{\delta}(t)\Vert_{L^2}^2}\leq K_{10}.
\end{align}
Taking expectation in \eqref{221020_03}, from \eqref{221020_06} it follows that there exists a constant $K_{11}\geq 0$, not depending on $\delta>0$ such that
\begin{align}\label{Estim1}
\erww{\sup_{t\in [0,T]}\|u_\delta(t)\|^2_{L^2}} + \delta \erww{\int_0^T (\|u_\delta\|^{\nu}_{W^{m,\nu}_0}+ \|u_\delta\|^2_{H^m_0})\,dt} + \erww{\int_0^T\|\nabla u_\delta \|_{L^p}^p \,dt}
\leq  K_{11}.
\end{align}
Note that $u_\delta$ is the solution in $\mathcal{V}'$ to the problem 
\begin{align*}
\partial_t \left(u_\delta - \int_0^{\cdot} G(\max(\psi,u_\delta))\,dW\right) + A_\delta(u_\delta)= 0.
\end{align*}
We recall that $A_{\delta}=A_0+\delta b$ where $b$ is defined in \eqref{230104_01} and $A_0$ is defined in \eqref{230104_02}. Keeping the $L^2$-Norm in the second line of  \eqref{221220_01} it follows that
\begin{align}\label{221222_04}
\begin{aligned}
&\erww{\int_0^T\left\Vert\partial_t \left(u_\delta - \int_0^{\cdot} G(\cdot,\max(\psi,u_\delta))\,dW\right)\right\Vert_{\mathcal{V}'}^{\nu'}\,dt}=\erww{\int_0^T \|A_\delta(u_{\delta})\|^{\nu'}_{\mathcal{V}'}\,dt}\\
\leq& 2^{\nu'-1} C_1^\varepsilon \erww{\int_0^T\left((C+1)\Vert u_{\delta}\Vert_{W^{1,p}_0}^{p-1}+C\Vert u_{\delta}\Vert_{L^2}+\bar{h}(\omega,t)\right)^{\nu'}\,dt}\\
&+2^{\nu'-1}\delta^{\nu'}\erww{\int_0^T \left(\Vert u_{\delta}\Vert_{H_0^m}+\Vert u_{\delta}\Vert^{\nu-1}_{W^{m,\nu}_0}\right)^{\nu'}\,dt}.
\end{aligned}
\end{align}
Since $1<\nu'<2$, by \eqref{Estim1}, for all $0<\delta\leq 1$ we have
\begin{align}\label{221222_05}
\begin{aligned}
&\delta^{\nu'}\erww{\int_0^T \left(\Vert u_{\delta}\Vert_{H_0^m}+\Vert u_{\delta}\Vert^{\nu-1}_{W^{m,\nu}_0}\right)^{\nu'}\,dt}
\leq
\delta 2^{\nu'-1}\erww{\int_0^T \left(\Vert u_{\delta}\Vert_{H_0^m}+1\right)^{2}+\Vert u_{\delta}\Vert^{\nu}_{W^{m,\nu}_0}\,dt}\\
\leq& 2^{\nu'}\delta\erww{\int_0^T (\Vert u_{\delta}\Vert^2_{H_0^m}+\Vert u_{\delta}\Vert_{W^{m,\nu}_0}^{\nu})\,dt}+2^{\nu'}T
\leq
2^{\nu'}(K_{11}+T).
\end{aligned}
\end{align}
Since we also have $\nu'<p'$, thanks to \eqref{Estim1} it follows that
\begin{align}\label{221222_06}
\begin{aligned}
&\erww{\int_0^T\left((C+1)\Vert u_{\delta}\Vert_{W^{1,p}_0}^{p-1}+C\Vert u_{\delta}\Vert_{L^2}+\bar{h}(\omega,t)\right)^{v'}\,dt}\\
\leq& (C+1)^{\nu'}\erww{\int_0^T\left(\Vert u_{\delta}\Vert_{W^{1,p}_0}^{p-1}+\Vert u_{\delta}\Vert_{L^2}+\bar{h}(\omega,t)\right)^{\nu'}\,dt}\\
\leq& (C+1)^{\nu'}2^{\nu'-1}\erww{\int_0^T\left(\Vert u_{\delta}\Vert_{W^{1,p}_0}^{p-1}+\bar{h}(\omega,t)+1\right)^{\nu'}+\left(\Vert u_{\delta}\Vert_{L^2}+1\right)^{\nu'}\,dt}\\
\leq& (C+1)^{\nu'}2^{\nu'-1}\erww{\int_0^T\left(\Vert u_{\delta}\Vert_{W^{1,p}_0}^{p-1}+\bar{h}(\omega,t)+1\right)^{p'}+\left(\Vert u_{\delta}\Vert_{L^2}+1\right)^{2}\,dt}\\
\leq& K_{13}\erww{\int_0^T \Vert u_{\delta}\Vert^p_{W^{1,p}_0}+\Vert u_{\delta}\Vert^2_{L^2}\, dt}+K_{14}\leq K_{13}K_{11}(1+2T)+K_{14}
\end{aligned}
\end{align}
where $K_{13}:=(C+1)^{\nu'}2^{\nu'}(2^{p'-1}+1)$, $K_{14}:=(C+1)^{\nu'}2^{\nu'-1}(2^{p'-1}+2)(\Vert\bar{h}\Vert^{p'}_{L^{p'}(\Omega_T)}+T)$. Thus, for any $0<\delta\leq 1$, from \eqref{221222_04}, \eqref{221222_05} and \eqref{221222_06} it follows that
\[\erww{\int_0^T\left\Vert\partial_t \left(u_\delta - \int_0^{\cdot} G(\cdot,\max(\psi,u_\delta))\,dW\right)\right\Vert_{\mathcal{V}'}^{\nu'}\,dt}\]
is uniformly bounded by a constant not depending on $\delta>0$.

We recall that $G(\omega,t,u)=G(\omega,t,0) + \sigma(\omega,t,u)$ with the information that $\sigma$ is vanishing at $u=0$ and Lipschitz, uniformly in $(\omega,t)$. Then, for $r=\max(p,2)$ and any $\alpha<1/2$, a constant $C(r,\alpha)\geq 0$ and a constant $K_{15}\geq 0$ not depending on $\delta>0$ exist such that, 
\begin{align}\label{EstimWfrac}
\erww{\left\Vert \int_0^{\cdot} \sigma(\max(\psi,u_\delta))\,dW\right\Vert^r_{W^{\alpha,r}(0,T;H)} }\leq C(r,\alpha) \erww{\int_0^T \|\sigma(\max(\psi,u_\delta))(t)\|^r_{HS}\,dt}\leq K_{15}.
\end{align}
Indeed, this is a consequence of \cite[Lemma 2.1]{Flan-Gata} by noticing that $\max(\psi,u_\delta)$ is bounded in $L^r(\Omega_T;H)$ by
\eqref{Estim1}. The following lemma is proposed to gather the previous estimates. 
\begin{lemma}\label{lem_estimations}
There exists a constant $K_{16}\geq 0$, independent of $\delta>0$ such that:
\begin{enumerate}
\item $(u_\delta)_{\delta}$ is bounded by $K_{16}\geq 0$ in $L^p(\Omega_T;V)$, in particular in $L^p(\Omega_T;L^p(D))$, 
\item $(u_\delta)_{\delta}$ is bounded by $K_{16}\geq 0$ in $L^2(\Omega;C([0,T];L^2(D)))$,
\item $(\sqrt[\nu]{\delta} u_\delta)_{\delta}$ is bounded by $K_{16}\geq 0$ in $L^\nu(\Omega_T;W_0^{m,\nu}(D))$ and 
$(\sqrt{\delta} u_\delta)_{\delta}$ is bounded by $K_{16}\geq 0$ in $L^2(\Omega_T;H_0^{m}(D))$, 
\item $\left(u_\delta - \int_0^{\cdot} G(\max(\psi,u_\delta))\,dW\right)_{\delta}$ is bounded by $K_{16}\geq 0$ in $L^2(\Omega_T;L^2(D))$\\ and $\partial_t \left(u_\delta - \int_0^{\cdot} G(\max(\psi,u_\delta))\,dW\right)_{\delta}$ is bounded by $K_{16}\geq 0$ in $L^{\nu'}(\Omega_T;\mathcal{V}')$.
\end{enumerate}
\end{lemma}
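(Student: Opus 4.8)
The lemma is essentially a bookkeeping statement: all four assertions are already contained in the estimates derived earlier in this subsection, so the plan is to read them off one by one, the only real task being to check that every constant involved is independent of $\delta$. Set $K_{16}$ at the end to be the maximum of the finitely many constants produced below.

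First I would dispatch (1) and (3) directly from \eqref{Estim1}, which for every $0<\delta\le 1$ gives
\[
\erww{\int_0^T\|\nabla u_\delta\|^p_{L^p}\,dt}\le K_{11},\qquad \delta\,\erww{\int_0^T\|u_\delta\|^\nu_{W^{m,\nu}_0}+\|u_\delta\|^2_{H^m_0}\,dt}\le K_{11}.
\]
Taking $\nu$-th, resp.\ square, roots of the $\delta$-weighted quantities yields (3); for (1) I would replace $\|\nabla u_\delta\|_{L^p}$ by the full norm of $V=W^{1,p}_0(D)$ via Poincar\'e's inequality and then invoke the continuous embedding $V\hookrightarrow L^p(D)$. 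For (2) I would simply quote \eqref{221020_06}: since by \cite[Theorem 5.1.3]{Liu-Rock} the solution $u_\delta$ of \eqref{Pblme_delta} has, $\mathds{P}$-a.s., continuous paths in $L^2(D)$, the bound $\erww{\sup_{t\in[0,T]}\|u_\delta(t)\|^2_{L^2}}\le K_{10}$ is precisely the statement that $(u_\delta)_\delta$ is bounded in $L^2(\Omega;C([0,T];L^2(D)))$.

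For (4), the time-derivative bound is exactly the content of \eqref{221222_04}--\eqref{221222_06}: writing $\partial_t(u_\delta-\int_0^\cdot G(\max(\psi,u_\delta))\,dW)=-A_\delta(u_\delta)=-A_0(u_\delta)-\delta b(u_\delta,\cdot)$, the growth estimate \eqref{221220_01} on $A_0$ together with \eqref{Estim1} controls the $A_0$-part uniformly in $\delta$, while the $\delta$-weighted term is absorbed into the $\delta$-weighted part of \eqref{Estim1} thanks to the elementary fact $\delta^{\nu'}\le\delta$ for $0<\delta\le 1$. For the $L^2(\Omega_T;L^2(D))$-bound on $u_\delta-\int_0^\cdot G(\max(\psi,u_\delta))\,dW$ I would use the triangle inequality together with (2), reducing the task to bounding the stochastic integral; then It\^o's isometry (or \eqref{EstimWfrac}), the decomposition $G(\max(\psi,u_\delta))=g+\sigma(\max(\psi,u_\delta))$, hypotheses H$_{3,1}$ and H$_{3,2}$, Remark \ref{conti-obst} and once more \eqref{Estim1} give $\erww{\int_0^T\|\int_0^t G(\max(\psi,u_\delta))\,dW\|^2_{L^2}\,dt}\le C$ with $C$ independent of $\delta$. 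There is no genuine obstacle in this proof; the only point requiring attention is the $\delta$-uniformity, in particular the role of the restriction $0<\delta\le 1$ used in \eqref{221222_05}, which is exactly what turns the estimate in (4) into a $\delta$-uniform bound in $L^{\nu'}(\Omega_T;\mathcal{V}')$.
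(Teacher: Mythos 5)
Your proposal is correct and follows exactly the approach the paper intends: the paper introduces this lemma with the phrase ``the following lemma is proposed to gather the previous estimates'' and gives no further proof, so the content of the lemma is precisely the bookkeeping you carry out — reading (1) and (3) off \eqref{Estim1}, (2) off \eqref{221020_06} together with path-continuity from \cite[Theorem 5.1.3]{Liu-Rock}, and (4) off \eqref{221222_04}--\eqref{221222_06} for the time derivative and It\^o's isometry plus H$_3$ for the $L^2$ bound, with the $\delta$-uniformity resting on $\delta^{\nu'}\le\delta$ for $0<\delta\le 1$ exactly as you flag.
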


\subsubsection{Tightness}\label{Tightness}
We fix an arbitrary $\eta>0$. Let us define the space
\begin{align}\label{Espace_u}
\mathbb{W}=\left\{v \in L^2(0,T;L^2(D)),\ \partial_tv \in L^{\nu'}(0,T;\mathcal{V}')\right\}.
\end{align}
For $R>0$, from Lemma \ref{lem_estimations}, $(4)$, it follows that $v_\delta:= u_\delta - \int_0^{\cdot} G(\max(\psi,u_\delta))\,dW$ is bounded in $L^{\nu'}(\Omega;\mathbb{W})$ by a constant $K_{17}\geq 0$ not depending on $\delta>0$ and therefore
\begin{align}\label{221027_01}
\begin{aligned}
&\mathds{P}\left[\left\Vert v_\delta\right\Vert_{\mathbb{W}}>R\right]
\leq
\frac{1}{R^{\nu'}}\int\limits_{\{\|v_\delta\|_{\mathbb{W}}>R\}}\left\Vert v_\delta\right\Vert^{\nu'}_{\mathbb{W}}\,d\mathds{P}
\leq \frac{K_{17}^{\nu'}}{R^{\nu'}}.
\end{aligned}
\end{align}

Concerning the martingale term, 
\[\int_0^{\cdot} G(\max(\psi,u_\delta))- G(0)\,dW=\int_0^{\cdot} \sigma(\max(\psi,u_\delta))\,dW\]
and, by \eqref{EstimWfrac}, the term on the right-hand side of the above equation is bounded independently of $\delta>0$ in $L^r(\Omega; W^{\alpha,r}(0,T;L^2(D)))$ for $\alpha<1/2$ and $r=\max(2,p)$. 
\\
Thus, with similar arguments as in \eqref{221027_01}
\begin{align}\label{220127_02}
&\mathds{P}\left[\left\Vert \int_0^{\cdot} \sigma(\max(\psi,u_\delta))\,dW\right\Vert_{W^{\alpha,r}(0,T;L^2(D))}>R\right] 
\leq  \frac{K_{15}}{R^{r}}.
\end{align}
Since $\int_0^{\cdot} G(0)\,dW$ is a fixed element of $W^{\alpha,2}(0,T;L^2(D))$, as 
\begin{align*}
u_\delta = u_\delta - \int_0^{\cdot} G(\max(\psi,u_\delta))\,dW + \int_0^{\cdot} G(0)\,dW+\int_0^{\cdot} \sigma(\max(\psi,u_\delta))\,dW,
\end{align*}
and since $\mathbb{W}$ is continuously embedded into $W^{\alpha,\nu'}(0,T;\mathcal{V}')$, with similar arguments as in \eqref{221027_01} one gets that
\begin{align*}
&\mathds{P}\left[\|u_\delta\|_{W^{\alpha,\nu'}(0,T;\mathcal{V}')}>R_1\right] 
\leq \frac{\eta}{2} 
\end{align*}
for $R_1>0$ well-chosen. Similarly, from Lemma \ref{lem_estimations}, $(1)$ and $(2)$ it follows that
\begin{align*}
\mathds{P}\left[\|u_\delta\|_{L^{p}(0,T;V)}>R_2\right] +\mathds{P}\left[\|u_\delta\|_{C([0,T];L^2(D))}>R_2\right] 
\leq & \frac{K_{16}}{R_2^{\min(p,2)}} \leq \frac{\eta}{2} 
\end{align*}
for $R_2>0$ well-chosen. Denote by 
\begin{align*}
\mathcal{K}_0=\overline{B}_{W^{\alpha,\nu'}(0,T;\mathcal{V}')}(0,R_1) \cap \overline{B}_{L^{p}(0,T;V)}(0,R_2) \cap \overline{B}_{C([0,T];L^2(D))}(0,R_2)
\end{align*}
and note that, by \cite[Cor. 7]{Simon}, $\mathcal{K}_0$ is relatively compact in $L^s(0,T;L^2(D))$ for any finite $s>1$, in particular for $s=4$. Moreover,
\begin{align}\label{221027_03}
\mathds{P}\left[u_\delta \in \mathcal{K}_0\right]=1-\mathds{P}\left[ u_\delta \not\in \mathcal{K}_0\right] \geq 1 - \eta.
\end{align}
Now, we define
\[\widetilde{\mathcal{K}}_0 = \overline{B}_{\mathbb{W}}(0,R_3)+\overline{B}_{W^{\alpha,r}(0,T;L^2(D))}(0,R_3) + \widetilde{\mathcal{K}}_0^G\] 
where, Prokhorov's theorem yields the existence of a compact subset $\widetilde{\mathcal{K}}_0^G$ of $C([0,T];\mathcal{V}')$ such that 
\[\mathds{P}\left[\int_0^{\cdot} G(0)\,dW\in \widetilde{\mathcal{K}}_0^G\right]\geq 1-\eta.\]
Since $\psi$, $G(0)$ and $\sigma$ are random variables with values in the separable Banach spaces $C([0,T];L^2(D))$, $L^2(0,T;HS(H))$ and $L^4(0,T;\Sigma)$ respectively, Prokhorov's theorem yields the existence of three compact sets $\mathcal{K}_i$ ($i=1,2,3$) in each one of those three spaces such that 
\begin{align*}
\mathds{P}\left[\psi\in \mathcal{K}_1\right] \geq 1-\eta,\quad 
\mathds{P}\left[G(0)\in \mathcal{K}_2 \right] \geq 1-\eta, \quad 
\mathds{P}\left[\sigma \in \mathcal{K}_3 \right]\geq 1-\eta.
\end{align*}
Thanks to the assumption on $C_{\sigma}$ in H$_{3,2}$, it follows that $\sigma \in L^{\infty}(\Omega_T, \operatorname{Lip}_0(H; HS(H)))$, thus also in $\sigma \in L^4(0,T;\Sigma)$ a.s. in $\Omega$.
Now we consider the mapping
\begin{align*}
&\Theta : L^4(0,T;L^2(D))\times C([0,T];L^2(D)) \times L^2(0,T;HS(H)) \times L^4(0,T;\Sigma) \to L^2(0,T;HS(H))\\ 
&(u,\psi,g,\sigma)\mapsto \Theta(u,\psi,g,\sigma)=g+\sigma(\max(u,\psi)).
\end{align*}

Then, for any $(u_1,\psi_1,g_1,\sigma_1)$, $(u_2,\psi_2,g_2,\sigma_2)$ in $L^4(0,T;L^2(D))\times C([0,T];L^2(D)) \times L^2(0,T;HS(H)) \times L^4(0,T;\Sigma)$, 
\begin{align*}
&\left\Vert\Theta(u_1,\psi_1,g_1,\sigma_1)-\Theta(u_2,\psi_2,g_2,\sigma_2)\right\Vert_{HS}
=
\|g_1-g_2 +\sigma_1(\max(u_1,\psi_1))-\sigma_2(\max(u_2,\psi_2))\|_{HS}
\\ \leq &
\|g_1-g_2\|_{HS} +\|\sigma_1(\max(u_1,\psi_1))-\sigma_2(\max(u_1,\psi_1))\|_{HS} + \|\sigma_2(\max(u_1,\psi_1))-\sigma_2(\max(u_2,\psi_2))\|_{HS}
\\ \leq &
\|g_1-g_2\|_{HS} +\|\sigma_1-\sigma_2\|_{Lip_0}\|\max(u_1,\psi_1))\|_{L^2} + \|\sigma_2\|_{Lip_0}\|\max(u_1,\psi_1)-\max(u_2,\psi_2)\|_{L^2}
\end{align*}
and therefore 
\begin{align*}
&\int_0^T\left\Vert\Theta(u_1,\psi_1,g_1,\sigma_1)-\Theta(u_2,\psi_2,g_2,\sigma_2)\right\Vert^2_{HS}\,dt\\
\leq&
C\left(\|g_1-g_2\|^2_{L^2(0,T;HS)} +\|\sigma_1-\sigma_2\|^2_{L^4(0,T;\Sigma)}\|\max(u_1,\psi_1))\|^2_{L^4(0,T;L^2)}\right) 
\\
&+C\|\sigma_2\|^2_{L^4(0,T;\Sigma)}\|\max(u_1,\psi_1))-\max(u_2,\psi_2)\|^2_{L^4(0,T;L^2)}.
\end{align*}
Note that since $\Theta(0)=0$, the above inequality justifies that $\Theta$ has values in $L^2(0,T;HS(H))$ and the fact that it is a continuous mapping. Therefore, 
\[\mathcal{K}=\Theta(\mathcal{K}_0\times \mathcal{K}_1\times \mathcal{K}_2\times \mathcal{K}_3)\] 
is a compact subset of $L^2(0,T;HS(H))$ and since $G(\cdot,\max(\psi,u_\delta))=\Theta(u_\delta, \psi,G(0),\sigma)$, $\mathds{P}$-a.s. in $\Omega$, 
\begin{align}\label{221027_05}
\mathds{P}\left[G(\cdot,\max(\psi,u_\delta)) \in \mathcal{K}\right] \geq \mathds{P}\left[(u_\delta, \psi,G(0),\sigma) \in\mathcal{K}_0\times \mathcal{K}_1\times \mathcal{K}_2\times \mathcal{K}_3 \right] \geq 1 - 4\eta.
\end{align}
Let
\[\mu_\delta=\mathcal{L}(u_\delta, G(\cdot,\max(\psi,u_\delta)), \psi,G(0),\sigma,W,f,u_0)\] be the law of the random variable $(u_\delta, G(\cdot,\max(\psi,u_\delta)), \psi, G(0), \sigma, W,f,u_0)$ on the space
\begin{align*}
\mathcal{X}=&L^4(0,T;L^2(D))\times L^2(0,T;HS(H)) \times \left(L^p(0,T;V)\cap C([0,T];L^2(D)) \right)
\\ &\quad 
\times L^2(0,T;HS(H)) \times  L^4(0,T;\Sigma) \times C([0,T];U) \times L^{p'}(0,T;V')\times L^2(D).
\end{align*}
Thanks to \eqref{221027_03} and \eqref{221027_05}, which hold true for any fixed $\eta>0$ independently of $\delta>0$, it follows that $(\mu_\delta)_{\delta}$ is tight on $\mathcal{X}$. Hence, passing to a not relabeled subsequence if necessary, by Prokhorov's theorem if follows that, for $\delta\rightarrow 0^+$, $\mu_{\delta}$ converges to a probability measure $\mu_{\infty}$ with respect to the narrow topology of $\mathcal{X}$. Then,\cite[Thm. 6.7]{Bil99} yields the existence of 
a probability space $(\overline{\Omega}, \overline{\mathcal{F}},  \overline{\mathds{P}})$ which may be chosen as $([0,1],\mathcal{B}([0,1]),\operatorname{Leb})$ with $\operatorname{Leb}$ being the Lebesgue measure on $[0,1]$ and a family of random vectors 
\[(\overline u_{\delta}, \overline{G}_{\delta}, \overline\psi_{\delta},\overline G_0^{\delta},\overline\sigma_{\delta},\overline W_{\delta},\overline f_{\delta},\overline{u_0}^{\delta})\] 
on $(\overline \Omega, \overline{\mathcal{F}}, \overline{\mathds{P}})$ with values in $\mathcal{X}$ having the same law as $(u_\delta, G(\cdot,\max(\psi,u_\delta)),  \psi,G(0),\sigma,W,f,u_0)$. Moreover, there exist
\begin{itemize}
\item[$i)$] a random variable $\overline u_\infty$ with values in $L^4(0,T;L^2(D))$ such that $(\overline u_\delta)_{\delta}$ converges $\overline{\mathds{P}}$-a.s. to $\overline u_\infty$ in $L^4(0,T;L^2(D))$ for $\delta\rightarrow 0^+$.
\item[$ii.)$] a random variable $\overline{G}_\infty$ with values in $L^2(0,T;HS(H))$ such that $(\overline G_\delta)_{\delta}$ converges $\overline{\mathds{P}}$-a.s. to $\overline{G}_\infty$ in $L^2(0,T;HS(H))$ for $\delta\rightarrow 0^+$.
\item[$iii.)$] a random variable $\overline{\psi}_{\infty}$ such that $\mathcal{L}(\overline{\psi}_{\infty})=\mathcal{L}(\psi)$ and $(\overline{\psi}_{\delta})_{\delta}$ converges $\overline{\mathds{P}}$-a.s. to $\overline{\psi}_{\infty}$ in $L^p(0,T;V)\cap C([0,T];L^2(D))$ for $\delta\rightarrow 0^+$.
\item[$iv.)$] a random variable $\overline{G}_0^{\infty}$ such that $\mathcal{L}(\overline{G}_0^{\infty})=\mathcal{L}(G(0))$ and $(\overline{G}_0^{\delta})_{\delta}$ converges $\overline{\mathds{P}}$-a.s. to $\overline{G}_0^{\infty}$ in $L^2(0,T;HS(H))$ for $\delta\rightarrow 0^+$.
\item[$v.)$] a random variable $\overline{\sigma}_{\infty}$ such that $\mathcal{L}(\overline{\sigma}_{\infty})=\mathcal{L}(\sigma)$ and $(\overline{\sigma}_{\delta})_{\delta}$ converges $\overline{\mathds{P}}$-a.s. to $\overline{\sigma}_{\infty}$ in $L^4(0,T;\Sigma)$ for $\delta\rightarrow 0^+$.
\item[$vi.)$] a random variable $\overline{W}_{\infty}$ such that $\mathcal{L}(\overline{W}_{\infty})=\mathcal{L}(W)$ and $(\overline{W}_{\delta})_{\delta}$ converges $\overline{\mathds{P}}$-a.s. to $\overline{W}_{\infty}$ in $C([0,T];U)$ for $\delta\rightarrow 0^+$.
 \item[$vii.)$] a random variable $\overline{f}_{\infty}$ such that $\mathcal{L}(\overline{f}_{\infty})=\mathcal{L}(f)$ and $(\overline{f}_{\delta})_{\delta}$ converges $\overline{\mathds{P}}$-a.s. to $\overline{f}_{\infty}$ in $L^{p'}(0,T;V')$ for $\delta\rightarrow 0^+$.
  \item[$viii.)$] a random variable $\overline{u}_0^{\infty}$ such that $\mathcal{L}(\overline{u}_0^{\infty})=\mathcal{L}(u_0)$ and $(\overline{u}_0^{\delta})_{\delta}$ converges $\overline{\mathds{P}}$-a.s. to $\overline{u}_0^{\infty}$ in $L^2(D)$ for $\delta\rightarrow 0^+$.
\end{itemize}
For the sake of clarity, the expectation with respect to $(\overline \Omega, \overline{\mathcal{F}}, \overline{\mathbb{P}})$ will be denoted by $\overline{\mathds{E}}$. 
Moreover, we will consider $0<\delta\leq 1$. In order to shorten the notation, we will often use the index $\delta=\infty$ to include the random variables $\overline u_\infty$, $\overline{G}_\infty$, $\overline{\psi}_{\infty}$, $\overline{G}_0^{\infty}$, $\overline{\sigma}_{\infty}$, $\overline{W}_{\infty}$, $\overline{f}_{\infty}$, $\overline{u}_0^{\infty}$ into our considerations. By equality of laws, the random vector $(\overline u_{\delta}, \overline{G}_{\delta}, \overline\psi_{\delta},\overline G_0^{\delta},\overline\sigma_{\delta},\overline W_{\delta},\overline f_{\delta},\overline{u_0}^{\delta})$ has the same regularity properties as $(u_\delta, G(\cdot,\max(\psi,u_\delta)), \psi,G(0),\sigma,W,f,u_0)$ for all $0<\delta\leq 1$. In particular, $\overline{u}_{\delta}\in L^2(\overline{\Omega};C([0,T];L^2(D)))$ for all $0<\delta\leq 1$. Since H$_3$ implies $\sigma\in L^2(\Omega_T;Lip_0(H;HS(H)))$, from equality of laws it also follows that $\overline{\sigma}_{\infty}, \overline{\sigma}_{\delta}\in L^2(\overline{\Omega}\times (0,T);Lip_0(H;HS(H))$. Since, for all $0<\delta\leq 1$,
\[G(\cdot,\max(\psi,u_\delta))=G(0)+\sigma(\max(\psi,u_{\delta}))=\Theta(u_\delta, \psi,G(0),\sigma),\] 
by equality of laws one has that 
\begin{align*}
0&=\erww{\Vert\Theta(u_\delta, \psi,G(0),\sigma)-G(\cdot,\max(\psi, u_{\delta}))\Vert_{L^2(0,T;HS(H))}}
=\erws{\Vert\Theta(\overline u_{\delta}, \overline{\psi}_{\delta},\overline G_0^{\delta}, \overline\sigma_{\delta})-\overline G_{\delta}\Vert_{L^2(0,T;HS(H))}},
\end{align*}
hence 
\begin{align}\label{250416_01}
\overline G_\delta=\Theta(\overline u_\delta, \overline \psi_{\delta}, \overline G_0^{\delta},\overline \sigma_{\delta})=\overline{G}_0^{\delta}+\overline{\sigma}_{\delta}(\max(\overline{\psi}_{\delta},\overline{u}_{\delta})).
\end{align}
Let us remark that since $\mathcal{L}(W)=\mathcal{L}(\overline W_{\delta})=\mathcal{L}(\overline W_{\infty})$ it follows that $\overline W_{\delta}(0)=\overline W_{\infty}(0)=0$. 
\\
In the following, for fixed $\delta \in (0,1]\cup\{+\infty\}$,
we will construct a filtration $(\overline{\mathcal{F}}^\delta_t)_{t\in [0,T]}$ on $(\overline \Omega, \overline{\mathcal{F}}, \overline{\mathds{P}})$ such that $\overline{W}_{\delta}=(\overline{W}_{\delta}(t))_{t\in [0,T]}$ 
is a $Q$-Wiener process with values in $U$ with respect to $(\overline{\mathcal{F}}^\delta_t)_{t\in [0,T]}$ and such that $\overline G_\delta=\overline{G}_0^{\delta}+\overline{\sigma}_{\delta}(\max(\overline{\psi}_{\delta},\overline{u}_{\delta}))$ is predictable with respect to $(\overline{\mathcal{F}}^\delta_t)_{t\in [0,T]}$.
To obtain this filtration, we will appropriately enlarge the natural filtration generated by $(\overline{W}_{\delta}(t))_{t\in [0,T]}$. 
\begin{definition}
For fixed $\delta \in (0,1]\cup\{+\infty\}$ and any $t\in [0,T]$, we define $\widetilde{\mathcal{F}}^\delta_t$ to be the smallest sub $\sigma$-field of 
$\overline{\mathcal{F}}$ generated by $\overline u_0^{\delta}$, $\overline{W}_{\delta}(s)$, $\overline{\psi}_{\delta}(s)$, $\int_0^s\overline u_{\delta}(r)\,dr$, $\int_0^s \overline{G_0}^{\delta}(r)\,dr$, $\int_0^s \overline{G}_{\delta}(r)\,dr$, $\int_0^s \overline{f}_{\delta}(r)\,dr$, $\int_0^s \overline{\sigma}_{\delta}(r)\,dr$ for all $0\leq s\leq t$. 
The right-continuous, $\overline{\mathds{P}}$-augmented filtration of $(\widetilde{\mathcal{F}}^\delta_t)_{t\in [0,T]}$, denoted by $(\overline{\mathcal{F}}^\delta_t)_{t\in [0,T]}$ is defined by
\[\overline{\mathcal{F}}^\delta_t: =\bigcap_{T\geq r>t}\sigma\left[\widetilde{\mathcal{F}}^\delta_r\cup \{\mathcal{N}\in\overline{\mathcal{F}} \, : \, \overline{\mathds{P}}(\mathcal{N})=0)\}\right].\]
\end{definition}
The proof of the next Lemma is similar to the proof of \cite[Lemma 4.13] {BNSZ22}.
\begin{lemma}\label{230105_lem1}
For any $0<\delta\leq 1$, and $\delta=\infty$, respectively, there exist $(\overline{\mathcal{F}}^\delta_t)_{t\in [0,T]}$-predictable $d\overline{\mathds{P}}\otimes dt$-representatives of $\overline{u}_{\delta}$, $\overline{G_0}^\delta$, $\overline{G}_\delta$, $\overline{f}_\delta$, and $\overline{\sigma}_\delta$.
\end{lemma}
\begin{lemma}\label{230105_lem2}
For any $\delta \in (0,1]\cup\{+\infty\}$, $(\overline{W}_\delta(t))_{t\in [0,T]}$ is a $U$-valued, square integrable $(\overline{\mathcal{F}}^\delta_t)_{t\in [0,T]}$-martingale with quadratic variation process $tQ$ for any $t\in [0,T]$.
\end{lemma}
\begin{proof}
Since $(\overline{\mathcal{F}}^\delta_t)_{t\in [0,T]}$ is a filtration containing the natural filtration of $(\overline{W}_\delta(t))_{t\in [0,T]}$ 
it follows that
$(\overline{W}_\delta(t))_{t\in [0,T]}$ is adapted to $(\overline{\mathcal{F}}^\delta_t)_{t\in [0,T]}$.
Using $\mathcal{L}(\overline{W}_\delta)=\mathcal{L}(W)$, by Burkholder-Davis-Gundy inequality it follows that there exists a constant $C>0$ such that
\begin{align*}
\erws{\sup_{t\in[0,T]}|\overline{W}_\delta(t)|_U^2}=\erww{\sup_{t\in[0,T]}|W(t)|_U^2}\leq C Tr(Q)T=C(Q,T)<\infty 
\end{align*}
for any fixed $\delta\in (0,1]\cup \{+\infty\}$. For $0<\delta\leq 1$
let us define the random variables $X^\delta$, $\overline X^\delta$ with values in
\[L^2(D)\times C([0,T];U)\times C([0,T];H)\times C([0,T];H)\times C([0,T];HS(H))  \times C([0,T]; V') \times C([0,T]; \Sigma)\]
by
\begin{align*}
X^\delta_t:=&\left(u_0, W(t), \int_0^t u_{\delta}(r)\,dr, \psi,\int_0^t G(0)(r)\,dr,\int_0^t f(r)\,dr,\int_0^t \sigma(r)\,dr\right),
\\
\overline X^\delta_t:=&\left(\overline u_0^\delta, \overline W_\delta(t), \int_0^t\overline u_{\delta}(r)\,dr, \overline \psi_\delta,\int_0^t \overline G_0^\delta(r)\,dr,\int_0^t \overline f_\delta(r)\,dr,\int_0^t \overline \sigma_\delta(r)\,dr\right),
\end{align*}
for any $t\in [0,T]$. For all $k\in\mathbb{N}$, $0\leq s\leq t\leq T$ and all bounded, continuous functions 
\[\Upsilon:L^2(D)\times C([0,s];U)\times C([0,s];H)\times C([0,s];H)\times C([0,s];HS(H))  \times C([0,s]; V') \times C([0,s]; \Sigma) \rightarrow\mathbb{R},\]
thanks to $\mathcal{L}(\overline{W}_\delta)=\mathcal{L}(W)$ we have
\begin{align}\label{230106_01}
\begin{aligned}
0&=\erww{(W(t)-W(s),e_k)_U\Upsilon\left(X^\delta|_{[0,s]}\right)}\\
&=\erws{(\overline{W}_\delta(t)-\overline{W}_\delta(s),e_k)_U\Upsilon\left(\overline{X}^\delta|_{[0,s]}\right)}.
\end{aligned}
\end{align}
The real-valued random variable 
\[\overline{\Omega}\ni\overline{\omega}\mapsto \Upsilon\left(\overline{X}^\delta(\omega)|_{[0,s]}\right)\]
is $\widetilde{\mathcal{F}}^\delta_s$-measurable.
Since \eqref{230106_01} applies to every bounded and continuous function $\Upsilon$, from the Lemma of Doob-Dynkin (see, \textit{e.g.}, \cite[Proposition 3]{RS06}) it follows that
\begin{align*}
0=\erws{\mathds{1}_A(\overline{W}_\delta(t)-\overline{W}_\delta(s),e_k)_U}
\end{align*}
for all $k\in\mathbb{N}$, for all $\widetilde{\mathcal{F}}^\delta_s$-measurable subsets $A\in \overline{\mathcal{F}}$ and for all $0\leq s\leq t\leq T$.
From the above equation it now follows that 
\[\erws{(\overline{W}_\delta(t)-\overline{W}_\delta(s),e_k)_U|\,\widetilde{\mathcal{F}}^\delta_s}=0\]
for all $0\leq s\leq t\leq T$ and all $k\in\mathbb{N}$.
Therefore, $(\overline{W}_\delta(t))_{t\in [0,T]}$ is a $(\widetilde{\mathcal{F}}^\delta_t)_{t\in [0,T]}$-martingale for any fixed $0<\delta\leq 1$. Using \cite[p.75]{Della-Meyer} we may conclude that, for any fixed $0<\delta\leq 1$, $(\overline{W}_{\delta}(t))_{t\in [0,T]}$ is also a martingale with respect to the augmented filtration $(\overline{\mathcal{F}}^{\delta}_t)_{t\in [0,T]}$.
We recall that for our $U$-valued $Q$-Wiener process $(W(t))_{t\geq 0}$ with respect to $(\mathcal{F}_t)_{t\geq 0}$, its quadratic variation process $\ll W \gg_t$ is equal to $tQ$ for all $t\in [0,T]$, or equivalently,
\[\erww{(W(t)-W(s),e_k)_U(W(t)-W(s),e_j)_U-((t-s)Q(e_k),e_j)_U|\mathcal{F}_s}=0\]
for all $0\leq s\leq t\leq T$ and all $k,j\in\mathbb{N}$ (see \cite[p.75]{DPZ14}). 
Using again that $\mathcal{L}(\overline{W}_\delta)=\mathcal{L}(W)$, and the notation introduced before \eqref{230106_01}, for any $0<\delta\leq 1$, $0\leq s\leq t\leq T$ and $k,j\in\mathbb{N}$, 
we get the following result:
\begin{align}\label{230106_03}
\begin{aligned}
0&=\erww{(W(t)-W(s),e_k)_U(W(t)-W(s),e_j)_U-((t-s)Q(e_k),e_j)_U\Upsilon\left(X^\delta|_{[0,s]}\right)}\\
&=\erws{(\overline{W}_\delta(t)-\overline{W}_\delta(s),e_k)_U(\overline{W}_\delta(t)-\overline{W}_\delta(s),e_j)_U-((t-s)Q(e_k),e_j)_U\Upsilon\left(\overline{X}^\delta|_{[0,s]}\right)}.
\end{aligned}
\end{align}
With similar arguments as for \eqref{230106_01} 
from \eqref{230106_03} it follows that, for any $0<\delta\leq 1$, $\ll \overline W_\delta \gg_t=tQ$ for all $t\in [0,T]$.\\
For any $t\in [0,T]$, we define
\begin{align*}
\overline X^\infty_t:=&\left(\overline u_0^\infty, \overline W_\infty, \int_0^t\overline u_{\infty}(r)\,dr, \overline \psi_\infty,\int_0^t \overline G_0^\infty(r)\,dr,\int_0^t \overline f_\infty(r)\,dr,\int_0^t \overline \sigma_\infty(r)\,dr\right).
\end{align*}
First, we observe that thanks to the convergence result $i.)$, $\int_0^{\cdot}\overline{u}_{\delta}(r)\,dr\rightarrow \int_0^{\cdot}\overline{u}_{\infty}(r)\,dr$ as $\delta\rightarrow 0^+$ in $C([0,s];H)$ for all $0\leq s\leq t\leq T$. Then, by the  convergence results $i)$ to $viii)$, for any continuous, bounded function $\Upsilon$ as introduced above, $\Upsilon\left(\overline{X}^\delta|_{[0,s]}\right)$ converges a.s. to $\Upsilon\left(\overline{X}^\infty|_{[0,s]}\right)$ for $\delta\rightarrow 0^+$.
\\
Then, for any $t,s \in [0,T]$ and $ k \in \N$, the convergence result $vi.)$ yields that $(\overline{W}^\delta(t)-\overline{W}^\delta(s),e_k)_U$ converges a.s to $(\overline{W}^\infty(t)-\overline{W}^\infty(s),e_k)_U$. Thanks to the equality in law, all moments of the sequence are bounded, independently of $0<\delta\leq 1$. Hence, Vitali's theorem implies convergence in $L^p(\overline{\Omega})$, for any $1<p<\infty$, more precisely 
\begin{align*}
&\erww{(\overline{W}_\infty(t)-\overline{W}_\infty(s),e_k)_U\Upsilon\left(X^\infty|_{[0,s]}\right)}
=\lim_{\delta\rightarrow 0^+}\erws{(\overline{W}_\delta(t)-\overline{W}_\delta(s),e_k)_U\Upsilon\left(\overline{X}^\delta|_{[0,s]}\right)}=0.
\end{align*}
Similarly, one has 
\begin{align*}
\erws{(\overline{W}_\infty(t)-\overline{W}_\infty(s),e_k)_U(\overline{W}_\infty(t)-\overline{W}_\infty(s),e_j)_U-((t-s)Q(e_k),e_j)_U\Upsilon\left(\overline{X}^\infty|_{[0,s]}\right)}=0,
\end{align*}
and the lemma is proved also for $\delta=\infty$.
\end{proof}
\begin{remark}
For all $0<\delta\leq 1$, and $\delta=\infty$, it follows from Lemma \ref{230105_lem2} that $(\overline{W}_\delta(t))_{t\in [0,T]}$ is a $U$-valued $Q$-Wiener process with respect to $(\overline{\mathcal{F}}^\delta_t)_{t\in [0,T]}$. From Lemma \ref{230105_lem1} it follows that $\overline{G}_\delta=\overline{G}_0^{\delta}+\overline{\sigma}_{\delta}(\max(\overline{\psi}_{\delta},\overline{u}_{\delta}))$ has a predictable $d\overline{\mathds{P}}\otimes dt$-representative with respect to that filtration and therefore the It\^{o} stochastic integral $\int_0^t \overline{G}_\delta\,d\overline{W}_\delta$ is well-defined. 
\end{remark}
In the following, we want to recover the stochastic integral and our equation on the new probability space.
\begin{lemma}
For any $t\in [0,T]$, $0<\delta\leq 1$,  let us define
\begin{align}\label{230306_04}
M_{\delta}(t):=\overline{u}_{\delta}(t)-\overline{u}_0^\delta+\int_0^t \delta\, \partial J(\overline{u}_{\delta})-\Div a(\max(\overline{\psi}_\delta, \overline{u}_{\delta}), \nabla \overline{u}_{\delta})-\frac{1}{\eps}(\overline{u}_{\delta}-\overline{\psi}_\delta)^{-}-\overline{f}_\delta\,ds,
\end{align}
where $\langle \partial J(u),v\rangle_{\mathcal{V}',\mathcal{V}}=b(u,v)$ for all $u,v\in \mathcal{V}$ (see \eqref{230104_01}).
The stochastic process $(M_{\delta}(t))_{t\in [0,T]}$ is a square-integrable, continuous $(\overline{\mathcal{F}}^\delta_t)_{t\in [0,T]}$ martingale with values in $L^2(D)$  such that 
\begin{align}\label{230306_01}
\ll M_{\delta}\gg_t&=\int_0^t (\overline{G}_\delta\circ Q^{1/2})\circ(\overline G_{\delta}\circ Q^{1/2})^{\ast}\,ds
\end{align}
\begin{align}\label{230306_02}
\ll \overline{W}_\delta, M_{\delta}\gg_t&=\int_0^t Q\circ \overline{G}_\delta^{\ast}\,ds
\end{align}
for all $0<\delta\leq 1$.
\end{lemma}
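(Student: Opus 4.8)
The plan is to transport the statement from the original stochastic basis $(\Omega,\mathcal F,(\mathcal F_t),\mathds P)$ to $(\overline\Omega,\overline{\mathcal F},\overline{\mathds P})$ by equality of laws, repeating the conditional-expectation/Doob--Dynkin argument already used in the proof of Lemma~\ref{230105_lem2}. So first I would introduce, on the original space,
\[
N(t):=u_\delta(t)-u_0+\int_0^t\Big(\delta\,\partial J(u_\delta)-\Div a(\max(\psi,u_\delta),\nabla u_\delta)-\tfrac1{\eps}(u_\delta-\psi)^- -f\Big)\,ds ,\qquad t\in[0,T].
\]
Integrating the equation \eqref{Pblme_delta} solved by $u_\delta$ (recall $b(u,\cdot)=\partial J(u)$) gives $N(t)=\int_0^t G(\max(\psi,u_\delta))\,dW$; since $G(\max(\psi,u_\delta))\in L^2(\Omega_T;HS(H))$ is predictable, the Hilbert-space It\^o calculus (\cite[Section~4.2 and p.~75]{DPZ14}) yields at once that $N$ is a continuous, square-integrable, $L^2(D)$-valued $(\mathcal F_t)$-martingale with
\[
\ll N\gg_t=\int_0^t(G(\max(\psi,u_\delta))\circ Q^{1/2})\circ(G(\max(\psi,u_\delta))\circ Q^{1/2})^\ast\,ds,\qquad \ll W,N\gg_t=\int_0^t Q\circ G(\max(\psi,u_\delta))\,ds.
\]

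Next I would observe that $N$ is the image, under a fixed Borel map, of the tuple $X$ from the proof of Lemma~\ref{230105_lem2}: the term $\int_0^t f\,ds$ is read off from the path-component $\int_0^\cdot f\,dr$; the $\mathcal V'$-valued Bochner integral $\int_0^t(\delta\,\partial J(u_\delta)-\Div a(\max(\psi,u_\delta),\nabla u_\delta)-\tfrac1{\eps}(u_\delta-\psi)^-)\,ds$ depends only on $u_\delta|_{[0,t]}$ and $\psi|_{[0,t]}$ and is well defined by the bounds gathered in Lemma~\ref{lem_estimations} (which, being measurable events of finite cost on the spaces carrying the laws, are inherited $\overline{\mathds P}$-a.s.\ after the Skorokhod representation); and $u_\delta(t)-u_0$ is $L^2(D)$-valued. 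Applying the same map to $\overline X$ produces exactly $M_\delta$ as in \eqref{230306_04}. Moreover $N\in C([0,T];L^2(D))$ $\mathds P$-a.s., so this Borel map sends a.e.\ input into $C([0,T];L^2(D))$; hence by the equality of laws of item~$ii.)$ the pair $(M_\delta,\overline W)$ (jointly with the remaining barred data) has the same law as $(N,W)$ (jointly with the original data), whence $M_\delta\in L^2(\overline\Omega;C([0,T];L^2(D)))$ and each $M_\delta(t)$ is $\overline{\mathcal F}^{\delta'}_t$-measurable because $\overline u_\delta,\overline\psi,\overline f$ and $\overline u_0$ are among the generators of $\overline{\mathcal F}^{\delta'}_t$.

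Then the three conclusions — $M_\delta$ is an $(\overline{\mathcal F}^\delta_t)$-martingale, \eqref{230306_01}, and \eqref{230306_02} — each reduce to the vanishing of the $\overline{\mathcal F}^{\delta'}_s$-conditional expectation of an explicit functional of $(M_\delta,\overline W)$ over $[s,t]$, tested against $\varphi,\chi\in L^2(D)$ and against the basis vectors $e_k,e_j$ of $U$ for the bracket identities. The corresponding functional of $(N,W)$ has vanishing $\mathcal F_s$-conditional expectation on the original side by the first step. Multiplying by a bounded continuous $\Upsilon$ of the path of $X$ on $[0,s]$, taking expectations, using $\mathcal L(X)=\mathcal L(\overline X)$ and the Doob--Dynkin lemma (\cite[Prop.~3]{RS06}) exactly as in \eqref{230106_01}--\eqref{230106_03}, one transfers these identities to $(M_\delta,\overline W)$ relative to $\overline{\mathcal F}^{\delta'}_s$; a final passage to the augmented filtration $(\overline{\mathcal F}^\delta_t)$ via \cite[p.~75]{Della-Meyer} concludes.

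I expect the delicate point to be the second step: checking that $N$, assembled from pieces living a priori in different spaces ($u_\delta(t)-u_0\in L^2(D)$ but the drift integral only in $\mathcal V'$), is genuinely a Borel functional of the Skorokhod tuple with values in $C([0,T];L^2(D))$ — which one obtains only by invoking the equation \eqref{Pblme_delta}, i.e.\ the identification of $N$ with the It\^o integral on the original space — together with the $(\overline{\mathcal F}^\delta_t)$-predictability of $\overline G(\overline u_\delta)$ furnished by Lemma~\ref{230105_lem1} and the remark preceding this lemma, which is precisely what makes the right-hand sides of \eqref{230306_01}--\eqref{230306_02} meaningful.
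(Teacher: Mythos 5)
Your proposal is correct and is exactly the approach the paper takes: the paper's proof simply records the law identity $\mathcal L(M_\delta(\cdot))=\mathcal L(\int_0^\cdot G(\max(\psi,u_\delta))\,dW)$ and then invokes the same conditional-expectation/Doob--Dynkin transfer as in Lemma~\ref{230105_lem2}. You have unpacked precisely those steps (identification of $N$ with the It\^o integral via \eqref{Pblme_delta}, realization of $N$ and $M_\delta$ as images of $X$ and $\overline X$ under one Borel map, and the martingale/bracket characterizations transferred by $\mathcal L(X)=\mathcal L(\overline X)$ and augmentation), so there is nothing to correct.
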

\begin{proof}
We recall that
\[\mathcal{L}\left(M_{\delta}(\cdot)\right)=\mathcal{L}\left(\int_0^{\cdot} G(\max(\psi,u_{\delta}))\,dW\right)\]
for all $0<\delta\leq 1$. Then, the assertion follows with similar arguments as in the proof of Lemma \ref{230105_lem2}.
\end{proof}
\begin{lemma}
For all $0<\delta\leq 1$ and all $t\in [0,T]$ we have
\[M_{\delta}(t)=\int_0^t \overline{G}_\delta\,d\overline{W}_\delta \]
in $L^2(\overline{\Omega};L^2(D))$. In particular,
\begin{align*}
&d\overline{u}_\delta + \delta \partial J(\overline{u}_\delta)\,dt - \Div a(\max(\overline{\psi}_\delta,\overline{u}_\delta),\nabla \overline{u}_\delta)\,dt -\frac1\varepsilon\penalisation{\overline{u}_\delta-\overline{\psi}_\delta}\,dt = \overline{f}_\delta\,dt + \overline{G}_\delta\,d\overline{W}_\delta
\end{align*}
or equivalently
\begin{align*}
\partial_t \left(\overline{u}_\delta - \int_0^\cdot\overline{G}_\delta\, d\overline{W}_\delta\right) + \delta \partial J(\overline{u}_\delta) - \Div a(\max(\overline{\psi}_\delta,\overline{u}_\delta),\nabla \overline{u}_\delta) -\frac1\varepsilon\penalisation{\overline{u}_\delta-\overline{\psi}_\delta} = \overline{f}_\delta.
\end{align*}
\end{lemma}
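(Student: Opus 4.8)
The plan is the standard ``identify the martingale through its quadratic variations'' argument. First I would observe that, by the remark after Lemma~\ref{230105_lem2}, $Z_\delta:=\int_0^{\cdot}\overline{G}(\overline{u}_\delta)\,d\overline{W}$ is a well-defined continuous, square-integrable, $L^2(D)$-valued $(\overline{\mathcal{F}}^{\delta}_t)_{t\in[0,T]}$-martingale: its integrand $\overline{G}(\overline{u}_\delta)$ is $(\overline{\mathcal{F}}^{\delta}_t)$-predictable and, by equality of laws with $G(\max(\psi,u_\delta))$ together with Lemma~\ref{lem_estimations} (and Remark~\ref{conti-obst}), lies in $L^2(\overline{\Omega}\times(0,T);HS(H))$; moreover the It\^o isometry (see \cite[Section~4.2]{DPZ14}) gives $\ll Z_\delta\gg_t=\int_0^t(\overline{G}(\overline{u}_\delta)\circ Q^{1/2})\circ(\overline{G}(\overline{u}_\delta)\circ Q^{1/2})^{\ast}\,ds$, i.e. exactly the right-hand side of \eqref{230306_01}. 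Since $(M_\delta(t))_{t\in[0,T]}$ is a continuous, square-integrable $L^2(D)$-valued $(\overline{\mathcal{F}}^{\delta}_t)$-martingale by the preceding lemma, so is $N_\delta:=M_\delta-Z_\delta$, with $N_\delta(0)=0$.

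The core step is to show that the operator quadratic variation of $N_\delta$ vanishes identically. By bilinearity and symmetry of the quadratic covariation, and the cross-variation identity $\ll\int_0^{\cdot}\Phi\,d\overline{W},X\gg_t=\int_0^t\Phi\,d\ll\overline{W},X\gg_s$, valid for every continuous square-integrable martingale $X$ and predictable integrand $\Phi$ (see \cite[Ch.~4]{DPZ14}), applied with $\Phi=\overline{G}(\overline{u}_\delta)$ and $X=M_\delta$, and using \eqref{230306_01}, \eqref{230306_02} together with $Q^{1/2}(Q^{1/2})^{\ast}=Q$, one computes
\[
\ll N_\delta\gg_t=\ll M_\delta\gg_t-\ll M_\delta,Z_\delta\gg_t-\ll Z_\delta,M_\delta\gg_t+\ll Z_\delta\gg_t=0
\]
for every $t\in[0,T]$, the four operator-valued terms cancelling because each of them equals the common expression in \eqref{230306_01}.

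It then follows that $N_\delta$ is a continuous, square-integrable $L^2(D)$-valued martingale null at $t=0$ with vanishing operator quadratic variation, so $\erws{\|N_\delta(t)\|_{L^2(D)}^2}=\erws{\operatorname{Tr}\left(\ll N_\delta\gg_t\right)}=0$ for all $t\in[0,T]$; hence $N_\delta(t)=0$ $\overline{\mathds{P}}$-a.s. and $M_\delta(t)=\int_0^t\overline{G}(\overline{u}_\delta)\,d\overline{W}$ in $L^2(\overline{\Omega};L^2(D))$ for all $t\in[0,T]$. Inserting this equality into the definition \eqref{230306_04} of $M_\delta$ and rearranging yields the asserted equation and, after writing the drift terms as primitives, its equivalent form. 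The only genuine difficulty is the bookkeeping: one must handle the $HS(H)$-valued (co)variations and the compositions with $Q^{1/2}$ carefully, and state and justify precisely the cross-variation identity for the It\^o integral in this infinite-dimensional Hilbert-space setting. An alternative route that avoids this computation altogether is to quote a L\'evy-type martingale representation theorem characterising $\int_0^{\cdot}\overline{G}(\overline{u}_\delta)\,d\overline{W}$ as the \emph{unique} continuous $L^2(D)$-valued $(\overline{\mathcal{F}}^{\delta}_t)$-martingale vanishing at $0$ whose quadratic and cross variations are given by \eqref{230306_01} and \eqref{230306_02} --- which is exactly what the preceding lemma provides for $M_\delta$.
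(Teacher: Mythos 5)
Your proof is correct and follows essentially the same route as the paper: both identify $M_\delta$ with the stochastic integral by showing $\erws{\|M_\delta(t)-\int_0^t\overline{G}(\overline{u}_\delta)\,d\overline{W}\|_{L^2(D)}^2}=0$ via the quadratic-variation and cross-variation formulas \eqref{230306_01} and \eqref{230306_02}. The paper writes this out concretely by expanding the squared norm in an orthonormal basis and taking traces (following \cite[Appendix A]{Hof13}), whereas you package the same cancellation as $\ll M_\delta - Z_\delta\gg_t = 0$; the two presentations are equivalent.
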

\begin{proof}
We adapt the ideas from \cite[Appendix A]{Hof13}. Let $(\mathfrak{e}_k)_{k\in\mathbb{N}}$ be an orthonormal basis of $L^2(D)$. Then, for all $0<\delta\leq 1$ and all $t\in [0,T]$ we have
\begin{align}\label{230306_03}
\erws{\left\Vert M_{\delta}(t)-\int_0^t \overline{G}_{\delta}\,d\overline{W}_\delta\right\Vert^2_{L^2(D)}}=\sum_{k=1}^{\infty}\erws{\left(M_{\delta}(t)-\int_0^t \overline{G}_{\delta}\,d\overline{W}_\delta,\mathfrak{e}_k\right)^2_{L^2(D)}}
\end{align}
where, for any $k\in\mathbb{N}$,
\begin{align*}
&\erws{\left(M_{\delta}(t)-\int_0^t \overline{G}_{\delta}\,d\overline{W}_\delta,\mathfrak{e}_k\right)^2_{L^2(D)}}
\\
=&\erws{\left(M_{\delta}(t),\mathfrak{e}_k\right)^2_{L^2(D)}}-2\erws{\left(M_{\delta}(t),\mathfrak{e}_k\right)_{L^2(D)}\left(\int_0^t \overline{G}_{\delta}\,d\overline{W}_\delta,\mathfrak{e}_k\right)_{\! \! L^2(D)}}+\erws{\left(\int_0^t \overline{G}_{\delta}\,d\overline{W}_\delta,\mathfrak{e}_k\right)^2_{L^2(D)}}.
\end{align*}
From the properties of the stochastic It\^{o} integral and from \eqref{230306_01} it follows that
\begin{align*}
&\sum_{k=1}^{\infty}\erws{\left(\int_0^t \overline{G}_{\delta}\,d\overline{W}_\delta,\mathfrak{e}_k\right)^2_{L^2(D)}}=\sum_{k=1}^{\infty}\erws{\left(M_{\delta}(t),\mathfrak{e}_k\right)^2_{L^2(D)}}\\
=&\erws{\int_0^t \operatorname{Tr}\,(\overline{G}_{\delta}\circ Q^{1/2})\circ(\overline G_{\delta}\circ Q^{1/2})^{\ast}\,ds}.
\end{align*}
From the properties of the stochastic It\^{o} integral and from \eqref{230306_02} using \cite[Th. 3.12, p.12]{Pardoux} further follows that
\begin{align*}
&\sum_{k=1}^{\infty}\erws{\left(M_{\delta}(t),\mathfrak{e}_k\right)_{L^2(D)}\left(\int_0^t \overline{G}_{\delta}\,d\overline{W}_\delta,\mathfrak{e}_k\right)_{L^2(D)}}=\erws{\operatorname{Tr}\,\ll M_{\delta}(\cdot),\int_0^{\cdot} \overline{G}_{\delta}\,d\overline{W}_\delta\gg_t}\\
=&\erws{\int_0^t\operatorname{Tr}\,(\overline{G}_{\delta}\circ Q^{1/2})\circ(\overline G_{\delta}\circ Q^{1/2})^{\ast}\,ds}
\end{align*}
and therefore \eqref{230306_03} is equal to zero. Now, the rest of the claim follows from \eqref{230306_04}.
\end{proof}
Therefore, applying the uniqueness result from Section \ref{higher order problem}, we may conclude that, for any $0<\delta\leq1$, $\overline{u}_\delta$ is the unique solution to Problem \ref{Pblme_delta} with initial datum $\overline{u}_0^\delta$ with respect to the stochastic basis $(\overline \Omega, \overline{\mathcal{F}}, (\overline{\mathcal{F}}^\delta_t)_{t\in [0,T]},\overline{\mathbb{P}},\overline W_\delta)$.
\begin{lemma}\label{Lemma 250416_01}
For $\delta\rightarrow 0^+$,
\[\int_0^{\cdot} \overline{G}_{\delta}\, d\overline{W}_{\delta}\rightarrow \int_0^{\cdot} \overline{G}_{\infty}\,d\overline{W}_{\infty}\]
in probability in $L^2(0,T;L^2(D))$.
\end{lemma}
\begin{proof}
From Lemma \ref{230105_lem1} it follows that $(\overline G_{\delta})_{\delta}$ is a collection of $\overline{\mathcal{F}}^\delta_t$-predictable processes for $0<\delta\leq 1$, and $\delta=\infty$, respectively. Thanks to the convergence results  $ii.)$ and $vi.)$, the assertion follows from \cite[Lemma 2.1, p. 1126]{DG-HT}
\end{proof}
\begin{lemma}\label{250416_02}
	For $\delta\rightarrow 0^+$, $\overline\sigma_{\delta}(\max(\overline{\psi}_{\delta},\overline{u}_{\delta}))\rightarrow\overline{\sigma}_{\infty}(\max(\overline{\psi}_{\infty},\overline{u}_{\infty}))$ $\overline{\mathbb{P}}$-a.s. in $L^2(0,T;HS(H))$. Moreover,
	$\overline{G}_{\infty}=\overline{G}_0^{\infty}+\overline{\sigma}_{\infty}(\max(\overline{\psi}_{\infty},\overline{u}_{\infty}))$.
	\end{lemma}
\begin{proof}
For any $0<\delta\leq 1$ we have 
	\begin{align}\label{250411_01}
		\begin{aligned}
	&	\int_0^T \Vert \overline{\sigma}_{\delta}(\max(\overline{\psi}_{\delta},\overline{u}_{\delta}))- \overline{\sigma}_{\infty}(\max(\overline{\psi}_{\infty},\overline{u}_{\infty}))\Vert^2_{HS}\,dt\\
	&\leq\int_0^T\left(\Vert \overline{\sigma}_{\delta}(\max(\overline{\psi}_{\delta},\overline{u}_{\delta}))- \overline{\sigma}_{\delta}(\max(\overline{\psi}_{\infty},\overline{u}_{\infty}))\Vert_{HS}+\Vert \overline{\sigma}_{\delta}(\max(\overline{\psi}_{\infty},\overline{u}_{\infty}))- \overline{\sigma}_{\infty}(\max(\overline{\psi}_{\infty},\overline{u}_{\infty}))\Vert_{HS}   \right)^2\,dt
	\end{aligned}
	\end{align}
	Now, considering the first term on the last line of the above equation, we have thanks to H$_{3,2}$ and equality of laws, 
	\begin{align*}
		\Vert \overline{\sigma}_{\delta}(\max(\overline{\psi}_{\delta},\overline{u}_{\delta}))- \overline{\sigma}_{\delta}(\max(\overline{\psi}_{\infty},\overline{u}_{\infty}))\Vert_{HS}\leq \sqrt{C_{\sigma}}\Vert \max(\overline{\psi}_{\delta},\overline{u}_{\delta})-\max(\overline{\psi}_{\infty},\overline{u}_{\infty})\Vert_{L^2}.
		\end{align*}
        Next, 
		\begin{align}\label{250411_02}
			\begin{aligned}
		&	\Vert \max(\overline{\psi}_{\delta},\overline{u}_{\delta})-\max(\overline{\psi}_{\infty},\overline{u}_{\infty})\Vert_{L^2}
		\leq 
        \Vert \overline{\psi}_{\delta}-\overline{\psi}_{\infty}\Vert_{L^2}+\Vert \overline{u}_{\delta}-\overline{u}_{\infty}\Vert_{L^2}.
		\end{aligned}
		\end{align}
		Plugging \eqref{250411_02} into \eqref{250411_01} 
        we find a constant $C\geq 0$ not depending on $0<\delta\leq 1$ such that
		\begin{align}\label{250411_03}
			\begin{aligned}
			&\int_0^T \Vert \overline{\sigma}_{\delta}(\max(\overline{\psi}_{\delta},\overline{u}_{\delta}))- \overline{\sigma}_{\infty}(\max(\overline{\psi}_{\infty},\overline{u}_{\infty}))\Vert^2_{HS}\,dt\\
			\leq& C\int_0^T\left(\Vert \overline{\psi}_{\delta}-\overline{\psi}_{\infty}\Vert^2_{L^2}+\Vert \overline{u}_{\delta}-\overline{u}_{\infty}\Vert^2_{L^2}
            \right)\,dt\\
			&+C\int_0^T\Vert (\overline{\sigma}_{\delta}- \overline{\sigma}_{\infty})(\max(\overline{\psi}_{\infty},\overline{u}_{\infty}))\Vert^2_{HS}  \,dt.
			\end{aligned}
		\end{align}
		Thanks to the convergence results $i.)$ and $iii.)$, the first line of the right-hand side of \eqref{250411_03} converges to $0$, for $\delta\rightarrow 0^+$, $\overline{\mathbb{P}}$-a.s. Moreover, concerning the second term on the right-hand side of \eqref{250411_03}, from H$_3$ it follows that
		\begin{align*}
			&\int_0^T\Vert (\overline{\sigma}_{\delta}- \overline{\sigma}_{\infty})(\max(\overline{\psi}_{\infty},\overline{u}_{\infty}))\Vert^2_{HS} \,dt=\int_0^T\Vert (\overline{\sigma}_{\delta}- \overline{\sigma}_{\infty})(t)[\max(\overline{\psi}_{\infty},\overline{u}_{\infty})]\Vert^2_{HS}\,dt \\
			&\leq\int_0^T \Vert (\overline{\sigma}_{\delta}- \overline{\sigma}_{\infty})(t)\Vert^2_{\operatorname{Lip}_0(H;HS(H))}\Vert \max(\overline{\psi}_{\infty},\overline{u}_{\infty})\Vert^2_{L^2}\,dt.
		\end{align*}
		Recalling that $\Sigma$ is a sub-Banach space of $\operatorname{Lip}_0(H;HS(H))$ with respect to the same norm (see H$_{3,3}$), using H\"older's inequality 
        as well as the inequality $(a+b)^p\leq 2^{p-1}(a^p+b^p)$ we may find constants $C_1,C_2\geq 0$ such that
		\begin{align*}
			&\int_0^T\Vert (\overline{\sigma}_{\delta}- \overline{\sigma}_{\infty})(\max(\overline{\psi}_{\infty},\overline{u}_{\infty}))\Vert^2_{HS} \,dt\leq \int_0^T\Vert (\overline{\sigma}_{\delta}- \overline{\sigma}_{\infty})(t)\Vert^2_{\Sigma}\Vert \max(\overline{\psi}_{\infty},\overline{u}_{\infty})\Vert^2_{L^2}\,dt\\
			&\leq \left(\int_0^T\Vert (\overline{\sigma}_{\delta}- \overline{\sigma}_{\infty})(t)\Vert^4_{\Sigma}\,dt\right)^{1/2}\left(\int_0^T\Vert \max(\overline{\psi}_{\infty},\overline{u}_{\infty})\Vert^4_{L^2}\,dt\right)^{1/2}\\
			&\leq \left(\int_0^T\Vert (\overline{\sigma}_{\delta}- \overline{\sigma}_{\infty})(t)\Vert^4_{\Sigma}\,dt\right)^{1/2}\left(C_1\int_0^T \Vert \overline{\psi}_{\infty}\Vert_{L^2}^4+C_2 \Vert \overline{u}_{\infty}\Vert_{L^2}^4\, dt.\right)^{1/2}
		\end{align*}
		Now, from the convergence results $v.)$ it follows that the second term on the right-hand side of \eqref{250411_03} converges to $0$, for $\delta\rightarrow 0^+$, $\overline{\mathbb{P}}$-a.s. Hence, the convergence  of $\overline{\sigma}_{\delta}(\max(\overline{\psi}_{\delta},\overline{u}_{\delta}))$ towards $\overline{\sigma}_{\infty}(\max(\overline{\psi}_{\infty},\overline{u}_{\infty}))$ $\overline{\mathbb{P}}$-a.s. in $L^2(0,T;HS(H))$ may be concluded from \eqref{250411_03}. Combining this result with the convergence $iv.)$, $ii.)$, and \eqref{250416_01}, the rest of the assertion follows.
	\end{proof}
\begin{defi}
	In the following, in view of Lemma \ref{250416_02} and \eqref{250416_01} we will write
    \[\overline{G}_{\delta}(\overline{u}_{\delta}):=\overline{G}_{\delta}=\overline{G}_0^{\delta}+\overline{\sigma}_{\delta}(\max(\overline{\psi}_{\delta},\overline{u}_{\delta})).\]
    for $\delta \in [0,1) \cup \{+\infty\}$.
	\end{defi}

\begin{lemma}\label{230406_02}
$\overline{u}_{\infty}$ is a $(\overline{\mathcal{F}}_t^{\infty})_{t\in [0,T]}$-adapted, square-integrable stochastic process with continuous paths in $L^2(D)$. Moreover, $\overline{u}_{\infty}\in L^p(\overline\Omega \times (0,T);V)$ and there exists $A_{\infty}\in L^{p'}(\overline\Omega; L^p(0,T;L^p(D)))$ such that
\begin{align}\label{230306_05}
d\overline{u}_\infty  - \operatorname{div} \, A_\infty \,dt -\frac1\varepsilon\penalisation{\overline{u}_\infty-\overline{\psi}_{\infty}}\,dt = \overline{f}_{\infty} \,dt + \overline{G}_{\infty}(\overline{u}_\infty)\,d\overline{W}_{\infty}
\end{align}
or, equivalently,
\begin{align*}
\partial_t \left( \overline{u}_\infty - \int_0^{\cdot}\overline{G}_{\infty}(\overline{u}_\infty) d\overline{W}_{\infty}\right) - \operatorname{div}\, A_\infty -\frac1\varepsilon\penalisation{\overline{u}_\infty-\overline{\psi}_{\infty}} = \overline{f}_{\infty}.
\end{align*}
In addition, It\^{o}'s energy formula holds.
\end{lemma}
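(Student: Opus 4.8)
The plan is to let $\delta\to 0^{+}$ in the equation for $\overline u_\delta$ furnished by the preceding lemmas, using only the uniform estimates already available. First I would transfer the bounds of Lemma \ref{lem_estimations} to $(\overline u_\delta)_{0<\delta\le1}$ by equality of laws and extract, along a subsequence, the weak limits $\overline u_\delta\rightharpoonup\overline u_\infty$ in $L^p(\overline\Omega_T;V)$ and $a(\max(\overline\psi,\overline u_\delta),\nabla\overline u_\delta)\rightharpoonup A_\infty$ in $L^{p'}(\overline\Omega_T;L^{p'}(D)^{d})$, boundedness of the latter following from the growth condition \eqref{croissance1} and Lemma \ref{lem_estimations}$(1)$; the first limit must coincide with the $\overline{\mathds P}$-a.s.\ limit of $\overline u_\delta$ in $L^4(0,T;L^2(D))$ obtained in $iii.)$, which in particular puts $\overline u_\infty$ in $L^p(\overline\Omega_T;V)$. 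Next I would check that the singular perturbation disappears: from $\|\delta\,\partial J(\overline u_\delta)\|_{\mathcal V'}\le C\delta\big(\|\overline u_\delta\|_{H^m_0}+\|\overline u_\delta\|^{\nu-1}_{W^{m,\nu}_0}\big)$ and the factorisations $\delta=\delta^{1/2}\cdot\delta^{1/2}=\delta^{1/\nu}\cdot\delta^{(\nu-1)/\nu}$, the boundedness of $\sqrt\delta\,\overline u_\delta$ in $L^2(\overline\Omega_T;H^m_0(D))$ and of $\sqrt[\nu]{\delta}\,\overline u_\delta$ in $L^\nu(\overline\Omega_T;W^{m,\nu}_0(D))$ (Lemma \ref{lem_estimations}$(3)$) gives $\delta\,\partial J(\overline u_\delta)\to0$ in $L^{\nu'}(\overline\Omega_T;\mathcal V')$. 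Finally, since $t\mapsto t^{-}$ is $1$-Lipschitz and $\overline u_\delta\to\overline u_\infty$ $\overline{\mathds P}$-a.s.\ in $L^2(0,T;L^2(D))$, the penalization terms converge $\overline{\mathds P}$-a.s.\ in $L^2(0,T;L^2(D))$ to $\penalisation{\overline u_\infty-\overline\psi}$, and, being bounded in $L^2(\overline\Omega_T;L^2(D))$, they converge weakly there by Vitali's theorem.

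For the It\^o term I would first identify $\overline G_\infty=\overline G(\overline u_\infty)$: by $iv.)$ and $\overline G_\delta=\Theta(\overline u_\delta,\overline\psi,\overline G_0,\overline\sigma)$ one has $\overline G(\overline u_\delta)\to\overline G_\infty$ $\overline{\mathds P}$-a.s.\ in $L^2(0,T;HS(H))$, while $\max(\overline u_\delta,\overline\psi)\to\max(\overline u_\infty,\overline\psi)$ $\overline{\mathds P}$-a.s.\ in $L^2(0,T;L^2(D))$ together with the Lipschitz bound of H$_{3,2}$ forces $\overline\sigma(\max(\overline u_\delta,\overline\psi))\to\overline\sigma(\max(\overline u_\infty,\overline\psi))$ in $L^2(0,T;HS(H))$; Vitali's theorem, using the $L^2(\overline\Omega_T;HS(H))$-bound from H$_3$, upgrades this to convergence in $L^2(\overline\Omega_T;HS(H))$. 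Since $\overline u_\infty$ is the $\overline{\mathds P}$-a.s.\ limit of the $(\overline{\mathcal F}_t)$-adapted $\overline u_\delta$ (the filtration being $\delta$-independent by Lemma \ref{230306_lem01}), $\overline u_\infty$ is $(\overline{\mathcal F}_t)$-adapted, hence, with Lemma \ref{230105_lem1}, $\overline G(\overline u_\infty)=\overline G_0+\overline\sigma(\max(\overline u_\infty,\overline\psi))$ has a predictable representative and $\int_0^{\cdot}\overline G(\overline u_\infty)\,d\overline W$ is a well-defined, $L^2(D)$-valued continuous martingale; by It\^o's isometry and the Burkholder--Davis--Gundy inequality, $\int_0^{\cdot}\overline G(\overline u_\delta)\,d\overline W\to\int_0^{\cdot}\overline G(\overline u_\infty)\,d\overline W$ in $L^2(\overline\Omega;C([0,T];L^2(D)))$.

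With all this in hand, testing the equation for $\overline u_\delta$ against an element $v\in\mathcal V$, multiplying by a bounded $\overline{\mathcal F}_T$-measurable random variable, integrating and passing to the limit yields, by a density argument,
\begin{align*}
\overline u_\infty(t)=\overline u_0+\int_0^t\Big(\Div A_\infty+\tfrac1\varepsilon\penalisation{\overline u_\infty-\overline\psi}+\overline f\Big)\,ds+\int_0^t\overline G(\overline u_\infty)\,d\overline W
\end{align*}
in $\mathcal V'$, first for a.e.\ $t$ and then, by continuity in $\mathcal V'$, for every $t\in[0,T]$, $\overline{\mathds P}$-a.s.; this is \eqref{230306_05}. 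For the path regularity I would then note that $v_\infty:=\overline u_\infty-\int_0^{\cdot}\overline G(\overline u_\infty)\,d\overline W$ lies in $W^{1,\nu'}(0,T;\mathcal V')$, which together with $\overline u_\infty\in L^\infty(0,T;L^2(D))\cap L^p(0,T;V)$ (from Lemma \ref{lem_estimations} and lower semicontinuity) and the $L^2(D)$-continuity of the stochastic integral gives $\overline u_\infty\in C_w([0,T];L^2(D))$ $\overline{\mathds P}$-a.s.; since $\overline u_\infty$ solves \eqref{230306_05}, \cite[Thm 4.2.5]{Liu-Rock} applies exactly as in Remark \ref{Prog-meas} and upgrades this to $\overline u_\infty\in L^2(\overline\Omega;C([0,T];L^2(D)))$. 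Being adapted with continuous paths, $\overline u_\infty$ is predictable, which completes the statement.

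The reason the argument is comparatively short is that it only uses compactness together with the uniform estimates; the one thing it deliberately does not settle is the identification $A_\infty=a(\max(\overline\psi,\overline u_\infty),\nabla\overline u_\infty)$, which cannot follow from the mere weak convergence of $\nabla\overline u_\delta$. That identification is the real difficulty of the whole scheme and is postponed: it will be obtained afterwards by a Minty--Browder/pseudomonotonicity argument, exploiting the It\^o energy identity for \eqref{230306_05} and the $\overline{\mathds P}$-a.s.\ strong $L^2$-convergence of $\overline u_\delta$. For the present lemma the only technical points are the adaptedness bookkeeping on the new stochastic basis and the Vitali-type passages from $\overline{\mathds P}$-a.s.\ to $L^2(\overline\Omega_T)$-convergence, both routine given Lemmas \ref{230105_lem1}--\ref{230306_lem01} and Lemma \ref{lem_estimations}.
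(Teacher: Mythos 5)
Your proposal is correct and follows essentially the same route as the paper's proof: transfer the uniform bounds of Lemma \ref{lem_estimations} to the new probability space, extract weak limits (in particular $a(\max(\overline\psi,\overline u_\delta),\nabla\overline u_\delta)\rightharpoonup A_\infty$ by the growth bound), pass the a.s. convergences of $\overline u_\delta$ and $\overline G(\overline u_\delta)$ to weak (or strong, via Vitali) convergence, pass to the limit in the equation, and invoke \cite[Thm 4.2.5]{Liu-Rock} for path continuity, while deliberately deferring the identification $A_\infty=a(\max(\overline\psi,\overline u_\infty),\nabla\overline u_\infty)$ to the subsequent Minty-type lemma. The only difference is presentational (you spell out that $\delta\,\partial J(\overline u_\delta)\to0$ and upgrade the stochastic-integral convergence to strong in $L^2(\overline\Omega;C([0,T];L^2(D)))$, where the paper settles for weak), which is an elaboration rather than a different approach.
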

\begin{proof} 
Note first that the estimates of Lemma \ref{lem_estimations} hold true for $\overline{u}_\delta$; then, since  $\overline u_\delta$ converges to $\overline{u}_\infty$ in $L^4(0,T;L^2(D))$ a.s. in $\overline\Omega$,  $\overline u_\delta$ converges to $\overline{u}_\infty$ in $L^s(\overline\Omega;L^4(0,T;L^2(D)))$ for any $s<2$ by Vitali's converging theorem. Then, up to a subsequence, $\overline{u}_\delta$ converges to $\overline{u}_\infty$ in the following modes: a.e. in $\overline\Omega \times (0,T) \times D$;  in $L^2(D)$ a.e. in $\overline\Omega \times (0,T)$; and weakly in $L^2(\overline\Omega;L^4(0,T;L^2(D)))$ and weakly in $L^p(\overline\Omega \times (0,T);V)$. Moreover, by Assumption H$_3$, $\overline{G}_\delta(\overline{u}_\delta)$ converges a.s. to $\overline{G}_\infty(\overline{u}_\infty)$ in $L^2(0,T;HS(H))$ and therefore weakly in $L^2(\overline\Omega \times (0,T);HS(H))$.
\\
By Lemma \ref{Lemma 250416_01}, Lemma \ref{lem_estimations} and Vitali's converging theorem, $\int_0^{\cdot} \overline{G}_{\delta}(\overline{u}_\delta)\, d\overline{W}_{\delta}\rightarrow \int_0^{\cdot} \overline{G}_{\infty}(\overline{u}_\infty)\,d\overline{W}_{\infty}$ 
in $L^s(\overline{\Omega},L^2(0,T;L^2(D)))$ for any $s<2$. Therefore, the convergence holds in $L^s(0,T;L^s(\overline{\Omega};L^2(D)))$, and, for a not relabeled subsequence, there exists $Z \subset(0,T)$ of full measure, such that for any $r \in Z$, $\int_0^{r} \overline{G}_{\delta}(\overline{u}_\delta)\, d\overline{W}_{\delta}\rightarrow \int_0^{r} \overline{G}_{\infty}(\overline{u}_\infty)\,d\overline{W}_{\infty}$ 
in $L^s(\overline{\Omega};L^2(D))$. Let $t \in [0,T]$ and $r \in Z$. W.l.o.g. we can assume $t \leq r$ (otherwise the limits of some upcoming integrals need to be exchanged and the expression $\sqrt{r-t}$ needs to be exchanged by $\sqrt{t-r}$). H\"older's inequality and It\^o's isometry yield
\begin{align*}
&\erws{\|\int_0^{r} \overline{G}_{\delta}(\overline{u}_\delta)\, d\overline{W}_{\delta}-\int_0^{t} \overline{G}_{\delta}(\overline{u}_\delta)\, d\overline{W}_{\delta}\|_{L^2(D)}^s} 
\leq    
\Big(\erws{\|\int_{t}^{r} \overline{G}_{\delta}(\overline{u}_\delta)\, d\overline{W}_{\delta}\|_{L^2(D)}^2}\Big)^{s/2}
\\
& \leq
\Big(\erws{\int_{t}^{r} \|\overline{G}_{\delta}(\overline{u}_\delta)\|^2_{L^2(D)}d\tau}\Big)^{s/2}
\leq
C\Big(\erws{\int_{t}^{r} \|\overline{G}_0^{\delta}\|^2_{L^2(D)}d\tau+ C_\sigma^2\int_{t}^{r} \|\overline{u}_\delta\|^2_{L^2(D)}d\tau}\Big)^{s/2}.
\end{align*}
One has that $\erws{\int_{t}^{r} \|\overline{G}_0^{\delta}\|^2_{L^2(D)}d\tau}=\erww{\int_{t}^{r} \|G_{0}\|^2_{L^2(D)}d\tau}$ and
\begin{align*}
\erws{\int_{t}^{r} \|\overline{u}_\delta\|^2_{L^2(D)}d\tau}
\leq 
\sqrt{r-t}\,\erws{\Big(\int_{t}^{r} \|\overline{u}_{\delta}\|^4_{L^2(D)}d\tau\Big)^{1/2}}
\leq 
\sqrt{r-t}\,\erws{\|\overline{u}_{\delta}\|^2_{L^4(0,T;L^2(D))}}
\leq C \sqrt{r-t}.
\end{align*}
Since the same inequalities hold for $\overline{G}_{\infty}$ and $\overline{u}_{\infty}$, one gets 
\begin{align*}
&\erws{\|\int_0^{t} \overline{G}_{\infty}(\overline{u}_\infty)\, d\overline{W}_{\infty}-\int_0^{t} \overline{G}_{\delta}(\overline{u}_\delta)\, d\overline{W}_{\delta}\|_{L^2(D)}^s}
\\ \leq & C\erws{\|\int_0^{r} \overline{G}_{\infty}(\overline{u}_\infty)\, d\overline{W}_{\infty}-\int_0^{t} \overline{G}_{\infty}(\overline{u}_\infty)\, d\overline{W}_{\infty}\|_{L^2(D)}^s}
+ C\erws{\|\int_0^{r} \overline{G}_{\infty}(\overline{u}_\infty)\, d\overline{W}_{\infty}-\int_0^{r} \overline{G}_{\delta}(\overline{u}_\delta)\, d\overline{W}_{\delta}\|_{L^2(D)}^s}
\\ &+
C\erws{\|\int_0^{r} \overline{G}_{\delta}(\overline{u}_\delta)\, d\overline{W}_{\delta}-\int_0^{t} \overline{G}_{\delta}(\overline{u}_\delta)\, d\overline{W}_{\delta}\|_{L^2(D)}^s}
\\ &\leq C\Big(\sqrt{r-t}+\erww{\int_{t}^{r} \|G_{0}\|^2_{L^2(D)}d\tau}\Big)^{s/2}+ C\erws{\|\int_0^{r} \overline{G}_{\infty}(\overline{u}_\infty)\, d\overline{W}_{\infty}-\int_0^{r} \overline{G}_{\delta}(\overline{u}_\delta)\, d\overline{W}_{\delta}\|_{L^2(D)}^s}.
\end{align*}
Assuming that $r \in Z$, one gets, for any $t \in [0,T]$,  
\begin{align*}
\limsup_{\delta}\erws{\|\int_0^{t} \overline{G}_{\infty}(\overline{u}_\infty)\, d\overline{W}_{\infty}-\int_0^{t} \overline{G}_{\delta}(\overline{u}_\delta)\, d\overline{W}_{\delta}\|_{L^2(D)}^s} 
\leq     
C\Big(\sqrt{r-t}+\erww{\int_{t}^{r} \|G_{0}\|^2_{L^2(D)}d\tau}\Big)^{s/2}.
\end{align*}
Then, by density of $Z$ in $[0,T]$, one concludes the convergence of $\int_0^{t} \overline{G}_{\delta}(\overline{u}_\delta)\, d\overline{W}_{\delta}$ to $\int_0^{t} \overline{G}_{\infty}(\overline{u}_\infty)\, d\overline{W}_{\infty}$ in $L^s(\overline{\Omega};L^2(D))$, for any $t \in [0,T]$. Since by It\^o's isometry
\begin{align*}
\erws{\|\int_0^{t} \overline{G}_{\delta}(\overline{u}_\delta)\, d\overline{W}_{\delta}\|_{L^2(D)}^2}
\leq 
\erws{\int_{0}^{T} \|\overline{G}_{\delta}(\overline{u}_\delta)\|_{L^2(D)}^2 \, dt }  \leq C,
\end{align*}
one may conclude the weak convergence of $\int_0^{t} \overline{G}_{\delta}(\overline{u}_\delta)\, d\overline{W}_{\delta}$ to  $\int_0^{t} \overline{G}_{\infty}(\overline{u}_\infty)\,d\overline{W}_{\infty}$ in $L^2(\overline{\Omega};L^2(D))$ for any $t\in[0,T]$.
\\
Recall that $\overline u_\delta - \int_0^{\cdot} \overline{G}_{\delta}(\overline{u}_\delta)\,d\overline W_{\delta}$ is bounded in $L^{\nu'}(\overline \Omega;\mathbb{W})$ and converges  to $\overline u_\infty - \int_0^\cdot \overline{G}_{\infty}(\overline{u}_\infty)\,d\overline W_\infty$ weakly in $L^{\nu'}(\overline \Omega;\mathbb{W})$ and in $L^{\nu'}(\overline \Omega;C([0,T];\mathcal{V}'))$ by continuous embedding  of $\mathbb{W}$ in $C([0,T];\mathcal{V}')$. 
\\
In particular, $\overline u_\delta$ converges weakly to $\overline u_\infty$ in $L^{\nu'}(\overline \Omega;C([0,T];\mathcal{V}'))$ and, for any $t$, $\overline u_\delta(t)$ converges weakly to $\overline u_\infty(t)$ in $L^{\nu'}(\overline \Omega;\mathcal{V}')$. 
Remember that $\overline u_\delta$ is bounded in $L^2(\overline \Omega;C([0,T];L^2(D)))$ to conclude that the weak convergence of $\overline u_\delta(t)$  to $\overline u_\infty(t)$ holds in $L^{2}(\overline \Omega;L^2(D))$.\\
Thanks to the growth assumption \eqref{croissance1} on $a$, up to a subsequence $a(\max(\overline{\psi},\overline{u}_\delta),\nabla \overline{u}_\delta)$ converges weakly to some $A_\infty$ in $L^{p'}(\overline\Omega; L^p(0,T;L^p(D)))$. Consequently, from 
\begin{align*}
&\overline u_\delta(t) + \delta \int_0^t \partial J(\overline u_\delta)\,ds - \int_0^t\Div a(\max(\overline \psi_\delta,\overline{u}_\delta),\nabla \overline{u}_\delta)\,ds -\frac1\varepsilon\int_0^t \penalisation{\overline{u}_\delta-\overline{\psi}_{\delta}}\,dx\,ds 
\\ 
=&
\overline u_0^{\delta} + \int_0^t \overline f_{\delta}\,ds + \int_0^t \overline{G}(\overline{u}_\delta)\, d\overline{W}_{\delta},
\end{align*}
and the above convergence results, we may deduce \eqref{230306_05}.
In particular, \cite[Theorem 4.2.5, p. 91]{Liu-Rock} yields $\overline{u}_\infty \in L^2(\overline\Omega; C([0,T]; L^2(D)))$, as well as It\^o's energy formula.\\
\end{proof}
\begin{lemma}
We have 
$
A_{\infty}=a(\max(\overline\psi_\infty,\overline{u}_\infty),\nabla \overline{u}_\infty)
$
 in $L^{p'}(\overline{\Omega}; L^{p'}(0,T;L^{p'}(D)))$. Especially, $A_{\infty}$ has a predictable representation and $A_{\infty} \in L^{p'}(\overline\Omega \times (0,T); L^{p'}(D))$.
\end{lemma}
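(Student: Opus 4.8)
The plan is to identify $A_\infty$ by the Minty--Browder monotonicity device; the only missing ingredient is the passage to the limit $\delta\to 0^+$ in the nonlinear elliptic term. Besides the convergences already recorded in the proof of Lemma~\ref{230406_02} (namely $\overline u_\delta\rightharpoonup\overline u_\infty$ weakly in $L^p(\overline\Omega_T;V)$, $a(\max(\overline\psi,\overline u_\delta),\nabla\overline u_\delta)\rightharpoonup A_\infty$ weakly in $L^{p'}(\overline\Omega_T;L^{p'}(D))$, $\overline u_\delta\to\overline u_\infty$ a.e.\ in $\overline\Omega_T\times D$ and $\overline u_\delta(t)\rightharpoonup\overline u_\infty(t)$ weakly in $L^2(\overline\Omega;L^2(D))$ for every $t$), I would first strengthen the compactness. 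Since by Lemma~\ref{lem_estimations} the family $(\overline u_\delta)_\delta$ is, $\overline{\mathds P}$-a.s., bounded in $L^p(0,T;V)\cap W^{\alpha,\nu'}(0,T;\mathcal{V}')$ and the chain $V\hookrightarrow\hookrightarrow L^p(D)\hookrightarrow\mathcal{V}'$ holds, the Aubin--Lions--Simon lemma (\cite[Cor.~7]{Simon}, exactly as in the Tightness section) yields $\overline u_\delta\to\overline u_\infty$ strongly in $L^p(0,T;L^p(D))$, $\overline{\mathds P}$-a.s.; combining this with the uniform bounds of Lemma~\ref{lem_estimations} and Vitali's theorem, $\overline u_\delta\to\overline u_\infty$ strongly in $L^p(\overline\Omega_T;L^p(D))$, so in particular $(|\overline u_\delta|^p)_\delta$ is equi-integrable on $\overline\Omega_T\times D$.

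Next I would set up two energy identities. Applying It\^o's formula (\cite[Thm.~4.2.5]{Liu-Rock}) to $t\mapsto\|\overline u_\delta(t)\|_{L^2}^2$ and solving for the elliptic term gives an expression for $\erws{\int_0^T\!\!\int_D a(\max(\overline\psi,\overline u_\delta),\nabla\overline u_\delta)\cdot\nabla\overline u_\delta\,dx\,dt}$ in terms of the initial datum, the source $\langle\overline f,\overline u_\delta\rangle$, the penalization contribution $\tfrac1\varepsilon\penalisation{\overline u_\delta-\overline\psi}\,\overline u_\delta$, the Hilbert--Schmidt term $\|\overline G(\overline u_\delta)\|_{HS}^2$, the value at $t=T$, and the nonnegative term $\delta\int_0^t b(\overline u_\delta,\overline u_\delta)$ (the It\^o correction being the martingale term, which has zero expectation). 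Doing the same for $t\mapsto\|\overline u_\infty(t)\|_{L^2}^2$ from \eqref{230306_05} (licit by Lemma~\ref{230406_02}) gives a similar expression for $\erws{\int_0^T\!\!\int_D A_\infty\cdot\nabla\overline u_\infty\,dx\,dt}$. Passing to the $\limsup$ in the $\overline u_\delta$-identity: the penalization and the Hilbert--Schmidt terms converge by the strong $L^2$-type convergence of $\overline u_\delta$ and the Lipschitz dependence in H$_3$ (Vitali); the source term converges by weak convergence in $L^p(\overline\Omega_T;V)$; the $\delta$-term, being nonnegative, only helps in the $\limsup$; and $\liminf_\delta\erws{\|\overline u_\delta(T)\|_{L^2}^2}\ge\erws{\|\overline u_\infty(T)\|_{L^2}^2}$ by weak lower semicontinuity of the $L^2(\overline\Omega;L^2(D))$-norm (Lemma~\ref{230406_02}). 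Comparing the two limits yields the crucial inequality
\[
\limsup_{\delta\to 0^+}\erws{\int_0^T\!\!\int_D a(\max(\overline\psi,\overline u_\delta),\nabla\overline u_\delta)\cdot\nabla\overline u_\delta\,dx\,dt}\ \le\ \erws{\int_0^T\!\!\int_D A_\infty\cdot\nabla\overline u_\infty\,dx\,dt}.
\]

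Finally I would run Minty's argument. For arbitrary $w\in L^p(\overline\Omega_T;V)$, monotonicity H$_{2,2}$ gives $\erws{\int_0^T\!\!\int_D\bigl(a(\max(\overline\psi,\overline u_\delta),\nabla\overline u_\delta)-a(\max(\overline\psi,\overline u_\delta),\nabla w)\bigr)\cdot(\nabla\overline u_\delta-\nabla w)\,dx\,dt}\ge 0$, and I would pass to the limit term by term: the first cross term is controlled by the crucial inequality above; $a(\max(\overline\psi,\overline u_\delta),\nabla\overline u_\delta)\rightharpoonup A_\infty$ tested against the fixed $\nabla w\in L^p$; and for the terms containing $a(\max(\overline\psi,\overline u_\delta),\nabla w)$ one invokes H$_{2,4}$, which bounds $|a(\max(\overline\psi,\overline u_\delta),\nabla w)-a(\max(\overline\psi,\overline u_\infty),\nabla w)|$ by $\bigl(C_3^{a}|\nabla w|^{p-1}+l\bigr)|\overline u_\delta-\overline u_\infty|$; combined with the growth bound \eqref{croissance1}, the a.e.\ convergence of $\overline u_\delta$ and the equi-integrability of $(|\overline u_\delta|^p)_\delta$, a Vitali argument gives $a(\max(\overline\psi,\overline u_\delta),\nabla w)\to a(\max(\overline\psi,\overline u_\infty),\nabla w)$ strongly in $L^{p'}(\overline\Omega_T;L^{p'}(D))$, which pairs with the weakly convergent $\nabla\overline u_\delta$. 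This produces $\erws{\int_0^T\!\!\int_D\bigl(A_\infty-a(\max(\overline\psi,\overline u_\infty),\nabla w)\bigr)\cdot(\nabla\overline u_\infty-\nabla w)\,dx\,dt}\ge 0$ for every $w\in L^p(\overline\Omega_T;V)$; taking $w=\overline u_\infty-t\,v$ for $v\in L^p(\overline\Omega_T;V)$, dividing by $t>0$ and letting $t\to 0^+$, the Carath\'eodory property H$_{2,1}$ together with \eqref{croissance1} (dominated convergence) gives $A_\infty=a(\max(\overline\psi,\overline u_\infty),\nabla\overline u_\infty)$. Predictability of $A_\infty$ then follows from that of $\overline u_\infty$ and $\overline\psi$, and $A_\infty\in L^{p'}(\overline\Omega_T;L^{p'}(D))$ from \eqref{croissance1} since $\overline u_\infty\in L^p(\overline\Omega_T;V)$. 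The main obstacle is the middle step: $a$ is monotone only in its last argument, so the non-monotone dependence on the second argument through $\max(\overline\psi,\overline u_\delta)$ must be absorbed via H$_{2,4}$, and this is what forces the promotion of the merely a.e.\ / a.s.\ compactness of $(\overline u_\delta)_\delta$ to genuine strong convergence in $L^p(\overline\Omega_T;L^p(D))$ before Vitali's theorem applies; this, together with the legitimacy of It\^o's energy equality for the limit $\overline u_\infty$ (Lemma~\ref{230406_02}), is where the real work lies.
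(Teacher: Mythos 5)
You follow the same high-level route as the paper -- It\^o energy identity, a limsup comparison, then Minty's monotonicity trick -- but the way you pass to the limsup is genuinely different, and that is where the argument has a gap.

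The paper applies It\^o's formula with an exponential weight $e^{\beta s}$ and chooses $\beta$ larger than the Lipschitz constant of $\overline G$, so that $\frac12\|\overline G(\overline u_\delta)-\overline G(\overline u_\infty)\|_{HS}^2-\frac\beta2\|\overline u_\delta-\overline u_\infty\|_{L^2}^2$ is nonpositive and may simply be discarded in the $\limsup$; the remaining cross terms need only the available weak convergences, and the penalization term is handled by Fatou. In this way the crucial inequality is obtained without ever needing $\overline u_\delta\to\overline u_\infty$ strongly in $L^2(\overline\Omega\times(0,T);L^2(D))$. You dispense with the weight and instead claim strong convergence of $\overline u_\delta$ in $L^p(\overline\Omega_T;L^p(D))$ (hence in $L^2(\overline\Omega_T;L^2(D))$), obtained from Aubin--Lions--Simon plus Vitali. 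This is not available from what has been established. The Skorokhod construction gives $\overline{\mathds P}$-a.s.\ convergence only in $L^4(0,T;L^2(D))$; the estimates of Lemma~\ref{lem_estimations} are bounds in expectation, not pathwise bounds, so one cannot apply Aubin--Lions $\omega$-by-$\omega$ to a uniform-in-$\delta$ family; and even if one had pathwise precompactness in $L^p(0,T;L^p(D))$, the Vitali step over $\overline\Omega$ would require equi-integrability of $\|\overline u_\delta\|_{L^p(0,T;L^p(D))}^p$, which is only bounded in $L^1(\overline\Omega)$. The paper itself records, in the proof of Lemma~\ref{230406_02}, that Vitali only gives convergence in $L^s(\overline\Omega;L^4(0,T;L^2(D)))$ for $s<2$, precisely because the exponent cannot be pushed to $2$. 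The same lack of equi-integrability of $|\overline u_\delta|^p$ over $\overline\Omega\times(0,T)\times D$ undercuts your Minty step: the strong $L^{p'}$-convergence of $a(\max(\overline\psi,\overline u_\delta),\nabla w)$ hinges on it, and invoking the unproven strong $L^p(\overline\Omega_T;L^p(D))$ convergence does not close the circle. The paper avoids this entirely by delegating the identification to the stochastic Minty machinery of Roubicek/Vallet--Zimmermann, which requires only a.e.\ convergence of $\overline u_\delta$, weak convergence of the gradients and the limsup inequality. In short, either the exponential-weight decoupling or some other device supplying the missing equi-integrability is needed; as written, the strong $L^2(\overline\Omega_T;L^2(D))$ and $L^p(\overline\Omega_T;L^p(D))$ convergences you rely on are not justified.
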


\begin{proof}
By It\^o's energy formula with exponential weight, for any $\beta \in \R^+$ and any $t \in [0,T]$,
\begin{align*}
&\frac{e^{-\beta t}}2\erws{\|\overline u_\delta(t)\|^2_{L^2}} + \delta \erws{\int_0^t \!\!\!e^{-\beta s} b(\overline u_\delta,\overline u_\delta)\,ds} + \erws{\int_0^t \!\!\! e^{-\beta s} \!\!\!\!\int_{D}\!\!\! a(\max(\overline{\psi}_{\delta},\overline{u}_\delta),\nabla \overline{u}_\delta)\nabla \overline u_\delta -\frac1\varepsilon\penalisation{\overline{u}_\delta-\overline{\psi}_{\delta}}\overline u_\delta \,dx\,ds} \\ 
=&\erws{\int_0^te^{-\beta s}\langle \overline f_{\delta},\overline u_\delta\rangle\,ds} + \frac12\erws{\int_0^t e^{-\beta s}\|\overline{G}_{\delta}(\overline{u}_\delta)\|_{HS}^2\,ds} + \frac12\erws{\|\overline u_0^{\delta}\|^2_{L^2}}-\frac\beta2\erws{\int_0^t e^{-\beta s}\|\overline u_\delta\|_{L^2}^2\,ds}.
\end{align*}
Note that: 
\begin{align*}
&b(\overline u_\delta,\overline u_\delta) \geq 0; \qquad  
\erws{\int_0^te^{-\beta s}\langle \overline f_{\delta},\overline u_\delta\rangle\,ds}  \to  \erws{\int_0^te^{-\beta s}\langle \overline f_{\infty},\overline u_\infty\rangle\,ds};
\\ &
\erws{\|\overline u_\infty(t)\|^2_{L^2}}\leq \liminf_{\delta\rightarrow 0^+} \erws{\|\overline u_\delta(t)\|^2_{L^2}}.
\end{align*}
Moreover, we have
\begin{align*}
-\frac1\varepsilon\penalisation{\overline{u}_\delta-\overline{\psi}_{\delta}}\overline u_\delta =\frac1\varepsilon|(\overline u_\delta -\overline \psi_{\delta} )^-|^2 -\frac1\varepsilon\penalisation{\overline{u}_\delta-\overline{\psi}_{\delta}}\overline \psi_{\delta}.
\end{align*}
By equality of laws, the weak convergence $\overline u_{\delta} \rightharpoonup \overline u_{\infty}$ in $L^2(\overline \Omega; L^2(0,T;L^2(D)))$ and
a.s. convergences, Fatou's lemma yields 
\begin{align*}
\liminf_{\delta\rightarrow 0^+ }-\frac1\varepsilon\erws{\int_0^t \int_D e^{-\beta s}  \penalisation{\overline{u}_\delta-\overline{\psi}_{\delta}}\overline u_\delta \,dx\,ds} \geq -\frac1\varepsilon\erws{\int_0^t \int_D e^{-\beta s}  \penalisation{\overline{u}_\infty-\overline{\psi}_{\infty}}\overline u_\infty \,dx\,ds}
\end{align*}
and finally
\begin{align*}
&\frac12\erws{\int_0^t e^{-\beta s}\|\overline{G}_{\delta}(\overline{u}_\delta)\|_{HS}^2\,ds}-\frac\beta2\erws{\int_0^t e^{-\beta s}\|\overline u_\delta\|_{L^2}^2\,ds}
\\ =&
\frac12\erws{\int_0^t e^{-\beta s}\Big(\|\overline{G}_{\delta}(\overline{u}_\delta)-\overline{G}_{\infty}(\overline{u}_\infty)\|_{HS}^2-\beta \|\overline u_\delta-\overline u_\infty\|_{L^2}^2\Big)\,ds}
\\& +
\erws{\int_0^t e^{-\beta s}\Big(\langle\overline{G}_{\delta}(\overline{u}_\delta),\overline{G}_{\infty}(\overline{u}_\infty)\rangle-\beta (\overline u_\delta,\overline u_\infty)\Big)\,ds}
\\&
-\frac12\erws{\int_0^t e^{-\beta s}\Big(\|\overline{G}_{\infty}(\overline{u}_\infty)\|_{HS}^2-\beta \|\overline u_\infty\|_{L^2}^2\Big)\,ds}.
\end{align*}
Then, choosing $\beta$ greater than the Lipschitz constant of $\overline G$, one gets by passing to the superior limit, 
\begin{align*}
&\frac{e^{-\beta t}}2\erws{\|\overline u_\infty(t)\|^2_{L^2}}  + \limsup_{\delta\rightarrow 0^+ }\erws{\int_0^t \!\!\!\! e^{-\beta s}\!\!\! \int_{D}\!\!\! a(\max(\overline{\psi}_{\delta},\overline{u}_\delta),\nabla \overline{u}_\delta)\nabla \overline u_\delta \,dx\,ds} - \frac1\varepsilon \erws{\int_0^t\!\!\! \!e^{-\beta s}\!\!\! \int_{D} \!\!\penalisation{\overline{u}_\infty-\overline{\psi}_{\infty}}\overline u_\infty \,dx\,ds}
\\  &\leq
\erws{\int_0^t\!\!e^{-\beta s}\langle \overline f_{\infty},\overline u_\infty\rangle\,ds} + \frac12\erws{\int_0^t \!\!e^{-\beta s}\|\overline{G}_{\infty}(\overline{u}_\infty)\|_{HS}^2\,ds} + \frac12\erws{\|\overline u_0^{\infty}\|^2_{L^2}}-\frac\beta2\erws{\int_0^t \!\!e^{-\beta s}\|\overline u_\infty\|_{L^2}^2\,ds}.
\end{align*}
Since It\^o's energy formula with exponential weight holds for the limit process $\overline u_\infty$, one has also 
\begin{align*}
&\frac{e^{-\beta t}}2\erws{\|\overline u_\infty(t)\|^2_{L^2}}  +\erws{\int_0^t \!\! e^{-\beta s} \int_{D} A_\infty\nabla \overline u_\infty \,dx\,ds} - \frac1\varepsilon \erws{\int_0^t e^{-\beta s} \int_{D} \penalisation{\overline{u}_\infty-\overline{\psi}_{\infty}}\overline u_\infty \,dx\,ds}
\\  &=
\erws{\int_0^te^{-\beta s}\langle \overline f_{\infty},\overline u_\infty\rangle\,ds} + \frac12\erws{\int_0^t e^{-\beta s}\|\overline{G}_{\infty}(\overline{u}_\infty)\|_{HS}^2\,ds} + \frac12\erws{\|\overline u_0^{\infty}\|^2_{L^2}}-\frac\beta2\erws{\int_0^t e^{-\beta s}\|\overline u_\infty\|_{L^2}^2\,ds}
\end{align*}
and, by subtraction and setting $t=T$
\begin{align*}
\limsup_{\delta\rightarrow 0^+ } \erws{\int_0^Te^{-\beta t}\int_{D} a(\max(\overline{\psi}_{\delta},\overline{u}_\delta),\nabla \overline{u}_\delta)\nabla \overline u_\delta \,dx\,dt}
\leq  
\erws{\int_0^Te^{-\beta t}\int_{D} A_\infty \nabla \overline u_\infty \,dx\,dt}. 
\end{align*}
We recall that, a.e. in $\overline\Omega \times (0,T)$, $\overline u_\delta(\omega,t)$ converges to $\overline{u}_\infty(\omega,t)$ in $L^2(D)$ and that it is bounded in \linebreak $L^2(\overline\Omega; L^4(0,T;L^2(D)))$ and $L^p(\overline\Omega \times (0,T);V)$. Thus, following \cite[Lemma 8.8, p. 208]{Roubicek}, revisited in \cite{Vallet-Zimm1} in the stochastic framework, one is able to apply a stochastic version of Minty's trick to identify $A_\infty=a(\max(\overline{\psi}_{\infty},\overline{u}_\infty),\nabla \overline{u}_\infty)$; the idea being that under suitable assumptions, if an operator is pseudomonotone on $V$, it will be pseudomonotone on $L^p(\overline\Omega \times (0,T);V)$ as well.  
\end{proof}

\subsection{Uniqueness and existence of strong solutions}

Let $\eps >0$ and let us first prove the following  result of uniqueness.
\begin{prop}\label{uniqueness}
 Assume that $W$ is a $(\mathcal{F}_t)$-adapted $Q$-Wiener process with values in $U$ with respect to the stochastic basis $(\Omega,\mathcal{F},(\mathcal{F}_t),\mathds{P})$ and $(\psi, f, G)$ given. If $u_1^\eps, u_2^\eps$ are solutions to \eqref{penalization} with respect  to the initial values $u_{01}$ and $u_{02}$ in $L^2(D)$ respectively on $(\Omega,\mathcal{F},(\mathcal{F}_t),\mathds{P})$, then we have
 \begin{align}
 \erww{\int_D\vert u_1^\eps(t)-u_2^\eps(t)\vert \,dx} \leq \erww{\int_D\vert u_{01}-u_{02}\vert \,dx} 
 \end{align}
 for all $t \in [0,T]$.
\end{prop}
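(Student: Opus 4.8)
The plan is to derive the contraction from an It\^o estimate applied to a regularised absolute value of the difference $w:=u_1^\eps-u_2^\eps$. By the regularity of the two solutions, $w\in L^2(\Omega;C([0,T];L^2(D)))\cap L^p(\Omega_T;V)$ and, the $f$-terms cancelling, $w$ solves in $V'$, $\mathds{P}$-a.s., with $w(0)=u_{01}-u_{02}$,
\begin{align*}
dw=\tfrac{1}{\eps}\big[(u_1^\eps-\psi)^--(u_2^\eps-\psi)^-\big]\,dt+\Div\big[\tilde a(u_1^\eps,\nabla u_1^\eps)-\tilde a(u_2^\eps,\nabla u_2^\eps)\big]\,dt+\big[\tilde G(u_1^\eps)-\tilde G(u_2^\eps)\big]\,dW.
\end{align*}
First I would fix a Kruzhkov-type regularisation $S_\eta\in C^2(\R)$ of $|\cdot|$: even, convex, $0\le S_\eta\le|\cdot|$, $S_\eta\to|\cdot|$ pointwise, with $S_\eta'$ odd, non-decreasing, $|S_\eta'|\le1$, $S_\eta'(0)=0$, $\supp S_\eta''\subset[-\eta,\eta]$, and $|r|\,S_\eta''(r)\le C$, $r^2S_\eta''(r)\le\eta$ for all $r$ (for example $S_\eta(r)=r^2/(2\eta)$ for $|r|\le\eta$ and $|r|-\eta/2$ otherwise). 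Since $S_\eta'(0)=0$ and $S_\eta'$ is Lipschitz, $S_\eta'(w(t))\in V$ with $\nabla S_\eta'(w(t))=S_\eta''(w(t))\nabla w(t)$, so the elliptic term may be tested against $S_\eta'(w)$ in the $V'$--$V$ duality.

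The core step is to apply It\^o's formula to $\Phi(v)=\int_D S_\eta(v)\,dx$ along $w$ -- legitimate because $\Phi$ is $C^2$ with bounded first and second derivatives and $S_\eta'(w)\in V$; alternatively, one mollifies the equation in space, uses the standard $L^2$-It\^o formula, and removes the mollification. Taking expectations (the stochastic integral being a genuine martingale of zero mean, by $|S_\eta'|\le1$, Cauchy--Schwarz and H$_{3,2}$) gives, for every $t\in[0,T]$,
\begin{align*}
\erww{\int_D S_\eta(w(t))\,dx}&=\erww{\int_D S_\eta(u_{01}-u_{02})\,dx}+\tfrac{1}{\eps}\erww{\int_0^t\!\!\int_D\big[(u_1^\eps-\psi)^--(u_2^\eps-\psi)^-\big]S_\eta'(w)\,dx\,ds}\\
&\quad-\erww{\int_0^t\!\!\int_D\big[\tilde a(u_1^\eps,\nabla u_1^\eps)-\tilde a(u_2^\eps,\nabla u_2^\eps)\big]\cdot\nabla w\;S_\eta''(w)\,dx\,ds}\\
&\quad+\tfrac{1}{2}\erww{\int_0^t\!\!\int_D S_\eta''(w)\sum_{k}\big|h_k(\cdot,\max(u_1^\eps,\psi))-h_k(\cdot,\max(u_2^\eps,\psi))\big|^2\,dx\,ds}.
\end{align*}

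Then I would control the three integrals on the right. The penalisation integral is non-positive, since $r\mapsto(r)^-$ is non-increasing, so $(u_1^\eps-\psi)^--(u_2^\eps-\psi)^-$ has the sign opposite to $w$, whereas $S_\eta'(w)$ has the sign of $w$; I discard it. For the elliptic integral I split $\tilde a(u_1^\eps,\nabla u_1^\eps)-\tilde a(u_2^\eps,\nabla u_2^\eps)$ as $\big[a(\max(u_1^\eps,\psi),\nabla u_1^\eps)-a(\max(u_1^\eps,\psi),\nabla u_2^\eps)\big]+\big[a(\max(u_1^\eps,\psi),\nabla u_2^\eps)-a(\max(u_2^\eps,\psi),\nabla u_2^\eps)\big]$: the first bracket tested against $\nabla w=\nabla u_1^\eps-\nabla u_2^\eps$ is $\ge0$ by H$_{2,2}$ and $S_\eta''(w)\ge0$, hence contributes a non-positive amount; the second bracket is estimated through H$_{2,4}$ and $|\max(u_1^\eps,\psi)-\max(u_2^\eps,\psi)|\le|w|$, so that, using $|r|\,S_\eta''(r)\le C$ with $\supp S_\eta''\subset[-\eta,\eta]$, its contribution is at most $C\,\erww{\int_0^t R_\eta(s)\,ds}$ with $R_\eta(s):=\int_{\{|w|\le\eta\}}(C_3^a|\nabla u_2^\eps|^{p-1}+l)\,|\nabla w|\,dx$. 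By H$_{3,2}$ and $r^2S_\eta''(r)\le\eta$, the It\^o correction is at most $\tfrac{C_\sigma}{2}|D|\eta T$. Altogether,
\begin{align*}
\erww{\int_D S_\eta(w(t))\,dx}&\le\erww{\int_D S_\eta(u_{01}-u_{02})\,dx}+C\,\erww{\int_0^t R_\eta(s)\,ds}+\tfrac{C_\sigma}{2}|D|\eta T.
\end{align*}

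Finally I let $\eta\to0^+$: by dominated convergence (with majorants $|w(t)|$ and $|u_{01}-u_{02}|$ in $L^1(\Omega\times D)$) the left-hand side tends to $\erww{\|w(t)\|_{L^1(D)}}$ and the first term on the right to $\erww{\|u_{01}-u_{02}\|_{L^1(D)}}$; the last term vanishes; and $R_\eta(s)\to0$ pointwise in $(\omega,s)$ because $\mathbf{1}_{\{|w|\le\eta\}}\to\mathbf{1}_{\{w=0\}}$ and $\nabla w=0$ a.e.\ on $\{w=0\}$, while $0\le R_\eta(s)\le\|C_3^a|\nabla u_2^\eps|^{p-1}+l\|_{L^{p'}(D)}\,\|\nabla w\|_{L^p(D)}\in L^1(\Omega_T)$ since $u_1^\eps,u_2^\eps\in L^p(\Omega_T;V)$, so that $\erww{\int_0^t R_\eta(s)\,ds}\to0$ again by dominated convergence. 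This yields the asserted $L^1$-contraction for all $t\in[0,T]$. I expect the delicate point to be precisely the vanishing of the H$_{2,4}$ term $R_\eta$: it forces the choice of $S_\eta$ with $|r|\,S_\eta''(r)$ bounded and support shrinking to $\{0\}$, and it relies both on the Sobolev regularity $u_i^\eps\in L^p(\Omega_T;V)$ and on $\nabla w=0$ a.e.\ on $\{w=0\}$; thanks to the monotonicity structure, no Gr\"onwall argument is needed at the end.
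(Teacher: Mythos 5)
Your proof is correct and follows essentially the same strategy as the paper's: apply It\^o's formula with a $C^2$ regularised $L^1$-functional to the difference $w=u_1^\eps-u_2^\eps$, drop the penalisation term by monotonicity, split the elliptic term via H$_{2,2}$ and H$_{2,4}$, and send the regularisation parameter to $0$ using that $\nabla w=0$ a.e.\ on $\{w=0\}$ so the H$_{2,4}$ contribution vanishes. The only cosmetic difference is that for the vanishing of that H$_{2,4}$ term you invoke two layers of dominated convergence, whereas the paper first applies H\"older to bound it by $C\,\erww{\int_{\{|w|\le\delta\}}|\nabla w|^p}^{1/p}$ and lets the shrinking domain do the work; both hinge on the same Stampacchia fact.
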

\begin{proof}
For simplicity, we will write $u_i$ instead $u_i^\eps ,\quad  i=1,2$. For $\delta >0$, define the  regular approximation of the absolute value $N_ \delta(r):=\int_0^r\eta_\delta(s)ds$, where $\eta_\delta$ is an odd, regular, nondecreasing Lipschitz approximation of the sign function, vanishing at $0$, with Lipschitz constant $\dfrac{2c}{e\delta}$ where $c$ is a positive constant (see the proof of \cite[Prop. 5]{Vallet-Zimm2}). Applying It\^o's formula with $F_{\delta}(u)=\int_D N_\delta(u)\,dx$ (see, \textit{e.g.}, \cite[Th. 5.3]{Pardoux}) to the process
\begin{align*}
u_1(t)-u_2(t)&-\dfrac{1}{\varepsilon}\int_0^t(\penalisation{u_1-\psi}- \penalisation{u_2-\psi})\,ds-\int_0^t(\Div{\tilde{{a}}(u_1,\nabla u_1)}-\Div\tilde{{a}}(u_2,\nabla u_2))\,ds\\
 &=(u_{01}-u_{02})+\int_0^t(\tilde{{G}}(u_1)-\tilde{{G}}(u_2))\,dW
 \end{align*}
 yields
 \begin{align*}
&\int_DN_\delta(u_1-u_2)(t)\,dx-\int_DN_\delta(u_{01}-u_{02})\,dx-\dfrac{1}{\varepsilon}\int_0^t\int_D\left(\penalisation{u_1-\psi}- \penalisation{u_2-\psi}\right)N_\delta^\prime(u_1-u_2)\,dx\,ds\\
&+\int_0^t \int_D\left(\tilde{{a}}(u_1,\nabla u_1)- \tilde{{a}}(u_2,\nabla u_2)\right)\cdot \nabla(u_1-u_2)N_\delta^{\prime\prime}(u_1-u_2)\,dx\,ds\\
=&\int_0^t \left(\tilde{{G}}(u_1)-\tilde{{G}}(u_2)\,dW,N_\delta^\prime(u_1-u_2)\right)_H\\
&
+\dfrac{1}{2}\operatorname{Tr}\int_0^t F''_{\delta}(u_1 - u_2)(\tilde{G}(u_1) - \tilde{G}(u_2))Q(\tilde{G}(u_1) - \tilde{G}(u_2))^{\ast}\,ds
\\
\Leftrightarrow& I_1+I_2+I_3=I_4+I_5,
\end{align*}
where, with a slight abuse of notation, we still denote the multiplication operator associated to $F''_{\delta}(u_1-u_2)$ by $F''_{\delta}(u_1-u_2)$.
Since, for any $t\in[0,T], N_\delta(u_1-u_2)(t)$ converges to $\vert (u_1-u_2)(t)\vert$ for $\delta \to 0^+$ a.e. in $\Omega\times D$ and $\vert N_\delta(u_1-u_2)(t)\vert \leq \vert (u_1-u_2)(t)\vert$ for all $\delta >0$ a.s. in $\Omega\times D$, it follows that
\begin{align*}
\lim_{\delta \to 0^+}\erww{I_1}=\erww{\int_D\vert (u_1-u_2)(t)\vert\,dx}-\erww{\int_D \vert u_{01}-u_{02}\vert \,dx}.
 \end{align*}
$I_2$ is non-negative by monotonicity of the penalization procedure and since $N_\delta$ is convex, and by using $H_{2,2}$, it follows that
\begin{align*}
&\left(\tilde{{a}}(u_1,\nabla u_1)- \tilde{{a}}(u_2,\nabla u_2)\right)\cdot \nabla(u_1-u_2)N_\delta^{\prime\prime}(u_1-u_2)\\
&\geq \left(\tilde{{a}}(u_1,\nabla u_2)- \tilde{{a}}(u_2,\nabla u_2)\right)\cdot\nabla(u_1-u_2)N_\delta^{\prime\prime}(u_1-u_2).
\end{align*}
 Note that, by using $H_{2,4}$, one has
 \begin{align*}
&\erww{\int_0^t\int_D\left|\left(\tilde{{a}}(u_1,\nabla u_2)- \tilde{{a}}(u_2,\nabla u_2)\right)\cdot \nabla(u_1-u_2)N_\delta^{\prime\prime}(u_1-u_2)\right| \,dx\,ds} \\
\leq& \frac{2c}{e\delta}\erww{\int_{\{ \vert u_1-u_2 \vert \leq \delta\}}\left( l+C_3^{a}\vert \nabla u_2\vert^{p-1} \right)\vert u_1-u_2\vert \vert \nabla (u_1-u_2)\vert \,dx\,ds}\\
\leq& \frac{2c}{e}\erww{\int_{\{ \vert u_1-u_2 \vert \leq \delta\}}\left( l+C_3^{a}\vert \nabla u_2\vert^{p-1} \right) \vert \nabla (u_1-u_2)\vert \,dx\,ds}.
 \end{align*}
 H\"older's inequality  ensures that
\begin{align*}
&\erww{\int_0^t\int_D\vert\left(\tilde{{a}}(u_1,\nabla u_2)- \tilde{{a}}(u_2,\nabla u_2)\right)\cdot \nabla(u_1-u_2)N_\delta^{\prime\prime}(u_1-u_2)\vert \,dx\,ds}\\
\leq& C\erww{\int_{\{ \vert u_1-u_2 \vert \leq \delta\}}\vert \nabla (u_1-u_2)\vert^p \,dx\,ds}^{1/p} \overset{\delta \to 0^+}{\longrightarrow} 0
 \end{align*}
 where $C\geq0$ is a constant not depending on $\delta>0$. Therefore
 \begin{align*}
 \liminf_{\delta \to 0^+}\erww{I_3} \geq 0.
 \end{align*}
Note that 
\begin{align*}
\erww{I_4}=\erww{\int_0^t\left(\tilde{{G}}(u_1)-\tilde{{G}}(u_2)\,dW,N_\delta^\prime(u_1-u_2) \right)_H}=0
\end{align*} 
for all $t\in [0,T]$. 

According to \cite[Proposition B.0.10.]{Liu-Rock} we have
\begin{align*}
I_5 = &\dfrac{1}{2}\operatorname{Tr}\int_0^t F''_{\delta}(u_1 - u_2)(\tilde{G}(u_1) - \tilde{G}(u_2))Q(\tilde{G}(u_1) - \tilde{G}(u_2))^{\ast}\,ds\\
= &\dfrac{1}{2}\operatorname{Tr}\int_0^t ((\tilde{G}(u_1) - \tilde{G}(u_2))Q^{1/2})^{\ast}F''_{\delta}(u_1 - u_2)(\tilde{G}(u_1) - \tilde{G}(u_2))Q^{1/2}\,ds \\
=&\dfrac{1}{2}\int_0^t \sum\limits_{k=1}^{\infty} \left(\tilde{G}(u_1) - \tilde{G}(u_2)Q^{1/2})^{\ast}F''_{\delta}(u_1 - u_2)(\tilde{G}(u_1) - \tilde{G}(u_2))Q^{1/2}(e_k), e_k \right)_U \, ds \\
= &\dfrac{1}{2}\int_0^t \sum\limits_{k=1}^{\infty} \left(F''_{\delta}(u_1 - u_2)(\tilde{G}(u_1) - \tilde{G}(u_2))Q^{1/2}(e_k), (\tilde{G}(u_1) - \tilde{G}(u_2))Q^{1/2}(e_k) \right)_H \, ds \\
=& \dfrac{1}{2}\int_0^t \int_D N_{\delta}''(u_1 - u_2) \sum\limits_{k=1}^{\infty} | (h_k(\max(\psi,u_1)) - h_k(\max(\psi,u_2))|^2 \, dx \, ds.
\end{align*}

By  using $ H_{3,2}$ it follows that
\begin{align*}
\vert\erww{I_5}\vert &=\frac{1}{2}\left|\erww{\int_0^t\int_DN_\delta^{\prime\prime}(u_1-u_2) \sum_{k=1}^\infty\vert h_k(\max(u_1,\psi))-h_k(\max(u_2,\psi))\vert^2\,dx\,ds}\right|\\
 &\leq C_{\sigma}\frac{c}{e\delta}\erww{\int_{\{ \vert u_1-u_2 \vert \leq \delta\}}\vert u_1-u_2\vert^2 \,dx\,ds} \leq C\delta \overset{\delta \to 0^+}{\longrightarrow} 0
 \end{align*}
and  the result holds.
\end{proof}
In particular, Proposition \ref{uniqueness} implies that whenever a strong solution to \eqref{penalization} exists, it is unique. Moreover, if $\mu^{1,2}$ is the joint law of $(u_1,u_2)$ on $L^4(0,T;L^2(D))^2$, then Proposition \ref{uniqueness} implies that
\begin{align*}
\mu^{1,2}\left\{  (\xi,\varsigma) \in L^4(0,T;L^2(D)^2 \ | \ \xi=\varsigma  \right\}=1.
\end{align*}
Let us recall the following lemma, which  contains a suitable necessary and sufficient condition to get existence of strong solutions, see \cite[Lemma 1.1.]{GK96}.
\begin{lemma}\label{strongsolutionlemma}
 Let $V$ be a Polish space equipped with the Borel $\sigma$-algebra. A sequence of $V$-valued random variables $(X_n)$  converges in probability if and only if for every pair of subsequences $X_n$ and $X_m$ there exists a joint subsequence 
 $(X_{n_k}, X_{m_k})$ which converges for $k \to \infty$ in law to a probability measure $\mu$ such that 
 \begin{align*}
 \mu(\{ (w,z)\in V\times V \vert \quad w=z\})=1.
 \end{align*}
\end{lemma}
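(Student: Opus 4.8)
The plan is to reduce both implications to the standard fact that, $V$ being Polish, one may fix a complete metric $d$ inducing its topology, and then the functional $\rho(X,Y):=\E[d(X,Y)\wedge 1]$ metrises convergence in probability for $V$-valued random variables, turning the space $L^0(\Omega;V)$ of equivalence classes of such variables into a \emph{complete} metric space. In particular, $(X_n)$ converges in probability if and only if it is Cauchy for $\rho$, and this Cauchy reformulation is what makes the equivalence transparent.

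First I would dispatch the easy implication ($\Rightarrow$). If $X_n\to X$ in probability, then for any pair of subsequences $(X_{n_k})_k$, $(X_{m_k})_k$ the pairs $(X_{n_k},X_{m_k})$ converge in probability in $V\times V$ to $(X,X)$, hence converge in law to $\mu:=\mathcal L(X,X)$, which is carried by the diagonal $\Delta:=\{(w,z)\in V\times V:\ w=z\}$; no passage to a further subsequence is even needed here.

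For the converse I would argue by contradiction. Assume $(X_n)$ is not Cauchy for $\rho$; then there are $\eps>0$ and subsequences $(X_{n_k})_k$, $(X_{m_k})_k$ such that
\[
\rho(X_{n_k},X_{m_k})=\E\big[d(X_{n_k},X_{m_k})\wedge 1\big]\geq \eps\qquad\text{for all }k.
\]
By the hypothesis of the lemma, along some joint subsequence $(X_{n_{k_j}},X_{m_{k_j}})_j$ the laws converge to a probability measure $\mu$ on $V\times V$ with $\mu(\Delta)=1$. Since $(w,z)\mapsto d(w,z)\wedge 1$ is bounded and continuous on $V\times V$, the definition of convergence in law gives
\[
\E\big[d(X_{n_{k_j}},X_{m_{k_j}})\wedge 1\big]\ \longrightarrow\ \int_{V\times V}\big(d(w,z)\wedge 1\big)\,d\mu(w,z)=0\qquad\text{as }j\to\infty,
\]
because $d(w,z)=0$ on $\Delta$ and $\mu(\Delta)=1$. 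This contradicts the uniform lower bound $\eps$, so $(X_n)$ is $\rho$-Cauchy and therefore converges in probability.

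\textbf{Main obstacle.} The only ingredient that is not completely routine is the completeness of $(L^0(\Omega;V),\rho)$ --- equivalently, the assertion that a sequence which is Cauchy in probability converges in probability --- and this is exactly where completeness of $V$ enters: one extracts, by a Borel--Cantelli argument, a subsequence converging $\mathds{P}$-almost surely, and then verifies that the whole sequence converges in probability to this limit. Everything else is merely the portmanteau theorem applied to the single bounded continuous test function $(w,z)\mapsto d(w,z)\wedge 1$.
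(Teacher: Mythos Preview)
Your proof is correct and complete. Note, however, that the paper does not actually prove this lemma: it is merely recalled with a citation to \cite[Lemma 1.1]{GK96} (Gy\"ongy--Krylov), so there is no ``paper's own proof'' to compare against. What you have written is essentially the original argument from that reference: metrise convergence in probability by the Ky Fan metric $\rho(X,Y)=\E[d(X,Y)\wedge 1]$, reduce the nontrivial direction to showing $(X_n)$ is $\rho$-Cauchy, and derive a contradiction by testing convergence in law against the single bounded continuous function $(w,z)\mapsto d(w,z)\wedge 1$. Your identification of the completeness of $L^0(\Omega;V)$ as the one nontrivial ingredient is accurate, and your sketch of its proof (Borel--Cantelli to extract an a.s.\ convergent subsequence, then upgrade) is the standard one.
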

As a consequence of Proposition \ref{uniqueness} we get the following result.
\begin{lemma}
 The sequence   $(u_\delta)_\delta$ converges in probability on $(\Omega,\mathcal{F},P)$ in $L^4(0,T;L^2(D))$ and there exists a unique strong solution to \eqref{penalization}.
\end{lemma}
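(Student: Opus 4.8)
The strategy is to apply the Gy\"ongy--Krylov criterion of Lemma \ref{strongsolutionlemma} to $(u_\delta)_\delta$, regarded as a family of $L^4(0,T;L^2(D))$-valued random variables on $(\Omega,\mathcal F,\mathds P)$, and then to identify its limit in probability as a strong solution of \eqref{penalization}. The pathwise uniqueness needed to feed Lemma \ref{strongsolutionlemma} is exactly Proposition \ref{uniqueness}.

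First I fix two subsequences of $(u_\delta)_\delta$, say $(u_{\delta_k})_k$ and $(u_{\delta'_k})_k$, and consider the joint laws, on the doubled version of the space $\mathcal X$ (i.e.\ $\mathcal X$ with the $u$- and $G$-slots duplicated), of
\[
\big(u_{\delta_k},\,u_{\delta'_k},\,G(\cdot,\max(\psi,u_{\delta_k})),\,G(\cdot,\max(\psi,u_{\delta'_k})),\,\psi,\,G(0),\,\sigma,\,W,\,f,\,u_0\big),
\]
where the shared data $(\psi,G(0),\sigma,W,f,u_0)$ appear only once. Tightness of this family follows verbatim from Section \ref{Tightness}: each slot is tight by Lemma \ref{lem_estimations} and Prokhorov's theorem, and the two $G$-slots are handled through the continuous map $\Theta$ exactly as there. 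Passing to a (not relabelled) subsequence, the Skorokhod--Jakubowski representation theorem \cite[Thm. C.1]{Brzezniak} provides a probability space $(\widehat\Omega,\widehat{\mathcal F},\widehat{\mathds P})$ carrying copies $(\widehat u_k,\widehat u'_k,\widehat G_k,\widehat G'_k,\widehat\psi,\widehat G_0,\widehat\sigma,\widehat W,\widehat f,\widehat u_0)$ with the same joint laws, together with limits $\widehat u,\widehat u'\in L^4(0,T;L^2(D))$ to which $\widehat u_k$, $\widehat u'_k$ converge $\widehat{\mathds P}$-a.s., and with $\widehat G_k\to\widehat G(\widehat u)$, $\widehat G'_k\to\widehat G(\widehat u')$ a.s.\ in $L^2(0,T;HS(H))$, where $\widehat G(v):=\widehat G_0+\widehat\sigma(\max(v,\widehat\psi))$.

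Next, following Lemma \ref{230306_lem01}, I construct a single filtration $(\widehat{\mathcal F}_t)_t$ on $(\widehat\Omega,\widehat{\mathcal F},\widehat{\mathds P})$ generated by the shared data and by both families $\widehat u_k$, $\widehat u'_k$ and then augmented, for which $\widehat W$ is a $Q$-Wiener process by the generalized L\'evy characterization of Lemma \ref{230105_lem2}, and for which both $\widehat u_k$ and $\widehat u'_k$ solve problem \eqref{Pblme_delta} with datum $\widehat u_0$ driven by $\widehat W$. Letting $k\to\infty$ exactly as in Lemma \ref{230406_02} and carrying out the subsequent stochastic Minty-type identification of the nonlinear term, both $\widehat u$ and $\widehat u'$ belong to $L^2(\widehat\Omega;C([0,T];L^2(D)))\cap L^p(\widehat\Omega\times(0,T);V)$ and solve the penalized problem \eqref{penalization} on the \emph{same} stochastic basis, with the same $\widehat W$, $\widehat\psi$, $\widehat f$, $\widehat G$ and the same initial value $\widehat u_0$. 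Proposition \ref{uniqueness} then gives $\widehat u=\widehat u'$ $\widehat{\mathds P}\otimes dt\otimes dx$-a.e., so the joint law of $(\widehat u_k,\widehat u'_k)$ on $L^4(0,T;L^2(D))^2$, which coincides with that of the corresponding joint subsequence of the original variables, converges to a measure supported on the diagonal $\{w=z\}$. By Lemma \ref{strongsolutionlemma}, $(u_\delta)_\delta$ converges in probability in $L^4(0,T;L^2(D))$ on $(\Omega,\mathcal F,\mathds P)$ to some limit $u^\eps$.

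Finally, along a subsequence $u_{\delta_j}\to u^\eps$ $\mathds P$-a.s.\ in $L^4(0,T;L^2(D))$; since the estimates of Lemma \ref{lem_estimations} are preserved in the limit, repeating on the \emph{original} basis the passage-to-the-limit argument of Lemma \ref{230406_02} together with the Minty identification of $a(\max(\psi,\cdot),\nabla\cdot)$ (now with the original $W$ and filtration, so that no further Skorokhod step is required) shows that $u^\eps$ is a solution of \eqref{penalization}, and its uniqueness is again Proposition \ref{uniqueness}. The principal difficulty is the bookkeeping in the doubled Skorokhod space: one must ensure that $(\psi,G(0),\sigma,W,f,u_0)$ is genuinely common to the two limit equations, that one filtration simultaneously makes $\widehat W$ a $Q$-Wiener process and keeps both $\widehat u_k,\widehat u'_k$ adapted and compatible with the It\^o integral, and that the stochastic Minty trick identifies the nonlinearities for both limit processes at once. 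Once the two limits are realised as solutions of \eqref{penalization} on one and the same stochastic basis, the $L^1$-contraction of Proposition \ref{uniqueness} closes the argument immediately.
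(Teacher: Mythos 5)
Your proposal is correct and takes exactly the route the paper intends: the paper only states the lemma as an immediate consequence of Proposition~\ref{uniqueness} and Lemma~\ref{strongsolutionlemma}, leaving the doubled Skorokhod construction, the common filtration from Lemma~\ref{230306_lem01}, and the Minty identification implicit, while you have spelled them out in the expected way.
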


\section{Existence of solution for the obstacle problem in the "regular case"}\label{sec-obstacle-pb}

Following Assumption $H_{5}$, $f - \partial_t \left(\psi - \int_0^{\cdot} G(\psi)\,dW\right)  +\Div a(\psi,\nabla \psi)=h=h^+-h^-$ belongs to the order dual $L^{p}(\Omega_T;V)^*$. In this subsection we impose an additional regularity on $h^-$, namely $h^- \in L^{\alpha}(\Omega_T; L^{\alpha}(D))$, where hereafter $\alpha:=\max(2,p')$. Our aim in this section is to prove the following theorem.

\begin{theorem}\label{bisexistence-regular}
Assume $H_1-H_6$ with $0 \leq h^- \in L^{\alpha}(\Omega_T; L^{\alpha}(D))$. There exists a unique pair of stochastic processes 
\[(u,\rho)\in L^p(\Omega;L^p(0,T;W^{1,p}_0(D))) \times L^\alpha(\Omega_T; L^\alpha( D))\]  
 such that
 \begin{enumerate}
  \item $u \in L^2(\Omega; C([0,T]; L^2(D)))$ is predictable,
  \item    $u(0,\cdot)=u_0$ a.s. in $L^2(D)$ and $u\geq \psi$,
  \item For all $t\in[0,T]$, a.s. in $\Omega$
  \begin{align}\label{230307_02}
  u(t)+ \int_0^t\rho \,ds - \int_0^t \operatorname{div} a(\cdot,u_,\nabla u) \, ds = u_0+\int_0^t G(u)\,dW(s)+\int_0^t f \, ds,
  \end{align}
  \item $-\rho \geq 0$
  and
  \begin{align*}
  \erww{\int_0^T\int_D \rho(u-\psi) \, dx\,dt} = 0.
  \end{align*}
 \end{enumerate}
\end{theorem}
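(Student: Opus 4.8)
The plan is to let $\eps\to 0^+$ in the penalized problem \eqref{penalization}, whose unique strong solution $u_\eps\in L^2(\Omega;C([0,T];L^2(D)))\cap L^p(\Omega_T;V)$ on $(\Omega,\mathcal F,(\mathcal F_t),\mathds P)$ was constructed in Section \ref{S1}. Write $\rho_\eps:=-\frac1\eps\penalisation{u_\eps-\psi}\leq 0$, so that \eqref{penalization} reads $du_\eps+\rho_\eps\,dt-\Div\tilde a(u_\eps,\nabla u_\eps)\,dt=f\,dt+\tilde G(u_\eps)\,dW$.

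\textbf{Step 1 ($\eps$-uniform Lewy--Stampacchia estimate).} The decisive point is that $\|\rho_\eps\|_{L^{\alpha}(\Omega_T;L^{\alpha}(D))}\leq\|h^-\|_{L^{\alpha}(\Omega_T;L^{\alpha}(D))}$, uniformly in $\eps$. Subtracting the equation \eqref{PDEforpsi} for $\psi$ from \eqref{penalization}, one uses that on the contact set $\{u_\eps<\psi\}$ one has $\max(u_\eps,\psi)=\psi$, hence $\tilde a(u_\eps,\nabla u_\eps)=a(\psi,\nabla u_\eps)$ and $\tilde G(u_\eps)-G(\psi)=G(\max(u_\eps,\psi))-G(\psi)=0$. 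Therefore, applying (after mollifying $r\mapsto|r^-|^{\alpha}$ and truncating $\rho_\eps$ when $\alpha>2$) It\^o's formula to $v\mapsto\frac1{\alpha\,\eps^{\alpha-1}}\int_D|v^-|^{\alpha}\,dx$ at $v=u_\eps-\psi$, the It\^o correction term and the martingale term both vanish (their integrands pair $|v^-|^{\alpha-2}$, supported in $\{v\leq 0\}$, against $G(\max(u_\eps,\psi))-G(\psi)$, supported in $\{u_\eps>\psi\}$), the second-order elliptic contribution is nonnegative by H$_{2,2}$, the penalization contributes $\int_0^t\int_D(-\rho_\eps)^{\alpha}\,dx\,ds$, and the initial term vanishes by H$_6$. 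Using $-h\leq h^-$ and Young's inequality one is left with $\int_0^T\int_D(-\rho_\eps)^{\alpha}\,dx\,dt\leq\int_0^T\int_D|h^-|^{\alpha}\,dx\,dt$, $\mathds P$-a.s. Feeding this into the It\^o energy identity for $\tfrac12\|u_\eps\|_{L^2}^2$ --- where the penalization now contributes $-\int_D\rho_\eps u_\eps\,dx=-\eps\int_D\rho_\eps^2\,dx-\int_D\rho_\eps\psi\,dx\leq\int_D(-\rho_\eps)\psi\,dx$, controlled via Step 1 --- and proceeding exactly as in Section \ref{estimates_delta} (coercivity \eqref{coercif1}, Burkholder--Davis--Gundy, Gronwall) yields $\eps$-uniform bounds for $u_\eps$ in $L^2(\Omega;C([0,T];L^2(D)))\cap L^p(\Omega_T;V)$, hence, via \eqref{croissance1}, for $a(\max(\psi,u_\eps),\nabla u_\eps)$ in $L^{p'}(\Omega_T;L^{p'}(D))$, for $\partial_t\big(u_\eps-\int_0^{\cdot}\tilde G(u_\eps)\,dW\big)$ in $L^{p'}(\Omega_T;V')+L^{\alpha}(\Omega_T;L^{\alpha}(D))$, and, as in \eqref{EstimWfrac}, for $\int_0^{\cdot}\sigma(\max(\psi,u_\eps))\,dW$ in $L^{r}(\Omega;W^{s,r}(0,T;L^2(D)))$ with $s<1/2$, $r=\max(2,p)$.

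\textbf{Step 2 (tightness, Skorokhod, limit equation).} One repeats the scheme of Section \ref{Tightness} and the following subsections: the laws of $\big(u_\eps,\,G(\cdot,\max(\psi,u_\eps)),\,\rho_\eps,\,\psi,\,G(0),\,\sigma,\,W,\,f,\,u_0\big)$ are tight on the natural product space --- the $u$-component in $L^s(0,T;L^2(D))$ via \cite[Cor. 7]{Simon}, the $\rho$-component in $L^{\alpha}(0,T;L^{\alpha}(D))$ with its weak topology ---, so Prokhorov's theorem and \cite[Thm. C.1]{Brzezniak} provide a new probability space $(\overline\Omega,\overline{\mathcal F},\overline{\mathds P})$, a compatible filtration $(\overline{\mathcal F}_t)$ constructed as in Lemmas \ref{230105_lem1}--\ref{230306_lem01}, and random variables with $\overline u_\eps\to\overline u_\infty$ a.s.\ in $L^s(0,T;L^2(D))$, $\overline\rho_\eps\rightharpoonup\rho$ weakly in $L^{\alpha}(\overline\Omega_T;L^{\alpha}(D))$ (so $\rho\leq 0$), $\overline u_\eps\rightharpoonup\overline u_\infty$ weakly in $L^p(\overline\Omega_T;V)$, $a(\max(\overline\psi,\overline u_\eps),\nabla\overline u_\eps)\rightharpoonup A_\infty$ weakly in $L^{p'}(\overline\Omega_T;L^{p'}(D))$, and $\overline u_\eps$ solving the penalized problem on $\overline\Omega$. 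From $\|\penalisation{\overline u_\eps-\overline\psi}\|^{\alpha}_{L^{\alpha}(\overline\Omega_T;L^{\alpha}(D))}=\eps^{\alpha}\|\overline\rho_\eps\|^{\alpha}_{L^{\alpha}}\leq\eps^{\alpha}\|h^-\|^{\alpha}_{L^{\alpha}}\to 0$ we deduce $\overline u_\infty\geq\overline\psi$ a.e., whence $\max(\overline\psi,\overline u_\eps)\to\overline u_\infty$ and $\overline G(\overline u_\eps)\to\overline G(\overline u_\infty)=\overline G_0+\overline\sigma(\overline u_\infty)$ a.s.\ in $L^2(0,T;HS(H))$; letting $\eps\to0^+$ in the penalized equation (the It\^o integral passing by continuity of $X\mapsto\int_0^{\cdot}X\,d\overline W$ on $L^2(\overline\Omega_T;HS(H))$) gives
\[d\overline u_\infty+\rho\,dt-\Div A_\infty\,dt=\overline f\,dt+\overline G(\overline u_\infty)\,d\overline W,\qquad\overline u_\infty(0)=\overline u_0,\]
with $\overline u_\infty\in L^2(\overline\Omega;C([0,T];L^2(D)))$ by \cite[Thm. 4.2.5]{Liu-Rock}.

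\textbf{Step 3 (complementarity and Minty identification).} Since $\int_D\overline\rho_\eps(\overline u_\eps-\overline\psi)\,dx=\eps\int_D\overline\rho_\eps^2\,dx\geq 0$ with $\overline\rho_\eps$ bounded in $L^2(\overline\Omega_T;L^2(D))$, and since (by Vitali's theorem and the Sobolev embeddings, exactly as in the $\delta\to0^+$ limit) $\overline u_\eps-\overline\psi\to\overline u_\infty-\overline\psi$ strongly in $L^{\alpha'}(\overline\Omega_T;L^{\alpha'}(D))$ (with $\alpha'$ the conjugate of $\alpha$) while $\overline\rho_\eps\rightharpoonup\rho$ weakly in $L^{\alpha}$, we obtain $\overline{\mathds E}\int_0^T\int_D\rho(\overline u_\infty-\overline\psi)\,dx\,dt=\lim_\eps\eps\,\overline{\mathds E}\int_0^T\int_D\overline\rho_\eps^2\,dx\,dt=0$; with $\rho\leq 0$ and $\overline u_\infty\geq\overline\psi$ this forces $\rho(\overline u_\infty-\overline\psi)=0$ a.e., i.e.\ the complementarity condition, and in particular $\int_D\rho\,\overline u_\infty\,dx=\int_D\rho\,\overline\psi\,dx$. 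For $A_\infty$ we run the stochastic Minty argument already used for the $\delta\to0^+$ limit: the It\^o energy formula with exponential weight $e^{\beta t}$ ($\beta$ larger than the Lipschitz constant of $\overline G$) for $\overline u_\eps$ and for $\overline u_\infty$, weak lower semicontinuity, Fatou's lemma, together with the convergence
\[\overline{\mathds E}\int_0^T\!\!e^{\beta t}\!\!\int_D(-\overline\rho_\eps)\overline u_\eps\,dx\,dt=\overline{\mathds E}\int_0^T\!\!e^{\beta t}\!\!\int_D\big((-\overline\rho_\eps)\overline\psi-\eps\overline\rho_\eps^2\big)\,dx\,dt\longrightarrow\overline{\mathds E}\int_0^T\!\!e^{\beta t}\!\!\int_D(-\rho)\overline u_\infty\,dx\,dt\]
(using the complementarity just proved), yield
\[\limsup_{\eps\to0^+}\overline{\mathds E}\int_0^T\!\!\int_D a(\max(\overline\psi,\overline u_\eps),\nabla\overline u_\eps)\cdot\nabla\overline u_\eps\,dx\,dt\leq\overline{\mathds E}\int_0^T\!\!\int_D A_\infty\cdot\nabla\overline u_\infty\,dx\,dt.\]
As $a$ is Leray--Lions pseudomonotone, $\overline u_\eps\rightharpoonup\overline u_\infty$ in $L^p(\overline\Omega_T;V)$ and $\overline u_\eps\to\overline u_\infty$, $\max(\overline\psi,\overline u_\eps)\to\overline u_\infty$ a.e., \cite[Lemma 8.8]{Roubicek} revisited in \cite{Vallet-Zimm1} in the stochastic framework gives $A_\infty=a(\overline u_\infty,\nabla\overline u_\infty)$. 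Hence $(\overline u_\infty,\rho)$ is a martingale solution of the obstacle problem on $\overline\Omega$ with all the stated regularity and properties ($\overline u_\infty\in L^p(\overline\Omega_T;V)$, $\overline u_\infty(0)=\overline u_0$, $\overline u_\infty\geq\overline\psi$, $-\rho\geq 0$, $\rho\in L^{\alpha}$).

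\textbf{Step 4 (uniqueness and strong solution).} Pathwise uniqueness follows from an $L^1$-contraction in the spirit of Proposition \ref{uniqueness}: for two solutions $(u_1,\rho_1)$, $(u_2,\rho_2)$ with data $u_{01}$, $u_{02}$, It\^o's formula applied to $F_\delta(u_1-u_2)=\int_D N_\delta(u_1-u_2)\,dx$ produces, besides the terms controlled there (using H$_{2,2}$, H$_{2,4}$, H$_{3,2}$), the extra term $-\erww{\int_0^t\int_D(\rho_1-\rho_2)N_\delta'(u_1-u_2)\,dx\,ds}$; on $\{u_1>u_2\}$ one necessarily has $u_1>\psi$ (otherwise $u_1=\psi>u_2$ would contradict $u_2\geq\psi$), so $\rho_1=0$ there and $(\rho_1-\rho_2)N_\delta'(u_1-u_2)=-\rho_2\,N_\delta'(u_1-u_2)\geq 0$, and symmetrically on $\{u_2>u_1\}$, so this term has the correct sign and one obtains $\erww{\int_D|u_1(t)-u_2(t)|\,dx}\leq\erww{\int_D|u_{01}-u_{02}|\,dx}$. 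Finally, pathwise uniqueness together with uniqueness in law and the Gy\"ongy--Krylov criterion (Lemma \ref{strongsolutionlemma}) --- applied to two arbitrary subsequences $\eps_n,\eps_m\to0^+$, whose joint Skorokhod limits are by Steps 2--3 both martingale solutions and hence coincide --- shows that $(u_\eps)_\eps$ converges in probability on $(\Omega,\mathcal F,(\mathcal F_t),\mathds P)$; passing to the limit directly on $\Omega$ (now with a.s.\ convergence) as in Steps 2--3 produces the unique strong solution $(u,\rho)$ with the required regularity and properties, $\rho$ being predictable as a weak limit of predictable processes. The main difficulty lies in Step 1 --- checking that every stochastic term in the It\^o formula for $|(u_\eps-\psi)^-|^{\alpha}$ genuinely vanishes on the contact set --- and in the Minty identification of Step 3, where one needs the penalization contribution to \emph{converge} rather than merely satisfy a favourable $\liminf$; this is precisely what the complementarity condition delivers.
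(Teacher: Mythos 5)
Your proposal is correct in outline but takes a genuinely different route from the paper, and one of its steps needs patching. The paper's proof of Theorem~\ref{bisexistence-regular} \emph{avoids} stochastic compactness for the $\eps\to 0^+$ limit: its key device is Lemma~\ref{monotone_sequence}, the observation (proved by an It\^o comparison with a $C^2$ approximation of the positive part) that $(u_\eps)_\eps$ is \emph{nondecreasing} as $\eps\to 0^+$. This gives a.e.\ pointwise convergence $u_\eps\uparrow u$ directly on $(\Omega,\mathcal F,\mathds P)$, whence (Lemma~\ref{230308_lem01}) strong convergence in $L^2(\Omega;L^\beta(0,T;L^2(D)))$ for all finite $\beta$ by monotone/dominated convergence, with no change of probability space and no Gy\"ongy--Krylov. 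You instead rerun the whole Section~\ref{S1} machinery (Skorokhod representation, filtration construction, martingale characterization of the stochastic integral, Gy\"ongy--Krylov) a second time for $\eps\to 0^+$; this is heavier, but in principle valid, and is actually close to what the paper does in Section~\ref{GC} for the $n\to\infty$ limit. Your Step~1 (the $\eps$-uniform bound $\|\rho_\eps\|_{L^\alpha}\lesssim\|h^-\|_{L^\alpha}$, vanishing of martingale and It\^o-correction terms by disjoint supports) is essentially the content of Lemmas~\ref{lem2}--\ref{lem2_2}, though you package it slightly differently; and your uniqueness argument in Step~4, based on $\rho_i(u_i-\psi)=0$ pointwise, is a pointwise shortcut that works because here $\rho_i\in L^\alpha(\Omega_T\times D)$, whereas the paper's Proposition~\ref{bisuniqueness1} is written in a $V'$-duality form (with an explicit test function $w\in K$) so that it also serves Section~\ref{GC} where $\rho$ is only in $L^{p'}(\Omega_T;V')$.

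The one genuine gap is in Step~3: you invoke ``strong $L^{\alpha'}(\overline\Omega_T;L^{\alpha'}(D))$ convergence of $\overline u_\eps$ by Vitali''. When $p\geq 2$ one has $\alpha=\max(2,p')=2$, so $\alpha'=2$; the a.s.\ convergence of $\overline u_\eps$ in $L^4(0,T;L^2(D))$ together with the bound in $L^2(\overline\Omega;C([0,T];L^2(D)))$ gives convergence in $L^s(\overline\Omega;L^4(0,T;L^2(D)))$ only for $s<2$, since the $L^2$-bound does not give uniform integrability of $\|\overline u_\eps\|_{L^2(Q_T)}^2$. The fix is the one used (for $n\to\infty$) in the last lemma of Section~\ref{GC}: use $(-\overline\rho_\eps)\,\overline u_\eps\le(-\overline\rho_\eps)\,\overline\psi$, pass to the limit against the fixed function $\overline\psi$ by weak convergence of $\overline\rho_\eps$ in $L^\alpha$, and then use $\rho\le 0$ and $\overline u_\infty\ge\overline\psi$ to conclude both the Minty $\limsup$ inequality and, a posteriori, the complementarity $\overline{\mathds E}\int_{\overline\Omega_T\times D}\rho(\overline u_\infty-\overline\psi)=0$ (because the nonpositive integrand has zero integral). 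Note that the paper's own complementarity argument (Lemma~\ref{Lem penalization}) also uses strong $L^2$-convergence of $u_\eps$, but there this is free from the monotonicity — which is precisely the ingredient your route lacks.
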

The uniqueness part of Theorem \ref{bisexistence-regular} follows from
\begin{prop}\label{bisuniqueness1}
Assume that $(u_1, \rho_1), (u_2,\rho_2)$ are solutions to \eqref{I} in the sense of Definition \ref{def}, with respect  to the initial values $u_{01}$ and $u_{02}$ in $L^2(D)$. Then, we have
 \begin{align}
 \erww{\int_D\vert u_1(t)-u_2(t)\vert \, dx} \leq \erww{\int_D\vert u_{01}-u_{02}\vert \, dx} 
 \end{align}
 for all $t \in [0,T]$. In particular, for $u_{01}=u_{02}$ we have $(u_1,\rho_1)=(u_2,\rho_2)$.
\end{prop}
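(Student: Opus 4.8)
The plan is to reproduce the $L^1$-contraction argument of Proposition~\ref{uniqueness} almost word for word; the only genuinely new ingredient is the treatment of the reaction terms $\rho_1-\rho_2$, which here replace the monotone penalization terms of \eqref{penalization}. I would take the same regular approximation $N_\delta$ of $r\mapsto|r|$ as in the proof of Proposition~\ref{uniqueness}, so that $N_\delta'$ is odd, nondecreasing, vanishes at $0$, $|N_\delta'|\le 1$ and is Lipschitz with constant $\tfrac{2c}{e\delta}$, while $N_\delta''\ge0$ is supported in $\{|r|\le\delta\}$. Writing $w:=u_1-u_2$, subtracting the two instances of \eqref{221013_01} gives
\begin{align*}
dw-\Div\bigl(a(u_1,\nabla u_1)-a(u_2,\nabla u_2)\bigr)\,ds+(\rho_1-\rho_2)\,ds=\bigl(G(u_1)-G(u_2)\bigr)\,dW,\qquad w(0)=u_{01}-u_{02}.
\end{align*}
Since $u_i\in L^p(\Omega_T;V)\cap L^2(\Omega;C([0,T];H))$, $\rho_i\in L^{p'}(\Omega_T;V')$, $-\Div a(u_i,\nabla u_i)\in L^{p'}(\Omega_T;V')$ by \eqref{croissance1}, and $N_\delta'(v)\in V$ whenever $v\in V$, I would apply It\^o's formula (\cite[Th.~5.3]{Pardoux}, exactly as in Proposition~\ref{uniqueness}) to $F_\delta(v)=\int_D N_\delta(v)\,dx$ to obtain, for every $t\in[0,T]$,
\begin{align*}
I_1+\widetilde I_2+I_3=I_4+I_5 ,
\end{align*}
where $I_1=\int_D N_\delta(w(t))\,dx-\int_D N_\delta(u_{01}-u_{02})\,dx$, $\widetilde I_2=\int_0^t\langle\rho_1-\rho_2,N_\delta'(w)\rangle_{V',V}\,ds$, $I_3=\int_0^t\int_D\bigl(a(u_1,\nabla u_1)-a(u_2,\nabla u_2)\bigr)\cdot\nabla w\,N_\delta''(w)\,dx\,ds$, $I_4$ is the It\^o stochastic integral (so $\erww{I_4}=0$) and $I_5$ is the It\^o correction term. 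The terms $I_1,I_3,I_5$ are handled verbatim as in Proposition~\ref{uniqueness}: $\erww{I_1}\to\erww{\int_D|w(t)|\,dx}-\erww{\int_D|u_{01}-u_{02}|\,dx}$, $\liminf_{\delta\to0^+}\erww{I_3}\ge0$ (via H$_{2,2}$, H$_{2,4}$, H\"older's inequality and $\erww{\int_{\{|w|\le\delta\}}|\nabla w|^p\,dx\,ds}\to0$) and $\erww{I_5}\to0$ (via H$_{3,2}$ and the support of $N_\delta''$). Thus the whole proof reduces to the single estimate $\erww{\widetilde I_2}\ge0$ for every $\delta>0$.

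This non-negativity is the heart of the matter, and it is where the obstacle structure is used. First, since $V$ is separable, $-\rho_i\in L^{p'}(\Omega_T;V')^+$ implies (by testing against $\mathds{1}_E\phi$ for predictable sets $E$ and $\phi$ ranging over a countable dense subset of the cone of non-negative elements of $V$) that $-\rho_i(\omega,t)\ge0$ in $V'$ for $d\mathds{P}\otimes dt$-a.e.\ $(\omega,t)$; combined with $u_i\ge\psi$ and the complementarity $\erww{\int_0^T\langle\rho_i,u_i-\psi\rangle_{V',V}\,dt}=0$ from \eqref{230530_01}, this upgrades to the pointwise identity $\langle-\rho_i(\omega,t),u_i(\omega,t)-\psi(\omega,t)\rangle_{V',V}=0$ $d\mathds{P}\otimes dt$-a.e. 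Next, since $N_\delta'$ is odd and nondecreasing with $N_\delta'(0)=0$, one has $N_\delta'(w)=N_\delta'(w^+)-N_\delta'(w^-)$, and $w^\pm\in L^p(\Omega_T;V)$ are non-negative and predictable, so $N_\delta'(w^\pm)\in L^p(\Omega_T;V)$ are admissible non-negative test functions. Because $u_1\ge\psi$ and $u_2\ge\psi$, one has the pointwise (in $x$) bounds $0\le w^+\le u_1-\psi$ and $0\le w^-\le u_2-\psi$, and together with $0\le N_\delta'(r)\le\tfrac{2c}{e\delta}r$ for $r\ge0$ this yields $0\le N_\delta'(w^+)\le\tfrac{2c}{e\delta}(u_1-\psi)$ and $0\le N_\delta'(w^-)\le\tfrac{2c}{e\delta}(u_2-\psi)$. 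Using $-\rho_i(\omega,t)\ge0$ in $V'$ together with the complementarity identity,
\begin{align*}
0\le\langle-\rho_1,N_\delta'(w^+)\rangle_{V',V}\le\tfrac{2c}{e\delta}\langle-\rho_1,u_1-\psi\rangle_{V',V}=0 ,\qquad 0\le\langle-\rho_2,N_\delta'(w^-)\rangle_{V',V}\le\tfrac{2c}{e\delta}\langle-\rho_2,u_2-\psi\rangle_{V',V}=0
\end{align*}
$d\mathds{P}\otimes dt$-a.e., hence
\begin{align*}
\langle\rho_1-\rho_2,N_\delta'(w)\rangle_{V',V}=\langle-\rho_2,N_\delta'(w^+)\rangle_{V',V}+\langle-\rho_1,N_\delta'(w^-)\rangle_{V',V}\ge0
\end{align*}
$d\mathds{P}\otimes dt$-a.e., since $-\rho_i\ge0$ in $V'$ and $N_\delta'(w^\pm)\ge0$ in $V$. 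Integrating over $(0,t)\times\Omega$ gives $\erww{\widetilde I_2}\ge0$.

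The main obstacle is precisely this last step: showing, via the complementarity conditions and the sign of the constraint $u_i\ge\psi$, that the reaction terms contribute non-negatively to the $L^1$-balance; once this is available no new difficulty is created by the pseudomonotone operator or by the multiplicative noise, as they are dealt with exactly as in Proposition~\ref{uniqueness}. Combining $\erww{\widetilde I_2}\ge0$, $\liminf_{\delta\to0^+}\erww{I_3}\ge0$ and $\erww{I_5}\to0$ in $\erww{I_1}=\erww{I_5}-\erww{\widetilde I_2}-\erww{I_3}$ and letting $\delta\to0^+$ yields $\erww{\int_D|u_1(t)-u_2(t)|\,dx}\le\erww{\int_D|u_{01}-u_{02}|\,dx}$ for all $t\in[0,T]$. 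Finally, when $u_{01}=u_{02}$ the contraction forces $u_1=u_2$ $d\mathds{P}\otimes dt\otimes dx$-a.e., and subtracting the two equations \eqref{221013_01} (whose remaining terms now coincide) gives $\int_0^t(\rho_1-\rho_2)\,ds=0$ in $V'$ for every $t\in[0,T]$, whence $\rho_1=\rho_2$ in $L^{p'}(\Omega_T;V')$ and $(u_1,\rho_1)=(u_2,\rho_2)$.
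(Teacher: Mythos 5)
Your proof is correct and follows the same overall strategy as the paper: reduce to the $L^1$-contraction argument of Proposition~\ref{uniqueness} (with $\tilde a = a$, $\tilde G = G$ on the admissible set) and isolate the new term $\widetilde I_2 = \int_0^t\langle\rho_1-\rho_2,N_\delta'(u_1-u_2)\rangle\,ds$. The only real difference is in how you sign $\widetilde I_2$: the paper keeps the decomposition $N_\delta'(u_1-u_2)=N_\delta'(u_1-u_2)^+-N_\delta'(u_1-u_2)^-$, drops the second piece using the sign of $\rho_1$, and tests the variational inequality with the $K$-admissible competitor $w=u_1-\tfrac{1}{\lambda}N_\delta'(u_1-u_2)^+$, which yields the non-negativity in integrated form; you instead upgrade $\erww{\int_0^T\langle\rho_i,u_i-\psi\rangle\,dt}=0$ to the $d\mathds{P}\otimes dt$-a.e.\ identity $\langle-\rho_i,u_i-\psi\rangle=0$ (since the integrand already has a sign) and kill the cross-terms pointwise, ending with the symmetric formula $\langle\rho_1-\rho_2,N_\delta'(w)\rangle=\langle-\rho_2,N_\delta'(w^+)\rangle+\langle-\rho_1,N_\delta'(w^-)\rangle\ge0$ a.e. Both routes hinge on exactly the same pointwise domination $0\le N_\delta'(w^+)\le \|N_\delta''\|_\infty\,(u_1-\psi)$ and are of comparable length; yours is a little more explicit about the a.e.\ sign of $\rho_i$ and the pointwise complementarity, which is a nice, precise reformulation of what the paper treats somewhat informally.
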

\begin{proof}
The proof is in every respect analogous to that of Proposition \ref{uniqueness}, where $\tilde a=a$ and $\tilde G=G$ since the constraint is satisfied by the solution. The only difference is that one needs to carefully consider the term
\begin{align}\label{230307_01}
\int_0^t\langle \rho_ 1- \rho_2, N_\delta^\prime(u_1-u_2)\rangle \, ds.
\end{align}
Note that 
$
\langle  \rho_1-\rho_2,N_\delta^\prime(u_1-u_2)\rangle = 
\langle \rho_1,N_\delta^\prime(u_1-u_2)\rangle
+
\langle \rho_2,N_\delta^\prime(u_2-u_1)\rangle
$ 
and since $-\rho_1$ is non-negative, one has for $\lambda>0$
\begin{align*}
\langle \rho_1,N_\delta^\prime(u_1-u_2)\rangle =& 
\langle \rho_1,N_\delta^\prime(u_1-u_2)^+\rangle - \langle \rho_1,N_\delta^\prime(u_1-u_2)^-\rangle
\geq
\langle \rho_1,N_\delta^\prime(u_1-u_2)^+\rangle
\\ =&
\lambda \langle \rho_1,u_1-[u_1-\frac1\lambda N_\delta^\prime(u_1-u_2)^+]\rangle.
\end{align*}
Set $w_{\delta}=u_1-\frac{1}{\lambda_{\delta}} N_\delta^\prime(u_1-u_2)^+$ and $\lambda_{\delta} = \|N_\delta^{\prime\prime}\|_\infty$. Then, $w_{\delta} \in L^p(\Omega_T,V)$. Since $N_{\delta}'(0)=\eta_{\delta}(0)=0$ and  $N_{\delta}'(x)^+ \leq |N_{\delta}'(x)| \leq |x|\lambda_{\delta}$ for any $x \in \R$, we have
\begin{align*}
w_{\delta}-\psi = 
\left\{
\begin{array}{ccc}
u_1-\psi  &  \text{ if } & u_1 \leq u_2  \\
u_1-u_2-\frac1\lambda N_\delta^\prime(u_1-u_2)^+ + u_2-\psi  &  \text{ if } & u_1 > u_2
\end{array}
\right\} \geq 0.
\end{align*}
Therefore, $w_{\delta} \in K$ and $\langle \rho_1,N_\delta^\prime(u_1-u_2)\rangle\geq 0$. Since similarly $\langle \rho_2,N_\delta^\prime(u_2-u_1)\rangle\geq 0$, \eqref{230307_01} is non-negative and it follows that $u_1=u_2$. Then $\rho_1=\rho_2$ follows from \eqref{230307_02}.
\end{proof}

\subsection{\textit{A priori} estimates with respect to $\eps$}
Let $\eps>0$, 
we recall the existence and uniqueness of $u_\eps$ solution to  the  penalized problem  \eqref{penalization} satisfying: $u_\eps: \Omega_T \to L^2(D)$ is a predictable process in $L^p(\Omega_T;V)$ belonging to $L^2(\Omega; C([0,T];L^2(D)))$, $u_\varepsilon(0,\cdot)=u_0$ in $L^2(D)$ and 
\begin{align}\label{220908_02}
u_\eps(t)-\dfrac{1}{\varepsilon}\int_0^t (u_\eps-\psi)^-\,ds-\int_0^t \operatorname{div}\,\tilde{a}(u_\eps,\nabla u_\eps) \,ds=u_0+\int_0^t \tilde{G}(u_\eps)\,dW(s)+\int_0^t f\,ds
 \end{align}
holds in $V'$ for all $t\in[0,T]$, $\mathds{P}$-a.s. in $\Omega$.

\begin{lemma}\label{lem1}
Under assumptions of Theorem \ref{bisexistence-regular}, we
 have the following uniform bounds with respect to $\eps$: 
 \begin{itemize}
  \item[$i.)$] $(u_\varepsilon)_{\varepsilon>0}$ is bounded in  $ L^p(\Omega_T;V)$ and in $L^2(\Omega; C([0,T];H))$.
  \item[$ii.)$] $(\tilde{a}(\cdot, u_\varepsilon,\nabla u_\varepsilon))_{\varepsilon>0}$ is bounded in $L^{p^\prime}(\Omega_T\times D)^d$. 
  \item[$iii.)$] $(- \operatorname{div} \tilde{a}\,(\cdot, u_\varepsilon,\nabla u_\varepsilon))_{\varepsilon>0}$ is bounded in $L^{p^\prime}(\Omega_T;V')$.
\end{itemize}
\end{lemma}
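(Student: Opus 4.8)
The strategy is to repeat, at the $\eps$-level, the energy estimate already performed for the singular perturbation in Section~\ref{estimates_delta}, but applied to the \emph{shifted} process $w_\eps:=u_\eps-\psi$ rather than to $u_\eps$ itself. The reason for the shift is that, by Remark~\ref{remark}, $\psi$ solves \eqref{PDEforpsi}; subtracting that equation from \eqref{220908_02} yields, a.s.\ in $\Omega$ and for all $t\in[0,T]$,
\begin{align*}
w_\eps(t)-\frac1\eps\int_0^t\penalisation{w_\eps}\,ds-\int_0^t\Div\bigl(\tilde a(u_\eps,\nabla u_\eps)-a(\psi,\nabla\psi)\bigr)\,ds=(u_0-\psi_0)+\int_0^t h\,ds+\int_0^t\bigl(\tilde G(u_\eps)-G(\psi)\bigr)\,dW.
\end{align*}
Since $\penalisation{w_\eps}w_\eps=-|\penalisation{w_\eps}|^2$ pointwise, testing against $w_\eps$ produces on the energy side the non-negative term $\frac1\eps\|\penalisation{w_\eps}\|_{L^2(D)}^2$, which may simply be dropped; this is precisely what removes the $\eps^{-1}\|\psi\|_{L^2(D)}^2$ blow-up present in \eqref{221019_01}--\eqref{Estim1}. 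Once $i.)$ is proved, the bounds $ii.)$ and $iii.)$ are immediate consequences of the growth condition \eqref{croissance1}.

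To prove $i.)$ I would apply the It\^o energy formula \cite[Thm.~4.2.5]{Liu-Rock} to $t\mapsto\frac12\|w_\eps(t)\|_{L^2(D)}^2$, which is legitimate because $w_\eps\in L^p(\Omega_T;V)\cap L^2(\Omega;C([0,T];L^2(D)))$ and the drift above lies in $L^{p'}(\Omega_T;V')$ (using $L^2(D)\hookrightarrow V'$ for the penalization term and \eqref{croissance1} for the elliptic one). The initial contribution $\frac12\|u_0-\psi_0\|_{L^2(D)}^2$ is a fixed element of $L^1(\Omega)$ by H$_1$ and Remark~\ref{conti-obst}. The $g$-part of the noise cancels in the corrector, since $\tilde G(u_\eps)-G(\psi)=\sigma(\max(u_\eps,\psi))-\sigma(\psi)$, whence by H$_{3,2}$ one has $\|\tilde G(u_\eps)-G(\psi)\|_{HS(H)}^2\le C_\sigma\|\max(u_\eps,\psi)-\psi\|_{L^2(D)}^2\le C_\sigma\|w_\eps\|_{L^2(D)}^2$, which is Gronwall-ready; the source is bounded by $\langle h,w_\eps\rangle_{V',V}\le\|h\|_{V'}\|w_\eps\|_V\le C\|h\|_{V'}^{p'}+\frac{\bar\alpha}{8}\|w_\eps\|_V^p$, where $h\in L^{p'}(\Omega_T;V')$ already by H$_5$. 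For the elliptic term $\int_D\bigl(a(\psi,\nabla\psi)-\tilde a(u_\eps,\nabla u_\eps)\bigr)\cdot\nabla w_\eps\,dx$ one expands $\nabla w_\eps=\nabla u_\eps-\nabla\psi$, applies the coercivity \eqref{coercif1} to $\int_D\tilde a(u_\eps,\nabla u_\eps)\cdot\nabla u_\eps\,dx$, and estimates the remaining cross terms through the pointwise bound \eqref{croissance1} and Young's inequality exactly as in \eqref{221020_01}, choosing every Young parameter small enough to absorb the resulting multiples of $\|\nabla u_\eps\|_{L^p(D)}^p$ into the coercive term; here one uses $q<p$ and $(p-1)p'=p$ together with Poincar\'e's inequality, so that every power of $u_\eps$ occurring is strictly subcritical, while what is left on the right-hand side is controlled by $\|\psi\|_V^p$, $\|a(\psi,\nabla\psi)\|_{L^{p'}(D)}^{p'}$ and $\|\bar h\|_{L^1(D)}$, all integrable on $\Omega_T$. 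Taking the supremum in $t$ and then the expectation, the martingale term is treated by the Burkholder-Davis-Gundy inequality exactly as in \eqref{221020_02}, and Gronwall's lemma gives
\begin{align*}
\erww{\sup_{t\in[0,T]}\|w_\eps(t)\|_{L^2(D)}^2}+\erww{\int_0^T\|\nabla u_\eps\|_{L^p(D)}^p\,dt}\le C
\end{align*}
with $C$ independent of $\eps$, as in \eqref{221020_06}--\eqref{Estim1}. Since $\psi\in L^p(\Omega_T;V)\cap L^2(\Omega_T;H)$ and $\psi\in C([0,T];H)$ a.s.\ (Remark~\ref{conti-obst}), writing $u_\eps=w_\eps+\psi$ yields $i.)$.

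For $ii.)$, the growth condition \eqref{croissance1} gives $|\tilde a(u_\eps,\nabla u_\eps)|\le\bar k+C_1^a|\max(u_\eps,\psi)|^{p-1}+C_2^a|\nabla u_\eps|^{p-1}$ a.e.; raising this to the power $p'$, using $(p-1)p'=p$ and $|\max(u_\eps,\psi)|\le|u_\eps|+|\psi|$, one obtains $\|\tilde a(u_\eps,\nabla u_\eps)\|_{L^{p'}(\Omega_T\times D)}^{p'}\le C\bigl(1+\|u_\eps\|_{L^p(\Omega_T;V)}^p+\|\psi\|_{L^p(\Omega_T;V)}^p\bigr)$, which is bounded uniformly in $\eps$ by $i.)$. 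For $iii.)$ one uses $\|{-}\Div\tilde a(u_\eps,\nabla u_\eps)\|_{V'}\le\|\tilde a(u_\eps,\nabla u_\eps)\|_{L^{p'}(D)}$ pointwise and then integrates over $\Omega_T$, so $iii.)$ follows from $ii.)$. The only genuine obstacle here is the $\eps$-uniformity of the energy estimate in $i.)$: the penalization must be absorbed without picking up the $\eps^{-1}$-singular lower order term $\frac1\eps\|\psi\|_{L^2(D)}^2$ that was harmless at the $\delta$-level only because the constant there was permitted to depend on $\eps$. This is resolved once and for all by the shift $w_\eps=u_\eps-\psi$ together with the coupled reformulation of Remark~\ref{remark}; everything else is the routine Young/Gronwall bookkeeping already carried out in Section~\ref{estimates_delta}.
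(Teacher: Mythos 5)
Your proposal is correct and follows the paper's proof of Lemma~\ref{lem1} essentially line by line: shift $w_\eps = u_\eps - \psi$ using \eqref{PDEforpsi} so that the penalization contribution becomes $\frac1\eps\|(u_\eps-\psi)^-\|_{L^2(D)}^2 \ge 0$ and can be discarded (precisely the observation that removes the $\eps^{-1}$-singular lower-order term), apply It\^o's energy formula, absorb via coercivity, Young's and Poincar\'e's inequalities, and close with the Burkholder-Davis-Gundy inequality and Gronwall's lemma. The only deviation is a cosmetic regrouping of $\Div a(\psi,\nabla\psi)$ — the paper keeps it inside the source $f-\partial_t\bigl(\psi-\int_0^\cdot G(\psi)\,dW\bigr)$ while you move it into the elliptic term so the source becomes $h$ — which is immaterial since both versions of the source lie in $L^{p'}(\Omega_T;V')$ by H$_4$, H$_5$.
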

\begin{proof}
 Since $\partial_t\left( \psi-\int_0^\cdot \widetilde{G}(\psi) \, dW\right) \in  L^{p^\prime}(\Omega_T;\Vprime)$ is predictable, using \eqref{PDEforpsi} one has that
 \begin{align*}
 &u_\varepsilon(t)-\psi(t)-\int_0^t(\operatorname{div}\,\tilde{a}(u_\eps,\nabla u_\eps)+\dfrac{1}{\varepsilon}(u_\varepsilon-\psi)^-) \, ds
 \\ =&
 u_0-\psi(0) +\int_0^t [f- \partial_t\left( \psi-\int_0^\cdot \widetilde{G}(\psi) \, dW\right)] \, ds+\int_0^t (\widetilde{G}(u_\varepsilon)-\widetilde{G}(\psi)) \, dW. 
 \end{align*}
 Then It\^o's stochastic energy yields
 \begin{align*}
 &\|u_\varepsilon-\psi \|_{H}^2(t)+2\int_0^t \langle -\operatorname{div}\, \tilde{a}(u_\eps,\nabla u_\eps),u_\varepsilon-\psi \rangle \, ds 
 -2\int_0^t \int_D\dfrac{1}{\varepsilon}(u_\varepsilon-\psi)^-(u_\varepsilon-\psi ) \, dx \, ds 
 \\ =&
 \|u_0-\psi (0)\|^2_{H} + 2\int_0^t \langle  f- \partial_t\left( \psi -\int_0^\cdot \widetilde{G}(\psi ) \, dW\right)  , u_\varepsilon-\psi  \rangle \, ds
 \\&
 +2\int_0^t  \left(u_\varepsilon-\psi ,\widetilde{G}(u_\varepsilon)-\widetilde{G}(\psi ) \, dW\right)_H + \int_0^t \|\widetilde{G}(u_\varepsilon)-\widetilde{G}(\psi )\|_{HS(H)}^2 \, ds. 
 \end{align*}
 Note that $-\dint_0^t \dint_D\dfrac{1}{\varepsilon}(u_\varepsilon-\psi)^-(u_\varepsilon-\psi ) \, dx \, ds =\dfrac{1}{\varepsilon} \int_0^t \Vert (u_\varepsilon-\psi)^-\Vert_H^2 \, ds$ and applying $H_{2,3}$ and Young's inequality yield the existence of a constant $C_{\bar\alpha} >0$ such that 
 \begin{align*}
 \langle -\operatorname{div}\,\tilde{a}(\cdot,u_\eps,\nabla u_\eps),u_\varepsilon-\psi \rangle &=\int_D \tilde{a}(\cdot,u_\eps,\nabla u_\eps)\cdot \nabla(u_\eps-\psi ) \, dx
 \\&\geq \dfrac{\bar\alpha}{2} \Vert u_\varepsilon\Vert_V^p - C_{\bar\alpha}(  \Vert \psi\Vert_{L^p(D)}^p+1) -
 \int_D \tilde{a}(\cdot,u_\eps,\nabla u_\eps)\cdot \nabla \psi \, dx.
 \end{align*}
 By using $H_{2,3}$, Poincar\'e's inequality and Young inequality one gets, for any $\delta >0$, a constant $C_{\delta}>0$ such that
 \begin{align*}
 \vert \int_D \tilde{a}(\cdot,u_\eps,\nabla u_\eps)\cdot \nabla \psi dx\vert  \leq \delta\Vert u_\eps\Vert_V^p +C(\Vert \bar k\Vert_{L^{p^\prime}(D)}^{p^\prime}+1)+C_\delta \Vert \psi \Vert_V^p.
 \end{align*}
 Therefore, choosing $\delta = \dfrac{\bar\alpha}{4}$ yields the existence of a function $\tilde{g}=\tilde{g}(\psi) \in L^1(\Omega_T)$ only depending on $\psi$ and fixed constants such that
 \begin{align*}
 \langle -\operatorname{div}\, \tilde{a}(\cdot,u_\eps,\nabla u_\eps),u_\varepsilon-\psi \rangle
 \geq \frac{\bar \alpha}{4} \Vert u_\varepsilon\Vert_V^p   - \tilde{g}.
 \end{align*}
 
 Thus, for any positive $\gamma$, Young's inequality yields the existence of a positive constant $C_\gamma$ such that 
 \begin{align*}
 &\|u_\varepsilon-\psi \|_{H}^2(t)+\int_0^t \frac{\bar\alpha}{2} \Vert u_\varepsilon\Vert_V^p \, ds+\dfrac{2}{\varepsilon}\int_0^t \!\!\!\Vert (u_\varepsilon-\psi)^-\Vert_H^2 \, ds
 \\ \leq & \|u_0-\psi (0)\|^2_{H}+
 \|\tilde{g}\|_{L^1(0,T)} + C_\gamma\int_0^t \left\Vert f- \partial_s\left( \psi -\int_0^\cdot \widetilde{G}(\psi) \, dW\right)\right \Vert_{V'}^{p'} \,ds
 \\&+ \gamma \int_0^t\|u_\varepsilon-\psi  \|^p_V \, ds+2\int_0^t  \left(u_\varepsilon-\psi ,\widetilde{G}(u_\varepsilon)-\widetilde{G}(\psi ) \, dW\right)_H+\int_0^t \|\widetilde{G}(u_\varepsilon)-\widetilde{G}(\psi )\|_{HS(H)}^2 \, ds. 
 \end{align*}
By choosing $\gamma= \frac{\bar\alpha}{2^{p+1}}$ and applying $H_{3,2}$ we may conclude that
 \begin{align*}
&\|u_\varepsilon-\psi \|_{H}^2(t)+\int_0^t \frac{\bar\alpha}{2} \Vert u_\varepsilon\Vert_V^p \, ds+\dfrac{2}{\varepsilon}\int_0^t \!\!\!\Vert (u_\varepsilon-\psi)^-\Vert_H^2 \, ds
\\ \leq & 
 C_{\sigma} \int_0^t \Vert u_\varepsilon - \psi \Vert_H^2 \, ds+ \int_0^t \frac{\bar\alpha}{4} \|u_\varepsilon\|^p_V \, ds+ C(\psi, u_0, \tilde{g},f)+2\int_0^t  \left(u_\varepsilon-\psi ,\widetilde{G}(u_\varepsilon)-\widetilde{G}(\psi ) \, dW\right)_H
 \end{align*}
 where $C(\psi, u_0, \tilde{g},f)\in L^1(\Omega)$ is a non-negative function depending on $u_0, \psi$, $\tilde{g}$ and $f$. Then, the first part of $i.)$ is proved by taking the expectation and applying Gronwall's lemma, the second part of $i.)$ follows from classical arguments based on BDG inequality as developed in Section \ref{estimates_delta}. The items $ii.)$ and $iii.)$ follow by adding the information H$_{2,3}$. 
\end{proof}
\begin{lemma}\label{monotone_sequence}
Still under assumptions of  Theorem \ref{bisexistence-regular}, the sequence $(u_\varepsilon)_{\varepsilon>0}$ is a non-decreasing sequence when $\varepsilon$ goes to $0$, 
more precisely, for all $0<\delta<\varepsilon$ and all $t \in [0,T]$ we have $\ueps(t) \leq u_{\delta}(t)$ $d\mathds{P}\otimes dx$-a.e. in $\Omega \times D$ and thus $\ueps \leq u_{\delta}$ $d\mathds{P}\otimes dt\otimes dx$-a.e. in $\Omega\times (0,T)\times D$.
\end{lemma}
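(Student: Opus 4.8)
The plan is to follow the very scheme of the proof of Proposition~\ref{uniqueness}, with the single modification that the smooth approximation of the absolute value is replaced by a one-sided approximation of the positive part; the ordering then comes out of the sign of the two penalisation terms, and that is the only place where the hypothesis $\delta<\eps$ enters. First I would fix $t\in[0,T]$ and subtract the equations \eqref{220908_02} written for $\ueps$ and for $u_\delta$ (both issued from the same initial datum $u_0$): setting $v:=\ueps-u_\delta$, this gives, in $V'$ and $\mathds{P}$-a.s.,
\[v(t)-\frac1\eps\int_0^t\penalisation{\ueps-\psi}\,ds+\frac1\delta\int_0^t\penalisation{u_\delta-\psi}\,ds-\int_0^t\Div\big(\tilde{a}(\ueps,\nabla\ueps)-\tilde{a}(u_\delta,\nabla u_\delta)\big)\,ds=\int_0^t\big(\tilde{G}(\ueps)-\tilde{G}(u_\delta)\big)\,dW(s),\]
with $v(0)=0$ and $v$ inheriting the regularity of the solutions. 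For $\zeta>0$ I would take $\eta_\zeta$ a smooth, non-decreasing Lipschitz approximation of $\mathds{1}_{(0,\infty)}$ that vanishes on $(-\infty,0]$, equals $1$ on $[\zeta,\infty)$ and satisfies $\|\eta_\zeta'\|_\infty\le c/\zeta$ (as in the construction of \cite[Prop.~5]{Vallet-Zimm2}), set $N_\zeta(r):=\int_0^r\eta_\zeta(s)\,ds$, so that $0\le N_\zeta(r)\uparrow r^+$, and $F_\zeta(w):=\int_D N_\zeta(w)\,dx$. Applying It\^o's formula to $F_\zeta(v)$ exactly as in Proposition~\ref{uniqueness} (see \cite[Th.~5.3]{Pardoux}), using $N_\zeta(0)=0$ and taking expectations so that the martingale term drops, I would reach
\[\erww{\int_D N_\zeta(v(t))\,dx}=\erww{\mathcal{P}_\zeta}-\erww{\mathcal{E}_\zeta}+\erww{\mathcal{T}_\zeta},\]
where $\mathcal{P}_\zeta:=\int_0^t\!\int_D\big(\tfrac1\eps\penalisation{\ueps-\psi}-\tfrac1\delta\penalisation{u_\delta-\psi}\big)\eta_\zeta(v)\,dx\,ds$, $\mathcal{E}_\zeta:=\int_0^t\!\int_D\big(\tilde{a}(\ueps,\nabla\ueps)-\tilde{a}(u_\delta,\nabla u_\delta)\big)\cdot\nabla v\,\eta_\zeta'(v)\,dx\,ds$, and $\mathcal{T}_\zeta:=\tfrac12\int_0^t\!\int_D\eta_\zeta'(v)\sum_k|h_k(\max(\psi,\ueps))-h_k(\max(\psi,u_\delta))|^2\,dx\,ds$ is the It\^o correction (the same expression as $I_5$ in the proof of Proposition~\ref{uniqueness}).

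The heart of the argument is that $\mathcal{P}_\zeta\le0$: since $\eta_\zeta$ vanishes on $(-\infty,0]$, the integrand is supported in $\{\ueps>u_\delta\}$, where $\ueps-\psi>u_\delta-\psi$, hence $\penalisation{\ueps-\psi}\le\penalisation{u_\delta-\psi}$ because $s\mapsto(s)^-$ is non-increasing, and therefore on that set
\[\frac1\eps\penalisation{\ueps-\psi}-\frac1\delta\penalisation{u_\delta-\psi}\le\Big(\frac1\eps-\frac1\delta\Big)\penalisation{u_\delta-\psi}\le0\]
because $\eps>\delta$ and $\penalisation{u_\delta-\psi}\ge0$; together with $\eta_\zeta\ge0$ this makes the integrand of $\mathcal{P}_\zeta$ non-positive. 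For the second-order terms I would repeat the estimates of Proposition~\ref{uniqueness}: writing $\tilde{a}(\ueps,\nabla\ueps)-\tilde{a}(u_\delta,\nabla u_\delta)=\big(\tilde{a}(\ueps,\nabla\ueps)-\tilde{a}(u_\delta,\nabla\ueps)\big)+\big(\tilde{a}(u_\delta,\nabla\ueps)-\tilde{a}(u_\delta,\nabla u_\delta)\big)$, the second bracket tested against $\nabla v=\nabla(\ueps-u_\delta)$ is non-negative by H$_{2,2}$ (monotonicity of $a(\max(u_\delta,\psi),\cdot)$) and contributes a non-negative amount since $\eta_\zeta'\ge0$, while H$_{2,4}$ bounds the first bracket by $(C_3^{a}|\nabla\ueps|^{p-1}+l)\,|v|$; as $\eta_\zeta'(v)$ is supported in $\{0<v<\zeta\}$ with $\|\eta_\zeta'\|_\infty\le c/\zeta$, H\"older's inequality yields
\[\mathcal{E}_\zeta\ge-c\left(\int_0^t\big\|C_3^{a}|\nabla\ueps|^{p-1}+l\big\|_{L^{p'}(D)}^{p'}\,ds\right)^{1/p'}\left(\int_0^t\!\!\int_{\{0<v<\zeta\}}|\nabla v|^p\,dx\,ds\right)^{1/p},\]
whose negative part has expectation tending to $0$ as $\zeta\to0^+$ by dominated convergence (the set $\{0<v<\zeta\}$ shrinks to $\emptyset$ and the bounds of Lemma~\ref{lem1} provide an integrable majorant). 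Similarly, H$_{3,2}$ gives $|\mathcal{T}_\zeta|\le\tfrac{c}{2\zeta}C_\sigma\int_0^t\!\int_{\{0<v<\zeta\}}|v|^2\,dx\,ds\le\tfrac12 c\,C_\sigma\,\zeta\,|Q_T|\to0$.

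Combining these facts, $\erww{\int_D N_\zeta(v(t))\,dx}\le-\erww{\mathcal{E}_\zeta}+\erww{\mathcal{T}_\zeta}\to0$ as $\zeta\to0^+$, while $0\le N_\zeta(v(t))\uparrow (v(t))^+$ with $(v(t))^+\in L^1(\Omega\times D)$ by Lemma~\ref{lem1}, so dominated convergence gives $\erww{\int_D(\ueps(t)-u_\delta(t))^+\,dx}\le0$; hence $\ueps(t)\le u_\delta(t)$ $d\mathds{P}\otimes dx$-a.e. in $\Omega\times D$ for each $t\in[0,T]$, and integrating in time gives the claim $\ueps\le u_\delta$ $d\mathds{P}\otimes dt\otimes dx$-a.e. in $\Omega\times(0,T)\times D$. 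I expect the only genuinely delicate step to be, as usual, the passage to the limit $\zeta\to0^+$ in the second-order terms $\mathcal{E}_\zeta$ and $\mathcal{T}_\zeta$; but this is routine once Lemma~\ref{lem1} is available, and the monotone behaviour of $(\ueps)_\eps$ is really carried by the elementary sign computation for $\mathcal{P}_\zeta$, for which $\delta<\eps$ is precisely what is needed.
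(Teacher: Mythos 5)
Your proposal is correct and follows essentially the same route as the paper: subtract the two penalised equations, apply It\^o's formula with a smooth convex approximation of the positive part, observe that both penalisation contributions have the right sign (the first by monotonicity of $r\mapsto r^-$ and $H_{2,2}$-type considerations, the second because $1/\eps-1/\delta<0$), and pass to the limit in the elliptic and It\^o-correction terms using the shrinking support of the second derivative together with $H_{2,2}$, $H_{2,4}$ and $H_{3,2}$. The only cosmetic differences are the exact parametrisation of the approximating function ($N_\zeta=\int_0^{\cdot}\eta_\zeta$ versus the paper's $H_\theta$ with mollified second derivative) and the precise bookkeeping of the Lipschitz-in-$\lambda$ term, which are interchangeable.
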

\begin{proof}
Set $0 < \delta< \varepsilon$. Denote by $(H_\theta)_{\theta>0}$ the $C^2$ approximation of the positive-part function such that $H^{\prime\prime}_\theta$ is the mollifier with support $[0,2\theta]$, $H_\theta(0)=H^{\prime}_\theta(0)=0$ and $H_{\theta}(r)\rightarrow r^+$ for $\theta\rightarrow 0^+$. Thus, by It\^o's formula with $\Phi_{\theta}(u):= \int_D H_{\theta}(u) \, dx$, for any $t\in [0,T]$ we have
\begin{align}\label{221107_01}
\begin{aligned}
&\erww{\int_D H_\theta(u_\varepsilon-u_\delta)(t) \, dx} + \erww{\int_{Q_t} H^{\prime\prime}_{\theta}(u_\varepsilon-u_\delta)[\tilde a(u_\varepsilon,\nabla u_\varepsilon) - \tilde a(u_\delta,\nabla u_\delta)]\nabla(u_\varepsilon-u_\delta) \, dx \, ds} 
\\ \leq& \frac1\varepsilon \erww{\int_{Q_t}[ (u_\varepsilon-\psi)^--(u_\delta-\psi)^-]H^\prime_\theta(u_\varepsilon-u_\delta) \, dx \, ds} + (\frac1\varepsilon-\frac1\delta)\erww{\int_{Q_t} (u_\delta-\psi)^- H^\prime_\theta(u_\varepsilon-u_\delta) \, dx \, ds} 
\\ &+ C_{\sigma} \erww{\int_{Q_t} H_{\theta}''(u_{\varepsilon} - u_{\delta}) |u_{\varepsilon} - u_{\delta}|^2 \, dx \, ds}.
\end{aligned}
\end{align}
Note that the middle-line in \eqref{221107_01} is non-positive and that $H^{\prime\prime}_{\theta}(u_\varepsilon-u_\delta)1_{\{u_\varepsilon\neq u_\delta\}}$ goes a.e. to $0$. 
Moreover, to be able to apply Lebesgue's theorem, note that there exists a constant $C>0$ such that $2\theta\Vert H_{\theta}''\Vert_{\infty} \leq C$ and therefore we get
\begin{align*}
0 \leq H^{\prime\prime}_{\theta}(u_\varepsilon-u_\delta)|u_\varepsilon-u_\delta|^2 \leq C |u_\varepsilon-u_\delta|\in L^1(\Omega\times Q_T),
\end{align*}
\begin{align*}
0 &\leq  H^{\prime\prime}_{\theta}(u_\varepsilon-u_\delta)\left(\tilde a(u_\varepsilon,\nabla u_\varepsilon) - \tilde a(u_\varepsilon,\nabla u_\delta)\right)\cdot\nabla(u_\varepsilon-u_\delta),
\end{align*}
and, by using H$_{2,4}$,
\begin{align*}
&\left|H^{\prime\prime}_{\theta}(u_\varepsilon-u_\delta)\left(\tilde{a}(u_   
\varepsilon,\nabla u_\delta) - \tilde a(u_\delta,\nabla u_\delta)\right)
\cdot\nabla(u_\varepsilon-u_\delta)\right|\\  
\leq& H^{\prime\prime}_{\theta}(u_\varepsilon-u_\delta) \left(C_3^{a}|\nabla u_\delta|^{p-1} + l\right) |u_\varepsilon- u_\delta| \, |\nabla(u_\varepsilon-u_\delta)|
\\ 
\leq& C \left(C_3^{a}|\nabla u_\delta|^{p-1} + l\right)  \, |\nabla(u_\varepsilon-u_\delta)| \in L^1(\Omega\times Q_T).
\end{align*}
Thus, by Fatou's lemma, passing to the limit with $\theta\rightarrow 0^{+}$ for we have 
\begin{align}\label{230308_01}
\erww{\int_{D}(u_\varepsilon-u_\delta)^+(t)\,dx}=0
\end{align} 
for any $t \in [0,T]$, hence $\ueps(t)\leq u_{\delta}(t)$ $d\mathds{P}\otimes dx$-a.e. in $\Omega \times D$. The rest of the assertion follows by integrating \eqref{230308_01} over $(0,T)$.   
\end{proof}

\begin{definition}\label{a.e.-convergence}
From Lemma \ref{monotone_sequence} it follows that we may define a $d\mathds{P}\otimes dt\otimes dx$-measurable function $u:\Omega\times (0,T)\times D\rightarrow\overline{\mathbb{R}}$ by setting
\begin{align*}
u(\omega,t,x):=\sup_{\eps>0}\ueps(\omega,t,x)
\end{align*}
and, for any $t$, a $d\mathds{P}\otimes  dx$-measurable function $u_t:\Omega\times D\rightarrow\overline{\mathbb{R}}$ by setting
\begin{align*}
u_t(\omega,x):=\sup_{\eps>0}\ueps(t)(\omega,x).
\end{align*}
Note that $u(\omega,t,x)=u_t(\omega,x)$ for almost every $(\omega,t,x) \in \Omega \times (0,T) \times D$ and from Lemma \ref{lem1} and Fatou's lemma, it follows that $u$ and $u_t$ are a.e. finite on $\Omega \times (0,T) \times D$ and on $\Omega \times D$ for any $t \in [0,T]$, respectively.
\end{definition}
\begin{lemma}\label{230308_lem01}
Under assumptions of Theorem \ref{bisexistence-regular}, we have the following convergence results for $\eps\rightarrow 0^+$:
\begin{itemize}
\item[$i.)$] $\ueps\rightharpoonup u$ in $L^p(\Omega_T;V)$, in $L^2(\Omega;L^{\beta}(0,T;L^2(D))$ for any $1\leq \beta<\infty$ and $*$-weakly in \linebreak $L^2(\Omega;L^{\infty}_w(0,T;L^2(D)))$ where $w$ denotes the weak-$\ast$ measurability in $L^{\infty}(0,T;L^2(D))$. In particular, $u$ is predictable with values in $V$ and in $H$.
\item[$ii.)$] $\ueps(t)\rightharpoonup u_t$ in $L^2(\Omega;L^2(D))$ for all $t\in [0,T]$. 
\item[$iii.)$] For any $t\in [0,T]$, $\ueps^+(t)\rightarrow u^+_t$, $\ueps^-(t)\rightarrow u^-_t$ in $L^2(\Omega;L^2(D))$.
\item[$iv.)$] $\ueps^+\rightarrow u^+$, $\ueps^-\rightarrow u^-$ and  $\ueps\rightarrow u$ in $L^2(\Omega;L^{\beta}(0,T;L^2(D))$ for any $1\leq \beta<\infty$.
\end{itemize}
\end{lemma}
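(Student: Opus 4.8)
The plan is to derive all four items from exactly two facts already established: the \emph{monotone} pointwise convergence $\ueps\uparrow u$ (Lemma~\ref{monotone_sequence} together with Definition~\ref{a.e.-convergence}), and the $\varepsilon$-uniform bounds of Lemma~\ref{lem1}$(i)$, i.e. boundedness of $(\ueps)_{\varepsilon>0}$ in the reflexive space $L^p(\Omega_T;V)$ and in $L^2(\Omega;C([0,T];H))$. The recurring device is: extract a weakly (or weakly-$\ast$) convergent subsequence by boundedness in a reflexive or dual space, and then pin down its limit as $u$ by a Lebesgue-type convergence theorem that exploits the monotonicity.

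For $(i)$, I would first check $u\in L^p(\Omega_T;V)$. Fix $\varepsilon_0>0$: for $\varepsilon\le\varepsilon_0$ one has $\ueps\ge u_{\varepsilon_0}$, hence $\ueps^-\le u_{\varepsilon_0}^-\in L^p(\Omega_T\times D)$, so $u^-\le u_{\varepsilon_0}^-\in L^p$; combining this with Fatou's lemma applied to $\ueps^+$ and the bound of Lemma~\ref{lem1}$(i)$ gives $u\in L^p(\Omega_T;V)$. Since $(\ueps)$ is bounded in the reflexive space $L^p(\Omega_T;V)$, a subsequence converges weakly to some $\chi\in L^p(\Omega_T;V)$, hence also weakly in $L^1(\Omega_T\times D)$; on the other hand $\ueps-u_{\varepsilon_0}\ge 0$ increases to the integrable function $u-u_{\varepsilon_0}$, so by the monotone convergence theorem $\ueps\to u$ in $L^1(\Omega_T\times D)$, forcing $\chi=u$; as the limit does not depend on the subsequence, the whole sequence converges weakly to $u$ in $L^p(\Omega_T;V)$. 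For the statement in $L^2(\Omega;L^\beta(0,T;L^2(D)))$ with $1<\beta<\infty$, note that boundedness in $L^2(\Omega;C([0,T];H))\hookrightarrow L^2(\Omega;L^\infty(0,T;L^2(D)))\hookrightarrow L^2(\Omega;L^\beta(0,T;L^2(D)))$ (finite time interval) gives weak compactness in this last, reflexive space, and the same $L^1$-argument identifies the limit as $u$; the case $\beta=1$ then follows from the case $\beta=2$ by the continuous embedding $L^2(\Omega;L^2(0,T;L^2(D)))\hookrightarrow L^2(\Omega;L^1(0,T;L^2(D)))$. Finally, $L^2(\Omega;L^\infty_w(0,T;L^2(D)))$ is the dual of $L^2(\Omega;L^1(0,T;L^2(D)))$ (since $L^2(D)$ is separable), so the bound in $L^2(\Omega;L^\infty(0,T;L^2(D)))$ yields a weakly-$\ast$ convergent subsequence, again with limit $u$. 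Predictability of $u$ as a $V$-valued (equivalently, by Remark~\ref{Prog-meas}, $H$-valued) process follows because the $\mathcal{P}_T$-measurable maps form a closed, hence weakly closed (Mazur), linear subspace of $L^p(\Omega_T;V)$.

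For $(ii)$ and $(iii)$, fix $t\in[0,T]$. From Lemma~\ref{lem1}$(i)$, $\erww{\sup_{s\in[0,T]}\|\ueps(s)\|_{L^2(D)}^2}$ is bounded, so $(\ueps(t))_{\varepsilon}$ is bounded in the Hilbert space $L^2(\Omega\times D)$; Lemma~\ref{monotone_sequence} gives $\ueps(t)\uparrow u_t$ $d\mathds{P}\otimes dx$-a.e., and the monotone-convergence argument of $(i)$ (subtracting $u_{\varepsilon_0}(t)$) identifies the weak limit, proving $(ii)$. For $(iii)$: since $x\mapsto x^-$ is non-increasing, $\ueps^-(t)\le u_{\varepsilon_0}^-(t)\in L^2(\Omega\times D)$ for $\varepsilon\le\varepsilon_0$ and $\ueps^-(t)\to u_t^-$ a.e., so dominated convergence gives $\ueps^-(t)\to u_t^-$ in $L^2(\Omega\times D)$; likewise $u_t^+\in L^2(\Omega\times D)$ (Fatou) and $0\le\ueps^+(t)\le u_t^+$ with $\ueps^+(t)\to u_t^+$ a.e., so dominated convergence gives $\ueps^+(t)\to u_t^+$ in $L^2(\Omega\times D)$. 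Item $(iv)$ is the space-time version: by $(i)$, $u\in L^2(\Omega;L^\infty(0,T;L^2(D)))$, so $u^+,u^-\le|u|$ lie in $L^2(\Omega;L^\beta(0,T;L^2(D)))$ for every finite $\beta$; from $0\le\ueps^+\le u^+$, $0\le\ueps^-\le u_{\varepsilon_0}^-$ and the a.e. convergences $\ueps^\pm\to u^\pm$ on $\Omega\times(0,T)\times D$, dominated convergence yields $\ueps^\pm\to u^\pm$ in $L^2(\Omega;L^\beta(0,T;L^2(D)))$, and subtracting, $\ueps\to u$ in the same space.

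I expect the only genuinely delicate point to be the bookkeeping in $(i)$: one must verify $u\in L^p(\Omega_T;V)$ (and the various $L^\infty$/$L^\beta$ memberships of the limit) \emph{before} invoking monotone convergence to identify weak limits, and one should phrase the $\ast$-weak statement correctly through the duality $L^2(\Omega;L^\infty_w(0,T;L^2(D)))=\big(L^2(\Omega;L^1(0,T;L^2(D)))\big)'$. Everything else is a routine combination of weak/weak-$\ast$ compactness in reflexive or dual spaces with the monotone and dominated convergence theorems, the monotonicity of $\varepsilon\mapsto\ueps$ providing the domination needed throughout.
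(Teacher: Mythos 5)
Your proof is correct and follows essentially the same route as the paper: monotone pointwise convergence from Lemma~\ref{monotone_sequence} and Definition~\ref{a.e.-convergence} to identify limits, the uniform bounds of Lemma~\ref{lem1}~$i.)$ to get weak compactness, domination by $u_{\eps_0}^-$ (the paper takes $\eps_0=1$) and by $u^+$ (via Fatou) to run dominated convergence for $iii.)$ and $iv.)$. The paper is terser about $i.)$ (``direct consequence of Lemma~3.6~$i.)$''), and your version spells out the identification-of-weak-limit step and the Mazur argument for predictability, which is welcome detail. The one genuinely different step is $iv.)$: the paper first shows convergence in $L^2(\Omega;L^2(0,T;L^2(D)))$ and then interpolates with the $L^2(\Omega;L^\infty(0,T;L^2(D)))$ bound to reach finite $\beta$, whereas you apply Lebesgue's theorem directly in $L^2(\Omega;L^\beta(0,T;L^2(D)))$ using the domination $0\le\ueps^+\le u^+$ and $\ueps^-\le u_{\eps_0}^-$; both work, and your route avoids the interpolation entirely. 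One small imprecision at the start of your $i.)$: the Fatou-plus-domination argument you give only yields $u\in L^p(\Omega_T\times D)$ (which suffices to run the $L^1$ identification), not membership in $V$; the fact that $u\in L^p(\Omega_T;V)$ actually comes out of the identification $u=\chi$ with $\chi$ the weak $L^p(\Omega_T;V)$-limit of the subsequence, not from the Fatou step on its own — your subsequent argument gets this right, so it is only a matter of wording.
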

\begin{proof}
The convergence results of $i.)$ are a direct consequence of Lemma \ref{lem1}, $i.)$. For any $t\in [0,T]$ $\ueps(t)\rightarrow u_t$ for $\eps\rightarrow 0^+$ $d\mathds{P}\otimes dx$-a.s. in $\Omega\times D$, hence together with the bound in $L^2(\Omega;C([0,T];L^2(D)))$ from Lemma \ref{lem1} we obtain $\ueps(t)\rightharpoonup u_t$ in $L^2(\Omega;L^2(D))$ for all $t\in [0,T]$. From the monotone convergence of $\ueps$ it follows that $u_t^+=\sup_{\eps>0}\ueps^+(t)$ $d\mathds{P}\otimes dx$-a.s. in $\Omega\times D$ for all $t\in [0,T]$ and $u^+=\sup_{\eps>0} u_{\eps}^+$ $d\mathds{P}\otimes dt\otimes dx$-a.s. in $\Omega\times (0,T)\times D$. Consequently,
\begin{align*}
|\ueps^+(t)|=\ueps^+(t)\leq u_t^{+}
\qquad \text{and} \qquad 
|\ueps^-(t)|= \ueps^-(t) \leq u_1^-(t)
\end{align*}
for all $0<\eps<1$ $d\mathds{P}\otimes dx$-a.s. in $\Omega\times D$ for all $t\in [0,T]$ and
\begin{align*}
|\ueps^+|=\ueps^+\leq u^{+}
\qquad \text{and} \qquad 
|\ueps^-|= \ueps^- \leq u_1^-
\end{align*}
$d\mathds{P}\otimes dt\otimes dx$-a.s. in $\Omega\times (0,T)\times D$.
For any $t\in [0,T]$, by using Fatou's lemma, one has $u_t^+\in L^2(\Omega\times D)$  and since $u_1(t) \in L^2(\Omega\times D)$, $iii.)$ follows from Lebesgue's dominated convergence theorem. On the other hand, again Fatou's lemma and Lemma \ref{lem1} $i.)$ ensure $u^+\in L^2(\Omega;L^{\beta}(0,T;L^2(D))$ for any $1\leq \beta<\infty$. Since the sequences are bounded in $L^2(\Omega;L^{\infty}(0,T;L^2(D))$, Lebesgue's dominated convergence theorem ensures first that $\ueps^+\rightarrow u^+$, $\ueps^-\rightarrow u^-$ in $L^2(\Omega;L^{2}(0,T;L^2(D))$, then in $L^2(\Omega;L^{\beta}(0,T;L^2(D))$ for any $1\leq \beta<\infty$ by some interpolation inequalities.
\end{proof}

\begin{lemma}\label{lem2} 
Under assumptions of Theorem \ref{bisexistence-regular},
there exists a constant $K_1\geq 0$ depending on the norm of $h^-$ in $L^2(\Omega\times Q_T)$ and not depending on $\varepsilon>0$ such that
\begin{align}\label{220908_07}
&\erww{\int_0^T\left\Vert \frac{1}{\varepsilon}(\ueps-\psi)^{-}\right\Vert_{L^2(D)}^2\,dt}\leq K_1,
\end{align}
\begin{align}\label{220908_08}
\sup_{t \in [0,T]} \erww{\Vert(\ueps-\psi)^-(t)\Vert_{L^2(D)}^2}\leq K_1\varepsilon,
\end{align}
\begin{align}\label{220908_09}
\erww{ \ \int\limits_{Q_T} \left|(\tilde  a(u_\varepsilon,\nabla u_\varepsilon)-\tilde{a}(\psi,\nabla \psi)) \cdot \nabla (u_\varepsilon-\psi)^-\right|\,dx\,ds} \leq K_1\frac{\varepsilon}{2}. 
\end{align}

\end{lemma}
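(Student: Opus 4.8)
The plan is to obtain all three bounds from a single It\^o energy estimate for $w_\eps:=\ueps-\psi$ against the functional $v\mapsto\tfrac12\|v^-\|_{L^2(D)}^2$, the crucial point being that $\widetilde a$ and $\widetilde G$ are \emph{frozen at $\psi$} on the coincidence set $\{\ueps<\psi\}$. Subtracting the equation \eqref{PDEforpsi} for $\psi$ (valid by Remark \ref{remark}, with $\widetilde a(\psi,\nabla\psi)=a(\psi,\nabla\psi)$ and $\widetilde G(\psi)=G(\psi)$) from \eqref{220908_02}, we get in $V'$, for all $t$ a.s.,
\[
w_\eps(t)=\big(u_0-\psi(0)\big)+\int_0^t\Big(\tfrac1\eps\penalisation{w_\eps}+\Div\big(\widetilde a(\ueps,\nabla\ueps)-a(\psi,\nabla\psi)\big)+h\Big)\,ds+\int_0^t\big(\widetilde G(\ueps)-G(\psi)\big)\,dW.
\]
I would then apply It\^o's formula (as in Proposition \ref{uniqueness}, the required integrability of the drift and of $\widetilde G(\ueps)-G(\psi)$ coming from Lemma \ref{lem1} and H$_3$) to $\Psi_\theta(w_\eps(t))$, where $\Psi_\theta(v)=\int_D\beta_\theta(v)\,dx$ and $\beta_\theta\in C^2(\R)$ is the convex approximation of $r\mapsto\tfrac12(r^-)^2$ with continuous second derivative $0\le\beta_\theta''\le1$, $\beta_\theta''\equiv1$ on $(-\infty,-\theta]$, $\beta_\theta''\equiv0$ on $[0,\infty)$, $\beta_\theta(0)=\beta_\theta'(0)=0$; this forces $\beta_\theta'\le0$, $\beta_\theta\ge0$, $|\beta_\theta'(r)|\le r^-$, $\beta_\theta(r)\le\tfrac12(r^-)^2$, and $\beta_\theta(r)\to\tfrac12(r^-)^2$, $\beta_\theta'(r)\to-r^-$, $\beta_\theta''(r)\to\mathds{1}_{\{r<0\}}$ as $\theta\to0^+$.

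The key observation is that on $\{w_\eps<0\}=\{\ueps<\psi\}$ one has $\max(\ueps,\psi)=\psi$, hence $\widetilde a(\ueps,\nabla\ueps)=a(\psi,\nabla\ueps)$ and $\widetilde G(\ueps)-G(\psi)=0$ there, while $\beta_\theta'(w_\eps)$ and $\beta_\theta''(w_\eps)$ are supported in $\{w_\eps\le0\}$. Consequently both the It\^o stochastic integral $\int_0^t\big(\beta_\theta'(w_\eps),(\widetilde G(\ueps)-G(\psi))\,dW\big)_H$ and the It\^o--trace correction $\tfrac12\int_0^t\operatorname{Tr}\big[\beta_\theta''(w_\eps)(\widetilde G(\ueps)-G(\psi))Q(\widetilde G(\ueps)-G(\psi))^\ast\big]\,ds$ vanish identically (their integrands being products of $\beta_\theta'(w_\eps)$ resp.\ $\beta_\theta''(w_\eps)$ with $\widetilde G(\ueps)-G(\psi)$, whose supports are disjoint). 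Using moreover $u_0\ge\psi(0)$ a.s.\ (H$_6$, so $\Psi_\theta(w_\eps(0))=0$) and $\nabla\beta_\theta'(w_\eps)=\beta_\theta''(w_\eps)\nabla w_\eps$, It\^o's formula reduces after taking expectations to
\[
\erww{\Psi_\theta(w_\eps(t))}-\frac1\eps\erww{\int_0^t\!\int_D\penalisation{w_\eps}\,\beta_\theta'(w_\eps)\,dx\,ds}+\erww{\int_0^t\!\int_D\beta_\theta''(w_\eps)\big(a(\psi,\nabla\ueps)-a(\psi,\nabla\psi)\big)\!\cdot\!\nabla w_\eps\,dx\,ds}=\erww{\int_0^t\!\langle h,\beta_\theta'(w_\eps)\rangle\,ds},
\]
where all three left-hand terms are nonnegative: the penalization one because $\penalisation{w_\eps}\ge0\ge\beta_\theta'(w_\eps)$, the elliptic one by H$_{2,2}$ with $\xi=\nabla\ueps$, $\eta=\nabla\psi$ and $\beta_\theta''\ge0$.

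For the right-hand side, $h=h^+-h^-$ with $h^+\ge0$ and $\beta_\theta'(w_\eps)\le0$, while $h^-\ge0$ and $|\beta_\theta'(r)|\le r^-$; since $h^-\in L^\alpha(\Omega_T;L^\alpha(D))\subset L^2(\Omega_T;L^2(D))$ (so that $\langle h^-,\cdot\rangle_{V',V}$ is the $L^2(D)$ pairing), this gives $\erww{\int_0^t\langle h,\beta_\theta'(w_\eps)\rangle\,ds}\le\erww{\int_0^t\int_D h^-\penalisation{w_\eps}\,dx\,ds}$, a bound independent of $\theta$. Passing to the limit $\theta\to0^+$ by Fatou's lemma on the three nonnegative left-hand terms (using $\beta_\theta(w_\eps)\to\tfrac12\penalisation{w_\eps}^2$, $-\penalisation{w_\eps}\beta_\theta'(w_\eps)\to\penalisation{w_\eps}^2$, $\beta_\theta''(w_\eps)\to\mathds{1}_{\{w_\eps<0\}}$ a.e.) yields
\[
\tfrac12\erww{\big\|\penalisation{w_\eps}(t)\big\|_{L^2(D)}^2}+\tfrac1\eps\erww{\int_0^t\big\|\penalisation{w_\eps}\big\|_{L^2(D)}^2\,ds}+R_\eps(t)\le\erww{\int_0^t\!\int_D h^-\penalisation{w_\eps}\,dx\,ds},
\]
where $R_\eps(t):=\erww{\int_0^t\!\int_D\mathds{1}_{\{w_\eps<0\}}\big(a(\psi,\nabla\ueps)-a(\psi,\nabla\psi)\big)\cdot\nabla w_\eps\,dx\,ds}\ge0$ equals the integral in \eqref{220908_09} (on $\{w_\eps\ge0\}$, $\nabla\penalisation{w_\eps}=0$; on $\{w_\eps<0\}$, $\big(\widetilde a(\ueps,\nabla\ueps)-\widetilde a(\psi,\nabla\psi)\big)\cdot\nabla\penalisation{w_\eps}=-\big(a(\psi,\nabla\ueps)-a(\psi,\nabla\psi)\big)\cdot\nabla w_\eps\le0$, so its modulus is the monotone quantity). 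Finally, Young's inequality $\int_D h^-\penalisation{w_\eps}\,dx\le\tfrac\eps2\|h^-\|_{L^2(D)}^2+\tfrac1{2\eps}\|\penalisation{w_\eps}\|_{L^2(D)}^2$ and absorption of the last term into the $\tfrac1\eps$-term give, with $K_1:=\|h^-\|_{L^2(\Omega\times Q_T)}^2$,
\[
\tfrac12\erww{\big\|\penalisation{w_\eps}(t)\big\|_{L^2(D)}^2}+\tfrac1{2\eps}\erww{\int_0^t\big\|\penalisation{w_\eps}\big\|_{L^2(D)}^2\,ds}+R_\eps(t)\le\tfrac\eps2\,K_1 ,
\]
whence \eqref{220908_08} follows by taking $\sup_{t}$, \eqref{220908_07} by dividing the second term by $\eps^2$, and \eqref{220908_09} by taking $t=T$ in $R_\eps(t)$.

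The main obstacle is purely technical: justifying the It\^o calculus for the non-smooth functional $\tfrac12\|v^-\|_{L^2(D)}^2$ and commuting $\theta\to0^+$ with expectation and time integration, which is done via the $C^2$-regularisation $\beta_\theta$ exactly as in Proposition \ref{uniqueness} and Lemma \ref{monotone_sequence}, the uniform controls $|\beta_\theta'(r)|\le r^-$ and $\beta_\theta''\le1$ keeping every integrand dominated by the a priori bounds of Lemma \ref{lem1}. The genuinely problem-specific ingredient is the freezing identity $\widetilde G(\ueps)=G(\psi)$ on $\{\ueps<\psi\}$: it annihilates the martingale and trace contributions and is what keeps the estimate sharp in powers of $\eps$.
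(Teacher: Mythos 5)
Your proof is correct and follows essentially the same route as the paper: subtract \eqref{PDEforpsi} from \eqref{220908_02}, apply It\^o's formula to a $C^2$ regularisation of the (squared) negative part, observe that the freezing of $\widetilde a$ and $\widetilde G$ on $\{\ueps<\psi\}$ kills the martingale and trace terms, use $u_0\geq\psi(0)$, $h^+\geq 0$, monotonicity of $a$ in the gradient, pass to the limit in the regularisation, and finish with Young's inequality. The only cosmetic differences are your choice of approximation $\beta_\theta$ (the paper uses the explicit Pardoux polynomial $F_\delta$) and your appeal to Fatou plus superadditivity of $\liminf$ where the paper invokes dominated convergence; both are valid.
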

\begin{proof}
 Let $\delta>0$ and consider the following approximation  from \cite[p. 152]{Pardoux}:
\begin{align*}
 F_\delta(r)= \left\{ \begin{array}{l}
 r^2-\dfrac{\delta^2}{6} \quad\quad\quad  \text{if} \hspace*{0.2cm} r\leq -\delta, \\
 -\dfrac{r^4}{2\delta^2}-\dfrac{4r^3}{3\delta}\quad \text{if} \hspace*{0.2cm}  -\delta\leq r \leq 0, \\[2ex]
 0 \quad\quad\quad\quad \quad\quad \text{if} \hspace*{0.2cm}  r\geq 0.
 \end{array}
 \right.
 \end{align*} 
 Note that $ (-\dfrac{1}{2}F_\delta^\prime)_\delta $  approximates the negative part. Moreover, $F_\delta(\cdot)\in C^2(\mathbb{R})$ satisfies:
 \begin{align*}
  \left\{ \begin{array}{l}
   \vert F_\delta(r)\vert \leq r^2 \ \text{for all} \ r\in\mathbb{R},\\
   \vert F_\delta^\prime(r)\vert \leq 2r \ \text{and} \ F_\delta^\prime(r) \leq 0 \ \text{for all} \ r \in \mathbb{R}, \\
   \vert F_\delta^{\prime\prime}(r)\vert \leq \frac{8}{3} \ \text{and} \ F_\delta^{\prime\prime}(r) \geq 0 \ \text{for all} \ r \in \mathbb{R}.
  \end{array}
  \right.
 \end{align*} 
Taking the difference of \eqref{PDEforpsi} and \eqref{220908_02} we obtain 
\begin{align}\label{220908_03}
\begin{aligned}
&\ueps(t)-\psi(t)-\frac{1}{\eps}\int_0^t(\ueps-\psi)^{-}\,ds-\int_0^t\Div(\tilde{a}(\ueps,\nabla \ueps)-\tilde{a}(\psi,\nabla \psi))\,ds\\
=&\int_0^t(\tilde{G}(\ueps)-\tilde{G}(\psi))\,dW(s)+u_0-\psi(0)+\int_0^t h\,ds
\end{aligned}
\end{align} 
for all $t\in [0,T]$, $\mathds{P}$-a.s in $\Omega$. Applying  It\^{o}'s formula (see \cite[Thm. 4.2 p. 65]{Pardoux} and also \cite[Lemma 4]{Rascanu}) with $\Psi_{\delta}(v)=\int_DF_\delta(v(x)) \, dx$ for $v\in L^2(D)$ in \eqref{220908_03} we obtain
\begin{align}\label{250701}
\begin{aligned}
&\int_D F_{\delta}(\ueps-\psi)(t)\,dx -\int_D F_{\delta}(u_0-\psi(0))\,dx -\frac{1}{\eps} \int_0^t\int_D(\ueps-\psi)^{-}F'_{\delta}(\ueps-\psi)\,dx\,ds\\
&-\int_0^t \left\langle\Div(\tilde{a}(\ueps,\nabla \ueps)-\tilde{a}(\psi,\nabla \psi)),F'_{\delta}(\ueps-\psi)\right\rangle\,ds\\
=&\int_0^t\langle h,F'_{\delta}(\ueps-\psi)\rangle\,ds + \int_0^t\left(F'_{\delta}(\ueps-\psi),\tilde{G}(\ueps)-\tilde{G}(\psi)\,dW(s)\right) \\
&+\frac{1}{2}\operatorname{Tr}\int_0^t \Psi_{\delta}''(\ueps-\psi)(\tilde{G}(\ueps)-\tilde{G}(\psi))Q(\tilde{G}(\ueps)-\tilde{G}(\psi))^{\ast}\,ds
\end{aligned}
\end{align}
for all $t\in [0,T]$, $\mathds{P}$-a.s in $\Omega$. Since $u_0\geq \psi(0)$, $F'_{\delta}(u_0-\psi(0))=0$. Recalling that $h=h^+-h^-$ with $h^+$ non-negative by H$_5$, using the assumption that $h^-$ is in $L^2(\Omega\times Q_T)$ and that $F'_{\delta}(\ueps-\psi)\leq 0$ we get
\begin{align*}
&\int_0^t\langle h,F'_{\delta}(\ueps-\psi)\rangle\,ds=\int_0^t \langle h^+,F'_{\delta}(\ueps-\psi)\rangle\,ds-\int_0^t\int_D h^- F'_{\delta}(\ueps-\psi)\,dx\,ds\\
\leq& -\int_0^t\int_D h^- F'_{\delta}(\ueps-\psi)\,dx\,ds.
\end{align*}
Since $F'_{\delta}(\ueps-\psi)=0$ on the set $\{\ueps-\psi\geq 0\}$ and $G(\max(\ueps,\psi))-G(\psi)=0$ in $\{\ueps-\psi\leq 0\}$, the last and the second last term in \eqref{250701} are equal to $0$, namely
\begin{align*}
&\int_0^t \left(F'_{\delta}(\ueps-\psi),\tilde{G}(\ueps)-\tilde{G}(\psi)\,dW(s)\right)=0 \\
\end{align*}
and
\begin{align*}
&\operatorname{Tr}\int_0^t \Psi_{\delta}''(\ueps-\psi)(\tilde{G}(\ueps)-\tilde{G}(\psi))Q(\tilde{G}(\ueps)-\tilde{G}(\psi))^{\ast}\,ds = 0
\end{align*}
for all $t\in [0,T]$, $\mathds{P}$-a.s in $\Omega$.
Thus,
\begin{align}\label{220908_04}
\begin{aligned}
&\int_D F_{\delta}(\ueps-\psi)(t)\,dx -\frac{1}{\eps} \int_0^t\int_D(\ueps-\psi)^{-}F'_{\delta}(\ueps-\psi)\,dx\,ds
\\&
-\int_0^t \left\langle\Div(\tilde{a}(\ueps,\nabla \ueps)-\tilde{a}(\psi,\nabla \psi)),F'_{\delta}(\ueps-\psi)\right\rangle\,ds\\
\leq& -\int_0^t\int_D h^- F'_{\delta}(\ueps-\psi)\,dx\,ds
\end{aligned}
\end{align}
for all $t\in [0,T]$, $\mathds{P}$-a.s in $\Omega$. Repeating the arguments of \cite[Sec. 3.1.2]{YV}, for all $t\in [0,T]$ and $\mathds{P}$-a.s in $\Omega$,  $F^\prime_\delta(u_\varepsilon-\psi)$ converges to $-2(u_\varepsilon-\psi)^-$ in $V$. Hence, we are now in position to conclude that, for $\delta\rightarrow 0^+$, 
\begin{itemize}[label=$\bullet$]
\item[$\bullet$] $\Psi_\delta (u_\varepsilon(t)-\psi(t)) \rightarrow \Vert(u_\varepsilon-\psi)^-(t)\Vert_{L^2(D)}^2$ for all $t\in [0,T]$, $\mathds{P}$-a.s in $\Omega$,  
\item[$\bullet$] $\left(-(u_\varepsilon-\psi)^-,F_\delta^\prime(u_\varepsilon-\psi)\right)_2\rightarrow 2\Vert (u_\varepsilon-\psi)^-\Vert_{L^{2}(D)}^{2}$ for all $t\in [0,T]$, $\mathds{P}$-a.s in $\Omega$, 
\item[$\bullet$] $\left(-h^-,F_\delta^\prime(u_\varepsilon-\psi)\right)_2 \rightarrow 2\left( h^-, (u_\varepsilon-\psi)^-\right)_2$ $d\mathds{P}\otimes dt$-a.e. in $\Omega_T$,
\item[$\bullet$]$d\mathds{P}\otimes dt$-a.e. in $\Omega_T$, 
\[\left\langle -\Div (\tilde{a}(u_\eps,\nabla u_\eps)-\tilde{a}(\psi,\nabla \psi)),F_\delta^\prime(u_\eps-\psi)\right\rangle \to \left\langle -\Div(\tilde{a}(u_\eps,\nabla u_\eps)-\tilde{a}(\psi,\nabla \psi)),-2(u_\eps-\psi)^-\right\rangle.\]
 \end{itemize}
Taking expectation in \eqref{220908_04} and using Lebesgue's dominated convergence theorem we may pass to the limit with $\delta\rightarrow 0^+$ and we find
\begin{align}\label{Pen}
\begin{aligned}
&\erww{\Vert(u_\eps-\psi)^-(t)\Vert_{L^2(D)}^2}+\dfrac{2}{\eps} \erww{\int_0^t\Vert (u_\varepsilon-\psi)^-\Vert_{L^{2}(D)}^{2}\,ds}\\
&-2\erww{\int_0^t\int_D (\tilde{a}(u_\eps,\nabla u_\eps)-\tilde{a}(\psi,\nabla \psi))\nabla(u_\eps-\psi)^-\,dx\,ds}\\
\leq& 2\erww{\int_0^t\int_D h^- (u_\varepsilon-\psi)^-\,dx\,ds}
\end{aligned}
\end{align}
for all $t\in [0,T]$. We have
\begin{align*}
&-\left(\tilde{a}(u_\eps,\nabla u_\eps)-\tilde{a}(\psi,\nabla \psi)\right)\nabla(u_\eps-\psi)^- 
=\left(\tilde{a}(u_\eps,\nabla u_\eps)-\tilde{a}(\psi,\nabla \psi)\right)\nabla (u_\eps-\psi) \mathds{1}_{\{u_\eps < \psi\}}\\
=&
\left(a(\psi,\nabla u_\eps)-a(\psi,\nabla \psi)\right)\nabla (u_\eps-\psi) \mathds{1}_{\{u_\eps < \psi\}} \geq 0.
\end{align*}
Multiplying \eqref{Pen} by $\frac{1}{2\eps}$, one gets
\begin{align}\label{220908_05}
\begin{aligned}
&\frac{1}{2\eps}\erww{\Vert(u_\eps-\psi)^-(t)\Vert_{L^2(D)}^2}+\frac{1}{\eps^2}\erww{\int_0^t \Vert (u_\eps-\psi)^-\Vert_{L^{2}(D)}^{2}\,ds}\\
&+\frac{1}{\eps}\erww{\int_0^t\int_D \left|\left(a(\psi,\nabla u_\eps)-a(\psi,\nabla \psi)\right)\nabla(u_\eps-\psi)^-\right|\,dx\,ds}\\
\leq& \erww{\int_0^t\int_D h^-\frac{1}{\eps}(u_\eps-\psi)^- \,dx\,ds}
\end{aligned}
\end{align}
for all $t\in [0,T]$. Applying Young's inequality on the right-hand side of \eqref{220908_05} we obtain
\begin{align}\label{220908_06}
\begin{aligned}
&\frac{1}{2\eps}\erww{\Vert(u_\eps-\psi)^-(t)\Vert_{L^2(D)}^2}+\frac{1}{2\eps^2}\erww{\int_0^t \Vert (u_\eps-\psi)^-\Vert_{L^{2}(D)}^{2}\,ds}\\
&+\frac{1}{\eps}\erww{\int_0^t\int_D \left|\left(a(\psi,\nabla u_\eps)-a(\psi,\nabla \psi)\right)\nabla(u_\eps-\psi)^-\right|\,dx\,ds}\\
\leq&\frac{1}{2}\erww{\int_0^t\Vert h^-\Vert^2_{L^2(D)}\,ds}
\end{aligned}
\end{align}
for all $t\in [0,T]$. From \eqref{220908_06} now it follows that \eqref{220908_07}, \eqref{220908_08} and \eqref{220908_09} hold with 
\[K_1=\erww{\int_0^T\Vert h^-\Vert^2_{L^2(D)}\,ds}.\] 
\end{proof}
From the previous Lemma one gets that $(\dfrac{1}{\eps}(u_\eps - \psi)^-)_{\eps >0}$ is bounded in $L^{2}(\Omega_T; L^{2}(D))$, we need now to add the boundedness in $L^{p'}(\Omega_T; L^{p'}(D))$, especially if $p<2$.
\begin{lemma}\label{lem2_2}
With the assumptions 
of Theorem \ref{bisexistence-regular},
$(\dfrac{1}{\eps}(u_\eps - \psi)^-)_{\eps >0}$ is bounded in $L^{\alpha}(\Omega_T; L^{\alpha}(D))$. 
\end{lemma}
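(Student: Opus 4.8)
The plan is the following. First, if $p\ge 2$ then $p'\le 2=\alpha$, and since $\Omega_T\times D$ has finite measure the assertion is already contained in Lemma~\ref{lem2}; so I may assume $p<2$, hence $p'>2$ and $\alpha=p'$, and the task is to bound $w_\eps:=\frac1\eps(\ueps-\psi)^-$ in $L^{p'}(\Omega_T\times D)$ uniformly in $\eps$. Writing $r_\eps:=\ueps-\psi$, so $w_\eps=\frac1\eps r_\eps^-$, equation \eqref{220908_03} reads
\[
 r_\eps(t)-\frac1\eps\!\int_0^t r_\eps^-\,ds-\int_0^t\Div\big(\tilde a(\ueps,\nabla\ueps)-a(\psi,\nabla\psi)\big)\,ds=r_\eps(0)+\int_0^t\big(\tilde G(\ueps)-\tilde G(\psi)\big)\,dW+\int_0^t h\,ds .
\]
I would apply It\^o's formula, exactly as in the proof of Lemma~\ref{lem2}, to the functional $v\mapsto\int_D F_R(v)\,dx$, where $(F_R)_{R>0}$ is a family of convex $C^2$ approximations of $\Phi(s)=\frac1{p'}(s^-)^{p'}$ — the exponent $p'$ being chosen precisely so that the penalisation term reproduces $(r_\eps^-)^{p'}=\eps^{p'}w_\eps^{p'}$. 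Since $\Phi$ is super‑quadratic for $p'>2$, the $F_R$ should have \emph{bounded} first and second derivatives, should vanish together with $F_R'$ and $F_R''$ on $[0,\infty)$, should satisfy $0\le -F_R'(s)\le (s^-)^{p'-1}$ (with equality on $[-R,0]$) and $-F_R'(s)\le R^{p'-1}$ for $s\le -R$, and should increase to $\Phi$ (and $-F_R'$ to $(\cdot^-)^{p'-1}$) as $R\to\infty$; such a family is routine to build. The application of It\^o is then licit $\omega$‑wise because $r_\eps\in L^p(\Omega_T;V)\cap L^2(\Omega;C([0,T];H))$ by Lemma~\ref{lem1}, and taking expectations is legitimate because $F_R',F_R''$ are bounded; in particular the penalisation term is read as the $L^2(D)$‑pairing $\frac1\eps\int_D r_\eps^-F_R'(r_\eps)\,dx$, so no time–space integrability of $r_\eps^-$ beyond Lemma~\ref{lem1} is needed to make sense of it.

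The resulting identity should collapse for essentially the same reasons as in Lemma~\ref{lem2}, plus one new one. Namely: (i) $F_R(r_\eps(0))=0$ because $u_0\ge\psi(0)$ by H$_6$; (ii) $F_R'(r_\eps)$ and $F_R''(r_\eps)$ are supported in $\{r_\eps<0\}$, where $\max(\ueps,\psi)=\psi$ and hence $\tilde G(\ueps)-\tilde G(\psi)=0$, so the stochastic integral term and the trace (It\^o correction) term vanish identically; (iii) on $\{r_\eps<0\}$ one also has $\tilde a(\ueps,\nabla\ueps)=a(\psi,\nabla\ueps)$, so by the monotonicity H$_{2,2}$ and $F_R''\ge0$,
\[
 \big\langle\Div\big(\tilde a(\ueps,\nabla\ueps)-a(\psi,\nabla\psi)\big),F_R'(r_\eps)\big\rangle=-\int_D F_R''(r_\eps)\,\nabla r_\eps\cdot\big(a(\psi,\nabla\ueps)-a(\psi,\nabla\psi)\big)\,dx\le0,
\]
which may be discarded; (iv) since $-F_R'(r_\eps)\ge0$ lies in $L^p(\Omega_T;V)$ ($F_R'$ being Lipschitz), the non‑negativity of $h^+$ in the order dual gives $\erww{\int_0^T\langle h^+,-F_R'(r_\eps)\rangle\,dt}\ge0$; and (v) — this is where $h^-\in L^\alpha=L^{p'}(\Omega_T;L^{p'}(D))\subset L^{p'}(\Omega_T;V')$ is used — $\langle h^-,-F_R'(r_\eps)\rangle=\int_D h^-(-F_R'(r_\eps))\,dx$. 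Discarding the non‑negative terms $\erww{\int_D F_R(r_\eps(T))\,dx}$ and the $h^+$ contribution, and setting $t=T$, I expect to arrive at
\[
 \frac1\eps\,\erww{\int_0^T\!\!\int_D r_\eps^-\,(-F_R'(r_\eps))\,dx\,dt}\ \le\ \erww{\int_0^T\!\!\int_D h^-\,(-F_R'(r_\eps))\,dx\,dt}.
\]

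To finish I would use the pointwise inequality $(-F_R'(s))^{p}\le s^-\,(-F_R'(s))$: it is an equality on $[-R,0]$ because $(p'-1)p=p'$, trivial for $s\ge0$, and for $s<-R$ it follows from $(-F_R'(s))^{p-1}\le (R^{p'-1})^{p-1}=R^{(p'-1)(p-1)}=R\le s^-$, using $(p-1)(p'-1)=pp'-p-p'+1=1$. Young's inequality with exponents $(p',p)$ then gives, for every $\mu>0$,
\[
 h^-(-F_R'(r_\eps))\ \le\ \frac{\mu^{p'}}{p'}(h^-)^{p'}+\frac1{p\mu^{p}}(-F_R'(r_\eps))^{p}\ \le\ \frac{\mu^{p'}}{p'}(h^-)^{p'}+\frac1{p\mu^{p}}\,r_\eps^-(-F_R'(r_\eps)),
\]
and the choice $\mu^p=2\eps/p$ absorbs the last term into half of the left‑hand side of the previous display, leaving $\erww{\int_0^T\int_D r_\eps^-(-F_R'(r_\eps))\,dx\,dt}\le C\eps^{p'}\|h^-\|^{p'}_{L^{p'}(\Omega_T\times D)}$ with $C$ independent of $\eps$ \emph{and} of $R$. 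Letting $R\to\infty$ and invoking monotone convergence ($r_\eps^-(-F_R'(r_\eps))\uparrow (r_\eps^-)^{p'}$) then yields
\[
 \erww{\int_0^T\!\!\int_D w_\eps^{p'}\,dx\,dt}=\frac1{\eps^{p'}}\erww{\int_0^T\!\!\int_D (r_\eps^-)^{p'}\,dx\,dt}\ \le\ C\,\|h^-\|^{p'}_{L^{p'}(\Omega_T\times D)}\ \le\ C'\,\|h^-\|^{p'}_{L^{\alpha}(\Omega_T\times D)},
\]
uniformly in $\eps>0$, which is the claim.

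The two points requiring care — and, I think, the only real (if mild) obstacles — are making It\^o's formula for the $\eps$‑fixed penalised equation rigorous with the non‑quadratic weight, which is why the bounded‑derivative approximants $F_R$ are used and why the penalisation term is handled as an $L^2(D)$‑pairing rather than a $V'$‑pairing; and the order of the two limits, namely that the Young absorption must be carried out at finite $R$ so that the $\eps^{p'}$‑bound is $R$‑independent, since the target estimate is itself the only a priori source of $L^{p'}$‑integrability of $r_\eps^-$. Everything else is the same sign bookkeeping already carried out in Lemma~\ref{lem2}.
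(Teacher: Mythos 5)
Your proof is correct and follows essentially the same strategy as the paper: apply It\^o's formula in the $\eps$‑fixed penalised equation to a truncated version of the $p'$‑power of the negative part, discard the (signed) terms exactly as in Lemma~\ref{lem2}, then use Young's inequality with an $\eps$‑dependent parameter to absorb the penalisation term, and finally let the truncation parameter go to infinity by monotone convergence. The algebraic backbone is identical: $(p'-1)p=p'$ and $(p'-1)(p-1)=1$.

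The one place you diverge from the paper, and where you should be a bit careful, is the choice of the truncation family. The paper's $F_M$ agrees with $|x|^{p'}$ on $[-M,0]$ and is extended to $(-\infty,-M]$ by \emph{constant second derivative}; this gives a genuine $C^2$ function (the second derivative matches at $-M$), at the price of an unbounded $F_M'$ and a ``for $M$ large enough'' sign check on $\{u_\eps-\psi\le -M\}$ after the Young step. Your $F_R$ is instead extended by \emph{constant first derivative}, which gives the cleaner, $R$‑uniform pointwise inequality $(-F_R')^p\le s^-(-F_R')$ and avoids any region splitting. But note that the list of requirements you impose on $F_R$ is over‑determined: convexity forces $-F_R'$ to be non‑increasing, so demanding $-F_R'(s)\le R^{p'-1}$ for $s\le -R$ together with $-F_R'(-R)=R^{p'-1}$ forces $F_R''\equiv 0$ on $(-\infty,-R)$, and then $F_R''$ jumps at $-R$; so $F_R$ as literally described is only $C^{1,1}$, not $C^2$. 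This is harmless — either mollify $F_R''$ on a short interval $[-R-\eta,-R]$ (which only perturbs $-F_R'$ by a bounded factor and leaves your pointwise inequality valid up to a constant), or invoke an It\^o formula valid for $C^1$ functionals with Lipschitz derivative — but it is worth stating, since otherwise the self‑imposed constraints are inconsistent with $C^2$ regularity, which you explicitly require for the It\^o step.
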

\begin{proof}
 For $p\geq2$ there is nothing to prove. Now let us assume $p<2$. Since $\tilde{a}(u_\eps, \nabla u_\eps)= \tilde{a}(\psi, \nabla u_\eps)$ and $G(\max(\ueps,\psi))=G(\psi)$ on the set $\{u_\eps  \leq \psi \}$, applying It\^{o}'s formula to the process $u_\eps - \psi$ yields
 \begin{align*}
 -\dfrac{1}{\eps}\int_0^t\int_D [(u_\eps - \psi)^-] F_M^\prime(u_\eps-\psi) \, dx \, ds
 \leq -\int_0^t\int_D h^- F_M^\prime(u_\eps-\psi) \, dx \, ds,
 \end{align*}
 where $F_M\in C^2(\mathbb{R})$ is the non-negative even convex function such that $F_M(x)=F_M^{\prime}(x)=0$ for any $x\geq 0$, equal to $F_M(x)=|x|^{p^\prime}$ on $[-M, 0]$ and whose second-order derivative on $(-\infty,-M]$ is given by $F_M^{\prime \prime}(x)=p^\prime(p^\prime-1) M^{p^\prime-2}$. After a few routine steps, one gets that $\forall r \in \mathbb{R}, \quad 0 \leq r^2 F_M^{\prime \prime}(r) \leq p^\prime(p^\prime-1) F_M(r)$,  and $0 \leq r F_M^{\prime}(r) \leq p^\prime F_M(r)$. Then, for any $\lambda >0$ there exists $C_{\lambda} >0$ such that
 \begin{align*}
 -\dfrac{1}{\eps^{p^\prime}}\int_0^t\int_D [(u_\eps - \psi)^-] F_M^\prime(u_\eps-\psi) \, dx \, ds
 \leq& 
 -\int_0^t\int_D \dfrac{1}{\eps^{p^\prime-1}}h^- F_M^\prime(u_\eps-\psi) \, dx \, ds
 \\ \leq & 
 \lambda \dfrac{1}{\eps^{p^\prime}}\int_0^t\int_D |F_M^\prime(u_\eps-\psi)|^p \, dx \, ds
 + C_\lambda \int_0^t\int_D |h^-|^{p^\prime} \, dx \, ds,
 \end{align*}
 \textit{i.e.} 
 \begin{align*}
 C_\lambda \int_0^t\int_D |h^-|^{p^\prime} \, dx \, ds \geq &-\dfrac{1}{\eps^{p^\prime}}\int_0^t\int_{D \cap \{-M<u_\eps-\psi\leq 0\}} [(u_\eps - \psi)^-] F_M^\prime(u_\eps-\psi) + \lambda |F_M^\prime(u_\eps-\psi)|^p \, dx \, ds
 \\&
 -\dfrac{1}{\eps^{p^\prime}}\int_0^t\int_{D \cap \{u_\eps-\psi\leq -M\}} [(u_\eps - \psi)^-] F_M^\prime(u_\eps-\psi) + \lambda |F_M^\prime(u_\eps-\psi)|^p \, dx \, ds
 \\ =&
 \dfrac{1}{\eps^{p^\prime}}\int_0^t\int_{D \cap \{-M<u_\eps-\psi\leq 0\}} (p^\prime -  (p^\prime)^p\lambda)|(u_\eps - \psi)^-|^{p^\prime} \, dx \, ds
 \\&
 +\dfrac{1}{\eps^{p^\prime}}\int_0^t\int_{D \cap \{u_\eps-\psi\leq -M\}} [(u_\eps - \psi)^-] |F_M^\prime(u_\eps-\psi)| - \lambda |F_M^\prime(u_\eps-\psi)|^p \, dx \, ds.
 \end{align*}
 Set $\lambda>0$ such that  $p^\prime -  (p^\prime)^p\lambda=1$. Since $F'_M(x)=p^\prime M^{p^\prime-2}[(p^\prime-1)x+M(p^\prime-2)]$ on $(-\infty,-M]$ and $p<2$, for $M$ big enough, $\Big([(u_\eps - \psi)^-] |F_M^\prime(u_\eps-\psi)| - \lambda |F_M^\prime(u_\eps-\psi)|^p\Big)1_{\{u_\eps-\psi\leq -M\}}\geq 0$ and 
 \begin{align*}
 \int_0^t\int_{D \cap \{-M<u_\eps-\psi\leq 0\}} |\dfrac{1}{\eps}(u_\eps - \psi)^-|^{p^\prime} \, dx \, ds \leq C \int_0^t\int_D |h^-|^{p^\prime} \, dx \, ds.
 \end{align*}
 Passing to the limit over $M$ yields the result.
\end{proof}
Note finally that since $\partial_t \left( u_\epsilon - \int_0^\cdot \tilde G(u_\eps) dW\right) = \dfrac{1}{\varepsilon} (u_\eps-\psi)^-+ \operatorname{div}\,\tilde{a}(u_\eps,\nabla u_\eps) + f$, Lemmas \ref{lem1}, \ref{lem2} and \ref{lem2_2} yield the following lemma. 
\begin{lemma}\label{lem2_3}
With assumptions 
of Theorem \ref{bisexistence-regular},
$\left(\partial_t \left( u_\eps - \int_0^\cdot \tilde G(u_\eps) dW\right)\right)_{\eps >0}$ is bounded in $L^{p'}(\Omega_T; V')$. 
\end{lemma}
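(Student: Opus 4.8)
The plan is to read the bound off term by term from the identity already recorded just before the statement, namely
\[
\partial_t \left( u_\eps - \int_0^\cdot \tilde G(u_\eps)\,dW\right) = \frac{1}{\eps}(u_\eps-\psi)^- + \operatorname{div}\,\tilde a(u_\eps,\nabla u_\eps) + f,
\]
which holds in $V'$ for a.e.\ $t\in[0,T]$, $\mathds{P}$-a.s.\ in $\Omega$ (it is an immediate rearrangement of \eqref{220908_02}). Since the right-hand side is a sum of three terms, by the triangle inequality in $L^{p'}(\Omega_T;V')$ it is enough to bound each of them separately, uniformly in $\eps>0$.

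First, $f\in L^{p'}(\Omega_T;V')$ by Assumption H$_5$, so this term contributes a fixed, $\eps$-independent quantity. Second, Lemma \ref{lem1}, $iii.)$, states precisely that $(-\operatorname{div}\,\tilde a(\cdot,u_\eps,\nabla u_\eps))_{\eps>0}$ is bounded in $L^{p'}(\Omega_T;V')$, uniformly in $\eps>0$. Third, for the penalization term we combine Lemma \ref{lem2} (which handles the case $p\geq 2$, where $\alpha=2$, via \eqref{220908_07}) and Lemma \ref{lem2_2} (which handles $p<2$, where $\alpha=p'>2$) to get that $\left(\frac1\eps(u_\eps-\psi)^-\right)_{\eps>0}$ is bounded in $L^{\alpha}(\Omega_T;L^{\alpha}(D))$ with $\alpha=\max(2,p')$. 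Since $\alpha\geq p'$ and $|\Omega_T|<\infty$, this yields a uniform bound in $L^{p'}(\Omega_T;L^{p'}(D))$; and since $V=W^{1,p}_0(D)$ embeds continuously into $L^p(D)$, its dual $L^{p'}(D)$ embeds continuously into $V'=W^{-1,p'}(D)$, so the penalization term is bounded in $L^{p'}(\Omega_T;V')$, uniformly in $\eps>0$, as well.

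Adding the three estimates gives the claimed uniform bound. There is essentially no obstacle here: all the analytic content has been carried out in Lemmas \ref{lem1}, \ref{lem2} and \ref{lem2_2}, and the only point deserving a moment's care is the chain of embeddings $L^{\alpha}(\Omega_T;L^{\alpha}(D))\hookrightarrow L^{p'}(\Omega_T;L^{p'}(D))\hookrightarrow L^{p'}(\Omega_T;V')$ used to downgrade the norm of the penalization term to the target space.
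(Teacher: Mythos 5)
Your proposal is correct and follows exactly the route the paper records just before the statement: rearrange \eqref{220908_02}, bound $f$ and $\operatorname{div}\,\tilde a(u_\eps,\nabla u_\eps)$ by H$_5$ and Lemma \ref{lem1}~$iii.)$, and handle the penalization term via Lemmas \ref{lem2} and \ref{lem2_2}. Your only addition is spelling out the embedding chain $L^{\alpha}(\Omega_T;L^{\alpha}(D))\hookrightarrow L^{p'}(\Omega_T;L^{p'}(D))\hookrightarrow L^{p'}(\Omega_T;V')$, which the paper leaves implicit.
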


\subsection{Passage to the limit with $\eps \to 0^+$}
From Lemmas \ref{lem2} and \ref{lem2_2} it follows that there exists a predictable $\rho \in L^\alpha(\Omega_T; L^\alpha(D))$ where $\alpha=\max(2,p')$ such that $-\rho \geq0$ and, passing to a not relabeled subsequence if necessary, for $\eps\rightarrow 0^+$,
\begin{align}\label{220909_03}
-\frac{1}{\eps}(\ueps-\psi)^-\rightharpoonup \rho \ \text{in} \ L^\alpha(\Omega_T; L^\alpha(D)).
\end{align}

\begin{lemma}\label{Lem penalization}
We have $u\geq \psi$, \textit{i.e.} $u \in L^p(\Omega_T; V)$ and $u \geq \psi$ a.e. in $\Omega_T \times D$. Moreover, we have
\begin{align*}
\erww{\int_0^T \int_D \rho(u-\psi)\,dx\,dt}=0.
\end{align*}
\end{lemma}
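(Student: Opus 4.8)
The plan is to read everything off from the $\eps$-uniform estimates of Lemma \ref{lem2} (and \ref{lem2_2}), the monotone convergence $\ueps\nearrow u$ of Lemma \ref{monotone_sequence} together with Definition \ref{a.e.-convergence}, and the weak convergence \eqref{220909_03}.

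First I would settle $u\in L^p(\Omega_T;V)$ with $u\geq\psi$. Membership in $L^p(\Omega_T;V)$ is immediate, since $u$ is the weak limit of $(\ueps)_\eps$ in that space by Lemma \ref{230308_lem01}(i). For the constraint, \eqref{220908_07} gives $\erww{\int_0^T\|(\ueps-\psi)^-\|_{L^2(D)}^2\,dt}\le\eps^2K_1\to 0$, hence $(\ueps-\psi)^-\to 0$ in $L^2(\Omega_T;L^2(D))$; at the same time $\ueps\to u$ $d\mathds{P}\otimes dt\otimes dx$-a.e. by Definition \ref{a.e.-convergence}, so $(\ueps-\psi)^-\to(u-\psi)^-$ a.e. by continuity of $r\mapsto r^-$. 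Identifying the two limits along a subsequence forces $(u-\psi)^-=0$, i.e. $u\ge\psi$ a.e. in $\Omega_T\times D$.

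Then I would prove the complementarity relation by a weak--strong argument based on the elementary identity (using that positive and negative parts have disjoint supports)
\begin{align*}
\erww{\int_0^T\!\!\int_D\tfrac1\eps(\ueps-\psi)^-(\ueps-\psi)\,dx\,dt}
&=-\erww{\int_0^T\tfrac1\eps\|(\ueps-\psi)^-(t)\|_{L^2(D)}^2\,dt}\\
&=-\eps\,\erww{\int_0^T\|\tfrac1\eps(\ueps-\psi)^-(t)\|_{L^2(D)}^2\,dt},
\end{align*}
whose right-hand side is bounded in absolute value by $\eps K_1\to 0$ thanks to \eqref{220908_07}. On the left-hand side I would pass to the limit using that $\tfrac1\eps(\ueps-\psi)^-\rightharpoonup-\rho$ weakly in $L^\alpha(\Omega_T;L^\alpha(D))$, $\alpha=\max(2,p')$, by \eqref{220909_03}, while $\ueps-\psi\to u-\psi$ strongly in $L^{\alpha'}(\Omega_T;L^{\alpha'}(D))$, the latter following from the strong $L^2(\Omega;L^2(0,T;L^2(D)))$-convergence of Lemma \ref{230308_lem01}(iv) together with the embedding $L^2\hookrightarrow L^{\alpha'}$ on the finite measure space $\Omega\times(0,T)\times D$ (note $\alpha'=\min(2,p)\le 2$). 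Hence the left-hand side tends to $-\erww{\int_0^T\int_D\rho(u-\psi)\,dx\,dt}$, and comparing with the vanishing bound yields $\erww{\int_0^T\int_D\rho(u-\psi)\,dx\,dt}=0$. Since moreover $-\rho\ge 0$ and, by the first step, $u-\psi\ge 0$ a.e., one in fact obtains $\rho(u-\psi)=0$ $d\mathds{P}\otimes dt\otimes dx$-a.e.

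I do not expect a genuine obstacle: all the delicate estimates are already in place. The only point needing care is the bookkeeping of Lebesgue exponents in the last step, so that the weak--strong pairing $L^\alpha\times L^{\alpha'}$ is legitimate in the regime $p<2$, where $\rho$ is a priori only known to lie in $L^{p'}$ and one must invoke the finiteness of the measure to upgrade the strong $L^2$-convergence of $\ueps$ to strong $L^p$-convergence.
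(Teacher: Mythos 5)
Your argument is correct and follows the paper's own proof essentially verbatim: a.e.\ identification of $(\ueps-\psi)^-\to(u-\psi)^-$ combined with the $L^2$-decay from \eqref{220908_07} for the constraint, then the weak--strong pairing of $-\tfrac1\eps(\ueps-\psi)^-\rightharpoonup\rho$ against $\ueps-\psi\to u-\psi$, with the limit killed by the $\eps K_1$ bound. The only difference is that you make the $L^\alpha$--$L^{\alpha'}$ exponent bookkeeping in the case $p<2$ explicit, which the paper leaves implicit but which is indeed the right thing to check.
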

\begin{proof}
From Lemma \ref{230308_lem01} it follows that $u\in L^p(\Omega_T; V)$ and according to Definition \ref{a.e.-convergence} we have $(\ueps - \psi)^- \to (u-\psi)^-$ a.e. in $\Omega \times (0,T)\times D$. On the other hand, from Lemma \ref{lem2} it follows that $(\ueps - \psi)^- \to 0$ in $L^2(\Omega_T;L^2(D))$ for $\varepsilon \to 0^+$ and therefore $u \in K$. 
Now, using \eqref{220908_07}, \eqref{220909_03} and the convergence of $\ueps$ in $L^2(\Omega_T;H)$,  we get
\begin{align*}
&\erww{\int_0^T \int_D \rho(u-\psi)\,dx\,dt}= \lim\limits_{\varepsilon \to 0^+} \erww{\int_0^T \int_D -\frac{1}{\varepsilon}(\ueps-\psi)^- (\ueps - \psi)\,dx\,dt} \\
= &\lim\limits_{\varepsilon \to 0^+} \erww{\int_0^T \varepsilon \Vert \frac{1}{\varepsilon}(\ueps-\psi)^- \Vert_{L^2(D)}^2\,dx\,dt} 
\leq  \lim\limits_{\varepsilon \to 0^+} \varepsilon K_1 =0.
\end{align*}
\end{proof}

Note that $u_\eps = u_\eps - \int_0^\cdot \tilde G(u_\eps) dW + \int_0^\cdot \tilde G(u_\eps) dW$, where $\int_0^\cdot \tilde G(u_\eps) dW$ converges strongly to $\int_0^\cdot G(u) dW $ in $L^2(\Omega;C([0,T];H))$ by similar reasoning as in the proof of Lemma \ref{250416_02}. Since $\ueps$ converges strongly to $u$ in $L^2(\Omega_T;H)$, Lemma \ref{lem2_3} yields that $u_\eps - \int_0^\cdot \tilde G(u_\eps) dW$ converges weakly to $u - \int_0^\cdot G(u) dW$ in $L^{\min(2,p')}(\Omega;\mathcal{W})$ for a subsequence, where
\begin{align*}
\mathcal{W}:= \{ v \in L^2(0,T;L^2(D)) \ | \ \partial_t v \in L^{p'}(0,T; V')\}.
\end{align*}
Since $\mathcal{W}$ is compactly imbedded in $C([0,T]; V')$ by \cite{Simon}, we may conclude that $\ueps$ converges weakly  to $u$ in $L^{\min(2,p')}(\Omega;C([0,T];V'))$. Especially, we have $u \in L^{\min(2,p')}(\Omega;C([0,T];V'))$ and $\ueps(t) \rightharpoonup u(t)$ in $L^{\min(2,p')}(\Omega;V')$ for any $t \in [0,T]$. As a by product, $u_t$ defined in Definition \ref{a.e.-convergence} is equal to $u(t)$ for any $t \in [0,T]$ and it is possible to summarize the convergence information in the following remark.
\begin{rem}\label{rem1}
Up to a subsequence, Lemma \ref{lem1}, Lemma \ref{230308_lem01}, Lemma \ref{Lem penalization}, H$_{3,2}$ and BDG inequality yield the existence of $A \in L^{p'}(\Omega_T; L^{p'}(D))^d$ predictable and the validity of the following convergences:
 \begin{itemize}[label=$\bullet$]
  \item $\ueps \to u$ in $L^2(\Omega_T; L^2(D))$ and $L^2(\Omega;L^\beta(0,T; L^2(D)))$ for any finite $\beta$,
  \item $\ueps \rightharpoonup u$ in $L^p(\Omega_T; V)$ and in $L^{\min(2,p')}(\Omega;C([0,T];V'))$,
  \item $\ueps(t) \to u(t)$ in $L^2(\Omega; L^2(D))$ for all $t \in [0,T]$,
  \item $-\frac{1}{\eps} (\ueps - \psi)^- \rightharpoonup \rho$ in $L^\alpha(\Omega_T;L^\alpha(D))$, $\alpha=\max(2,p')$,
  \item $\tilde{a}(\cdot,\ueps, \nabla \ueps) \rightharpoonup A $ in $L^{p'}(\Omega_T;L^{p'} (D))^d$,
  \item $\operatorname{div} \tilde{a}(\cdot,\ueps, \nabla \ueps) \rightharpoonup \operatorname{div} A $ in $L^{p'}(\Omega_T; V')$,
  \item $\tilde{G}(\ueps) \to G(u)$ in $L^2(\Omega_T; HS(H))$.
 \end{itemize}
\end{rem}
\begin{lemma}\label{230406_01}
\begin{align}\label{221109_01}
u(t) + \int_0^t \rho \, ds - \int_0^t \operatorname{div} A \, ds = u_0 + \int_0^t G(u) \, dW + \int_0^t f \, ds
\end{align}
holds in $V'$ a.s. in $\Omega$ for all $t \in [0,T]$. Especially, we have $u \in L^2(\Omega; C([0,T]; L^2(D)))$.
\end{lemma}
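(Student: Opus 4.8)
The plan is to obtain the limit equation \eqref{221109_01} by passing to the limit $\varepsilon\to0^+$ in the penalized equation \eqref{220908_02}, tested against deterministic elements of $V$, and then to recover the announced time-regularity of $u$ from the It\^o formula for the square of the $H$-norm. All the convergences I will use are collected in Remark \ref{rem1}, and concern sequences living on the \emph{same} stochastic basis $(\Omega,\mathcal F,(\mathcal F_t),\mathds P)$ on which the penalized solutions $\ueps$ were constructed.

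First I fix $t\in[0,T]$, $A\in\mathcal F$ and $v\in V$, apply $\langle\,\cdot\,,v\rangle_{V',V}$ to \eqref{220908_02}, multiply by $\mathds 1_A$ and take the expectation. Letting $\varepsilon\to0^+$, I treat the terms one by one: $\ueps(t)\to u(t)$ strongly in $L^2(\Omega;L^2(D))\hookrightarrow L^2(\Omega;V')$ handles the first term; since $g\mapsto\int_0^t g\,ds$ is a bounded linear, hence weakly continuous, operator from $L^{\alpha}(\Omega_T;L^\alpha(D))$ and from $L^{p'}(\Omega_T;V')$ into $L^1(\Omega;V')$, the weak convergences $-\tfrac1\varepsilon\penalisation{\ueps-\psi}\rightharpoonup\rho$ and $\Div\tilde a(\ueps,\nabla\ueps)\rightharpoonup\Div A$ give $-\tfrac1\varepsilon\int_0^t\penalisation{\ueps-\psi}\,ds\rightharpoonup\int_0^t\rho\,ds$ and $\int_0^t\Div\tilde a(\ueps,\nabla\ueps)\,ds\rightharpoonup\int_0^t\Div A\,ds$ in $L^1(\Omega;V')$; from $\tilde G(\ueps)\to G(u)$ in $L^2(\Omega_T;HS(H))$, It\^o's isometry together with Doob's inequality yields $\int_0^\cdot\tilde G(\ueps)\,dW\to\int_0^\cdot G(u)\,dW$ strongly in $L^2(\Omega;C([0,T];H))$, hence $\int_0^t\tilde G(\ueps)\,dW\to\int_0^t G(u)\,dW$ in $L^2(\Omega;H)$; while $u_0$ and $\int_0^t f\,ds$ do not depend on $\varepsilon$. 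Passing to the limit we obtain
\[\erww{\mathds 1_A\Big\langle u(t)+\int_0^t\rho\,ds-\int_0^t\Div A\,ds-u_0-\int_0^t G(u)\,dW-\int_0^t f\,ds,\,v\Big\rangle_{V',V}}=0\]
for all $A\in\mathcal F$ and all $v\in V$. As $V$ is separable, this shows that \eqref{221109_01} holds in $V'$, $\mathds P$-a.s., for the fixed $t$.

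To upgrade this to an identity valid simultaneously for all $t\in[0,T]$ and $\mathds P$-a.s., I observe that every term in \eqref{221109_01} defines a process with $\mathds P$-a.s. continuous $V'$-valued paths: $u\in L^{\min(2,p')}(\Omega;C([0,T];V'))$ by Remark \ref{rem1}; $t\mapsto\int_0^t\rho\,ds$, $t\mapsto\int_0^t\Div A\,ds$, $t\mapsto\int_0^t f\,ds$ are continuous in $V'$ since $\rho$, $\Div A$, $f\in L^1(0,T;V')$ a.s. (here $L^\alpha(D)\hookrightarrow L^{p'}(D)\hookrightarrow V'$ and $\Div$ maps $L^{p'}(D)^d$ continuously into $V'$); and $t\mapsto\int_0^t G(u)\,dW$ has continuous paths in $H\hookrightarrow V'$. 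Two $V'$-continuous processes agreeing, for each fixed $t$, outside a $t$-dependent null set agree outside a single null set for all $t$; this yields \eqref{221109_01} for every $t$, $\mathds P$-a.s.

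It remains to prove $u\in L^2(\Omega;C([0,T];L^2(D)))$. The process $u\in L^p(\Omega_T;V)$ is predictable, $Y:=-\rho+\Div A+f$ is predictable and lies in $L^{p'}(\Omega_T;V')$ (use $L^\alpha(\Omega_T;L^\alpha(D))\hookrightarrow L^{p'}(\Omega_T;V')$ for $\rho$, the continuity of $\Div$ for $A$, and H$_5$ for $f$), and $G(u)\in L^2(\Omega_T;HS(H))$ is predictable; since \eqref{221109_01} reads $u(t)=u_0+\int_0^t Y\,ds+\int_0^t G(u)\,dW$, \cite[Theorem 4.2.5]{Liu-Rock} applies and provides an $L^2(D)$-continuous version of $u$ in $L^2(\Omega;C([0,T];L^2(D)))$, together with the It\^o energy formula for $\|u(\cdot)\|_{L^2(D)}^2$. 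I expect the only points requiring real care to be the predictability of the weak limits $\rho$ and $A$ (ensured, as in Remark \ref{rem1}, because the relevant subspaces of predictable functions are strongly closed and convex, hence weakly closed) and the passage from ``for each $t$'' to ``for all $t$''; the individual limit passages are otherwise routine.
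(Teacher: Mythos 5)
Your proof is correct and follows essentially the same strategy as the paper: pass to the limit term by term in the penalized equation using the convergences collected in Remark \ref{rem1}, then invoke \cite[Theorem 4.2.5]{Liu-Rock} for the $L^2(\Omega;C([0,T];L^2(D)))$ regularity. The paper tests the time-derivative form \eqref{221109_02} against $\phi\xi\chi_F$ with $\xi\in C_0^\infty(0,T)$, whereas you test the integral form at fixed $t$ against $v\mathds 1_A$ and then upgrade via a.s.\ path continuity in $V'$; this is a harmless variant of the same argument.
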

\begin{proof}
According to \eqref{220908_02} we have
\begin{align}\label{221109_02}
\partial_t\left(\ueps - \int_0^{\cdot} \tilde{G}(\ueps) \, dW\right) = \operatorname{div} \tilde{a}(\cdot,\ueps, \nabla \ueps) + \frac{1}{\varepsilon} (\ueps - \psi)^- + f
\end{align}
in $L^{p'}(\Omega_T; V')$. We fix $\phi \in W_0^{1,p}(D)$, $\xi \in C_0^{\infty}(0,T)$ and $F \in \mathcal{F}$. Now, similar as in \cite{Vallet-Zimm1}, testing \eqref{221109_02} with $\phi \xi \chi_{F}$ and passing to the limit with $\varepsilon \to 0^+$ yields
\begin{align*}
\partial_t \left(u - \int_0^{\cdot} G(u) \, dW\right) = \operatorname{div} A - \rho + f
\end{align*}
in $L^{p'}(0,T; V')$, a.s. in $\Omega$.
Therefore, the function $u$ satisfies equality \eqref{221109_01}. Moreover, \cite[Theorem 4.2.5, p. 91]{Liu-Rock} yields that $u \in L^2(\Omega; C([0,T]; L^2(D)))$.
\end{proof}
The following lemma finalizes the proof of Theorem \ref{bisexistence-regular}:
\begin{lemma}
$A= a(\cdot, u , \nabla u)$ in $L^{p'}(\Omega_T \times D)^d$.
\end{lemma}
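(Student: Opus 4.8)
The plan is to run the stochastic version of Minty's trick already used in Section \ref{higher order problem} to identify $A_\infty$ (in the spirit of \cite{Vallet-Zimm1} and \cite[Lemma 8.8, p. 208]{Roubicek}). The first and main step is to establish the energy inequality
\[
\limsup_{\eps\to 0^+}\erww{\int_0^T\!\!\int_D a(\max(\psi,\ueps),\nabla \ueps)\cdot\nabla \ueps\,dx\,dt}\le \erww{\int_0^T\!\!\int_D A\cdot\nabla u\,dx\,dt},
\]
the second is to deduce $A=a(\cdot,u,\nabla u)$ from it using monotonicity and hemicontinuity of $a$ in its last variable.

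For the energy inequality I would write It\^o's energy formula for $\ueps$ (legitimate since $\ueps\in L^2(\Omega;C([0,T];L^2(D)))$, see \eqref{220908_02}) and for the limit $u$ (justified by Lemma \ref{230406_01} through \eqref{221109_01}). In the identity for $\ueps$ the penalization contributes $\frac1\eps\erww{\int_0^T\!\int_D \penalisation{\ueps-\psi}\ueps\,dx\,dt}$, which I split as $-\frac1\eps\int_D|\penalisation{\ueps-\psi}|^2\,dx+\frac1\eps\int_D\penalisation{\ueps-\psi}\psi\,dx$: the first term is non-positive and is simply discarded, while by \eqref{220909_03} (recall $\alpha\ge 2$) one has $\frac1\eps\penalisation{\ueps-\psi}\rightharpoonup -\rho$ in $L^2(\Omega_T;L^2(D))$, so its pairing with the fixed element $\psi$ passes to the limit. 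Using in addition the strong convergences $\ueps\to u$ in $L^2(\Omega_T;H)$ and $\tilde G(\ueps)\to G(u)$ in $L^2(\Omega_T;HS(H))$, the weak convergences $\ueps\rightharpoonup u$ in $L^p(\Omega_T;V)$ and $\ueps(T)\rightharpoonup u(T)$ in $L^2(\Omega;H)$ (all from Remark \ref{rem1}), weak lower semicontinuity of the $L^2$-norm for the terminal term, and then subtracting the energy identity for $u$, I obtain
\[
\limsup_{\eps\to 0^+}\erww{\int_0^T\!\!\int_D a(\max(\psi,\ueps),\nabla \ueps)\cdot\nabla \ueps\,dx\,dt}\le \erww{\int_0^T\!\!\int_D A\cdot\nabla u\,dx\,dt}+\erww{\int_0^T\!\!\int_D \rho(u-\psi)\,dx\,dt}.
\]
Since $-\rho\ge 0$ and $u\ge\psi$ the last term is $\le 0$ (it is in fact $0$ by Lemma \ref{Lem penalization}), which gives the claimed energy inequality.

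Next, for arbitrary $v\in L^p(\Omega_T;V)$, monotonicity H$_{2,2}$ yields $\erww{\int_0^T\int_D(a(\max(\psi,\ueps),\nabla \ueps)-a(\max(\psi,\ueps),\nabla v))\cdot(\nabla \ueps-\nabla v)\,dx\,dt}\ge 0$. I expand this and pass to the limit: on the $a(\cdot,\nabla\ueps)\cdot\nabla\ueps$ term I use the energy inequality; on the mixed terms I use $a(\max(\psi,\ueps),\nabla\ueps)=\tilde a(\cdot,\ueps,\nabla\ueps)\rightharpoonup A$ in $L^{p'}(\Omega_T\times D)^d$ together with $\nabla\ueps\rightharpoonup\nabla u$ in $L^p$, and the strong convergence $a(\max(\psi,\ueps),\nabla v)\to a(\max(\psi,u),\nabla v)=a(u,\nabla v)$ in $L^{p'}(\Omega_T\times D)^d$, which follows from $\max(\psi,\ueps)\to u$ a.e. in $\Omega\times(0,T)\times D$ (since $\penalisation{\ueps-\psi}\to 0$ and, along a subsequence, $\ueps\to u$ a.e.), the Carath\'eodory property H$_{2,1}$, the growth bound \eqref{croissance1}, the Lipschitz-type estimate H$_{2,4}$ and Vitali's theorem, using $\max(\psi,u)=u$ because $u\ge\psi$. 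This gives $\erww{\int_0^T\int_D(A-a(u,\nabla v))\cdot(\nabla u-\nabla v)\,dx\,dt}\ge 0$ for all $v\in L^p(\Omega_T;V)$. Taking $v=u-\lambda w$ with $\lambda>0$ and $w\in L^p(\Omega_T;V)$ arbitrary, dividing by $\lambda$, letting $\lambda\to 0^+$ and invoking hemicontinuity of $\xi\mapsto a(x,\lambda,\xi)$ (a consequence of H$_{2,1}$ and \eqref{croissance1}), I obtain $\erww{\int_0^T\int_D(A-a(u,\nabla u))\cdot\nabla w\,dx\,dt}\ge 0$ for every $w$, hence $A=a(\cdot,u,\nabla u)$ a.e.; since $u\ge\psi$ this agrees with $\tilde a(\cdot,u,\nabla u)$, which completes the proof of the lemma and of Theorem \ref{bisexistence-regular}.

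The main obstacle is the energy inequality: the penalization term $\frac1\eps\penalisation{\ueps-\psi}$ is unbounded, and the argument closes only because the complementarity structure $-\rho\ge 0$, $u\ge\psi$ forces $\erww{\int_0^T\int_D\rho(u-\psi)\,dx\,dt}\le 0$ — this is the analogue of the role played by the coercive $\delta$-terms in the identification of $A_\infty$. A secondary technical point is the strong $L^{p'}$-convergence of $a(\max(\psi,\ueps),\nabla v)$, where a.e. convergence of $\max(\psi,\ueps)$ must be combined with the uniform integrability supplied by \eqref{croissance1} and H$_{2,4}$.
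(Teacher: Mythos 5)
Your strategy matches the paper's: It\^o energy identities for $\ueps$ and for $u$, a limit passage to the energy estimate, then a stochastic Minty's trick. The energy step is sound; note that one can also pass to the limit in the whole product $\erww{\int_0^T\int_D\frac1\eps(\ueps-\psi)^-\ueps\,dx\,dt}$ directly (since $\frac1\eps(\ueps-\psi)^-$ is weakly convergent in $L^\alpha$, $\alpha\ge 2$, and $\ueps\to u$ strongly in $L^2(\Omega_T;H)$, cf.\ Remark~\ref{rem1}), which yields the full limit equality as the paper does rather than the $\limsup$-inequality you obtain by discarding the self-term and invoking the complementarity; either way suffices for Minty.

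The one real gap is at the end of the Minty step. Testing the monotonicity inequality with $v=u-\lambda w$, $w\in L^p(\Omega_T;V)$, only yields
\[
\erww{\int_0^T\!\!\int_D \bigl(A-a(u,\nabla u)\bigr)\cdot\nabla w\,dx\,dt}=0\quad\text{for all }w\in L^p(\Omega_T;V),
\]
which says $\Div(A-a(u,\nabla u))=0$ in $L^{p'}(\Omega_T;V')$, but does \emph{not} give the pointwise identity $A=a(\cdot,u,\nabla u)$ in $L^{p'}(\Omega_T\times D)^d$ claimed by the lemma: the gradients $\nabla w$ with $w\in W^{1,p}_0(D)$ span only a proper subspace of $L^p(D)^d$, so an $L^{p'}$-field orthogonal to all such $\nabla w$ need not vanish. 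To recover the pointwise identification you must insert arbitrary vector fields $\zeta\in L^p(\Omega_T\times D)^d$ into the pointwise monotonicity H$_{2,2}$, i.e. use $\erww{\int_0^T\int_D(a(\max(\psi,\ueps),\nabla\ueps)-a(\max(\psi,\ueps),\zeta))\cdot(\nabla\ueps-\zeta)\,dx\,dt}\ge0$, pass to the limit exactly as you do (the energy inequality for the diagonal term, weak--strong pairing for the mixed terms, the strong $L^{p'}$-convergence $a(\max(\psi,\ueps),\zeta)\to a(u,\zeta)$ via H$_{2,4}$ and Vitali), then take $\zeta=\nabla u-\lambda\xi$, divide by $\lambda>0$ and let $\lambda\to0^+$: hemicontinuity gives $\erww{\int_0^T\int_D(A-a(u,\nabla u))\cdot\xi\,dx\,dt}=0$ for every $\xi\in L^p(\Omega_T\times D)^d$, hence $A=a(\cdot,u,\nabla u)$ a.e. That added freedom — generic $\xi$ rather than $\nabla w$ — is precisely what the stochastic Minty argument cited in Section~\ref{higher order problem} exploits and is what your write-up omits.
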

\begin{proof}
Applying  It\^{o}'s formula to equalities \eqref{220908_02} and \eqref{221109_01} with $\Vert \cdot \Vert_2^2$ yields for all $t \in [0,T]$, a.s. in $\Omega$:
\begin{align}\label{221109_03}
\frac{1}{2} \Vert \ueps(t) \Vert_2^2 &- \int_0^t \int_D  \frac{1}{\varepsilon}(\ueps - \psi)^- \ueps \, dx \, ds + \int_0^t \int_D \tilde{a}(\cdot, \ueps, \nabla \ueps) \cdot \nabla \ueps \, dx \, ds \notag \\
&= \frac{1}{2} \Vert u_0 \Vert_2^2 + \int_0^t \langle f, \ueps \rangle_{V',V} \, ds + \int_0^t (\tilde{G}(\ueps) \, dW, \ueps)_2 + \frac{1}{2} \int_0^t \Vert \tilde{G}(\ueps) \Vert_{HS(H)}^2 \, ds
\end{align}
and
\begin{align}\label{221109_04}
\frac{1}{2} \Vert u(t) \Vert_2^2 &+ \int_0^t \int_D \rho u \, dx \, ds + \int_0^t \int_D A \cdot \nabla u \, dx \, ds \notag \\
&= \frac{1}{2} \Vert u_0 \Vert_2^2 + \int_0^t \langle f,u \rangle_{V',V} \, ds + \int_0^t (G(u) \, dW, u)_2 + \frac{1}{2} \int_0^t \Vert G(u) \Vert_{HS(H)}^2 \, ds.
\end{align}
We set $t=T$, take the expectation in \eqref{221109_03} and \eqref{221109_04} and pass to the limit with $\varepsilon \to 0^+$ in \eqref{221109_03}. Then, according to Remark \ref{rem1}, we have
\begin{align*}
\lim\limits_{\varepsilon \to 0^+} \erww{\int_0^T \int_D \tilde{a}(\cdot, \ueps, \nabla \ueps) \cdot \nabla \ueps \, dx \, dt} = \erww{\int_0^T \int_D A \cdot \nabla u \, dx \, dt}.
\end{align*}
Now, as already mentioned at the end of Section \ref{section-2.2}, a stochastic version of Minty's trick yields $A = a(\cdot, u , \nabla u)$ in $L^{p'}(\Omega_T \times D)^d$.
\end{proof}

\section{Lewy-Stampacchia's inequalities in the "regular case"}\label{sec-LS-regular}

\begin{prop}\label{LSinequlity}
Assume H$_1$-H$_6$ such that $h^- \in L^\alpha(\Omega_T; L^\alpha(D))\cap L^p(\Omega_T;V)$ where $\alpha=\max(2,p')$ with $\partial_t h^- \in L^2(\Omega_T; H)$. Then, the unique solution $(u,\rho)$ to \eqref{230307_02} in the sense of Theorem \ref{bisexistence-regular} satisfies
\begin{align}\label{LS}
\begin{aligned}
0 &\leq -\rho=\partial_t \left( u-\displaystyle\int_0^\cdot G(u)\,dW\right) - \operatorname{div}\,a(u,\nabla u)-f \\
&\leq h^-=\left(f - \partial_t \left(\psi - \int_0^\cdot G(\psi)\,dW\right) + \operatorname{div}\, a(\psi,\nabla \psi)\right)^-.
\end{aligned}
\end{align}
\end{prop}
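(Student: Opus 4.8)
The plan is to reduce the upper Lewy--Stampacchia bound to a quantitative decay of the penalization and then obtain it by an It\^o--energy estimate. The equality $-\rho=\partial_t(u-\int_0^\cdot G(u)\,dW)-\operatorname{div} a(u,\nabla u)-f$ is just a rewriting of \eqref{221109_01}, and $-\rho\ge0$ is contained in Theorem~\ref{bisexistence-regular}, so only $-\rho\le h^-$ is at stake. Recall that $(u,\rho)$ arises as the limit of the penalized solutions $u_\eps$ of \eqref{penalization}; writing $\zeta_\eps:=\frac1\eps(u_\eps-\psi)^-\ge0$ one has $\zeta_\eps\rightharpoonup-\rho$ in $L^\alpha(\Omega_T;L^\alpha(D))$. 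I would prove that $(\zeta_\eps-h^-)^+\to0$ in $L^2(\Omega_T\times D)$; since the family is bounded in $L^\alpha$ this also gives convergence in $L^1$, and since $\zeta_\eps\le h^-+(\zeta_\eps-h^-)^+$, testing against an arbitrary non-negative $\varphi\in L^p(\Omega_T;V)$ and passing to the limit then yields $-\rho\le h^-$. The decisive observation is the identity $(\zeta_\eps-h^-)^+=\frac1\eps v_\eps^+$ with $v_\eps:=\psi-\eps h^--u_\eps$: on $\{u_\eps\ge\psi\}$ both sides vanish (here $h^-\ge0$ is used), while on $\{u_\eps<\psi\}$ one has $\zeta_\eps-h^-=\frac1\eps(\psi-u_\eps-\eps h^-)=\frac1\eps v_\eps$.

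Using that $\psi$ solves \eqref{PDEforpsi} and the additional regularity $h^-\in L^p(\Omega_T;V)$, $\partial_t h^-\in L^2(\Omega_T;H)$, subtracting \eqref{penalization} gives
\[dv_\eps-\operatorname{div}\big(a(\psi,\nabla\psi)-\tilde a(u_\eps,\nabla u_\eps)\big)\,dt+\big(\zeta_\eps+h^+-h^-+\eps\,\partial_t h^-\big)\,dt=\big(G(\psi)-\tilde G(u_\eps)\big)\,dW,\]
whose drift lies in $L^{p'}(\Omega_T;V')+L^2(\Omega_T;H)$. I would then apply It\^o's formula (as in the proof of Lemma~\ref{lem2}, cf.\ \cite[Th.~5.3]{Pardoux}) to $t\mapsto\int_D F_\delta(v_\eps(t))\,dx$, where $F_\delta\in C^2(\R)$ is convex, vanishes together with $F_\delta'$ on $(-\infty,0]$, satisfies $0\le F_\delta'\le(\cdot)^+$ and $F_\delta''\ge0$ supported in $(0,\infty)$, with $F_\delta\to\frac12((\cdot)^+)^2$, $F_\delta'\to(\cdot)^+$, $F_\delta''\to\mathds{1}_{(0,\infty)}$ as $\delta\to0^+$. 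Several terms then cooperate: the initial term vanishes, since $v_\eps(0)=\psi(0)-\eps h^-(0)-u_0\le0$ by H$_6$; on $\operatorname{supp}F_\delta'(v_\eps)\subseteq\{v_\eps>0\}\subseteq\{u_\eps\le\psi\}$ one has $\tilde a(u_\eps,\nabla u_\eps)=a(\psi,\nabla u_\eps)$ and $\tilde G(u_\eps)=G(\psi)$, so the stochastic integral and the It\^o correction vanish identically, while $-\int_D(a(\psi,\nabla\psi)-a(\psi,\nabla u_\eps))\cdot(\nabla\psi-\nabla u_\eps)F_\delta''(v_\eps)\,dx\le0$ by H$_{2,2}$; and on $\{v_\eps>0\}$, $\zeta_\eps-h^-=\frac1\eps v_\eps$, so the penalization and $h^-$ combine into $-\frac1\eps F_\delta'(v_\eps)v_\eps-F_\delta'(v_\eps)h^+\le0$.

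What survives are two error terms carrying a factor $\eps$, namely $\eps\int_D(a(\psi,\nabla\psi)-a(\psi,\nabla u_\eps))\cdot\nabla h^-\,F_\delta''(v_\eps)\,dx$ and $-\eps\int_D F_\delta'(v_\eps)\partial_t h^-\,dx$. Letting $\delta\to0^+$, taking expectations, and absorbing the second one by Young's inequality into the $\frac1\eps$-term produced by the penalization, I expect to reach
\[\tfrac12\erww{\|v_\eps^+(t)\|_{L^2}^2}+\tfrac1{2\eps}\erww{\int_0^t\|v_\eps^+\|_{L^2}^2\,ds}\le\eps\,|E_\eps|+C\eps^3\qquad(t\in[0,T]),\]
with $C=C(\|\partial_t h^-\|_{L^2(\Omega_T;H)})$ and $E_\eps:=\erww{\int_{Q_T}(a(\psi,\nabla\psi)-a(\psi,\nabla u_\eps))\cdot\nabla h^-\,\mathds{1}_{\{v_\eps>0\}}\,dx\,ds}$. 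By \eqref{220908_08}, $(u_\eps-\psi)^-\to0$, so along a subsequence $\mathds{1}_{\{v_\eps>0\}}\le\mathds{1}_{\{u_\eps<\psi\}}\to0$ a.e.\ on $\{u>\psi\}$, whereas on $\{u=\psi\}$ one has $\nabla u=\nabla\psi$ and $\nabla u_\eps\to\nabla u$ (the convergence of gradients used to identify $A=a(\cdot,u,\nabla u)$), so $a(\psi,\nabla u_\eps)\to a(\psi,\nabla\psi)$ there; together with the equi-integrability of $(|a(\psi,\nabla u_\eps)|^{p'})_\eps$ and $\nabla h^-\in L^p$ this gives $E_\eps\to0$. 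Hence $\erww{\int_0^T\|v_\eps^+\|_{L^2}^2\,ds}\le2\eps(\eps|E_\eps|+C\eps^3)=o(\eps^2)$, so $(\zeta_\eps-h^-)^+=\frac1\eps v_\eps^+\to0$ in $L^2(\Omega_T\times D)$, and the reduction above finishes the proof of \eqref{LS}.

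The main obstacle is the final gain of a power of $\eps$ — obtaining $\erww{\int\|v_\eps^+\|_{L^2}^2}=o(\eps^2)$ and not merely $O(\eps^2)$. This rests on combining two facts: the factor $\frac1\eps$ that the penalization term places on the left of the energy inequality, and the genuine vanishing $E_\eps\to0$, which in turn uses the a priori decay \eqref{220908_08} of $(u_\eps-\psi)^-$ and the almost-everywhere (or strong $L^p$) convergence of $\nabla u_\eps$; without the latter one would only recover $\zeta_\eps\le h^-+O(1)$. Also requiring care, though essentially routine, are the validity of It\^o's formula for $v_\eps$ (whose drift lies only in $L^{p'}(\Omega_T;V')+L^2(\Omega_T;H)$) and the exact cancellation of the stochastic and It\^o-correction terms on $\{v_\eps\ge0\}$, which hinges on $\tilde a(u_\eps,\cdot)=a(\psi,\cdot)$ and $\tilde G(u_\eps)=G(\psi)$ below the obstacle.
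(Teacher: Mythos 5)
Your overall plan coincides with the paper's: both reduce the upper Lewy--Stampacchia bound to showing that $\frac1\varepsilon(u_\varepsilon-\psi+\varepsilon h^-)^-$ (your $\frac1\varepsilon v_\varepsilon^+$) vanishes in $L^2(\Omega\times Q_T)$, both derive the SDE for $u_\varepsilon-\psi+\varepsilon h^-$ using $\partial_t h^-\in L^2$, both apply It\^o's formula with a $C^2$ approximation of the squared positive/negative part, and both exploit the same cancellations below the obstacle ($\tilde a=a(\psi,\cdot)$, $\tilde G=G(\psi)$, vanishing martingale and correction terms, the good sign from H$_{2,2}$, $h^+\geq0$, $v_\varepsilon(0)\leq 0$). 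The Young absorption of $\varepsilon\,\partial_t h^-$ and the resulting inequality $\frac1{2\varepsilon}\erww{\int\|v_\varepsilon^+\|_{L^2}^2}\le \varepsilon|E_\varepsilon|+C\varepsilon^3$ also match the paper's estimate \eqref{LSapproche} after the factor $\frac1{2\varepsilon}$ is applied.

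The gap is in your treatment of the remaining error term $E_\varepsilon$. On the coincidence set $\{u=\psi\}$ you invoke an a.e.\ convergence $\nabla u_\varepsilon\to\nabla u$, crediting it to ``the convergence of gradients used to identify $A=a(\cdot,u,\nabla u)$.'' But the paper identifies $A$ with a stochastic Minty/pseudomonotonicity trick, which uses only \emph{weak} convergence of $\nabla u_\varepsilon$ in $L^p(\Omega_T\times D)$; no pointwise or strong $L^p$ convergence of gradients is established anywhere in the text, and H$_{2,2}$ is mere monotonicity (not strict), so it cannot be extracted for free. Without it your decomposition into $\{u>\psi\}$ and $\{u=\psi\}$ does not close, and as you yourself observe one would at best obtain $-\rho\le h^-+O(1)$. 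The paper instead avoids the contact set entirely: from the quantitative penalization estimate \eqref{220908_09} of Lemma \ref{lem2} it extracts a subsequence along which $|a(\psi,\nabla u_\varepsilon)-a(\psi,\nabla\psi)|\,\mathds 1_{\{u_\varepsilon<\psi\}}\,|\nabla(u_\varepsilon-\psi)|\to 0$ a.e., and then applies a pointwise ``product-to-zero'' lemma (if $x_\varepsilon y_\varepsilon\to0$ and $y_{\varepsilon'}\to0\Rightarrow x_{\varepsilon'}\to0$ by continuity of $a(\psi,\cdot)$, then $x_\varepsilon\to0$) to deduce that $|a(\psi,\nabla u_\varepsilon)-a(\psi,\nabla\psi)|\,\mathds 1_{\{u_\varepsilon<\psi\}}\to 0$ a.e.; boundedness in $L^{p'}$ then upgrades this to weak convergence, and pairing with $\nabla h^-\in L^p$ gives $\liminf\erww{I_{2,2}}\geq 0$. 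You never invoke \eqref{220908_09}, which is precisely the estimate that makes this step work; replacing your appeal to a.e.\ gradient convergence by the paper's product-to-zero argument based on \eqref{220908_09} is what is needed to close the proof.
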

\begin{proof}
Since the sign of $\rho$ is known and the equation for $h^-$ follows from \eqref{230308_02}, one has to prove $-\rho\leq h^-$. Since $\rho$ is the weak limit in $L^\alpha(\Omega_T; L^\alpha(D))$ of $-\frac1\varepsilon (u_\varepsilon-\psi)^-$, where $\ueps$ is given by \eqref{220908_02}, we will consider the following inequality:
\begin{align*}
-\frac1\varepsilon (u_\varepsilon-\psi)^- = -\frac1\varepsilon (u_\varepsilon-\psi)^-+h^--h^- \geq -\frac1\varepsilon (u_\varepsilon-\psi +\varepsilon h^-)^- -h^-.
\end{align*}
Lewy-Stampacchia's inequalities will be proved if one proves that $\frac1\varepsilon (u_\varepsilon-\psi +\varepsilon h^-)^-$ converges to $0$ for $\varepsilon\rightarrow 0^+$ in $L^2(\Omega\times Q_T)$.
We recall that 
\begin{align*}
  &d u_\varepsilon - \operatorname{div}\,\tilde{a}(u_\varepsilon,\nabla   u_\varepsilon)\, dt-\frac1\varepsilon( u_\varepsilon-\psi)^- \,dt =  \tilde{G}(u_\varepsilon)\,dW+ f\,dt,
  \\&
  d \psi - \operatorname{div}\,\tilde{a}(\psi,\nabla \psi)\, dt +h^+-h^- =  \tilde{G}(\psi)\,dW+ f\,dt.
\end{align*}
Then, since $\partial_t h^- \in L^2(\Omega_T; H)$, from the above equations it follows that
\begin{align}\label{230308_03}
\begin{aligned}
d(u_\varepsilon-\psi +\varepsilon h^-) &-\operatorname{div}\,\left(\tilde{a}( u_\varepsilon,\nabla u_\varepsilon)-\tilde{a}(\psi,\nabla  \psi) \right)\,dt-\frac1\varepsilon(u_\varepsilon-\psi)^- \,dt
\\
&=(\tilde{G}(u_\varepsilon)-\tilde{G}(\psi))\,dW+ (h^+-h^-+\varepsilon \partial_th^-)\,dt.
\end{aligned}
\end{align}
 Let $\delta >0$ and denote by $F_{\delta}$ an approximation of the square of the negative part as in the proof of Lemma \ref{lem2} and
 consider $\varphi_\delta(u)=\int_D F_\delta(u(x))\,dx$. 
 Applying It\^o's formula with $\varphi_\delta$, in \eqref{230308_03}, for any $t\in [0,T]$,
 \begin{align*}
  \varphi_\delta( u_\varepsilon-\psi+\varepsilon h^-)(t)&-\int_0^t\left\langle \operatorname{div}\,\left(\tilde{a}( u_\varepsilon,\nabla u_\varepsilon)-\tilde{a}(\psi,\nabla  \psi) \right), F_\delta^\prime( u_\varepsilon-\psi+\varepsilon h^-)\right\rangle \, ds \\
  &+\int_0^t \int_D\left(-\frac1\varepsilon( u_\varepsilon-\psi)^-+h^-\right)F_\delta^\prime( u_\varepsilon-\psi+\varepsilon h^-) \, dx \, ds \nonumber\\
  =&\varphi_\delta( u_\varepsilon-\psi+\varepsilon h^-)(0)+\int_0^t\langle h^++\varepsilon \partial_th^-,F_\delta^\prime( u_\varepsilon-\psi+\varepsilon h^-)\rangle \, ds \\
  &+\int_0^t(\tilde G( u_\varepsilon)-\tilde G( \psi)\,dW,F_\delta^\prime( u_\varepsilon-\psi+\varepsilon h^-))_H\\
  &+\dfrac{1}{2}\int_0^t\int_D F_\delta^{\prime\prime}(u_\varepsilon-\psi+\varepsilon h^-)\sum\limits_{k=1}^{\infty} | h_k(\max(\psi,u_\varepsilon)) - h_k(\psi)|^2 \, dx \, ds\\
  &\Longleftrightarrow I_1^\delta+I_2^\delta+I_3^\delta=I_4^\delta+I_5^\delta+I_6^\delta+I_7^\delta .
 \end{align*}
 Denote by $S_{\varepsilon}$ the set $\{  u_\varepsilon+\varepsilon h^-\leq \psi\}$ and note that $\tilde G( u_\varepsilon)=G(\psi)$  on $S_{\varepsilon}$. Therefore $I_6^\delta+I_7^\delta=0$.\\
 Passing to the limit with respect to the regularization parameter and multiplying with $\frac{1}{2\varepsilon}$, one gets
 \begin{align*}
 \dfrac{1}{2\varepsilon}\Vert( u_\varepsilon-\psi+\varepsilon h^-)^-\Vert_{H}^2(T)&+\frac1\varepsilon\int_0^T\langle \operatorname{div}\,\left(\tilde{a}( u_\varepsilon,\nabla u_\varepsilon)-\tilde{a}(\psi,\nabla  \psi) \right), ( u_\varepsilon-\psi+\varepsilon h^-)^-\rangle  \, dt \\
 &-\frac1\varepsilon\int_0^T \int_D\left(-\frac1\varepsilon( u_\varepsilon-\psi)^-+h^-\right)( u_\varepsilon-\psi+\varepsilon h^-)^- \, dx \, dt\\
 =&\dfrac{1}{2\varepsilon }\Vert( u_\varepsilon-\psi+\varepsilon h^-)^-\Vert_{H}^2(0)+\frac1\varepsilon\int_0^T\langle h^++\varepsilon \partial_th^-,-( u_\varepsilon-\psi+\varepsilon h^-)^-\rangle \, dt\\
  &\Longleftrightarrow I_1+I_2+I_3=I_4+I_5.
 \end{align*}
 Note that $I_4=0$ since $ u_\varepsilon(0)=u_0 \geq \psi(0)$. We have, using Young's inequality,
 \begin{align*}
 I_5:&=\frac1\varepsilon\int_0^T\langle h^++\varepsilon \partial_th^-,-( u_\varepsilon-\psi+\varepsilon h^-)^-\rangle \, dt\\
 &\leq\int_0^T\int_D\vert \partial_t h^-\vert \vert ( u_\varepsilon-\psi+\varepsilon h^-)^-\vert \, dx \, dt\\
 &\leq \dfrac{\varepsilon^2}{2}\int_0^T\int_D\vert \partial_t h^-\vert^2 \, dx \, dt+\dfrac{1}{2\varepsilon^2}\int_0^T\int_D \vert ( u_\varepsilon-\psi+\varepsilon h^-)^-\vert^2 \, dx \, dt.
  \end{align*}
  Now, let us estimate $I_3$,
  \begin{align*}
   I_3:&=-\frac1\varepsilon\int_0^T \int_D\left(-\frac1\varepsilon( u_\varepsilon-\psi)^-+h^-\right)( u_\varepsilon-\psi+\varepsilon h^-)^- \, dx \, dt\\
   &=\frac1{\varepsilon^2}\int_0^T \int_D \vert ( u_\varepsilon-\psi+\varepsilon h^-)^-\vert^2 \, dx \, dt.
  \end{align*}
  Therefore, after discarding the non-negative terms,
   \begin{align}
  \frac1\varepsilon\int_0^T\langle \operatorname{div}\,\left(\tilde{a}( u_\varepsilon,\nabla u_\varepsilon)-\tilde{a}(\psi,\nabla  \psi) \right), ( u_\varepsilon-\psi+\varepsilon h^-)^-\rangle  \, dt \nonumber&+\dfrac{1}{2\varepsilon^2}\int_0^T \int_D \vert ( u_\varepsilon-\psi+\varepsilon h^-)^-\vert^2 \, dx \, dt \nonumber\\
  &\leq  \dfrac{\varepsilon^2}{2}\int_0^T\int_D\vert \partial_t h^-\vert^2 \, dx \, dt. \label{LSapproche}
    \end{align}
    Let us now study the term
\begin{align*}
I_2= \frac1\varepsilon\int_0^T\langle \operatorname{div}\,\left(\tilde{a}( u_\varepsilon,\nabla u_\varepsilon)-\tilde{a}(\psi,\nabla  \psi) \right), ( u_\varepsilon-\psi+\varepsilon h^-)^-\rangle \, dt.
\end{align*}
We have
    \begin{align*}
     I_2&=\frac1\varepsilon\int_0^T\int_D \left(\tilde{a}(   u_\varepsilon,\nabla   u_\varepsilon)- \tilde{a}(   \psi,\nabla  \psi)\right)\cdot \nabla ( u_\varepsilon-\psi+\varepsilon h^-)\mathds{1}_{\{ u_\varepsilon+\varepsilon h^- <\psi \}}  \, dx \, dt\\
     &=\frac1\varepsilon\int_0^T\int_D \left(\tilde{a}(   u_\varepsilon,\nabla   u_\varepsilon)- \tilde{a}(   \psi,\nabla  \psi)\right)\cdot \nabla ( u_\varepsilon-\psi)\mathds{1}_{\{ u_\varepsilon+\varepsilon h^- <\psi \}} \, dx \, dt\\
     &\hspace{0.5cm}+\int_0^T\int_D \left(\tilde{a}(   u_\varepsilon,\nabla   u_\varepsilon)- \tilde{a}(   \psi,\nabla  \psi)\right)\cdot \nabla h^-\mathds{1}_{\{ u_\varepsilon+\varepsilon h^- <\psi \}} \, dx \, dt:=I_{2,1}+I_{2,2}.
    \end{align*}
  Since  $ u_\varepsilon \leq \psi $ on the set $S_{\varepsilon}$ it follows that $\tilde{a}(   u_\varepsilon,\nabla   u_\varepsilon)=\tilde{a}(  \psi,\nabla   u_\varepsilon)$. Therefore, thanks to $H_{2,2}$,
  \begin{align*}
   I_{2,1}&= \frac1\varepsilon\int_0^T\int_D \left(\tilde{a}( u_\varepsilon,\nabla   u_\varepsilon)- \tilde{a}(   \psi,\nabla  \psi)\right)\cdot \nabla ( u_\varepsilon-\psi)\mathds{1}_{\{ u_\varepsilon+\varepsilon h^- <\psi \}} \, dx \, dt \\
   &=\frac1\varepsilon\int_0^T\int_D \left(\tilde{a}(  \psi,\nabla   u_\varepsilon)- \tilde{a}(   \psi,\nabla  \psi)\right)\cdot \nabla ( u_\varepsilon-\psi)\mathds{1}_{\{ u_\varepsilon+\varepsilon h^- <\psi \}} \, dx \, dt \geq 0.
  \end{align*}
  Note that $\{ u_\varepsilon+\varepsilon h^- <\psi \} \subset \{  u_\varepsilon <  \psi\}$ and $\mathds{1}_{\{ u_\varepsilon+\varepsilon h^- <\psi \}} \leq \mathds{1}_{\{  u_\varepsilon <  \psi\}}$, hence
  \begin{align*}
   I_{2,2}&:=\int_0^T\int_D \left(\tilde{a}(   u_\varepsilon,\nabla   u_\varepsilon)- \tilde{a}(   \psi,\nabla  \psi)\right)\cdot \nabla h^-\mathds{1}_{\{ u_\varepsilon+\varepsilon h^- <\psi \}} \, dx \, dt\\
   & \geq - \int_0^T\int_D \vert \tilde{a}(   u_\varepsilon,\nabla   u_\varepsilon)- \tilde{a}(   \psi,\nabla  \psi)\vert \vert \nabla h^-\vert \mathds{1}_{\{ u_\varepsilon <\psi \}} \, dx \, dt\\
   & \geq - \int_0^T\int_D \vert \tilde{a}(  \psi,\nabla   u_\varepsilon)- \tilde{a}(   \psi,\nabla  \psi)\vert \vert \nabla h^-\vert \mathds{1}_{\{ u_\varepsilon <\psi \}} \, dx \, dt.
  \end{align*}
From \eqref{220908_09} in Lemma \ref{lem2} we have
\begin{align*}
 \erww{\int_{Q_T} \vert\left(\tilde{a}( \psi,\nabla  u_\varepsilon)-\tilde{a}( \psi,\nabla \psi)\right)\cdot\nabla ( u_\varepsilon-\psi)^-\vert \, dx \, dt} \to 0 \text{ for } \eps \to 0^+
\end{align*}
and up to a subsequence, 
\begin{align*}
|\tilde{a}( \psi,\nabla  u_\varepsilon)-\tilde{a}( \psi,\nabla \psi)|\mathds{1}_{\{u_\varepsilon<\psi\}}|\nabla ( u_\varepsilon-\psi)| \to 0
\end{align*}
for $\eps\rightarrow 0^+$ a.e. in $\Omega_T \times D$. Let us fix $(\omega,t,x)$ such that the above convergence holds. For $\eps>0$ we set 
\begin{align*}
x_{\varepsilon}:=|\tilde{a}( \psi,\nabla  u_\varepsilon)-\tilde{a}( \psi,\nabla \psi)|\mathds{1}_{\{u_\varepsilon<\psi\}} (\omega,t,x)
\end{align*}
and 
\begin{align*}
y_{\varepsilon} := |\nabla ( u_\varepsilon-\psi)|.
\end{align*}
Then, the sequences $(x_{\varepsilon})_{\eps>0}$ and $(y_{\varepsilon})_{\eps>0}$ have the following properties:
\begin{itemize}
\item [$(i)$]$x_{\varepsilon} y_{\varepsilon} \to 0$ for $\varepsilon \to 0^+$,
\item[$(ii)$]For any subsequence $\varepsilon'$ of $\varepsilon$ we have: $y_{\varepsilon'} \to 0$ for $\varepsilon' \to 0^+ \Longrightarrow x_{\varepsilon'} \to 0$ for $\varepsilon' \to 0^+$.
\end{itemize}
We want to show that $x_{\varepsilon} \to 0$ for $\varepsilon \to 0^+$. If we assume that this is not the case, then there exist $\delta>0$ and a subsequence $\varepsilon''$ of $\varepsilon$ such that $|x_{\varepsilon''}| \geq \delta$ for any $\varepsilon''$.
Then we have
\begin{align*}
\delta |y_{\varepsilon''}| \leq |x_{\varepsilon''} y_{\varepsilon''}| \to 0~ \text{as} ~ \varepsilon'' \to 0^+.
\end{align*}
Hence $y_{\varepsilon''} \to 0$ as $\varepsilon'' \to 0^+$ and property $(ii)$ yields $x_{\varepsilon''} \to 0$ as $\varepsilon'' \to 0^+$, a contradiction to the assertion.
Now, 
\[|\tilde{a}( \psi,\nabla  u_\varepsilon)-\tilde{a}( \psi,\nabla \psi)|\mathds{1}_{\{u_\varepsilon<\psi\}} \to 0\] 
for $\eps\rightarrow 0^+ $ a.e. in $\Omega_T \times D$ and since this sequence is bounded in $L^{p'}(\Omega_T; L^{p'}(D))$, it converges weakly to $0$ in $L^{p'}(\Omega_T; L^{p'}(D))$ and as $|\nabla h^-|\in L^p(\Omega_T;L^p( D))$, one has that 
\[\liminf\limits_{\varepsilon \to 0^+} \erww{ I_{2,2}} \geq 0.\]
Taking expectation and passing to the limit with $\varepsilon\rightarrow 0^+$ in \eqref{LSapproche}, it follows that 
  \begin{align*}
   \frac1\varepsilon( u_\varepsilon-\psi+\varepsilon h^-)^- \to 0 \text{  in } L^2(\Omega_T \times D),
  \end{align*}
and the result holds.
  \end{proof}

\section{Well-posedness and LS inequality (general case)}\label{GC}

Let us start this section by recalling that, even in the general case, the solution to Problem \eqref{I} in the sense of Definition \ref{def} is unique thanks to Proposition \ref{bisuniqueness1}. 
\begin{lemma}\label{230310_lem01}
The positive cone of $L^p(\Omega_T;V)\cap L^{\alpha}(\Omega_T;L^{\alpha}(D))$ with time-derivative in $L^2(\Omega_T;L^2(D))$  is dense in the positive cone of $L^{p'}(\Omega_T;V')$. 
\end{lemma}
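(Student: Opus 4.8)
The plan is to approximate an arbitrary nonnegative $\xi\in L^{p'}(\Omega_T;V')$ by a threefold regularization --- truncation of the magnitude, a positivity-preserving spatial smoothing, and a one-sided mollification in time --- each step keeping the object predictable and nonnegative, and then to conclude by a diagonal argument. (The smaller cone does sit inside $L^{p'}(\Omega_T;V')$: if $p\ge2$ use $V\hookrightarrow H\hookrightarrow V'$ together with $L^{p}\subset L^{p'}$ on the finite measure space $\Omega_T$, and if $p<2$ use $L^{\alpha}(D)=L^{p'}(D)\hookrightarrow V'$.) I would first record that, by a routine measurable-selection argument using the separability of $V$, the condition $\xi\ge0$ in the order-dual sense of the paper is equivalent to $\xi(\omega,t)$ being a nonnegative element of $V'=W^{-1,p'}(D)$ for $d\mathds{P}\otimes dt$-a.e.\ $(\omega,t)$; in particular positivity is preserved under multiplication by a nonnegative predictable scalar and under any bounded operator whose formal adjoint maps $V^{+}:=\{\phi\in V:\phi\ge0\}$ into itself. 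As a first step, since $(\omega,t)\mapsto\|\xi(\omega,t)\|_{V'}$ is predictable, I would set $\xi^{(R)}:=\bigl(1\wedge R\|\xi\|_{V'}^{-1}\bigr)\xi$ (the factor being $1$ where $\xi$ vanishes); then $\xi^{(R)}$ is predictable, nonnegative, with $\|\xi^{(R)}(\omega,t)\|_{V'}=\min\{R,\|\xi(\omega,t)\|_{V'}\}$, so $\xi^{(R)}\in L^{q}(\Omega_T;V')$ for every $q<\infty$, while $\xi^{(R)}\to\xi$ in $L^{p'}(\Omega_T;V')$ as $R\to\infty$ by dominated convergence.

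For the second step, let $\Delta_D$ be the Dirichlet Laplacian on $D$ and fix $k\in\mathbb{N}$ so large that $D((-\Delta_D)^{k})\hookrightarrow V\cap L^{\infty}(D)$. For $0<\lambda\le1$ set $S_\lambda:=e^{\lambda\Delta_D}$ (or $S_\lambda:=(I-\lambda\Delta_D)^{-k}$). Each $S_\lambda$ is positivity-preserving by the maximum principle, maps $V'$ continuously into $V\cap L^{\infty}(D)$, satisfies $\sup_{0<\lambda\le1}\|S_\lambda\|_{\mathcal{L}(V')}<\infty$, and $S_\lambda\to\mathrm{Id}$ strongly on $V'$ as $\lambda\to0^{+}$. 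Applying $S_\lambda$ fibrewise, $\xi^{(R)}_\lambda:=S_\lambda\,\xi^{(R)}$ remains predictable and nonnegative, and since $\xi^{(R)}$ is bounded by $R$ in $V'$ uniformly in $(\omega,t)$ one gets $\xi^{(R)}_\lambda\in L^{p}(\Omega_T;V)\cap L^{\alpha}(\Omega_T;L^{\alpha}(D))\cap L^{2}(\Omega_T;L^{2}(D))$, with $\xi^{(R)}_\lambda\to\xi^{(R)}$ in $L^{p'}(\Omega_T;V')$ by dominated convergence using the uniform bound.

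For the third step, write $\zeta:=\xi^{(R)}_\lambda$, extend it by $0$ to $\{t<0\}$, choose $0\le\rho_\mu\in C_c^{\infty}((0,\mu))$ with $\int\rho_\mu=1$, and set $\zeta_\mu(t):=\int_0^{\mu}\rho_\mu(r)\,\zeta(t-r)\,dr$. The one-sided (causal) kernel keeps $\zeta_\mu$ predictable and $\zeta_\mu\ge0$; time convolution is a contraction on $L^{p}(\Omega_T;V)$, on $L^{\alpha}(\Omega_T;L^{\alpha}(D))$ and on $L^{2}(\Omega_T;L^{2}(D))$, so $\zeta_\mu$ stays in all three and $\zeta_\mu\to\zeta$ in each as $\mu\to0^{+}$ (hence in $L^{p'}(\Omega_T;V')$), while $\partial_t\zeta_\mu=\rho_\mu'\ast\zeta$ yields $\|\partial_t\zeta_\mu\|_{L^{2}(\Omega_T;L^{2}(D))}\le\|\rho_\mu'\|_{L^{1}}\|\zeta\|_{L^{2}(\Omega_T;L^{2}(D))}<\infty$, i.e.\ $\partial_t\zeta_\mu\in L^{2}(\Omega_T;L^{2}(D))$. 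Now, given $\xi\ge0$ in $L^{p'}(\Omega_T;V')$ and $\varepsilon>0$, choosing $R$, then $\lambda$, then $\mu$ so that the three successive errors are each smaller than $\varepsilon/3$ produces $\zeta_\mu\in\{v\in L^{p}(\Omega_T;V)\cap L^{\alpha}(\Omega_T;L^{\alpha}(D)):\partial_t v\in L^{2}(\Omega_T;L^{2}(D))\}$, nonnegative, with $\|\zeta_\mu-\xi\|_{L^{p'}(\Omega_T;V')}<\varepsilon$.

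The hard part will be the second step: one needs a spatial regularization that \emph{simultaneously} (i) preserves nonnegativity --- which forces staying within second-order, maximum-principle operators and rules out higher-order smoothing --- (ii) gains enough regularity to reach $V\cap L^{\infty}(D)$, and (iii) converges in the $W^{-1,p'}(D)$ norm up to the merely Lipschitz boundary of $D$. Properties (i)--(ii) are standard for the Dirichlet heat semigroup/resolvent; (iii), i.e.\ that $\Delta_D$ generates a bounded $C_0$-semigroup on the $W^{-1,p'}$-scale, is the point to justify carefully --- either by invoking the semigroup theory for the Dirichlet Laplacian, or by first reducing, through the dense compact embedding $H\hookrightarrow V'$, to the action of $S_\lambda$ on $H$, which is unproblematic. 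The remaining work is the bookkeeping verifying that predictability and nonnegativity survive all three regularizations.
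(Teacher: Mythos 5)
Your Steps 1 and 3 are sound and Step 3 is essentially the same device the paper uses (the paper applies a Landes regularization $h_n(t)=(n+1)\int_0^t e^{(n+1)(s-t)}\overline h_n(s)\,ds$, which is exactly a causal time convolution; your $\rho_\mu\ast\zeta$ achieves the same thing, and both preserve predictability and positivity and produce $\partial_t h_n\in L^2(\Omega_T;L^2(D))$). The reduction in Step~1 to pointwise nonnegativity of $\xi(\omega,t)$ in $V'$ via a countable dense subset of $V^+$ is also correct.

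The gap is in Step~2, and you have misdiagnosed where the difficulty lies. You treat ``(ii) gains enough regularity to reach $V\cap L^\infty(D)$'' as standard and worry mainly about (iii), but on a bounded \emph{Lipschitz} domain $D$ (which is all that is assumed in the paper) the Dirichlet heat semigroup / resolvent does \emph{not} in general map $W^{-1,p'}(D)$, or even $L^{p'}(D)$, into $W^{1,p}_0(D)$ when $p$ is large. Concretely, $W^{1,p}$ regularity for the Dirichlet Poisson problem on Lipschitz domains in $\mathbb{R}^d$, $d\ge 3$, holds only for $p$ in a window around $2$ (Jerison--Kenig, Shen: roughly $3/2-\varepsilon<p<3+\varepsilon$, with counterexamples outside this range); equivalently, the Riesz transform $\nabla(-\Delta_D)^{-1/2}$ is not bounded on $L^p$ for all $p$. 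Since the paper allows any $p>\max(1,2d/(d+2))$, your $S_\lambda$ need not take values in $V=W^{1,p}_0(D)$, and by the same token $\sup_\lambda\|S_\lambda\|_{\mathcal{L}(V')}<\infty$ fails (the latter is dual to uniform boundedness of $S_\lambda$ on $W^{1,p}_0$). Iterating the resolvent or using $S_\lambda=S_{\lambda/2}\circ S_{\lambda/2}$ does not repair this, because the obstruction is boundary regularity, which does not improve with more powers on a merely Lipschitz boundary; and, as you correctly note, you cannot switch to a higher-order positivity-preserving smoothing since no such operator exists.

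The paper avoids this entirely by delegating the spatial step to \cite[Lemma 4.1]{GMYV}, whose argument is based on monotone-operator techniques and truncation --- essentially inverting a $p$-Laplacian-type operator, which is naturally adapted to the $W^{1,p}_0/W^{-1,p'}$ duality and a comparison principle, and so works for every $p\in(1,\infty)$ on a Lipschitz domain. That nonlinear, scale-adapted regularization is the key missing idea; replacing it with a linear heat semigroup works only when $V$ and $H$ live on the same regularity scale (e.g.\ $p=2$, or a smooth domain), not in the generality of the lemma.
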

\begin{proof} Note first that $L^{\alpha}(\Omega_T;L^{\alpha}(D))$ embeds in $L^{p'}(\Omega_T;V')$. Following the arguments developed  in \cite[Lemma 4.1]{GMYV} mainly based on monotonicity techniques and truncation, one has just to add the predictability assumption to the spaces of type $L^r(\Omega_T; X )$ to obtain that the positive cone of $L^p(\Omega_T;V)\cap L^\alpha(\Omega_T;L^\alpha(D))$ is dense in the positive cone of $L^{p'}(\Omega_T;V')$.

Thus, for any non-negative element $\overline{h}$  of $L^{p'}(\Omega_T;V')$, we find a non-negative approximation sequence $(\overline{h}_n)\subset L^p(\Omega_T;V)\cap L^\alpha(\Omega_T;L^\alpha(D))$ such that $\overline{h}_n\rightarrow\overline{h}$ for $n\rightarrow\infty$ in $L^{p^\prime}(\Omega_T;\Vprime)$. For $n\in\mathbb{N}$ by Landes regularisation, we obtain $h_n$ by solving the random equation
\begin{align*}
\frac1{n+1}\frac{d h_n}{dt}+h_n=\overline{h}_n \text{ with } h_n(0)=0,
\end{align*}
more precisely
\begin{align*}
h_n(t)=(n+1)\int_0^t e^{(n+1)(s-t)}\overline{h}_n(s)\,ds
\end{align*}
for all $t\in [0,T]$, $\mathds{P}$-a.s. in $\Omega$. Then, the sequence $(h_n)$ has the desired properties.
\end{proof}
In this section, we will finalize the proof of Theorem \ref{MTh}. According to Lemma \ref{230310_lem01}, let $(h_n) \subset L^p(\Omega_T;V)$ be a sequence of non-negative mappings with $\partial_t h_n \in L^{2}(\Omega_T;L^2(D))$ satisfying $h_n \to h^-$ in $L^{p^\prime}(\Omega_T;\Vprime)$ for $n\rightarrow \infty$. Then, associated with $h_n$, for $n\in\mathbb{N}$ we define
\begin{align*}
f_n=\partial_t\left(\psi-\int_0^{\cdot}G(\psi)\,dW\right)-\operatorname{div}\,a(\cdot,\psi,\nabla \psi)+h^+-h_n=f+h^--h_n.
\end{align*}
 Note that $f_n$ converges to $f$ in $L^{p^\prime}(\Omega_T;\Vprime)$ and since $h^+-h_n \in L^{p}(\Omega_T;V)^*$, Assumption H$_5$ is satisfied by $f_n$. From Theorem \ref{bisexistence-regular} it follows that Problem \eqref{I} with data $(u_0,\psi,f_n)$ and $h^-$ replaced by $h_n$  has a unique solution  $(u_n,\rho_n)$ in the sense of Definition \ref{def} with $\rho_n\in L^{\alpha}(\Omega_T;L^{\alpha}(D))$ for any $n\in\mathbb{N}$, \textit{i.e.},
\begin{align}\label{biseqn-un}
u_n(t)+\int_0^t\rho_n \,ds-\int_0^t \operatorname{div}\,a(\cdot,u_n,\nabla u_n)\,ds=u_0+\int_0^t G(u_n)\,dW+\int_0^t f_n\,ds,
\end{align}
for all $t\in[0,T]$, a.s. in $\Omega$. From Proposition \ref{LSinequlity} it follows that $(u_n,\rho_n)$ satisfies Lewy-Stampacchia's inequalities \eqref{LSfinal} in the sense of Definition \ref{230310_def1}, \textit{i.e.},
\begin{align}\label{230310_03} 
0\leq -\rho_n \leq h_n
\end{align}
in $L^{\alpha}(D)$, $d\mathds{P}\otimes dt$-a.s. in $\Omega_T$ for all $n\in\mathbb{N}$. 
\begin{lemma}\label{lem2Bis}
There exists $\rho\in L^{p'}(\Omega_T;V')$ with $-\rho\in L^{p'}(\Omega_T;V')^+$  such that, passing to a subsequence if necessary, $\rho_n\rightharpoonup \rho$ in $L^{p'}(\Omega_T;V')$ for $n\rightarrow \infty$. In particular
\begin{align}\label{230310_02}
-\rho\leq h^{-}
\end{align}
in the sense that $h^{-}+\rho$ is in $L^{p'}(\Omega_T;V')^+$.
\end{lemma}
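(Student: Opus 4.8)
The plan is to extract a uniform bound for $(\rho_n)_n$ in $L^{p'}(\Omega_T;V')$ directly from the Lewy--Stampacchia estimate \eqref{230310_03}, and then to conclude by weak compactness together with the weak closedness of the positive cone.

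The first step — which is the only non-routine point — is to turn the order relation $0\le -\rho_n\le h_n$ of \eqref{230310_03} into a $V'$-norm bound. For $d\mathds{P}\otimes dt$-a.e.\ $(\omega,t)$ we view $\rho_n(\omega,t)$ and $h_n(\omega,t)$ as elements of $L^\alpha(D)\hookrightarrow V'$, so that $\langle \rho_n,v\rangle_{V',V}=\int_D \rho_n v\,dx$ for every $v\in V$. Using that $v\in V=W^{1,p}_0(D)$ implies $v^+,v^-,|v|\in V$ with $\||v|\|_V=\|v\|_V$, and that both $-\rho_n$ and $h_n$ are nonnegative, one gets
\begin{align*}
|\langle -\rho_n,v\rangle_{V',V}| &\le \int_D (-\rho_n)\,|v|\,dx \le \int_D h_n\,|v|\,dx = \langle h_n,|v|\rangle_{V',V} \\
&\le \|h_n\|_{V'}\,\||v|\|_V = \|h_n\|_{V'}\,\|v\|_V .
\end{align*}
Taking the supremum over $\|v\|_V\le 1$ yields $\|\rho_n(\omega,t)\|_{V'}\le \|h_n(\omega,t)\|_{V'}$ $d\mathds{P}\otimes dt$-a.e., and hence, raising to the power $p'$ and integrating over $\Omega_T$, $\|\rho_n\|_{L^{p'}(\Omega_T;V')}\le \|h_n\|_{L^{p'}(\Omega_T;V')}$. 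Since $h_n\to h^-$ in $L^{p'}(\Omega_T;V')$, the right-hand side is bounded uniformly in $n$.

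Because $1<p'<\infty$ and $V'=W^{-1,p'}(D)$ is reflexive, the space $L^{p'}(\Omega_T;V')$ is reflexive, so, passing to a not relabelled subsequence, $\rho_n\rightharpoonup\rho$ in $L^{p'}(\Omega_T;V')$ for some $\rho$. The subspace of (classes of) predictable maps and the positive cone $L^{p'}(\Omega_T;V')^+$ are convex and norm-closed, hence weakly closed; since each $-\rho_n$ lies in $L^{p'}(\Omega_T;V')^+$ and $-\rho_n\rightharpoonup-\rho$, we obtain that $\rho$ is predictable with $-\rho\in L^{p'}(\Omega_T;V')^+$. Finally, \eqref{230310_03} also gives $h_n+\rho_n\ge 0$, i.e.\ $h_n+\rho_n\in L^{p'}(\Omega_T;V')^+$; as $h_n\to h^-$ strongly and $\rho_n\rightharpoonup\rho$ weakly in $L^{p'}(\Omega_T;V')$, we have $h_n+\rho_n\rightharpoonup h^-+\rho$, and weak closedness of the cone yields $h^-+\rho\in L^{p'}(\Omega_T;V')^+$, that is, \eqref{230310_02}.

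The main obstacle is precisely the first step: the naive estimate $\|\rho_n\|_{V'}\le C\|\rho_n\|_{L^\alpha(D)}\le C\|h_n\|_{L^\alpha(D)}$ through the embedding $L^\alpha(D)\hookrightarrow V'$ is useless, since the $L^\alpha$-norm of the Landes regularisations $h_n$ is not known to stay bounded; the truncation argument above, exploiting that $|\,\cdot\,|$ preserves the $V$-norm and that $-\rho_n$ and $h_n$ are both nonnegative, is what allows one to replace $\|h_n\|_{L^\alpha(D)}$ by $\|h_n\|_{V'}$. Once this bound is available, the remaining steps are standard reflexive weak compactness and weak closedness of closed convex cones.
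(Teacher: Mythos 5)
Your proof is correct and takes essentially the same approach as the paper: derive a uniform bound for $\rho_n$ in $L^{p'}(\Omega_T;V')$ from the pointwise order relation $0\le -\rho_n\le h_n$ by testing against nonnegative functions, then invoke reflexivity for weak compactness and weak closedness of the positive cone for the sign conditions. The only cosmetic difference is that you test against $|v|$ directly (using $\||v|\|_V=\|v\|_V$) and get the bound $\|\rho_n\|_{V'}\le\|h_n\|_{V'}$ pointwise in $(\omega,t)$, whereas the paper splits $\varphi=\varphi^+-\varphi^-$ at the level of the $\Omega_T$-integral and obtains the constant $2$.
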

\begin{proof}
For any $\varphi \in L^p(\Omega_T;V)$, from \eqref{230310_03} it follows that
\begin{align*}
\left|\erww{\int_0^T\langle -\rho_n, \varphi \rangle_{V',V} \,ds} \right|
&\leq \erww{\int_0^T \langle -\rho_n, \varphi^+ \rangle_{V',V}  \,ds} + \erww{\int_0^T  \langle - \rho_n, \varphi^- \rangle_{V',V}  \,ds}
\\
 &\leq \erww{\int_0^T \langle h_n, \varphi^+ \rangle_{V',V} \,ds}+ \erww{\int_0^T  \langle h_n, \varphi^- \rangle_{V',V} \,ds}\\
 & \leq 2 \Vert h_n\Vert_{L^{p^\prime}(\Omega_T;V^\prime)}\Vert \varphi\Vert_{L^{p}(\Omega_T;V)}.
 \end{align*}
Since $(h_n)_n$ converges to $h^-$ in $L^{p^\prime}(\Omega_T;V^\prime)$, there exists a constant $C\geq 0$ not depending on $n\in\mathbb{N}$ such that $\Vert h_n\Vert_{L^{p^\prime}(\Omega_T;V^\prime)}\leq C$ for all $n\in\mathbb{N}$.
Therefore $(\rho_n)_n$ is bounded in $L^{p^\prime}(\Omega_T;V^\prime)$  independently of $n\in\mathbb{N}$ and, up to a subsequence, $\rho$ exists such that $\rho_n\rightharpoonup \rho$ in $L^{p'}(\Omega_T;V')$ for $n\rightarrow \infty$. Now, \eqref{230310_02} follows by passing to the limit in \eqref{230310_03}.
\end{proof}

In the following,  
we will show that $(u,\rho)$, a limit of $(u_n,\rho_n)$ with $\rho$ satisfying Lemma \ref{lem2Bis}, will be a solution to Problem \eqref{I} in the sense of Definition \ref{def}.  Thanks to \eqref{230310_02}, Lewy-Stampacchia's inequalities will be satisfied and it is left to show that we can pass to the limit in \eqref{biseqn-un}. 
\begin{lemma}\label{230402_01}
We have the following uniform bounds with respect to $n\in\mathbb{N}$:
\begin{itemize}
\item[$i.)$] $(u_n)_n$ is bounded in $L^p(\Omega_T; V)$ and in $L^2(\Omega;C([0,T];H))$.
\item[$ii.)$]  $(a(u_n,\nabla u_n))_n$ is bounded in $L^{p'}(\Omega_T\times D)^d$,  $(-\Div a(u_n,\nabla u_n))_n$ is bounded in $L^{p'}(\Omega_T;V')$.
\item[$iii.)$]  $\left(u_n-\int_0^{\cdot} G(u_n)\, dW \right)_n$ is bounded in $L^2(\Omega;C([0,T];H))$ and $\left(\partial_t\left(u_n-\int_0^{\cdot} G(u_n) \, dW\right)\right)_n$ is bounded in $L^{p'}(\Omega;L^{p'}(0,T;V'))$,  hence $\left(u_n-\int_0^{\cdot} G(u_n)\, dW \right)_n$is bounded in $L^{\min(2,p')}(\Omega;\mathcal{W})$, where $\mathcal{W}$ is defined as in the proof of Lemma \ref{230406_01}.
\end{itemize}
\end{lemma}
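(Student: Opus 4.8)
The plan is to re-run, for the sequence $(u_n,\rho_n)$ solving \eqref{biseqn-un}, the energy estimate already performed in the proof of Lemma \ref{lem1}, the only genuinely new ingredient being the control of the reaction term $\rho_n$. First I would subtract the equation \eqref{PDEforpsi} for $\psi$ from \eqref{biseqn-un}: since $f_n=f+h^--h_n$ and $h=h^+-h^-$, the forcing of the difference equation is $h^+-h_n$, so that $u_n-\psi$ solves
\begin{align*}
d(u_n-\psi)-\Div\!\bigl(a(u_n,\nabla u_n)-a(\psi,\nabla\psi)\bigr)\,dt+\rho_n\,dt=(h^+-h_n)\,dt+\bigl(G(u_n)-G(\psi)\bigr)\,dW.
\end{align*}
Applying Itô's energy formula (\cite[Thm~4.2.5]{Liu-Rock}, using Remark \ref{conti-obst} and $u_0\ge\psi(0)$) to $\|u_n(t)-\psi(t)\|_H^2$ then produces, besides the terms already met in Section \ref{estimates_delta}, the new contribution $2\int_0^t\langle -\rho_n+h^+-h_n,u_n-\psi\rangle\,ds$.

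The key point is that this contribution is harmless. By the Lewy--Stampacchia inequality \eqref{230310_03} one has $0\le-\rho_n\le h_n$ $d\mathds{P}\otimes dt$-a.e., hence $h_n+\rho_n\ge0$; testing against the non-negative function $u_n-\psi$ (recall $u_n\in K$ and $h^+\ge0$) gives, a.s., $\langle -\rho_n+h^+-h_n,u_n-\psi\rangle\le\langle h^+,u_n-\psi\rangle\le\|h^+\|_{V'}\|u_n-\psi\|_V$, which is absorbed by the coercive term after Young's inequality since $h^+\in L^{p'}(\Omega_T;V')$ by H$_{5}$. (Alternatively one may argue from the complementarity relation $\erww{\int_0^T\langle\rho_n,u_n-\psi\rangle\,dt}=0$ together with the monotonicity in $t$ of $-\!\int_0^\cdot\langle\rho_n,u_n-\psi\rangle\,ds$.) From here the estimate is exactly the one carried out in Section \ref{estimates_delta} and in Lemma \ref{lem1}: coercivity \eqref{coercif1} and the growth bound \eqref{croissance1}, together with Poincaré's and Young's inequalities and $q<p$, bound the elliptic term from below by $\tfrac{\bar\alpha}{4}\|u_n\|_V^p$ minus an $L^1(\Omega_T)$ function of $\psi$, $u_0$ and $f$; H$_{3,2}$ yields $\|G(u_n)-G(\psi)\|_{HS}^2\le C_\sigma\|u_n-\psi\|_H^2$; and Burkholder--Davis--Gundy, Young and Gronwall applied to $\erww{\sup_{s\le t}\|u_n(s)-\psi(s)\|_H^2}$ give a bound independent of $n$. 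Combined with Remark \ref{conti-obst}, this proves $i.)$.

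The remaining items follow quickly. For $ii.)$, \eqref{croissance1} gives $\|a(u_n,\nabla u_n)\|_{L^{p'}(\Omega_T\times D)}^{p'}\le C\bigl(1+\|u_n\|_{L^p(\Omega_T;V)}^p\bigr)$, bounded by $i.)$, and $\|\Div a(u_n,\nabla u_n)\|_{V'}\le\|a(u_n,\nabla u_n)\|_{L^{p'}(D)}$ gives the bound in $L^{p'}(\Omega_T;V')$. For $iii.)$, BDG with H$_{3,1}$, H$_{3,2}$ and the $L^2(\Omega_T;H)$-bound of $i.)$ shows $\bigl(\int_0^\cdot G(u_n)\,dW\bigr)_n$ is bounded in $L^2(\Omega;C([0,T];H))$, hence so is $\bigl(u_n-\int_0^\cdot G(u_n)\,dW\bigr)_n$; and from \eqref{biseqn-un},
\begin{align*}
\partial_t\Bigl(u_n-\int_0^\cdot G(u_n)\,dW\Bigr)=\Div a(u_n,\nabla u_n)-\rho_n+f_n,
\end{align*}
which is bounded in $L^{p'}(\Omega_T;V')$ by $ii.)$, by Lemma \ref{lem2Bis} (which already provides a uniform $L^{p'}(\Omega_T;V')$-bound on $\rho_n$) and by the convergence $f_n\to f$ in $L^{p'}(\Omega_T;V')$; combining the two bounds gives the asserted bound in $L^{\min(2,p')}(\Omega;\mathcal{W})$. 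The only real difficulty will be the bookkeeping of the $\rho_n$-term in the energy identity; once it is recognised — via \eqref{230310_03} or via complementarity — that it is dominated by $\langle h^+,u_n-\psi\rangle$ and hence absorbed by coercivity, the rest is the by-now-routine energy/Gronwall/BDG scheme already used twice above, with uniformity in $n$ supplied for free by the convergences $h_n\to h^-$ and $f_n\to f$ in $L^{p'}(\Omega_T;V')$.
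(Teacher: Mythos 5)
Your proof is correct and follows essentially the same route as the paper: subtract the obstacle equation \eqref{PDEforpsi} from \eqref{biseqn-un}, apply It\^o's energy formula to $u_n-\psi$, and run the Gronwall/BDG scheme of Lemma~\ref{lem1}, then deduce $ii.)$ from \eqref{croissance1} and $iii.)$ from BDG, $ii.)$ and Lemma~\ref{lem2Bis}. The only (minor) difference is in how the reaction term is absorbed: you invoke the Lewy--Stampacchia bound $0\le -\rho_n\le h_n$ from \eqref{230310_03} to get $\langle -\rho_n+h^+-h_n,u_n-\psi\rangle\le\langle h^+,u_n-\psi\rangle$, whereas the paper uses only the definition of a solution, observing that $-\rho_n\ge 0$, $u_n-\psi\ge 0$ and $\erww{\int_0^T\langle\rho_n,u_n-\psi\rangle\,dt}=0$ force $\langle\rho_n,u_n-\psi\rangle=0$ a.e.\ in $\Omega_T$, so the $\rho_n$-term drops out cleanly and one only needs $\langle h^+-h_n,u_n-\psi\rangle\le\langle h^+,u_n-\psi\rangle$ from $h_n\ge 0$. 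You note this complementarity alternative yourself, so the two routes are interchangeable; the paper's is marginally more economical as it does not rely on Proposition~\ref{LSinequlity}.
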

\begin{proof}
Since $\langle \rho_n,u_n-\psi\rangle =0$ and $u_n\geq \psi$ for all $n\in\mathbb{N}$,  we may repeat the arguments in the proof of Lemma \ref{lem1} to obtain $i.)$ and $ii.)$.  Now,   Burkholder-Davis-Gundy inequality,  $ii.)$,  and 
\[\partial_t\left(u_n-\int_0^{\cdot} G(u_n)\, dW \right)=\Div a(u_n,\nabla u_n)-\rho_n+f_n\]
yields $iii.)$.
\end{proof}
\begin{lemma}
Let $\mu_n$ be the law of the random variable 
\[Z:=(u_n,G(u_n),\rho_n,f_n,\psi, G(0),\sigma,W,u_0)\]
on the space
\begin{align*}
\mathcal{X}:=& L^4(0,T;H)\times L^2(0,T;HS(H))\times \Pi\times L^{p'}(0,T;V')\times L^p(0,T;V)\cap C([0,T];H)\\
&\times L^2(0,T;HS(H))\times L^4(0,T;\Sigma)\times C([0,T];U)\times H
\end{align*}
with $\Pi:=\left(W^{1,p}(Q_T)\cap L^p(0,T;W^{2,p}_0(D))\right)^{\ast}$. Then, passing to a not relabeled subsequence if necessary, $(\mu_n)_n$ converges to a probability measure $\mu_{\infty}$ with respect to the narrow topology on $\mathcal{X}$.
\end{lemma}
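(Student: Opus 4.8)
The plan is to deduce the claim from \textbf{Prokhorov's theorem}: since $\mathcal{X}$ is a finite product of Polish spaces, it suffices to show that the family $(\mu_n)_n$ is tight on $\mathcal{X}$, i.e. that for every $\eta>0$ there is a compact set $\mathcal{K}_\eta\subset\mathcal{X}$ with $\mathds{P}[Z\in\mathcal{K}_\eta]\geq 1-\eta$ for all $n$; one then constructs $\mathcal{K}_\eta$ as a Cartesian product of compacts, one per coordinate. The coordinates $G(0)$, $\sigma$, $W$, $u_0$ and $\psi$ are $n$-independent random variables with values in separable (Polish) spaces, hence each of their laws is a Radon measure and in particular tight. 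For $f_n=f+h^--h_n$, Lemma \ref{230310_lem01} (the choice of $(h_n)$) gives $f_n\to f$ in $L^{p'}(\Omega_T;V')$, so $f_n\to f$ in probability in $L^{p'}(0,T;V')$ and the laws of $f_n$ converge narrowly, in particular they are tight.

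For the coordinate $u_n$ in $L^4(0,T;H)$ I would reproduce the tightness argument of Section \ref{Tightness}. Write $u_n=v_n+\int_0^\cdot G(0)\,dW+\int_0^\cdot\sigma(u_n)\,dW$ with $v_n:=u_n-\int_0^\cdot G(u_n)\,dW$. By Lemma \ref{230402_01}, $(v_n)_n$ is bounded in $L^{\min(2,p')}(\Omega;\mathcal{W})$ and $(u_n)_n$ is bounded in $L^p(\Omega_T;V)$ and in $L^2(\Omega;C([0,T];H))$; the stochastic Sobolev estimate \cite[Lemma 2.1]{Flan-Gata}, exactly as in \eqref{EstimWfrac}, together with H$_{3,2}$ and the bound of $u_n$ in $L^{\max(2,p)}(\Omega_T;H)$, shows that $\big(\int_0^\cdot\sigma(u_n)\,dW\big)_n$ is bounded in $L^{r}(\Omega;W^{\alpha,r}(0,T;H))$ for $r=\max(2,p)$ and any $\alpha<1/2$, while $\int_0^\cdot G(0)\,dW$ is a fixed element of $L^2(\Omega;W^{\alpha,2}(0,T;H))$. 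Applying Markov's inequality to each of these bounds, and using the continuous embedding $\mathcal{W}\hookrightarrow W^{\alpha,p'}(0,T;V')$, for $R$ large the corresponding ball constraints hold simultaneously with probability $\geq 1-\eta$; the resulting bounded set is relatively compact in $L^4(0,T;H)$ by \cite[Cor.~7]{Simon}, and its closure is the sought compact. For the coordinate $G(u_n)$ in $L^2(0,T;HS(H))$ I would reuse the continuous mapping $\Theta$ of Section \ref{Tightness}: since $u_n\geq\psi$ one has $G(u_n)=G(0)+\sigma(\max(u_n,\psi))=\Theta(u_n,\psi,G(0),\sigma)$ $\mathds{P}$-a.s., so the tightness of $u_n$ in $L^4(0,T;H)$ combined with the tightness of $\psi$, $G(0)$ and $\sigma$ yields a compact set in the domain of $\Theta$, whose continuous image is a compact subset of $L^2(0,T;HS(H))$ capturing $G(u_n)$ with probability close to $1$.

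The genuinely new point concerns the coordinate $\rho_n$ in $\Pi=\big(W^{1,p}(Q_T)\cap L^p(0,T;W^{2,p}_0(D))\big)^{\ast}$. In the general case we only retain a uniform bound on $(\rho_n)_n$ in $L^{p'}(\Omega_T;V')$ (from $0\leq-\rho_n\leq h_n$ and the convergence of $h_n$ in $L^{p'}(\Omega_T;V')$, cf. the proof of Lemma \ref{lem2Bis}); the $L^\alpha$-bound available in the regular case is \emph{not} uniform in $n$, which is exactly why the weaker target space $\Pi$ is used. The key observation is that $E:=W^{1,p}(Q_T)\cap L^p(0,T;W^{2,p}_0(D))$ is a separable reflexive Banach space which, by the Aubin--Lions--Simon lemma (using $W^{2,p}_0(D)\hookrightarrow\hookrightarrow W^{1,p}_0(D)$ and the time derivative in $L^p(Q_T)$), embeds compactly into $L^p(0,T;V)$; passing to adjoints (Schauder's theorem) the canonical restriction map $L^{p'}(0,T;V')=\big(L^p(0,T;V)\big)^{\ast}\to E^{\ast}=\Pi$ is compact, so every bounded subset of $L^{p'}(0,T;V')$ is relatively compact in $\Pi$. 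Hence, by Markov's inequality, for $R$ large the ball $\overline B_{L^{p'}(0,T;V')}(0,R)$ contains $\rho_n$ with probability $\geq 1-\eta$ and its $\Pi$-closure is compact. Combining the compact sets obtained for all nine coordinates and relabelling $\eta$ gives tightness of $(\mu_n)_n$ on $\mathcal{X}$, and Prokhorov's theorem then yields a subsequence along which $\mu_n\to\mu_\infty$ narrowly. I expect the main obstacle to be precisely this treatment of $\rho_n$: one must accept that only an $L^{p'}(\Omega_T;V')$-bound survives, enlarge the state space to $\Pi$, and justify the compact embedding $L^{p'}(0,T;V')\hookrightarrow\Pi$ by duality; the remaining coordinates are handled essentially verbatim as in the $\delta$-tightness argument of Section \ref{Tightness}.
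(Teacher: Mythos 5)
Your proof is correct and follows the same route as the paper: tightness of each component of $Z$ followed by Prokhorov's theorem, with the non-$n$-dependent coordinates handled by Ulam's theorem, $u_n$ and $G(u_n)$ handled by repeating the Section~\ref{Tightness} decomposition $u_n=v_n+\int_0^\cdot G(0)\,dW+\int_0^\cdot\sigma(u_n)\,dW$ together with the continuous map $\Theta$, and $\rho_n$ handled through its uniform bound in $L^{p'}(\Omega_T;V')$ from Lemma~\ref{lem2Bis}. Where the paper only asserts that this $L^{p'}(0,T;V')$-bound yields tightness of $\mathcal{L}(\rho_n)$ on $\Pi$, you supply the justification the authors leave implicit: the compact embedding $W^{1,p}(Q_T)\cap L^p(0,T;W^{2,p}_0(D))\hookrightarrow\hookrightarrow L^p(0,T;V)$ by Aubin--Lions--Simon, hence by Schauder's theorem the compactness of the adjoint restriction map $L^{p'}(0,T;V')\to\Pi$, so that balls of $L^{p'}(0,T;V')$ have compact closure in $\Pi$ and Markov's inequality gives tightness.
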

\begin{proof}
We show that every component of $Z$ is a tight sequence and apply Prokhorov's theorem. Since $(\rho_n)_n$ is bounded in $L^p(0,T;V')$, the sequence of laws of $(\rho_n)_n$ on $\Pi$ is tight. The tightness results with respect to the other components follow from Lemma \ref{230402_01} repeating the arguments of the stochastic compactness argument in Section \ref{S1}.
\end{proof}
Now, similar as in Section \ref{S1}, \cite[Thm. 6.7]{Bil99}  yields the existence of a probability space $(\overline{\Omega}, \overline{\mathcal{F}},  \overline{\mathds{P}})$ which may be chosen as $([0,1],\mathcal{B}([0,1]),\operatorname{Leb})$ with $\operatorname{Leb}$ being the Lebesgue measure on $[0,1]$ and a family of random vectors
\begin{align*}
(\overline u_n, \overline G_n,\overline \rho_n, \overline f_n, \overline \psi_n,\overline G_0^n,\overline \sigma_n,\overline W_n,\overline u_0^n)
\end{align*}
on $(\overline \Omega, \overline{\mathcal{F}}, \overline{\mathds{P}})$ with values in $\mathcal{X}$ having the same law as $Z$. Moreover, there exist
\begin{itemize}
\item[$i.)$] a random variable $\overline u$ with values in $L^4(0,T;L^2(D))$ such that $\overline u_n$ converges $\overline{\mathds{P}}$-a.s. to $\overline u$ in $L^4(0,T;L^2(D))$ for $n\rightarrow \infty$,
\item[$ii.)$] a random variable $\widetilde{\overline{G}}$ with values in $L^2(0,T;HS(H))$ such that $\overline G_n$ converges $\overline{\mathds{P}}$-a.s. to $\widetilde{\overline{G}}$ in $L^2(0,T;HS(H))$ for $n\rightarrow \infty$,
\item[$iii.)$] a random variable $\overline{\rho}$ with values in $\Pi$ such that $\overline{\rho}_n$ converges $\overline{\mathds{P}}$-a.s. to $\overline{\rho}$ in $\Pi$ for $n\rightarrow \infty$,
\item[$iv.)$] a random variable $\overline f$ with values in $L^{p'}(0,T;V')$ such that $\overline f_n$ converges $\overline{\mathds{P}}$-a.s. to $\overline{f}$ in $L^{p'}(0,T;V')$ for $n\rightarrow \infty$,
\item[$v.)$] a random variable $\overline \psi$ with values in $L^p(0,T;V)\cap C([0,T]; H)$ such that $\mathcal{L}(\overline \psi) = \mathcal{L}(\psi)$ and $\overline \psi_n$ converges $\overline{\mathds{P}}$-a.s. to $\overline \psi$ in $L^p(0,T;V)\cap C([0,T]; H)$ for $n\rightarrow \infty$,
\item[$vi.)$] a random variable $\overline G_0$ with values in $L^2(0,T;HS(H))$ such that $\mathcal{L}(\overline G_0) = \mathcal{L}(G(0))$ and $\overline G_0^n$ converges $\overline{\mathds{P}}$-a.s. to $\overline G_0$ in $L^2(0,T;HS(H))$ for $n\rightarrow \infty$,
\item[$vii.)$] a random variable $\overline \sigma$ with values in $L^4(0,T; \Sigma)$ such that $\mathcal{L}(\overline \sigma) = \mathcal{L}(\sigma)$ and $\overline \sigma_n$ converges $\overline{\mathds{P}}$-a.s. to $\overline \sigma$ in $L^4(0,T; \Sigma)$ for $n\rightarrow \infty$,
\item[$viii.)$] a random variable $\overline W$ with values in $C([0,T];U)$ such that $\mathcal{L}(\overline W) = \mathcal{L}(W)$ and $\overline W_n$ converges $\overline{\mathds{P}}$-a.s. to $\overline W$ in $C([0,T];U)$ for $n\rightarrow \infty$,
\item[$ix.)$] a random variable $\overline u_0$ with values in $H$ such that $\mathcal{L}(\overline u_0) = \mathcal{L}(u_0)$ and $\overline u_0^n$ converges $\overline{\mathds{P}}$-a.s. to $\overline u_0$ in $H$ for $n\rightarrow \infty$.
\end{itemize}
Again, we will denote the expectation with respect to $(\overline \Omega, \overline{\mathcal{F}}, \overline{\mathbb{P}})$ by $\overline{\mathds{E}}$. Moreover, as in Section \ref{S1}, the random variables $(\overline u_n, \overline G_n,\overline \rho_n, \overline f_n, \overline \psi_n,\overline G_0^n,\overline \sigma_n,\overline W_n,\overline u_0^n)$ have the same properties as $Z$, especially we have 
\begin{itemize}
\item $\overline u_n \in L^2(\overline{\Omega};C([0,T];L^2(D)))$ for all $n \in \mathbb{N}$,
\item $\overline \sigma_n\in L^2(\overline{\Omega}; L^2(0,T;Lip_0(H;HS(H))))$,
\item $\overline G_n(\overline u_n) := \overline G_n = \overline G_0^n + \overline \sigma_n(\overline u_n)$,
\item $\overline f \in L^{p'}(\overline{\Omega}; L^{p'}(0,T;V'))$ and $\overline{f}_n \to \overline f$ in $L^{p'}(\overline{\Omega}; L^{p'}(0,T;V'))$ by Brezis-Lieb lemma (see  \cite{BrezisLieb}),
\item $\overline{\rho} \in L^{p'}(\overline{\Omega}; L^{p'}(0,T;V'))$ and $\overline{\rho}_n \rightharpoonup \overline{\rho}$ in $L^{p'}(\overline{\Omega}; L^{p'}(0,T;V'))$,
\item $\overline W_n(0)=0$.
\end{itemize}
\begin{definition}
For $t\in [0,T]$ and $n \in \mathbb{N}$ we define $\overline{\mathcal{F}}^{n'}_t$ to be the smallest sub $\sigma$-field of 
$\overline{\mathcal{F}}$ generated by $\overline u_0^n$, $\overline W_n(s)$, $\overline \psi_n(s)$, $\overline u_n(s)$, $\int_0^s \overline G_0^n(r)\,dr$, $\int_0^s \overline G_n(r)\,dr$, $\int_0^s \overline f_n(r)\,dr$, $\int_0^s \overline \rho_n \, dr$, $\int_0^s \overline \sigma_n(r)\,dr$ for $0\leq s\leq t$. The right-continuous, $\overline{\mathds{P}}$-augmented filtration of $(\overline{\mathcal{F}}^{n'}_t)_{t\in [0,T]}$, denoted by $(\overline{\mathcal{F}}^{n}_t)_{t\in [0,T]}$ is defined by
\[\overline{\mathcal{F}}^{n}_t:=\bigcap_{T\geq s>t}\sigma\left[\overline{\mathcal{F}}^{n '}_s\cup \{\mathcal{N}\in\overline{\mathcal{F}} \, : \, \overline{\mathds{P}}(\mathcal{N})=0)\}\right].\]
Similarly, we define $\overline{\mathcal{F}}^{'}_t$ to be the smallest sub $\sigma$-field of 
$\overline{\mathcal{F}}$ generated by $\overline u_0$, $\overline W(s)$, $\overline \psi(s)$, $\overline u(s)$, $\int_0^s \overline G_0(r)\,dr$, $\int_0^s \overline G(r)\,dr$, $\int_0^s \overline f(r)\,dr$, $\int_0^s \overline \rho \, dr$, $\int_0^s \overline \sigma(r)\,dr$ for $0\leq s\leq t$. The right-continuous, $\overline{\mathds{P}}$-augmented filtration of $(\overline{\mathcal{F}}^{'}_t)_{t\in [0,T]}$, denoted by $(\overline{\mathcal{F}}_t)_{t\in [0,T]}$ is defined by
\[\overline{\mathcal{F}}_t:=\bigcap_{T\geq s>t}\sigma\left[\overline{\mathcal{F}}^{'}_s\cup \{\mathcal{N}\in\overline{\mathcal{F}} \, : \, \overline{\mathds{P}}(\mathcal{N})=0)\}\right].\]
\end{definition}
We continue by giving several lemmas similarly to Section \ref{S1} to obtain a limit equation. Since the proofs of the upcoming lemmas are the same or very similar to the corresponding lemmas in Section \ref{S1} we only state minor differences to the proofs in Section \ref{S1} or some collections of properties which are needed to finalize the proof of Theorem \ref{MTh}.
\begin{lemma}\label{230405_lem1}
There exist $(\overline{\mathcal{F}}^{n}_t)_{t\in [0,T]}$-predictable $d\overline{\mathds{P}}\otimes dt$-representatives of $\overline G_0^n$, $\overline f_n$, $\overline \rho_n$ and $\overline \sigma_n$. Moreover, we have $-\overline \rho_n \in L^{p'}(\overline{\Omega}_T; V')^+$.
\end{lemma}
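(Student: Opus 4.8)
The plan is to run, \emph{mutatis mutandis}, the argument already used for Lemma~\ref{230105_lem1} (which in turn follows \cite[Lemma~4.13]{BNSZ22}), replacing the parameter $\delta$ by $n$, and then to supply the two genuinely new points: a predictable representative for the extra process $\overline{\rho}_n$, and the sign of $\overline{\rho}$. The common mechanism is this: by construction of $(\overline{\mathcal{F}}^{n'}_t)_{t\in[0,T]}$, the primitives $t\mapsto\int_0^t\overline{G_0}(r)\,dr$, $t\mapsto\int_0^t\overline{f}_n(r)\,dr$, $t\mapsto\int_0^t\overline{\sigma}(r)\,dr$ and $t\mapsto\int_0^t\overline{\rho}_n(r)\,dr$ are continuous and $(\overline{\mathcal{F}}^{n'}_t)$-adapted, with values in the separable Banach spaces $HS(H)$, $V'$, $\Sigma$ and $V'$ respectively (for the last one, equality of laws with $\rho_n$ places $\overline{\rho}_n$ in $L^{p'}(0,T;V')$ $\overline{\mathds{P}}$-a.s.), hence they are predictable, being left-continuous and adapted. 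Since $(\overline{\Omega},\overline{\mathcal{F}},\overline{\mathds{P}})=([0,1)^2,\mathcal{B}([0,1)^2),\operatorname{Leb})$, the Lebesgue differentiation theorem applies in each of these Bochner spaces, so that e.g. $\overline{\rho}_n(t)=\lim_{h\to 0^+}\frac1h\int_{t-h}^t\overline{\rho}_n(r)\,dr$ for $d\overline{\mathds{P}}\otimes dt$-a.e.\ $(\overline\omega,t)$; each difference quotient is left-continuous and $(\overline{\mathcal{F}}^{n'}_t)$-adapted, hence predictable, and a $d\overline{\mathds{P}}\otimes dt$-a.e.\ limit of predictable processes is predictable. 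This yields the required $(\overline{\mathcal{F}}^{n'}_t)$-predictable — a fortiori $(\overline{\mathcal{F}}^{n}_t)$-predictable — $d\overline{\mathds{P}}\otimes dt$-representatives of $\overline{G_0}$, $\overline{f}_n$, $\overline{\sigma}$ and $\overline{\rho}_n$; the representatives of the limits $\overline f$ and $\overline\rho$ are then inherited, since predictable elements of $L^{p'}(\overline{\Omega}_T;V')$ form a closed (hence weakly closed) subspace and $\overline f_n\to\overline f$ strongly, $\overline{\rho}_n\rightharpoonup\overline{\rho}$ weakly there.

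For the sign statement I would transfer a closed convex constraint through the equality of laws and then pass to the weak limit. Recall that $W^{1,p}(Q_T)\cap L^p(0,T;W^{2,p}_0(D))$ is separable and dense in $L^p(0,T;V)$; fix a countable subset $\mathcal{D}$ of its positive cone that is dense in the positive cone of $L^p(0,T;V)$. For each $n$, Theorem~\ref{bisexistence-regular} gives $-\rho_n\in L^{\alpha}(\Omega_T;L^{\alpha}(D))$ with $-\rho_n\geq 0$, so $\rho_n$ takes its values, $\mathds{P}$-a.s., in the Borel (indeed closed, convex) cone
\[
\mathcal{C}:=\bigl\{z\in\Pi\ :\ \langle z,\varphi\rangle_{\Pi}\leq 0\ \text{ for every }\varphi\in\mathcal{D}\bigr\}\subset\Pi .
\]
Since $\overline{\rho}_n$ has the same law on $\Pi$ as $\rho_n$, it takes values in $\mathcal{C}$ $\overline{\mathds{P}}$-a.s., which by density of $\mathcal{D}$ is exactly the statement $-\overline{\rho}_n(\overline\omega)\in L^{p'}(0,T;V')^+$ for $\overline{\mathds{P}}$-a.e.\ $\overline\omega$; integrating in $\overline\omega$, $-\overline{\rho}_n\in L^{p'}(\overline{\Omega}_T;V')^+$ for every $n$. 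As $\overline{\rho}_n\rightharpoonup\overline{\rho}$ in $L^{p'}(\overline{\Omega}_T;V')$ and the positive cone of the order dual is convex and norm-closed, hence weakly closed, we conclude $-\overline{\rho}\in L^{p'}(\overline{\Omega}_T;V')^+$.

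The routine part is the measurability bookkeeping for the difference quotients and the augmentation of the filtration; the point that deserves genuine care in the write-up is the transfer of the \emph{pointwise} nonnegativity of $\rho_n$ — a true $L^\alpha$-function — to $\overline{\rho}_n$. Because there is no uniform $L^\alpha$-bound on $(\rho_n)_n$, only the uniform $L^{p'}(\Omega_T;V')$-bound of Lemma~\ref{lem2Bis}, the Skorokhod construction has to be performed in the larger dual space $\Pi$, and the sign cannot be read off directly but must be recovered through the countable family of closed half-space conditions defining $\mathcal{C}$, as above. Once $-\overline{\rho}\in L^{p'}(\overline{\Omega}_T;V')^+$ and the predictable representatives are secured, the remaining lemmas of Section~\ref{GC} — identification of the It\^o integral on the new stochastic basis, the limit equation \eqref{biseqn-un}, and the complementarity relation for $(\overline u,\overline\rho)$ — follow exactly as their counterparts in Section~\ref{S1}.
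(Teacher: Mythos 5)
The paper gives no proof of this lemma: it declares that the lemmas of Section~\ref{GC} ``are the same or very similar to the corresponding lemmas in Section~2,'' and the Section~2 analogue (Lemma~\ref{230105_lem1}) is itself only attributed to \cite[Lemma~4.13]{BNSZ22}. Your argument for the predictable representatives — left-continuous adapted primitives, Lebesgue differentiation of Bochner integrals on the canonical Lebesgue probability space, difference quotients, and closedness of the predictable subspace — is precisely the mechanism behind that reference, so the first half matches what the paper intends. The second half (the sign of $\overline\rho$) is content the paper skips entirely, and your scheme for it is sound: pointwise nonnegativity of $\rho_n$ must first be encoded as membership in a Borel set of the Skorokhod target space $\Pi$ (since only the $L^{p'}(\Omega_T;V')$ bound of Lemma~\ref{lem2Bis}, not an $L^\alpha$ bound, is uniform in $n$), transferred by equality of laws, and then carried to the weak limit by norm-closedness of the order-dual positive cone. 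Identifying the closed convex cone $\mathcal{C}$ through a countable family of test functions is the right device, and your remark that one must separately ensure $\overline\rho_n(\overline\omega)\in L^{p'}(0,T;V')$ before the density argument upgrades $\mathcal{C}$-membership to genuine nonnegativity in $L^{p'}(0,T;V')$ is well placed.

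One small caveat worth flagging in the write-up: for a fixed $n$, only $\overline{f}_n$ and $\overline{\rho}_n$ are generators of $(\overline{\mathcal{F}}^{n'}_t)$, so the closed-subspace argument for the \emph{limits} $\overline f$ and $\overline\rho$ presupposes that the whole sequences $(\overline f_m)_m$, $(\overline\rho_m)_m$ are adapted to $(\overline{\mathcal{F}}^{n}_t)$, which is not automatic before one invokes the subsequent lemma identifying all $(\overline{\mathcal{F}}^n_t)$ with the single filtration $(\overline{\mathcal{F}}_t)$ generated by $\overline f$. In practice the intended meaning of the statement is surely ``predictable representatives of $\overline{G_0}$, $\overline{f}_n$, $\overline{\rho}_n$, $\overline{\sigma}$ with respect to $(\overline{\mathcal{F}}^n_t)$,'' with $\overline f$ and $\overline\rho$ handled after the filtrations have been unified; you might streamline the exposition by deferring the limit passage to that point rather than presenting it here.
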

\begin{lemma}\label{230405_lem2}
For any $n \in \N$, $\overline{W}_n=(\overline{W}_n(t))_{t\in [0,T]}$ is a $U$-valued, square integrable $(\overline{\mathcal{F}}^{n}_t)_{t\in [0,T]}$-martingale with quadratic variation process $tQ$ for any $t\in [0,T]$ and $\overline{W}=(\overline{W}(t))_{t\in [0,T]}$ is a $U$-valued, square integrable $(\overline{\mathcal{F}}_t)_{t\in [0,T]}$-martingale with quadratic variation process $tQ$ for any $t\in [0,T]$. 
\end{lemma}
\begin{proof}
In the proof of Lemma \ref{230105_lem2} we replace $X^{\delta}$, $\overline{X}^{\delta}$ and $\overline{X}^{\infty}$ by $Y^n$, $\overline{Y}^n$ and $\overline{Y}$, where
\begin{align*}
Y^n_t:=&\left(u_0, W(t), u_n(t), \psi,\int_0^{t} G(0)(r)\,dr,\int_0^{t} f_n(r)\,dr, \int_0^{t} \rho_n \, dr, \int_0^{t} \sigma(r)\,dr\right),
\\
\overline Y^n_t:=&\left(\overline u_0^n, \overline W_n(t), \overline u_n(t), \overline \psi_n,\int_0^{t} \overline G_0^n(r)\,dr,\int_0^{t} \overline f_n(r)\,dr, \int_0^{t} \overline \rho_n \, dr, \int_0^{t} \overline \sigma_n(r)\,dr\right),
\\
\overline Y_t:=&\left(\overline u_0, \overline W(t), \overline u(t), \overline \psi,\int_0^{t} \overline G_0(r)\,dr,\int_0^{t} \overline f(r)\,dr, \int_0^{t} \overline \rho \, dr, \int_0^{t} \overline \sigma(r)\,dr\right).
\end{align*}
Moreover, we replace $\Upsilon$ by $\tilde{\Upsilon}$, where, for $0 \leq s \leq t \leq T$, $\tilde{\Upsilon}$ goes through all bounded and continuous real-valued functions defined on the space
\begin{align*}
L^2(D)\times C([0,s];U)\times C([0,s];H)^2 \times C([0,s];HS(H)) \times C([0,s]; V')^2 \times C([0,s]; \Sigma).
\end{align*}
\end{proof}
\begin{remark}
From Lemma \ref{230405_lem2} it follows that for each $n \in \N$, $\overline{W}_n=(\overline{W}_n(t))_{t\in [0,T]}$ is a $(\overline{\mathcal{F}}^{n}_t)_{t\in [0,T]}$-adapted $Q$-Wiener process with values in the separable Hilbert space $U$ and from Lemma \ref{230405_lem1} it follows that $\overline{G}_n(\overline u_n)$ has a predictable $d\overline{\mathds{P}}\otimes dt$-representative and therefore the It\^{o} stochastic integral $\int_0^t \overline{G}_n(\overline{u}_n)\,d\overline{W}_n$ is well-defined. Analogously, $\overline{W}=(\overline{W}(t))_{t\in [0,T]}$ is a $(\overline{\mathcal{F}}_t)_{t\in [0,T]}$-adapted $Q$-Wiener process with values in the separable Hilbert space $U$ and from Lemma \ref{230405_lem1} it follows that $\overline{G}(\overline u)$ has a predictable $d\overline{\mathds{P}}\otimes dt$-representative and therefore the It\^{o} stochastic integral $\int_0^t \overline{G}(\overline{u})\,d\overline{W}$ is well-defined.
\end{remark}
\begin{lemma}
For any $n \in \mathbb{N}$ and $t\in [0,T]$, let us define
\begin{align}
M_{n}(t):=\overline{u}_n(t)-\overline{u}_0^n+\int_0^t (-\Div a(\overline{u}_n, \nabla \overline{u}_n)+\overline{\rho}_n -\overline{f}_n)\,ds.
\end{align}
The stochastic process $(M_n(t))_{t\in [0,T]}$ is a square-integrable, continuous $(\overline{\mathcal{F}}^n_t)_{t\in [0,T]}$ martingale with values in $L^2(D)$ such that 
\begin{align}
\ll M_n\gg_t&=\int_0^t (\overline{G}_n(\overline{u}_n)\circ Q^{1/2})\circ(\overline G_n(\overline{u}_n)\circ Q^{1/2})^{\ast}\,ds
\end{align}
\begin{align}
\ll \overline{W}_n, M_n\gg_t&=\int_0^t Q\circ \overline{G}_n(\overline{u}_n)\,ds.
\end{align}
\end{lemma}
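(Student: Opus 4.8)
The plan is to reproduce, almost verbatim, the argument used for the analogous statement about $M_\delta$ in Section~\ref{S1}. The starting observation is that, on the original stochastic basis, the process obtained from $M_n$ by removing all the bars --- call it $M_n^{\mathrm{orig}}$ --- is an It\^o integral. Indeed, rearranging \eqref{biseqn-un} gives, for all $t\in[0,T]$ and $\mathds{P}$-a.s. in $\Omega$,
\begin{align*}
u_n(t)-u_0+\int_0^t\Big(-\Div a(u_n,\nabla u_n)+\rho_n-f_n\Big)\,ds=\int_0^t G(u_n)\,dW(s),
\end{align*}
so $M_n^{\mathrm{orig}}(t)=\int_0^tG(u_n)\,dW(s)$, which by the standard properties of the stochastic integral is a square-integrable, continuous $L^2(D)$-valued $(\mathcal{F}_t)$-martingale with $\ll M_n^{\mathrm{orig}}\gg_t=\int_0^t(G(u_n)\circ Q^{1/2})\circ(G(u_n)\circ Q^{1/2})^{\ast}\,ds$ and $\ll W,M_n^{\mathrm{orig}}\gg_t=\int_0^t Q\circ G(u_n)\,ds$.

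Next I would note that, for fixed $n$, $M_n$ is exactly the same measurable functional of $(\overline u_n,\overline\rho_n,\overline f_n,\overline{u_0})$ as $M_n^{\mathrm{orig}}$ is of $(u_n,\rho_n,f_n,u_0)$; here one uses that $a$ is a Carath\'eodory function with the growth bound \eqref{croissance1}, so that $\Div a(\overline u_n,\nabla\overline u_n)\in L^{p'}(\overline\Omega_T;V')$ is a well-defined measurable function of $\overline u_n\in L^p(\overline\Omega_T;V)$. Combining this with the equality of laws of $Z$ and its barred counterpart yields $\mathcal{L}\big(M_n(\cdot)\big)=\mathcal{L}\big(\int_0^\cdot G(u_n)\,dW\big)$; in particular $M_n\in L^2(\overline\Omega;C([0,T];L^2(D)))$ and the two claimed variation formulas hold in law.

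It then remains to promote these distributional statements to genuine $(\overline{\mathcal{F}}^n_t)$-martingale properties, which is done along the lines of Lemma~\ref{230105_lem2} and of the $M_\delta$-lemma. Fixing $\phi,\zeta\in L^2(D)$, $k\in\mathbb{N}$, $0\le s\le t\le T$ and a bounded continuous functional $\tilde\Upsilon$ of the history of $\big(\overline{u_0},\overline W,\overline u_n,\overline\psi,\int_0^\cdot\overline{G_0},\int_0^\cdot\overline f_n,\int_0^\cdot\overline\rho_n,\int_0^\cdot\overline\sigma\big)$ up to time $s$, the identities
\begin{align*}
&\overline{\mathbb{E}}\big[(M_n(t)-M_n(s),\phi)_{H}\,\tilde\Upsilon\big]=0,\\
&\overline{\mathbb{E}}\big[\big((M_n(t),\phi)_{H}(M_n(t),\zeta)_{H}-(M_n(s),\phi)_{H}(M_n(s),\zeta)_{H}-{\textstyle\int_s^t}(\cdots)\,dr\big)\tilde\Upsilon\big]=0,\\
&\overline{\mathbb{E}}\big[\big((\overline W(t),e_k)_U(M_n(t),\phi)_{H}-(\overline W(s),e_k)_U(M_n(s),\phi)_{H}-{\textstyle\int_s^t}(\cdots)\,dr\big)\tilde\Upsilon\big]=0,
\end{align*}
where the integrands $(\cdots)$ are those dictated by the quadratic and cross variation of the It\^o integral recalled above, with $\overline G(\overline u_n)$ in place of $G(u_n)$, follow from the corresponding identities on the original basis (there $M_n^{\mathrm{orig}}=\int_0^\cdot G(u_n)\,dW$ is a martingale with those variations) together with equality of laws and the identity $\overline G_n=\overline G(\overline u_n)$ established earlier. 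The Doob--Dynkin lemma then converts these into the vanishing of the corresponding $\overline{\mathcal{F}}^{n'}_s$-conditional expectations, and passage to the augmented filtration $(\overline{\mathcal{F}}^n_t)$ (as in the proof of Lemma~\ref{230105_lem2}) yields the asserted martingale property and the two variation formulas.

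The main point requiring care is purely a bookkeeping matter: one has to check that $M_n$ is $(\overline{\mathcal{F}}^n_t)$-adapted and that the path segments appearing in $\tilde\Upsilon$ generate $\overline{\mathcal{F}}^{n'}_s$, so that testing against all such $\tilde\Upsilon$ is equivalent to conditioning on $\overline{\mathcal{F}}^{n'}_s$. Both are immediate once $\int_0^\cdot\overline\rho_n\,dr$ is included among the driving processes defining $(\overline{\mathcal{F}}^{n'}_t)$, as it is. Apart from this, the proof is a line-by-line transcription of the $M_\delta$-case, with $\overline\rho_n$ and $-\Div a(\overline u_n,\nabla\overline u_n)$ playing the roles of $-\frac{1}{\varepsilon}\penalisation{\overline u_\delta-\overline\psi}$ and $\delta\,\partial J(\overline u_\delta)-\Div a(\max(\overline\psi,\overline u_\delta),\nabla\overline u_\delta)$, and $\overline G(\overline u_n)$ that of $\overline G(\max(\overline\psi,\overline u_\delta))$; the continuity and square-integrability of $M_n$ come for free from the equality of laws with $\int_0^\cdot G(u_n)\,dW\in L^2(\Omega;C([0,T];L^2(D)))$.
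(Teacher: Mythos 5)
Your proposal is correct and follows essentially the same route as the paper, which proves the $M_\delta$-analogue by noting $\mathcal{L}(M_\delta)=\mathcal{L}\big(\int_0^\cdot G(\max(\psi,u_\delta))\,dW\big)$ and then invoking the martingale-identification argument of Lemma~\ref{230105_lem2} (equality of laws, testing against bounded continuous functionals of the truncated history, Doob--Dynkin, passage to the augmented filtration), stating that the Section~5 lemmas are proved identically. Your spelling-out of the substitution $\overline\rho_n\leftrightarrow-\frac1\varepsilon(\overline u_\delta-\overline\psi)^-$, etc., and the remark that $\int_0^\cdot\overline\rho_n\,dr$ is among the generators of $\overline{\mathcal{F}}^{n'}_t$ so that the $\tilde\Upsilon$-test indeed captures $\overline{\mathcal{F}}^{n'}_s$-conditioning are exactly the bookkeeping points the paper leaves implicit.
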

\begin{lemma}
For all $n \in \mathbb{N}$ and all $t\in [0,T]$ we have
\[M_n(t)=\int_0^t \overline{G}_n(\overline{u}_n)\,d\overline{W}_n\]
in $L^2(\overline{\Omega};L^2(D))$. In particular,
\begin{align*}
&d\overline{u}_n - \Div a(\overline{u}_n,\nabla \overline{u}_n)\,dt + \overline{\rho}_n \,dt = \overline{f}_n \,dt + \overline{G}_n(\overline{u}_n) \,d\overline{W}_n,
\end{align*}
or equivalently
\begin{align*}
\partial_t \left(\overline{u}_n - \int_0^{\cdot}\overline{G}_n(\overline{u}_n)\, d\overline{W}_n\right) - \Div a(\overline{u}_n,\nabla \overline{u}_n) + \overline{\rho}_n = \overline{f}_n.
\end{align*}
\end{lemma}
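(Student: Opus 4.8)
The plan is to repeat, essentially verbatim, the argument already used in Section~\ref{S1} for the analogous identity $M_\delta(t)=\int_0^t\overline{G}(\overline{u}_\delta)\,d\overline{W}$ (itself modelled on \cite[Appendix~A]{Hof13}), with $\delta$ replaced by $n$ and the extra term $\overline{\rho}_n$ simply carried along inside $M_n$. First I would record that, by Lemma~\ref{230405_lem1}, $\overline{G}(\overline{u}_n)$ admits an $(\overline{\mathcal{F}}^n_t)_{t\in[0,T]}$-predictable representative, so that the It\^o integral $\int_0^\cdot\overline{G}(\overline{u}_n)\,d\overline{W}$ is well defined and lies in $L^2(\overline{\Omega};C([0,T];L^2(D)))$, while by the preceding lemma $(M_n(t))_{t\in[0,T]}$ is a square-integrable continuous $L^2(D)$-valued $(\overline{\mathcal{F}}^n_t)_{t\in[0,T]}$-martingale. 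Both sides of the claimed identity being square integrable, it then suffices to fix $t\in[0,T]$, choose an orthonormal basis $(\mathfrak{e}_k)_{k\in\mathbb{N}}$ of $L^2(D)$, and show that
\begin{align*}
\erws{\left\|M_n(t)-\int_0^t\overline{G}(\overline{u}_n)\,d\overline{W}\right\|_{L^2(D)}^2}=\sum_{k=1}^\infty\erws{\left(M_n(t)-\int_0^t\overline{G}(\overline{u}_n)\,d\overline{W},\mathfrak{e}_k\right)^2_{L^2(D)}}=0.
\end{align*}

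Expanding the square under the sum, I would then treat the three resulting contributions separately. The term $\sum_k\erws{\left(\int_0^t\overline{G}(\overline{u}_n)\,d\overline{W},\mathfrak{e}_k\right)^2_{L^2(D)}}$ is handled by the It\^o isometry and equals $\erws{\int_0^t\operatorname{Tr}\,(\overline{G}(\overline{u}_n)\circ Q^{1/2})\circ(\overline{G}(\overline{u}_n)\circ Q^{1/2})^{\ast}\,ds}$; the term $\sum_k\erws{\left(M_n(t),\mathfrak{e}_k\right)^2_{L^2(D)}}=\erws{\operatorname{Tr}\,\ll M_n\gg_t}$ equals the same trace integral by the formula for $\ll M_n\gg_t$ from the preceding lemma; and the cross term $\sum_k\erws{\left(M_n(t),\mathfrak{e}_k\right)_{L^2(D)}\left(\int_0^t\overline{G}(\overline{u}_n)\,d\overline{W},\mathfrak{e}_k\right)_{L^2(D)}}=\erws{\operatorname{Tr}\,\ll M_n,\int_0^\cdot\overline{G}(\overline{u}_n)\,d\overline{W}\gg_t}$ is once more the same trace integral, once the joint quadratic variation is computed from $\ll\overline{W},M_n\gg_t=\int_0^t Q\circ\overline{G}(\overline{u}_n)\,ds$. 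Hence the three contributions cancel, the displayed quantity vanishes, and $M_n(t)=\int_0^t\overline{G}(\overline{u}_n)\,d\overline{W}$ in $L^2(\overline{\Omega};L^2(D))$ for every $t\in[0,T]$. Unwinding the definition of $M_n$ then yields, $\overline{\mathds{P}}$-a.s. and for all $t\in[0,T]$,
\begin{align*}
\overline{u}_n(t)-\overline{u}_0-\int_0^t\Div a(\overline{u}_n,\nabla\overline{u}_n)\,ds+\int_0^t\overline{\rho}_n\,ds-\int_0^t\overline{f}_n\,ds=\int_0^t\overline{G}(\overline{u}_n)\,d\overline{W},
\end{align*}
which is the asserted integral equation, and the equivalent differentiated form follows immediately.

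The only genuinely delicate step — exactly as in Section~\ref{S1} — is the computation of the covariation $\ll M_n,\int_0^\cdot\overline{G}(\overline{u}_n)\,d\overline{W}\gg$ from the knowledge of $\ll\overline{W},M_n\gg$; this rests on the standard rules for covariations between a continuous square-integrable martingale and an It\^o integral with respect to a $Q$-Wiener process (cf.\ \cite[Section~4]{DPZ14}) together with a routine polarization argument, and it is precisely the point carried out in the proof of the corresponding $\delta$-lemma. Everything else is bookkeeping: the new term $\overline{\rho}_n$ is merely transported inside $M_n$ and causes no difficulty, since by Lemma~\ref{230405_lem1} it possesses a predictable representative and $-\overline{\rho}_n$ is non-negative, and the passage from the fixed-time identity in $L^2(\overline{\Omega};L^2(D))$ to the pathwise identity on all of $[0,T]$ uses the $\overline{\mathds{P}}$-a.s.\ continuity of both sides.
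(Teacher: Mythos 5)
Your proposal is correct and takes essentially the same approach as the paper: the paper states explicitly in Section~\ref{GC} that the proofs of these lemmas are the same as in Section~\ref{S1}, and your argument reproduces faithfully the proof given there for the $\delta$-version (expansion in an orthonormal basis of $L^2(D)$, It\^o isometry for the integral term, the quadratic variation formula for the $M_n$ term, and the cross-variation deduced from $\ll\overline{W},M_n\gg$), with the extra term $\overline{\rho}_n$ correctly absorbed inside $M_n$.
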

Therefore, applying the uniqueness result from Proposition \ref{bisuniqueness1}, we may conclude that, for any $n \in \mathbb{N}$, $\overline{u}_n$ is the unique solution to Problem \eqref{biseqn-un} with initial datum $\overline{u}_0^n$ with respect to new stochastic basis $(\overline \Omega, \overline{\mathcal{F}}, (\overline{\mathcal{F}}^n_t)_{t\in [0,T]},\overline{\mathbb{P}})$, associated with $\overline W_n$.
\begin{lemma}
For $n \to \infty$,
\[\int_0^{\cdot} \overline{G}_n\, d\overline{W}_n\rightarrow \int_0^{\cdot} \overline{G}\,d\overline{W}\]
in probability in $L^2(0,T;L^2(D))$.
\end{lemma}
\begin{lemma}
For $n \to \infty$, $\overline\sigma_n(\overline{u}_n)\rightarrow\overline{\sigma}(\overline{u})$ $\overline{\mathbb{P}}$-a.s. in $L^2(0,T;HS(H))$. Moreover,
$\overline{G} (\overline{u}):= \overline{G}=\overline{G}_0+\overline{\sigma}(\overline{u})$.
\end{lemma}
\begin{lemma}
$\overline{u}$ is a $(\overline{\mathcal{F}}_t)_{t\in [0,T]}$-adapted, square-integrable stochastic process with continuous paths in $L^2(D)$. Moreover, $\overline{u} \in L^p(\overline\Omega \times (0,T);V)$ and there exists $A_{\infty}\in L^{p'}(\overline\Omega \times (0,T)\times D)$ such that
\begin{align}\label{230405_01}
d\overline{u}  - \operatorname{div} \, A_\infty \,dt + \overline{\rho} \,dt = \overline{f} \,dt + \overline{G}(\overline{u})\,d\overline{W}
\end{align}
or, equivalently,
\begin{align*}
\partial_t \left( \overline{u} - \int_0^{\cdot}\overline{G}(\overline{u})d\overline{W}\right) - \operatorname{div}\, A_\infty + \overline{\rho} = \overline{f}.
\end{align*}
\end{lemma}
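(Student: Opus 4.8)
The plan is to follow, almost verbatim, the argument of Lemma~\ref{230406_02}, with the approximation parameter $\delta$ now replaced by $n$ and the operator $A_\delta=A_0+\delta b$ replaced by $u\mapsto-\Div a(u,\nabla u)+\overline{\rho}_n-\overline{f}_n$. First, by equality of laws the uniform bounds of Lemma~\ref{230402_01} transfer to the barred family $(\overline u_n)_n$; combining the $\overline{\mathds{P}}$-a.s.\ convergence $\overline u_n\to\overline u$ in $L^4(0,T;L^2(D))$ with the bound in $L^2(\overline\Omega;C([0,T];H))$, Vitali's theorem upgrades it to convergence in $L^s(\overline\Omega;L^4(0,T;L^2(D)))$ for every $s<2$. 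Passing to a common, not relabelled subsequence I would extract the convergences $\overline u_n\to\overline u$ a.e.\ in $\overline\Omega\times(0,T)\times D$, in $L^2(D)$ a.e.\ in $\overline\Omega\times(0,T)$, weakly in $L^2(\overline\Omega;L^4(0,T;L^2(D)))$, and weakly in $L^p(\overline\Omega\times(0,T);V)$. The last of these gives $\overline u\in L^p(\overline\Omega\times(0,T);V)$, and, since $(\overline{\mathcal{F}}_t)_{t\in[0,T]}$ contains the natural filtrations and each $\overline u_n$ is $(\overline{\mathcal{F}}_t)$-adapted, the weak limit $\overline u$ is $(\overline{\mathcal{F}}_t)$-adapted as well.

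Next I would treat the non-linear and stochastic terms. By H$_3$, $\overline G(\overline u_n)\to\overline G(\overline u)$ $\overline{\mathds{P}}$-a.s.\ in $L^2(0,T;HS(H))$, hence weakly in $L^2(\overline\Omega\times(0,T);HS(H))$ by boundedness; using that $\overline W$ is a $Q$-Wiener process with respect to $(\overline{\mathcal{F}}_t)_{t\in[0,T]}$ and that $\overline G(\overline u_n)$, $\overline G(\overline u)$ admit predictable $d\overline{\mathds{P}}\otimes dt$-representatives (Lemmas~\ref{230405_lem1} and~\ref{230405_lem2}), the It\^o integration operator is a bounded linear map from $L^2(\overline\Omega\times(0,T);HS(H))$ into $L^2(\overline\Omega;C([0,T];L^2(D)))$, so $\int_0^{\cdot}\overline G(\overline u_n)\,d\overline W$ converges weakly in that space to $\int_0^{\cdot}\overline G(\overline u)\,d\overline W$, and in particular $\int_0^t\overline G(\overline u_n)\,d\overline W\rightharpoonup\int_0^t\overline G(\overline u)\,d\overline W$ in $L^2(\overline\Omega;L^2(D))$ for each $t$. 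By the growth bound~\eqref{croissance1} together with Lemma~\ref{230402_01}$(ii)$, along a further subsequence $a(\overline u_n,\nabla\overline u_n)\rightharpoonup A_\infty$ in $L^{p'}(\overline\Omega\times(0,T)\times D)^d$ for some $A_\infty$ of the asserted integrability, whence $-\Div a(\overline u_n,\nabla\overline u_n)\rightharpoonup-\Div A_\infty$ in $L^{p'}(\overline\Omega\times(0,T);V')$; we already dispose of $\overline\rho_n\rightharpoonup\overline\rho$ in $L^{p'}(\overline\Omega\times(0,T);V')$ and $\overline f_n\to\overline f$ in $L^{p'}(\overline\Omega\times(0,T);V')$.

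Finally I would recover the limit equation. Starting from the identity satisfied by $(\overline u_n,\overline\rho_n)$ on the new stochastic basis,
\begin{align*}
\overline u_n(t)+\int_0^t\overline\rho_n\,ds-\int_0^t\Div a(\overline u_n,\nabla\overline u_n)\,ds=\overline u_0+\int_0^t\overline f_n\,ds+\int_0^t\overline G(\overline u_n)\,d\overline W,
\end{align*}
I would test against $\phi\,\xi\,\chi_F$ with $\phi\in W^{1,p}_0(D)$, $\xi\in C_0^{\infty}(0,T)$ and $F\in\overline{\mathcal{F}}$ and pass to the limit $n\to\infty$ using the convergences collected above, exactly as in the proof of Lemma~\ref{230406_01}, to obtain
\begin{align*}
\partial_t\left(\overline u-\int_0^{\cdot}\overline G(\overline u)\,d\overline W\right)-\Div A_\infty+\overline\rho=\overline f
\end{align*}
in $L^{p'}(0,T;V')$, $\overline{\mathds{P}}$-a.s.\ in $\overline\Omega$, i.e.\ \eqref{230405_01}; then \cite[Theorem~4.2.5, p.~91]{Liu-Rock} applied to this equation yields $\overline u\in L^2(\overline\Omega;C([0,T];L^2(D)))$ with continuous paths in $L^2(D)$, together with It\^o's energy formula for $\overline u$. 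I do not expect a genuine obstacle at this stage: the statement merely duplicates the $\delta\to0^+$ passage of Lemma~\ref{230406_02}, and the only delicate bookkeeping is to realise all the weak limits along a single subsequence and to invoke the correct predictability facts so that the stochastic integral passes to the limit; the substantive work, namely the identification $A_\infty=a(\overline u,\nabla\overline u)$ via the stochastic Minty trick, is postponed to the following lemma.
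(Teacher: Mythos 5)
Your proposal is correct and takes essentially the same approach as the paper: transfer the uniform bounds of Lemma~\ref{230402_01} to the barred family, extract a common subsequence realizing the a.e., strong, and weak convergences, pass to the limit in the equation via test functions (and the bounded linear It\^o integration operator), and invoke \cite[Theorem~4.2.5]{Liu-Rock} for continuity of paths. You also correctly state the weak convergence of $a(\overline u_n,\nabla\overline u_n)$ in $L^{p'}(\overline\Omega\times(0,T)\times D)^d$, which fixes a small typographical slip in the paper's own list of convergences.
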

\begin{proof}
The estimates of this section and similar arguments as in Lemma \ref{230406_02} give us the following results:
\begin{itemize}
\item $\overline{u}_n \to \overline u$ in $L^4(0,T;L^2(D))$ a.s. in $\overline\Omega$; in $L^s(\overline\Omega;L^4(0,T;L^2(D)))$ for any $s<2$; in $L^2(D)$ a.e. in $\overline{\Omega} \times(0,T)$ and a.e. in $\overline{\Omega} \times(0,T)\times D$, 
\item $\overline{u}_n \rightharpoonup \overline u$ in $L^2(\overline\Omega;L^4(0,T;L^2(D)))$; in $L^p(\overline\Omega \times (0,T);V)$ and in $L^{\min(2,p')}(\overline \Omega;C([0,T];V'))$,
\item $\overline{u}_n(t) \rightharpoonup \overline{u}(t)$ in $L^2(\overline \Omega \times D)$ for any $t \in [0,T]$,
\item $\int_0^t \overline{G}_n(\overline{u}_n)d\overline W_n \rightharpoonup \int_0^t \overline{G}(\overline{u})d\overline W$ weakly in $L^2(\overline\Omega;L^2(D)))$ for all $t \in [0,T]$,
\item $a(\overline{u}_n, \nabla \overline{u}_n) \rightharpoonup A_{\infty}$ in $L^p(\overline{\Omega}; L^p(0,T;V))$ for some $A_{\infty} \in L^p(\overline{\Omega}; L^p(0,T;V))$,
\item $\overline{\rho}_n \rightharpoonup \overline{\rho}$ in $L^{p'}(\overline\Omega \times (0,T); V')$,
\item $\overline{f}_n \to \overline{f}$ in $L^{p'}(\overline\Omega \times (0,T); V')$.
\end{itemize}
These results may conclude equality \eqref{230405_01} and $\overline{u} \in L^2(\overline\Omega; C([0,T]; L^2(D)))$.
\end{proof}
\begin{lemma}
We have
 $
A_{\infty}= a(\overline{u}, \nabla \overline{u})
$ 
in $L^{p'}(\overline{\Omega}; L^{p'}(0,T;L^{p'}(D)))$. Especially, $A_{\infty}$ has a predictable representation and $A_{\infty} \in L^{p'}(\overline\Omega \times (0,T); L^{p'}(D))$. Moreover, we have $ \langle \overline{\rho}, \overline{u}-\overline{\psi} \rangle = 0$ a.e. in $\overline\Omega \times (0,T)$.
\end{lemma}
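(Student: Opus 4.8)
The proof will run parallel to the last lemma of Section~\ref{sec-obstacle-pb}, the only new feature being the reaction term $\overline{\rho}_n$, which I would handle through the complementarity available at each level $n$. First I would record the facts inherited from equality of laws (transferred exactly as the analogous facts in Section~\ref{S1}): for every $n$ one has $\overline{u}_n\ge\overline{\psi}$ and $\overline{\rho}_n(\overline{u}_n-\overline{\psi})=0$ $d\overline{\mathds{P}}\otimes dt\otimes dx$-a.e., whence $\langle\overline{\rho}_n,\overline{u}_n\rangle=\langle\overline{\rho}_n,\overline{\psi}\rangle$ $d\overline{\mathds{P}}\otimes dt$-a.e.; passing to the limit in the a.e.\ convergence $\overline{u}_n\to\overline{u}$ also gives $\overline{u}\ge\overline{\psi}$ a.e. Since $\overline{\rho}_n\rightharpoonup\overline{\rho}$ in $L^{p'}(\overline{\Omega}_T;V')$ while $\overline{\psi}$ is a fixed element of $L^p(\overline{\Omega}_T;V)$, this lets me pass to the limit in the otherwise intractable product $\langle\overline{\rho}_n,\overline{u}_n\rangle$:
\begin{align*}
\overline{\mathds{E}}\int_0^T\langle\overline{\rho}_n,\overline{u}_n\rangle\,dt=\overline{\mathds{E}}\int_0^T\langle\overline{\rho}_n,\overline{\psi}\rangle\,dt\xrightarrow[n\to\infty]{}\overline{\mathds{E}}\int_0^T\langle\overline{\rho},\overline{\psi}\rangle\,dt .
\end{align*}

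Next I would apply It\^o's energy formula with $\|\cdot\|_2^2$ (carrying an exponential weight $e^{\beta t}$ with $\beta$ exceeding the Lipschitz constant of $\overline{G}$, exactly as in the proof of the corresponding lemma of Section~\ref{S1}) to \eqref{biseqn-un} on the new probability space and to the limit equation $d\overline{u}-\operatorname{div}A_\infty\,dt+\overline{\rho}\,dt=\overline{f}\,dt+\overline{G}(\overline{u})\,d\overline{W}$, the latter being legitimate by \cite[Thm.~4.2.5]{Liu-Rock}. Taking $t=T$, expectations, and $\limsup_{n}$, and invoking the weak lower semicontinuity of $\overline{\mathds{E}}\|\cdot(T)\|_2^2$, the strong convergence $\overline{f}_n\to\overline{f}$ tested against $\overline{u}_n\rightharpoonup\overline{u}$, the displayed convergence of the $\overline{\rho}_n$-energy term, and the $\|\overline{G}\|_{HS}^2$ versus $-\beta\|\cdot\|_2^2$ bookkeeping from Section~\ref{S1}, I would subtract the energy identity for $\overline{u}$ to arrive at
\begin{align*}
\limsup_{n\to\infty}\overline{\mathds{E}}\int_0^T\!\!\int_D a(\overline{u}_n,\nabla\overline{u}_n)\cdot\nabla\overline{u}_n\,dx\,dt\le\overline{\mathds{E}}\int_0^T\!\!\int_D A_\infty\cdot\nabla\overline{u}\,dx\,dt+\overline{\mathds{E}}\int_0^T\langle\overline{\rho},\overline{u}-\overline{\psi}\rangle\,dt .
\end{align*}
Because $-\overline{\rho}\in L^{p'}(\overline{\Omega}_T;V')^+$ (Lemma~\ref{230405_lem1}) forces $-\overline{\rho}(\omega,t)\ge 0$ in $V'$ for a.e.\ $(\omega,t)$ (separability of $V$) and $\overline{u}-\overline{\psi}\ge 0$ a.e., the last term is $\le 0$ and may be discarded. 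Then, with $\overline{u}_n\to\overline{u}$ in $L^2(D)$ a.e.\ in $\overline{\Omega}\times(0,T)$, bounded in $L^2(\overline{\Omega};L^4(0,T;L^2(D)))$ and in $L^p(\overline{\Omega}\times(0,T);V)$, with $a(\cdot,\lambda,\cdot)$ monotone in its last variable and H$_{2,4}$ governing the $\lambda$-dependence, the stochastic Minty--Browder argument of \cite[Lemma 8.8]{Roubicek} revisited in \cite{Vallet-Zimm1} yields $A_\infty=a(\overline{u},\nabla\overline{u})$; predictability of $A_\infty$ follows from that of $\overline{u}$ and $A_\infty\in L^{p'}(\overline{\Omega}\times(0,T);L^{p'}(D))$ from the growth bound \eqref{croissance1}.

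Finally, for the complementarity, once $A_\infty=a(\overline{u},\nabla\overline{u})$ is known I would use the reverse lower semicontinuity coming from monotonicity, $\liminf_{n}\overline{\mathds{E}}\int_0^T\!\int_D a(\overline{u}_n,\nabla\overline{u}_n)\cdot\nabla\overline{u}_n\ge\overline{\mathds{E}}\int_0^T\!\int_D a(\overline{u},\nabla\overline{u})\cdot\nabla\overline{u}$ (test the monotonicity inequality with $w=\overline{u}$ and pass to the liminf), which combined with the $\limsup$ bound above forces $\overline{\mathds{E}}\int_0^T\langle\overline{\rho},\overline{u}-\overline{\psi}\rangle\,dt\ge 0$; together with the opposite sign noted above, $\overline{\mathds{E}}\int_0^T\langle\overline{\rho},\overline{u}-\overline{\psi}\rangle\,dt=0$. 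Since the integrand $\langle\overline{\rho},\overline{u}-\overline{\psi}\rangle$ is $\le 0$ $d\overline{\mathds{P}}\otimes dt$-a.e., it must vanish a.e.\ in $\overline{\Omega}\times(0,T)$, which is the claim. I expect the main obstacle to be exactly the treatment of $\langle\overline{\rho}_n,\overline{u}_n\rangle$: no bound better than $L^{p'}(\overline{\Omega}_T;V')\times L^p(\overline{\Omega}_T;V)$ is available, so one must replace it by $\langle\overline{\rho}_n,\overline{\psi}\rangle$ through the level-$n$ complementarity before taking the limit, and must carefully retain the sign of $\overline{\mathds{E}}\int_0^T\langle\overline{\rho},\overline{u}-\overline{\psi}\rangle\,dt$ so that it can be dropped in the Minty estimate and subsequently recovered to equal $0$.
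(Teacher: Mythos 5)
Your proposal is correct and follows essentially the same route as the paper: apply It\^o's energy formula with an exponential weight $e^{\beta t}$ to both the $n$-level equation and the limit equation, exploit the level-$n$ complementarity $\langle\overline{\rho}_n,\overline{u}_n\rangle=\langle\overline{\rho}_n,\overline{\psi}\rangle$ to pass the otherwise intractable product through the weak convergence $\overline{\rho}_n\rightharpoonup\overline{\rho}$, subtract the two energy identities, discard the non-positive $\overline{\rho}$-term to run the stochastic Minty argument, and then recover the sign to conclude that $\overline{\mathds{E}}\int_0^T\langle\overline{\rho},\overline{u}-\overline{\psi}\rangle\,dt=0$ and hence the integrand vanishes a.e. The only cosmetic difference is that you spell out the $\liminf$ lower bound coming from monotonicity, which the paper compresses into the remark that the $\limsup$ bound is in fact an equality (citing \cite[Subsec.\ 2.5]{Vallet-Zimm1}); the content is the same.
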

\begin{proof}
By It\^o's energy formula with exponential weight, for any $\beta \in \R^+$ and any $t \in [0,T]$,
\begin{align*}
&\frac{e^{-\beta  t}}2\erws{\|\overline u_n(t)\|^2_{L^2}} + \erws{\int_0^t  e^{-\beta  s} \int_{D} a(\overline{u}_n,\nabla \overline{u}_n)\nabla \overline u_n \,dx\,ds} + \erws{\int_0^t e^{-\beta  s} \langle \overline{\rho}_n,\overline u_n \rangle \,ds} \\ 
=&\erws{\int_0^te^{-\beta  s}\langle \overline f_n,\overline u_n\rangle\,ds} + \frac12\erws{\int_0^t e^{-\beta  s}\|\overline G_n( \overline u_n)\|_{HS}^2\,ds} + \frac12\erws{\|\overline u_0^n\|^2_{L^2}}-\frac\beta 2\erws{\int_0^t e^{-\beta s}\|\overline u_n\|_{L^2}^2\,ds}
\end{align*}
and
\begin{align*}
&\frac{e^{-\beta t}}2\erws{\|\overline u(t)\|^2_{L^2}}+ \erws{\int_0^t e^{-\beta s} \langle \overline{\rho},\overline u \rangle \,ds} + \erws{\int_0^t  e^{-\beta s} \int_{D} A_\infty\nabla \overline u \,dx\,ds} 
\\  =&
\erws{\int_0^te^{-\beta s}\langle \overline f,\overline u\rangle\,ds} + \frac12\erws{\int_0^t e^{-\beta s}\|\overline G(\overline u)\|_{HS}^2\,ds} + \frac12\erws{\|\overline u_0\|^2_{L^2}}-\frac\beta2\erws{\int_0^t e^{-\beta s}\|\overline u\|_{L^2}^2\,ds}.
\end{align*}
Note that $\mathcal{L}(\overline \psi_n) = \mathcal{L}(\psi)=\mathcal{L}(\overline \psi)$ for each $n \in \N$, $\| \overline \psi_n \|_{L^p(\overline \Omega; L^p(0,T;V))} = \| \psi \|_{L^p(\Omega; L^p(0,T;V))} = \| \overline \psi \|_{L^p(\overline \Omega; L^p(0,T;V))} $ and therefore by Brezis-Lieb lemma (see  \cite{BrezisLieb}) $\overline \psi_n \to \overline \psi$ in $L^p(\overline \Omega; L^p(0,T;V))$. Hence
\begin{align*}
\erws{\int_0^t e^{-\beta s} \langle \overline{\rho}_n,\overline u_n \rangle \,ds}=\erws{\int_0^t e^{-\beta s} \langle \overline{\rho}_n,\overline \psi_n \rangle \,ds} \to \erws{\int_0^t e^{-\beta s} \langle \overline{\rho},\overline \psi \rangle \,ds} .
\end{align*}
Now one follows verbatim the arguments in Lemma 2.8 and we obtain
\begin{align*}
\erws{\int_0^t e^{-\beta s} \langle \overline{\rho},\overline \psi - \overline u \rangle \,ds} + \limsup_{n\rightarrow \infty}\erws{\int_0^t \!\! e^{-\beta s} \int_{D} a(\overline{u}_n,\nabla \overline{u}_n)\nabla \overline u_n \,dx\,ds} 
\leq 
\erws{\int_0^t  e^{-\beta s} \int_{D} A_\infty\nabla \overline u \,dx\,ds}. \end{align*}
Note that $\erws{\int_0^t e^{-\beta s} \langle \overline{\rho},\overline \psi - \overline u \rangle \,ds} \geq 0$ for any $t \in [0,T]$
so that $$\limsup_{n\rightarrow \infty}\erws{\int_0^T \!\! e^{-\beta t} \int_{D} a(\overline{u}_n,\nabla \overline{u}_n)\nabla \overline u_n \,dx\,dt} 
\leq 
\erws{\int_0^T  e^{-\beta t} \int_{D} A_\infty\nabla \overline u \,dx\,dt}.$$
Then, since $A_\infty$ is the weak limit of $a(\overline{u}_n,\nabla \overline{u}_n)$, it is routine, see \textit{e.g.} \cite[Subsec. 2.5]{Vallet-Zimm1} to prove that $$\lim_{n\rightarrow \infty}\erws{\int_0^T \!\! e^{-\beta t} \int_{D} a(\overline{u}_n,\nabla \overline{u}_n)\nabla \overline u_n \,dx\,dt} 
=
\erws{\int_0^T  e^{-\beta t} \int_{D} A_\infty\nabla \overline u \,dx\,dt}$$ and we obtain $A_\infty=a(\overline{u},\nabla \overline{u})$ and $$\erws{\int_0^T e^{-\beta s} \langle \overline{\rho},\overline \psi - \overline u \rangle \,dt} = 0.$$
\end{proof}
Thus, $\overline u$ is solution of equation \eqref{230307_02} with respect to the stochastic basis $(\overline \Omega, \overline{\mathcal{F}}, (\overline{\mathcal{F}}_t)_{t\in [0,T]},\overline{\mathbb{P}})$, associated with $\overline W$. Since the path-wise uniqueness holds, Lemma \ref{strongsolutionlemma} yields that $u_n$ converges in probability in $L^4(0,T;L^2(D))$ and hence a.s. in the original probability space $(\Omega, \mathcal{F},\mathbb{P})$ for a subsequence. Now, repeating the previous arguments in this section with respect to the probability space $(\Omega, \mathcal{F},\mathbb{P})$ completes the proof of Theorem \ref{MTh}.

\section*{Acknowledgment}
This work has been supported by the German Research Foundation project ZI 1542/3-1 and the Procope Plus programme for Project-Related Personal Exchange (France-Germany). The work of Y. Tahraoui was partially  funded by national funds through the FCT - Funda\c c\~ao para a Ci\^encia e a Tecnologia, I.P., under the scope of the projects UIDB/00297/2020 and UIDP/00297/2020 (Center for Mathematics and Applications). G. Vallet acknowledges the ANR project SOS2ID: Stochastic Optimization Schemes - Infinite Dimensional and Inertial Dynamics, France (ANR-24-CE40-3786).

\end{document}